\newcommand{\Real}{\ensuremath{\mathbb{R}}}
\newcommand{\minimize}[1]{\displaystyle\minim_{#1}}
\newcommand{\minim}{\mathop{\hbox{\rm minimize}}}
\newcommand{\maximize}[1]{\displaystyle\maxim_{#1}}
\newcommand{\maxim}{\mathop{\hbox{\rm maximize}}}
\DeclareMathOperator*{\st}{subject\;to}
\def\spose#1{\hbox to 0pt{#1\hss}}
\def\text #1{\hbox{\quad#1\quad}}
\def\nthinsp{\mskip -2   mu}
\def\superstar{^{\raise 0.5pt\hbox{$\nthinsp *$}}}
\def\SUPERSTAR{^{\raise 0.5pt\hbox{$*$}}}
\def\lamstarT {\lambda^{\raise 0.5pt\hbox{$\nthinsp *$}T}}
\def\Cscr{{\cal C}}
\def\Nscr{{\cal N}}
\def\Nscr{{\cal N}}
\def\Xscr{{\cal X}}
\def\Yscr{{\cal Y}}
\def\hbar{\skew{4.2}\bar h}
		\def\bk1{{\rm 1\kern-.17em l}}
		\def\bkD{{\rm I\kern-.17em D}}
		\def\bkR{{\rm I\kern-.17em R}}
		\def\bkP{{\rm I\kern-.17em P}}
		\def\bkY{{\bf \kern-.17em Y}}
		\def\bkZ{{\bf \kern-.17em Z}}
		\def\beq{\begin{eqnarray}}
		\def\bc{\begin{center}}
		\def\be{\begin{enumerate}}
		\def\bi{\begin{itemize}}
		\def\bs{\begin{small}}
		\def\bS{\begin{slide}}
		\def\ec{\end{center}}
		\def\ee{\end{enumerate}}
		\def\ei{\end{itemize}}
		\def\es{\end{small}}
		\def\eS{\end{slide}}
		\def\eeq{\end{eqnarray}}
		\def\qed{\quad \vrule height7.5pt width4.17pt depth0pt}
	\def\cp2problem#1#2#3#4{\fbox
		 {\begin{tabular*}{0.9\textwidth}
			{@{}l@{\extracolsep{\fill}}l@{\extracolsep{6pt}}l@{\extracolsep{\fill}}c@{}}
				#1 & & $#4 $
			\end{tabular*}}}
\def\z{\phantom 0}
		\renewcommand{\emph}[1]{\textbf{#1}}
		\def\bk1{{\rm 1\kern-.17em l}}
		\def\bkD{{\rm I\kern-.17em D}}
		\def\bkR{{\rm I\kern-.17em R}}
		\def\bkP{{\rm I\kern-.17em P}}
		\def\bkZ{{\bf{Z}}}
\newcommand {\beeq}[1]{\begin{equation}\label{#1}}
\newcommand {\eeeq}{\end{equation}}
\newcommand {\bea}{\begin{eqnarray}}
\newcommand {\eea}{\end{eqnarray}}
\def\texitem#1{\par\smallskip\noindent\hangindent 25pt
               \hbox to 25pt {\hss #1 ~}\ignorespaces}
\def\st{\mbox{subject to}}
	\definecolor{shadethmcolor}{HTML}{EDF8FF}
	\definecolor{shaderulecolor}{HTML}{45CFFF}
	\definecolor{shaderulecolor}{gray}{0}
	\colorlet{shadecolor}{orange!15}
	\definecolor{shadethmcolor}{HTML}{EDF8FF}
	\definecolor{shaderulecolor}{HTML}{45CFFF}
	\definecolor{shaderulecolor}{gray}{0}
	\colorlet{shadecolor}{orange!15}
	\definecolor{shadethmcolor}{HTML}{EDF8FF}
	\definecolor{shaderulecolor}{HTML}{45CFFF}
\definecolor{shaderulecolor}{gray}{0}
 	\colorlet{shadecolor}{orange!15}
\newtheorem{theorem}{Theorem}
\newtheorem{assumption}{Assumption}
\newtheorem{corollary}{Corollary}
\newtheorem{definition}{Definition}
\newtheorem{lemma}{Lemma}
\newtheorem{proposition}{Proposition}
\newtheorem{remark}{Remark}
\def\bko{{\rm 1\kern-.17em l}}
\newcommand{\fy}[1]{{\color{black}#1}}
\newcommand{\us}[1]{{\color{black}#1}}
\newcommand{\uss}[1]{{\color{black}#1}}
\newcommand{\usv}[1]{{\color{black}#1}}
\newcommand{\fyy}[1]{{\color{black}#1}}
\newcommand{\fyyy}[1]{{\color{black}#1}}
\newcommand{\ic}[1]{{\color{black}#1}}
\def\Nscr{{\mathcal N}}
\newcommand{\ww}{\mathbf{w}}
\newcommand{\uu}{\mathbf{u}}
\newcommand{\vx}{\mathbf{x}}
\newcommand{\vz}{\mathbf{z}}
\def\be{\begin{enumerate}}
\def\ee{\end{enumerate}}
\def\Nscr{{\mathcal N}}
\def\st{\mbox{subject to}}
\newcommand{\todo}[1]{\vspace{5 mm}\par \noindent \marginpar{{ToDo}}
\framebox{\begin{minipage}[c]{0.9\linewidth} \tt #1
\end{minipage}}\vspace{5 mm}\par}
\def\argmin{\mathop{\rm argmin}}
 \newcommand{\remove}[1]{}
\newcommand{\EXP}[1]{\mathsf{E}\!\left[#1\right] }
\newcommand{\pmat}[1]{\begin{pmatrix} #1 \end{pmatrix}}
\newcommand{\x}{{\mathbf{x}}}
\newcommand{\y}{{\mathbf{y}}}
\def\sF{\mathcal{F}}
\def\Real{\mathbb{R}}
\def\argmin{\mathop{\rm argmin}}
\newcounter{algsubstate}
\renewcommand{\thealgsubstate}{\alph{algsubstate}}
\begin{document}
\sloppy
%
%

\title{Complexity guarantees for an implicit smoothing-enabled method for stochastic MPECs}

\author{Shisheng Cui\thanks{Industrial \& Manufacturing Engineering,
	Pennsylvania State University,  University Park, State College, PA
		16802, USA, \texttt{suc256@psu.edu};}  \and Uday V. Shanbhag\thanks{Industrial \& Manufacturing Engineering,
	Pennsylvania State University,  University Park, State College, PA
		16802, USA, \texttt{udaybag@psu.edu}; Shanbhag gratefully acknowledges the support from NSF CMMI-1538605 and DOE ARPA-E award DE-AR0001076. } \and Farzad Yousefian\thanks{Department of Industrial \& Systems Engineering, Rutgers University, Piscataway, NJ 08854, USA, \texttt{farzad.yousefian@rutgers.edu}; 
		Yousefian  gratefully acknowledges the support of the NSF through CAREER grant ECCS-1944500.		
		} }
 
\maketitle

\begin{abstract}  
Mathematical programs with equilibrium constraints (MPECs) represent {a class of hierarchical programs that allow for
modeling problems} in engineering, economics, finance, and
statistics. {While stochastic generalizations} have been
assuming increasing relevance, {there is a pronounced} absence of
    efficient first/zeroth-order schemes with non-asymptotic rate guarantees
    for resolving even deterministic variants of such problems. We consider a
    subclass of stochastic MPECs (\fyyy{SMPECs}) where the parametrized lower-level equilibrium
    problem is  given by a  deterministic/stochastic variational inequality
    (VI) problem whose mapping is strongly monotone, uniformly in upper-level
    decisions. Under suitable assumptions, 
    {this paves the way for resolving the implicit problem with a Lipschitz continuous objective} via a gradient-free zeroth-order method by
    leveraging a locally randomized spherical smoothing framework. In this
    setting, we {present schemes for single-stage and two-stage stochastic MPECs when the upper-level problem is either convex or nonconvex.  {\bf (I). Single-stage SMPECs.} In single-stage SMPECs, in convex regimes, our proposed inexact schemes are characterized by a complexity in upper-level projections, upper-level samples, and lower-level projections   of $\mathcal{O}(\tfrac{1}{\epsilon^2})$, $\mathcal{O}(\tfrac{1}{\epsilon^2})$, and $\mathcal{O}(\tfrac{1}{\epsilon^2}\ln(\tfrac{1}{\epsilon}))$, respectively. Analogous bounds for the nonconvex regime are  $\mathcal{O}(\tfrac{1}{\epsilon})$, $\mathcal{O}(\tfrac{1}{\epsilon^2})$, and $\mathcal{O}(\tfrac{1}{\epsilon^3})$, respectively. {\bf (II). Two-stage SMPECs.} In two-stage SMPECs, in convex regimes, our proposed inexact schemes have a complexity in upper-level projections, upper-level samples, and lower-level projections   of $\mathcal{O}(\tfrac{1}{\epsilon^2}), \mathcal{O}(\tfrac{1}{\epsilon^2})$, and $\mathcal{O}(\tfrac{1}{\epsilon^2}\ln(\tfrac{1}{\epsilon}))$ while the corresponding bounds in the nonconvex regime are  $\mathcal{O}(\tfrac{1}{\epsilon})$,$\mathcal{O}(\tfrac{1}{\epsilon^2})$, and $\mathcal{O}(\tfrac{1}{\epsilon^2}\ln(\tfrac{1}{\epsilon}))$, respectively. In addition, we derive statements for exact as well as accelerated counterparts. Preliminary
    numerics suggest that the schemes scale with problem size, are relatively
    robust to modification of algorithm parameters, {show distinct benefits in obtaining near-global minimizers for convex implicit problems in contrast with competing solvers}, and  provide solutions of
    similar accuracy in a fraction of the time taken by sample-average
    approximation (SAA).  
    }  
\end{abstract}

\section{Introduction}
In this paper, we consider the resolution of variants and stochastic generalizations of the mathematical program with equilibrium  constraints (MPEC), given by 
\begin{align}\tag{MPEC} \label{MPEC}
\begin{aligned}
    \min_{\x, \y} & \quad f(\x, \y) \\
    \st & \quad \y \in \mbox{SOL}(\Yscr, F(\x,\bullet)), \\
     & \quad \x \in \Xscr, 
\end{aligned}
\end{align}
where $f: \Real^n \times \fy{\Real^m} \to \Real$ is a real-valued function, $F:
 \Xscr \times \Yscr \to \Real^m$, $\Xscr \subseteq \Real^n$ and  $\Yscr
\subseteq \Real^m$ denote closed and convex sets, and $\fy{\mbox{SOL}(\Yscr,
F(\x,\bullet))}$ denotes the solution set of the parametrized variational
inequality problem VI$(\Yscr, F(\x,\bullet))$, given an upper-level decision
$\x$. Recall that the variational inequality problem VI$(\Yscr, F(\x,\bullet))$ requires a {vector} $\y$ in {the set} $\Yscr$ such that 
\begin{align}\tag{VI$(\Yscr, F(\x,\bullet))$} 
    (\tilde{\y}-\y)^T F(\x,\y) \ \geq \ 0, \qquad \forall \ \tilde{\y} \ \in \ \Yscr. 
\end{align} MPECs have a broad range of applications arising in hierarchical
optimization, {frictional contact} problems, power systems~\cite{hobbs-strategic}, traffic
equilibrium problems~\cite{hearn04mpec}, and Stackelberg equilibrium
problems~\cite{sherali83stackelberg}. A comprehensive survey of models,
analysis, and algorithms can be found in~\cite{luo96mathematical} while a
subsequent monograph emphasized the implicit
framework~\cite{outrata98nonsmooth}. 

\medskip

The MPEC is an ill-posed generalization of a nonconvex and nonlinear program,  an observation that follows from considering the setting where $\Yscr$ is a convex cone in $\Real^m$. In such an instance, (MPEC) reduces to a mathematical program with complementarity constraints (MPCC) since $\y$ solves VI$(\Yscr, F(\x,\bullet))$ if and only if \fyyy{$\y$} solves CP$(\Yscr,F(\x,\bullet)\fy{)}$, defined as the problem of finding \fy{a vector} $\y$ such that   
\begin{align}\tag{CP$(\Yscr, F(\x,\bullet))$} 
    \Yscr \ni \y \ \perp \ F(\x, \y) \in \Yscr^*,
\end{align}
where $\Yscr^* \triangleq \{u \mid y^Tu \geq 0, y \in \Yscr\}$. When $\Yscr$ is the nonnegative orthant, then (MPEC) reduces to the following MPCC, {which can be cast as an ill-posed nonlinear program.} 
\begin{align} \tag{MPCC} \label{MPCC}
\begin{aligned}
    \min_{\x, \y} \quad  &  f(\x, \y)  \\
    \st  \quad 0  \leq \y & \perp F(\x, \y) \geq 0, \\
    \quad \x  & \in \Xscr.
\end{aligned}
\end{align}
Ill-posedness of (MPCC) arises from noting that standard constraint qualifications (such as the Mangasarian-Fromovitz constraint qualification) fail to hold at any feasible point. This has led to a concerted effort in developing weaker stationarity conditions for MPECs~\cite{scheel2000mathematical} as well as a host of regularization~\cite{ragunathan05interior,fletcher02local,anitescu-solving,jiang00smooth,leyffer2006interior} and penalization~\cite{xu04convergence} schemes. 

\medskip

 Yet {an enduring} gap {persists} in the development of
algorithms for such problems. Despite a wealth of developments in the field of
zeroth and first-order algorithms for deterministic and stochastic convex and
nonconvex optimization, there are no available non-asymptotic rate guarantees
for either zeroth or first-order schemes for MPECs or their stochastic
variants. In particular, our interest lies in two distinct stochastic variants presented as follows. 

\smallskip

\noindent {\bf \large 1.1. Problems of interest.} We focus on the problem (MPEC) where the lower-level map $F(\x,\bullet)$ is strongly monotone over $\Yscr$ {uniformly in $\x$}. This ensures that the solution of VI$(\Yscr,F(\x,\bullet))$ is a singleton for every $\x \in \Xscr$. We consider two {settings.}

\smallskip

\noindent {(i) \bf {Single-stage SMPECs.}\footnote{\fyyy{In some of the literature on stochastic programming, this class of problems is also known as {\em one-stage} SMPEC.  However, inspired by this paper~\cite{rockafellar17stochastic} and for expository reasons, we have adopted {\em single-stage} SMPEC.}}} {Single-stage MPECs capture a class of stochastic MPECs with constraints given by parametrized  variational inequality problems {with} expectation-valued maps}. Such problems assume relevance in modeling a range of
stochastic equilibrium problems; more specifically, such problems represent the
necessary and sufficient equilibrium conditions of smooth stochastic convex
optimization problems and smooth stochastic convex Nash equilibrium
problems~\cite{Houyuan08,Nem11}. \fyyy{They can also be employed for modeling settings in power systems~\cite{fang2015coupon,baringo2013strategic}, structural optimization~\cite{evgrafov2003stochastic}, and transportation science~\cite{patriksson2008applicability,migdalas1998multilevel}.} More formally, suppose the
variational inequality problem VI$(\Yscr,F(\x,\bullet))$ is characterized by a
map $F$ whose components are expectation-valued, i.e.
\begin{align}
    \label{def-SF}
    F(\x,\y) \triangleq \pmat{ \mathbb{E}[G_1(\x,\y,\xi(\omega))] \\
                            \vdots \\
                        \mathbb{E}[G_m(\x,\y,\xi(\omega))]}, 
\end{align}
where {$G_i: \Real^n \times {\Real^m} \times \Real^d \to \Real$ and $\xi: \Omega\to\mathbb{R}^d$ denotes a random variable associated with the probability space $(\Omega, \mathcal{F},\mathbb{P})$}. {Note that the expectations in \eqref{def-SF} are taken with respect to the probability distribution $\mathbb{P}$.} {For the ease of presentation, throughout the paper, we} refer to the integrand $G_i(\x,\y, \xi(\omega){)}$ by $G_i(\x,\y, \omega)$. {In effect, the lower-level problem is a stochastic variational inequality problem~\cite{Houyuan08,FarzadAngeliaUday_MathProg17}}. In addition, the objective may also be expectation-valued and the \usv{pessimistic version of the}  resulting problem is defined as follows. 
\begin{align}\label{eqn:SMPECepx}\tag{SMPEC$^{\bf 1s}$}
\begin{aligned}
    \min_{\x,\y} & \quad f(\x,\y)\triangleq  \mathbb{E}[{\tilde f}(\x, \y,\omega)] \\
    \st & \quad \y \in \mbox{SOL}(\Yscr, \mathbb{E}[G(\x, \bullet, \omega)]), \\
     & \quad \x \in \Xscr\fy{.}
\end{aligned}
\end{align}
{An instance where \fyy{\eqref{eqn:SMPECepx}} emerges is when the lower-level equilibrium problem {captures} the equilibrium conditions of a convex stochastic optimization problem given by 
\begin{align}
    \min_{\y \in \Yscr} \,  \mathbb{E}[h(\x,\y,\omega)], 
\end{align}
where $F(\x,\y) \triangleq \mathbb{E}[\nabla_\y h(\x,\y,\omega)].$ {A more general instance is when {a solution to the} lower-level equilibrium problem {is a Nash equilibrium of a noncooperative game with expectation-valued objectives}, as given by 
\begin{align}
    \min_{\y_i \in \Yscr_i} \,  \mathbb{E}[h_i(\x,(\y_i;\y_{-i}),\omega)], 
\end{align}
where $i \in \{1,\ldots,N\}$, $N$ denotes the number of players, $\y_i \in \Yscr_i$ and $h_i(\x,(\bullet;\y_{-i}),\omega)$ denote the strategy set and the cost function of player $i \in \{1,\ldots,N\}$, respectively, and $\y_{-i}$ denotes the strategies of the other players than player $i$. Under some mild conditions, it is known that the equilibrium conditions of the aforementioned game can be characterized as VI$(\Yscr,F(\x,\bullet))$ where $\Yscr \triangleq\prod_{i=1}^N \Yscr_i$ and $F(\x,\y)  \triangleq \prod_{i=1}^N \mathbb{E}[\nabla_{\y_i}h_i(\x,(\y_i;\y_{-i}),\omega)]$ (cf. Chap. 1 in \cite{facchinei02finite}).} 

{An alternate approach for modeling uncertainty in MPECs is provided in the
next \fy{model},} where the lower-level problem constraints are
imposed in an almost sure (a.s.) sense~\cite{demiguel09stochastic}.

\smallskip

\noindent {(ii)  {\bf Two-stage SMPECs.}  Two-stage stochastic  MPECs are characterized by  equilibrium constraints VI$(\Yscr, F(\x, \bullet, \omega))$ for almost every $\omega \in \Omega$.} {We provide motivation by considering the following two-stage leader-follower game in which the follower makes  a {\em second-stage} decision} $\y$ contingent on the
leader's {decision} $\x$ and the realization of uncertainty is denoted by
$\omega$. Consequently, the leader's {first-stage}  problem requires minimizing her expected
cost $\mathbb{E}[\tilde{f}(\x,\y(\omega),\omega)]$ where $\y(\omega)$
represents follower's {second-stage (i.e. recourse)} decision, given $\x$ and $\omega$. \usv{A pessimistic version of this problem} can be
compactly represented as \fyy{\eqref{eqn:a_s_prob}}, defined next. 
\begin{align}\tag{SMPEC$^{\bf 2s}$}\label{eqn:a_s_prob}
\begin{aligned}
    \min_{\x, \y(\omega)} & \quad \mathbb{E}[\tilde f(\x, \y(\omega),\omega)] \\
    \st & \quad \y(\omega) \in \mbox{SOL}(\Yscr{(\x,\omega)}, {G(\x, \bullet,\omega)}), \mbox{ for almost every } \omega \in \Omega \\
     & \quad \x \in \Xscr. 
\end{aligned}
\end{align}
{In regimes where }VI$(\Yscr(\x,\omega), G(\x,\bullet,\omega))$ has a unique solution for any $\x \in \Xscr$ and any $\omega \in \Omega$, \usv{the pessimistic and optimistic versions of the SMPECs coincide and} we may recast \fyy{\eqref{eqn:a_s_prob}} as the following {\it implicit} stochastic optimization problem where $\y: \Xscr \times \Omega \to \Real^m$ denotes a single-valued solution map \usv{of VI$(\Yscr, F(\x,\bullet,\omega))$}. 
\begin{align}\tag{SMPEC$^{\bf imp,2s}$}\fy{\label{eqn:a_s_prob_imp}}
\begin{aligned}
    \min_{\x} & \quad   {f^{{\bf imp}}(\x)}  \triangleq  \mathbb{E}[\fyy{ \tilde f}(\x, \y(\x,\omega),\omega)] \\
     \st & \quad \x \in \Xscr. 
\end{aligned}
\end{align}
{The implicit counterpart of (SMPEC$^{\bf 1s}$), denoted by (SMPEC$^{\bf imp,1s}$), is defined analogously. }

\smallskip 
\noindent {\bf \large 1.2. Gaps and Contributions.} {\usv{The lower-level parametrized variational inequality problem can often be recast as a parametrized complementarity problem (e.g. when the VI admits a suitable regularity condition~\cite{luo96mathematical}). The MPEC then reduces to a mathematical program with complementarity constraints (MPCC).} 
Nonlinear programming (NLP) approaches aligned around sequential quadratic
    programming~\cite{fletcher02local} and interior-point schemes~\cite{leyffer2006interior,ragunathan05interior,anitescu-solving}  have been applied for resolving MPCCs (See~\cite{luo96mathematical} for a survey). \usv{This represents a dominant algorithmic thread for resolving MPECs while a second lies in implicit programming approaches~\cite{implicit1,implicit2,implicit3,implicit4,implicit5,implicit6,luo96mathematical}. Yet, there are some key shortcomings of such avenues in such regimes, motivating the present research.}}

\smallskip 

\smallskip

\noindent (a) {\em Limited convergence guarantees for existing NLP/regularization/penalization
schemes.} \usv{Most interior-point~\cite{leyffer2006interior,ragunathan05interior,anitescu-solving}, sequential quadratic programming (SQP)~\cite{fletcher02local}, and penalization/regularization
schemes~\cite{anitescu-solving,leyffer2006interior,demiguel2005two}} for
resolving MPECs are characterized by convergence {to}
strong-stationary or C-stationary points in the full space of upper and
lower-level decisions with rate guarantees {only} available in a local sense. \usv{Such schemes do not leverage any convexity properties in obtaining stronger guarantees.}  {In particular, there appear to be no efficient schemes that can provide convergence guarantees to global minimizers (\usv{in an implicit sense}) in either deterministic or stochastic regimes.} 

\smallskip

\noindent (b) {\em Implementability concerns with existing implicit approaches.} \usv{Existing implicit programming approaches (cf.~\cite{implicit1,implicit2,implicit3,implicit4,implicit5,implicit6,implicit7}) require exact resolution of the lower-level problem (precluding the resolution of lower-level stochastic variational inequality problems), can generally not accommodate uncertainty in their lower/upper-level, and are not equipped with non-asymptotic rate and complexity guarantees, particularly when the implicit problem is nonconvex.} 

\smallskip

\noindent (c) {\em Lack of efficient first/zeroth-order schemes.} While there has been a
tremendous amount of advances in providing non-asymptotic rate guarantees for
efficient first/zeroth-order algorithms for convex and {nonconvex} 
optimization problems~\cite{nesterov1998introductory,ConnScheVice09,flaxman2005online,ghadimi15,nesterov17}, the
resolution of MPECs via such avenues has been largely ignored. In fact, we are
unaware of any efficient first/zeroth-order scheme for deterministic MPECs even
under strong monotonicity assumptions at the lower-level.    

\smallskip 

\noindent (d) {\em Lack of scalability and convergence of schemes for stochastic MPECs.}
Sample-average
approximation~\cite{shapiro08stochastic,chen15regularized,liu11convergence} and
smoothing schemes~\cite{lin09solving} for \fyy{\eqref{eqn:a_s_prob}} have been studied
extensively. While SAA schemes provide an avenue for approximation, the SAA
problems become increasingly difficult to solve since the number of constraints
grows linearly with the sample-size. Absent such sampling, then such avenues
can generally contend with finite sample-spaces. However, no efficient
stochastic approximation schemes are available for contending with the
stochastic analogs.  

\medskip 

Collectively, these gaps motivate the development of tools and techniques for
this challenging class of stochastic nonconvex problems. To this end, {we
    develop a zeroth-order algorithmic framework equipped with convergence rate
    guarantees that is applied on the implicit formulation of the problem. In
    the implicit formulation, the objective {function} is viewed as a function in terms of the variable $\x$. While the implicit
    programming approach has been utilized before~\cite{luo96mathematical,lin09solving,xu06implicit}, several challenges arise
    when considering the development of iterative solution methods: (i) a
    closed-form characterization for $\y(\bullet)$ \fyy{(or $\y(\bullet,\omega)$)} is possibly unavailable
    which in turn, {precludes} the applicability of the standard first-order
    schemes; (ii) the implicit function is possibly nondifferentiable and
    nonconvex in $\x$ which complicates the convergence analysis and, in
particular, {the derivation of} rate statements. {In fact, one cannot compute subgradients or Clarke generalized gradients easily in such settings;} (iii) in inexact regimes where
there is lack of access to an oracle for computing $\y(\bullet)$ {(or $\y(\bullet,\omega)$)}, 
    standard zeroth-order methods may not be directly applied.  This is
    primarily because an inexact value of $\y(\bullet)$ may lead to a biased
    zeroth-order gradient approximation for the implicit function and the level
    of bias may even grow undesirably, as the parameters are updated
    iteratively; (iv) {finally, in settings {where the implicit problem is convex}, asymptotically convergent accelerated schemes with rate statements are unavailable.}

\begin{table}
\begin{center}{\scriptsize 
            \caption{Complexity guarantees for solving single-stage {SMPECs}\label{table:1s}}
\begin{tabular}{ |l| l||c|c|c|c|} 
\hline
\multicolumn{2}{|c||}{Single-stage SMPECs} & \multicolumn{2}{c|}{Convex implicit} & \multicolumn{2}{c|}{Nonconvex implicit}\\
\multicolumn{2}{|c||}{}  & Inexact & Exact &   Inexact & Exact\\
\hline
\multirow{2}{2em}{Upper level} & $\#$ projections & $n^4L_0^2\tilde{L}_0^4\epsilon^{-2}$  & $n^2L_0^2 \epsilon^{-2}$  &           $n^2L_0^2\tilde{L}_0^2\epsilon^{-1}$ & $n^2L_0^2\epsilon^{-1}$\\ 
& $\#$ samples  & $n^4L_0^2\tilde{L}_0^4\epsilon^{-2}$& $n^2L_0^2 \epsilon^{-2}$  &            $n^4L_0^4\tilde{L}_0^4\epsilon^{-2}$ & $n^4L_0^4\epsilon^{-2}$ \\ 
\hline
\multirow{2}{2.2em}{Lower level} & $\#$ projections & $n^4L_0^2\tilde{L}_0^4\epsilon^{-2} \ln\left( n^2L_0\tilde{L}_0^2\epsilon^{-1} \right)$ &  
--  &                $n^6L_0^6\tilde{L}_0^6\epsilon^{-3}$ & --\\ 
&$\#$ samples  & $  n^{4\bar \tau}L_0^{2\bar \tau}\tilde{L}_0^{4\bar \tau}\epsilon^{-2\bar \tau}$ & --                                    & $n^6L_0^6\tilde{L}_0^6\epsilon^{-3}$ &-- \\ 
\hline
\end{tabular}}
\end{center}
\end{table}
\begin{table}
\begin{center}{\scriptsize 
            \caption{Complexity guarantees for solving two-stage {SMPECs}\label{table:2s}}
\begin{tabular}{ |l| l||c|c|c|c|c|c|} 
\hline
\multicolumn{2}{|c||}{Two-stage SMPECs} & \multicolumn{3}{c|}{Convex implicit} & \multicolumn{2}{c|}{Nonconvex implicit}\\
\multicolumn{2}{|c||}{}  & Inexact & Exact & Accelerated &  Inexact & Exact\\
\hline
\multirow{2}{2em}{Upper level} & $\#$ projections & $n^4L_0^2{\tilde{L}_0^4}\epsilon^{-2}  $ &  $n^2L_0^2\epsilon^{-2}$  &  $\epsilon^{-1}$ &               $n^2L_0^2\tilde{L}_0^2\epsilon^{-1}$ & $n^2L_0^2\epsilon^{-1}$\\ 
& $\#$ samples  & $n^4L_0^2{\tilde{L}_0^4}\epsilon^{-2} $& $n^2L_0^2 \epsilon^{-2}$  &    $\epsilon^{-(2+\delta)}$ & $n^4L_0^4{\tilde{L}^4_0} \epsilon^{-2}$ & $n^4L_0^4\epsilon^{-2}$ \\ 
\hline
{Lower level} & {$\#$ projections} & $n^4L_0^2\tilde{L}_0^4\epsilon^{-2}  \ln\left( n^2L_0\tilde{L}_0^2\epsilon^{-1}  \right)$ &  
--  &     --  &            $n^4L_0^4{\tilde{L}^4_0}\epsilon^{-2}\ln(n^2L_0^2{\tilde{L}^2_0}\epsilon^{-1})$ & --\\ 
\hline
\end{tabular}}
\end{center}
\end{table}
    \medskip 
    \noindent {\bf Contributions.} In this paper, we aim at addressing these challenges through
    the development of a locally randomized zeroth-order scheme where the
    gradient of the implicit function is approximated at perturbed and possibly
    inexact evaluations of $\y(\bullet)$ \fyy{(single-stage) and $\y(\bullet,\omega)$ (two-stage)}. {Tables~\ref{table:1s} and~\ref{table:2s} provide the new complexity statements derived in this work for single-stage and two-stage SMPECs, respectively. The contributions in different regimes are as follows}.

{\noindent {\bf (1) Single-stage SMPECs.} We consider the single-stage problem~\eqref{eqn:SMPECepx} in Section~\ref{sec:1s}.

    \smallskip 
    \noindent {\bf (1-i)} {\em Inexact convex settings:} We develop (\texttt{ZSOL$^{\bf 1s}_{\rm cnvx}$}), {defined in} Algorithm~\ref{algorithm:inexact_zeroth_order_SVIs} where we employ a zeroth-order {method} for minimizing the implicit function. In the inexact variant of this method, to solve the stochastic VI at the lower-level and approximate $\y(\bullet)$, we employ a variance-reduced stochastic
    approximation method presented by Algorithm~\ref{algorithm:inexact_lower_level_SA}. In Theorem
    \ref{thm:ZSOL_convex}, we derive non-asymptotic convergence rates and also
    obtain an overall iteration complexity of
    $\mathcal{O}\left(n^4L_0^2\tilde{L}_0^4\epsilon^{-2}  \right)$ and
    $\mathcal{O}\left(n^4L_0^2\tilde{L}_0^4\epsilon^{-2} \ln\left( n^2L_0\tilde{L}_0^2\epsilon^{-1} \right)\right)$ for the projections on the set $\Xscr$ and $\Yscr$,
    respectively, where $L_0$ and $\tilde{L}_0$ are defined by Assumption~\ref{ass-1}. Importantly, both the stepsize and smoothing parameters are updated iteratively using prescribed rules allowing for establishing convergence to an optimal solution of the original single-stage SMPEC. 

    \noindent {\bf(1-ii)} {\em Exact convex settings:} The convergence statements for the exact variant of (\texttt{ZSOL$^{\bf 1s}_{\rm cnvx}$}) are provided in Corollary~\ref{thm:convex_exact}. In particular, we derive the iteration complexity of
    $\mathcal{O}\left(n^2L_0^2\epsilon^{-2}\right)$. This implies that to obtain an $\epsilon$-solution,
    the number of oracle calls to the solution of the lower-level variational inequality problem is at most $\mathcal{O}\left(n^2L_0^2\epsilon^{-2}\right)$.

    \noindent {\bf (1-iii)} {\em Inexact nonconvex settings:} In the case where the implicit
    function is nonconvex, we develop (\texttt{ZSOL$^{\bf 1s}_{\rm ncvx}$}), \uss{defined in} Algorithm~\ref{algorithm:ZSOL_nonconvex}. We analyze the convergence properties of this
zeroth-order scheme under a constant stepsize and smoothing parameter. In
Theorem \ref{thm:inexact_nonconvex}, to obtain an $\epsilon$-solution (characterized by mean norm-squared of a residual mapping) to the smoothed approximate SMPEC, we derive non-asymptotic convergence rates
for solving the smoothed implicit problem and obtain an overall iteration complexity of $\mathcal{O}\left(n^2L_0^2\tilde{L}_0^2\epsilon^{-1}\right)$ and $\mathcal{O}\left(n^4L_0^4\tilde{L}_0^4\epsilon^{-2}\right)$ for the projections on the set $\Xscr$ and $\Yscr$,
respectively.

\noindent {\bf (1-iv)} {\em Exact nonconvex settings:} In Corollary \ref{cor:exact_nonconvex} we provide the results for the exact variant of (\texttt{ZSOL$^{\bf 1s}_{\rm ncvx}$}). To obtain an $\epsilon$-solution (characterized by mean norm-squared of a residual mapping), we derive the iteration complexity of $\mathcal{O}\left(n^2L_0^2\epsilon^{-1}\right)$ for solving the smoothed approximate SMPEC. The number of oracle calls to the solution of the lower-level variational inequality problem is at most $\mathcal{O}\left(n^4L_0^4\epsilon^{-2}\right)$.

\medskip 

\noindent {\bf (2) Two-stage SMPECs.} We consider the two-stage problem~\eqref{eqn:a_s_prob} in Section~\ref{sec:2s}.

\noindent {\bf (2-i)} {\em Inexact convex settings}: We \uss{present} (\texttt{ZSOL$^{\bf 2s}_{\rm cnvx}$}), \uss{defined in} Algorithm~\ref{algorithm:inexact_zeroth_order_SVIs_2S}, for addressing two-stage SMPECs with a convex implicit objective function. In Theorem~\ref{thm:ZSOL_convex_2s}, for the inexact setting, we derive an overall iteration complexity of
    $\mathcal{O}\left(n^4L_0^2\tilde{L}_0^4\epsilon^{-2}  \right)$ and
    $\mathcal{O}\left(n^4L_0^2\tilde{L}_0^4\epsilon^{-2} \ln\left( n^2L_0\tilde{L}_0^2\epsilon^{-1} \right)\right)$ for the projections on the set $\Xscr$ and $\Yscr$,
    respectively. These \uss{statements} are similar to those obtained in the single-stage model. However, unlike in the single-stage case, the inexact variant of (\texttt{ZSOL$^{\bf 2s}_{\rm cnvx}$}) does not require any new samples in solving the lower-level problem, i.e., in Algorithm~\ref{algorithm:Inexact_lower_level_SA_twostage}, a \uss{parametrized} deterministic variational inequality problem is solved. 

    \noindent {\bf (2-ii)} {\em Exact convex settings}: In Corollary~\ref{cor:exact_nonconvex_2s}, we provide the iteration complexity of
    $\mathcal{O}\left(n^2L_0^2\epsilon^{-2}\right)$, similar to that of the single-stage counterpart. This implies that the number of oracle calls to the solution of the lower-level variational inequality problem is at most $\mathcal{O}\left(n^2L_0^2\epsilon^{-2}\right)$.
    
    \noindent {\bf (2-ii-a)} {\em Accelerated exact convex settings}: We develop  a variance-reduced accelerated zeroth-order scheme called (\texttt{ZSOL$^{\bf 2s}_{\rm cnvx, acc}$}), \uss{formally specified} by Algorithm~\ref{algorithm:acc-inexact_zeroth_order_SVIs}. In Proposition~\ref{prop:acc_convex_exact}, we improve the complexity to $\mathcal{O}(1/\epsilon)$ in terms of upper-level projection steps while the number of lower-level variational inequality problems is no worse than $\mathcal{O}(1/\epsilon^{2+\delta})$ for $\delta > 0$. 

    \noindent {\bf (2-iii)} {\em Inexact nonconvex settings}: In addressing two-stage models with a nonconvex implicit objective function, we develop (\texttt{ZSOL$^{\bf 2s}_{\rm ncnvx}$}), \uss{a variance-reduced} zeroth-order method. This scheme is presented by Algorithm~\ref{algorithm:ZSOL_nonconvex_2s}. In Theorem~\ref{thm:inexact_nonconvex_2s} we obtain non-asymptotic convergence rates
for solving the smoothed implicit problem and derive an overall iteration complexity of $\mathcal{O}\left(n^2L_0^2\tilde{L}_0^2\epsilon^{-1}\right)$ and $\mathcal{O}\left(n^4L_0^4\tilde{L}_0^4\epsilon^{-2}\right)$ for the projections on the set $\Xscr$ and $\Yscr$, respectively. These results are similar to those we obtained for the single-stage counterpart. However, in computing an approximate $\y(\bullet,\omega)$ in the lower-level problem in Algorithm~\ref{algorithm:Inexact_lower_level_SA_twostage}, unlike in the single-stage regime, we solve a deterministic variational inequality problem.

\noindent {\bf (2-iv)} {\em Exact nonconvex settings:} Lastly, in Corollary~\ref{cor:exact_nonconvex_2s}, we consider the exact variant of (\texttt{ZSOL$^{\bf 2s}_{\rm ncnvx}$}). Similar to the single-stage case, to obtain an $\epsilon$-solution (characterized by mean norm-squared of a residual mapping),  we derive the iteration complexity of $\mathcal{O}\left(n^2L_0^2\epsilon^{-1}\right)$ for solving the smoothed approximate SMPEC. The number of oracle calls to the solution of the lower-level variational inequality problem is at most $\mathcal{O}\left(n^4L_0^4\epsilon^{-2}\right)$. 

\smallskip

\noindent {\bf (3)} {\bf Comprehensive numerics.} \usv{In Section~\ref{sec:5},
we provide a comprehensive set of numerics where we provide empirical support
for the scalability and convergence claims for inexact schemes for single and
two-stage SMPECs. Such investigations also suggest the limited scalability of
SAA schemes as well as the ability of the proposed schemes to compute
near-global solutions under convexity of the implicit problems, in contrast with their SAA counterparts. Finally, the benefits of acceleration in terms of accuracy is observed as promised by theoretical claims.} 

\smallskip

To the best of our knowledge, all the above-mentioned rate and
complexity results in addressing both the single-stage and two-stage
SMPECs appear to be novel.}  

\noindent {\bf Notation.} {Throughout, we use the following notation and
definitions. We let $\Xscr^*$ and $f^*$ denote the optimal solution set and the
optimal objective value of a corresponding implicit problem, respectively.} We
define $D_\Xscr   \triangleq \frac{1}{2}\sup_{\x \in
\Xscr}\textrm{dist}^2(\x,\Xscr^*)$. We let $\mathbb{B}$ denote the unit
ball defined as $\mathbb{B}\triangleq \{u \in  \mathbb{R}^n\mid \|u\| \leq 1\}$
and $\mathbb{S}$ denote the surface of the ball $\mathbb{B}$, i.e.,
$\mathbb{S}\triangleq \{v \in \mathbb{R}^n\mid \|v\| = 1\}$. Given a set $\Xscr
\subseteq \mathbb{R}^n$ and a scalar $\eta>0$, we let $\Xscr_\eta$ denote the
expanded set $\Xscr +\eta \mathbb{B}$. Given a function $f:\mathbb{R}^n \to
\mathbb{R}$ and a set $\Xscr \subseteq \mathbb{R}^n$, we write $f \in
C^{0,0}(\Xscr)$ if $f$ is \fyy{Lipschitz} continuous on the set $\Xscr$,
i.e., $|f(\x)-f(\tilde \x)| \leq L_0\|\x-\tilde \x\|$ for all $\x,\tilde \x \in
\Xscr$ and some $L_0>0$.  In the case where $f$ is globally Lipschitz, i.e.,
$\Xscr = \mathbb{R}^n$, we write $f \in C^{0,0}$.  Given a continuously
differentiable function and a set $\Xscr \subseteq \mathbb{R}^n$, we write $f
\in C^{1,1}(\Xscr)$ if $\nabla f$ is \fyy{Lipschitz} continuous on the set
$\Xscr$, i.e., $\|\nabla f(\x)-\nabla f(\tilde \x)\| \leq L_1\|\x-\tilde \x\|$
for all $\x,\tilde \x \in \Xscr$ and some $L_1>0$.  Similarly,  we write $f \in
C^{1,1}$ to denote that $\nabla f$ is globally Lipschitz. {We  denote the
Euclidean projection of a vector $\x$ on a set $\Xscr$ {by 
$\Pi_\Xscr(\x)$}, i.e., $\|\x-\Pi_\Xscr(\x)\| = \min_{\bar{\x} \in
\Xscr}\|\x-\bar{\x}\|$. {Throughout, unless otherwise specified, for the ease
of presentation we use $\mathbb{E}[\bullet]$ to denote the expectation with
respect to all the random variables under discussion. We use conditional
expectations to specifically take expectations with respect to a subgroup of
random variables.}}

\section{Preliminaries}
In this section, {we begin by outlining the key assumptions imposed on \fyy{\eqref{eqn:SMPECepx}} and \fyy{\eqref{eqn:a_s_prob}}
in Section~\ref{sec:2.1}}. {Our treatment and analysis differ based on whether}  the implicit function {$f^{\bf imp} $} is
either convex or nonconvex. In the latter case, the resulting problem reduces
to a nonsmooth nonconvex program with possibly expectation-valued objectives.
In such settings, we provide a brief discussion of stationarity conditions in
Section~\ref{sec:2.2} {while a discussion of locally randomized spherical
smoothing techniques is presented in Section~\ref{sec:2.3}.}

\subsection{Problem definition}\label{sec:2.1}
Throughout this paper, we assume that in the case of \fyy{\eqref{eqn:SMPECepx}}, the set $\Yscr$ is closed and convex in $\Real^m$ and the parametrized map $F(\x,\bullet)$ is {strongly} monotone on $\Yscr$ uniformly in $\x$. An analogous assumption for \fyy{\eqref{eqn:a_s_prob}} requires that $G(\x,\bullet,\omega)$ is strongly monotone on $\Yscr$ for every $\omega \in \Omega$. Since {the} lower-level problem is strongly monotone, the solution map of the lower-level problem is single-valued. Consequently, we may recast \fyy{\eqref{eqn:a_s_prob}} as the following implicit program in $\x$.
\begin{align}\label{prob:mpec_as_imp}\tag{SMPEC$^{\bf imp, 2s}$} 
    \min_{\x \in \Xscr} \ f^{\bf imp}(\x) \triangleq \mathbb{E}[\tilde{f}(\x,\y(\x,\omega),\omega)], 
\end{align}
where ${f^{\bf imp}}(\bullet)$ is assumed to be Lipschitz continuous on a closed and convex set $\Xscr$. {Note that such a property on $f^{\bf imp}$ holds if $f^{\bf imp}$ is locally Lipschitz on a compact set}. In the case of (SMPEC$^{\bf 1s})$, the \fy{implicit} problem reduces to   
\begin{align}\label{prob:mpec_exp_imp}\tag{SMPEC$^{\bf imp, 1s}$} 
    \min_{\x \in \Xscr} \ f^{\bf imp}(\x)  \triangleq \mathbb{E}[\tilde{f}(\x,\y(\x),\omega)], 
\end{align}
where $\y(\x)$ represents the solution to a variational inequality problem VI$(\Yscr, F(\x,\bullet))$. {Note that this problem subsumes \eqref{eqn:SMPECepx} by suppressing the expectation in the upper-level.} We now formalize the assumptions {on the problems of interest}.


\begin{assumption}[{\bf Properties of $f, F, \Xscr, \Yscr$}]\z 
\label{ass-1}\em 
\noindent {(a)} Consider the problem {\eqref{prob:mpec_exp_imp}}.

\noindent (a.i) {$\tilde{f}(\bullet,\y(\bullet),\omega)$ is $L_0(\omega)$-Lipschitz continuous on $\Xscr+\eta_0 \mathbb{B}$ for every $\omega \in \Omega$ and for some $\eta_0 > 0$, where $L_0\triangleq \sqrt{\mathbb{E}[L_0^2(\omega)]}<\infty$.  Also, $\fyy{\tilde f}(\x,\bullet,\omega)$ is $\tilde{L}_0(\omega)$-Lipschitz for all $\x \in \Xscr+\eta_0 \mathbb{B}$ for every $\omega \in \Omega$ and for some $\eta_0 > 0$, where $\tilde{L}_0\triangleq \sqrt{\mathbb{E}[\tilde{L}_0^2(\omega)]}<\infty$.}
    
\noindent (a.ii) $\Xscr \subseteq \Real^n$ and $\Yscr \subseteq \Real^m$ are \fy{nonempty,} closed, \fyyy{bounded}, and convex sets.

\noindent (a.iii) $F(\x,\bullet)$ is a $\mu_F$-strongly monotone and $L_F$-Lipschitz continuous map on $\Yscr$ uniformly in $\x \in \Xscr$.
\\  

\noindent \fy{(b)} Consider the problem {\eqref{prob:mpec_as_imp}}.

\noindent (b.i) {$\tilde{f}(\bullet,\y(\bullet,\omega),\omega)$ is $L_0(\omega)$-Lipschitz continuous on $\Xscr+\eta_0 \mathbb{B}$ for every $\omega \in \Omega$ and for some $\eta_0 > 0$, where $L_0\triangleq \sqrt{\mathbb{E}[L_0^2(\omega)]}<\infty$. Also, $\fyy{\tilde f}(\x,\bullet,\omega)$ is $\tilde{L}_0(\omega)$-Lipschitz for all $\x \in \Xscr+\eta_0 \mathbb{B}$ for every $\omega \in \Omega$ and for some $\eta_0 > 0$, where $\tilde{L}_0\triangleq \sqrt{\mathbb{E}[\tilde{L}_0^2(\omega)]}<\infty$.}
    
\noindent (b.ii) $\Xscr \subseteq \Real^n$ and $\Yscr \subseteq \Real^m$ are \fyyy{nonempty,} closed, \fyyy{bounded}, and convex sets.

\noindent (b.iii) $G(\x,\bullet,\omega)$ is a {$\mu_F(\omega)$-strongly monotone and $L_F(\omega)$-Lipschitz continuous map on $\Yscr$ uniformly in $\x \in \Xscr$ for every $\omega \in \Omega$, and there exist scalars $\mu_F,L_F \in (0,+\infty)$ such that $\inf_{\omega \in \Omega}\mu_F(\omega) \geq  \mu_F$ and $\sup_{\omega \in \Omega}L_F(\omega) \leq L_F$.}
\qed  

\end{assumption}
{\begin{remark} \em
        As outlined in Assumption~\ref{ass-1}, throughout we assume that the {mapping in the lower-level parametrized by $\x$} is strongly monotone { on $\Yscr$ uniformly in $\x$}. The assumption is inherent to most implicit methods for resolving MPECs and our
proposed schemes inherit that characteristic. When considering sample-average
approximation schemes in the context of SMPECs, we observe that similar assumptions have been adopted in a subset of prior work including~\cite{shapiro2006,xuye11,lin09solving}. \fyy{In fact, lower-level uniqueness is by no means a rarely seen phenomenon. It is inherent to a host of problems in practice~\cite{sherali83stackelberg,murphy82mathematical,su2007analysis,demiguel09stochastic} and there is a significant body of research on implicit methods for solving MPECs in a range of settings~\cite{implicit1,implicit2,implicit3,implicit4,implicit5,implicit6,implicit7}.} In the current work, we intend to assess the fundamental gaps on the performance under a requirement on lower-level uniqueness but we allow for far more generality in the lower-level problem (e.g., in terms of accommodating expectation-valued maps) and either convexity or nonconvexity in terms of the upper-level problem.
\end{remark}}
We observe that the requirement that $f$ is Lipschitz continuous on $\Xscr + \eta_0 \mathbb{B}$ (rather than $\Xscr$) is a consequence of employing a smoothed approximation of $f$ in our algorithm development. A natural question is whether the Lipschitz continuity of the objective $f$ over $\Xscr$ in the implicit problem  follows under reasonable conditions. The next result {addresses} precisely such a concern. 

\begin{proposition}\em  Consider the problem {\eqref{eqn:SMPECepx}}. {Let} Assumption~\ref{ass-1} {(a.ii, a.iii)} hold. Suppose $\tilde{f}(\bullet,\bullet,\omega)$ is continuously differentiable on $\Cscr \times \Real^m$ where $\Cscr$ is an open set containing $\Xscr$. Then the function {$f^{\bf imp}$}, defined as ${f^{{\bf imp}}(\x)} \triangleq \mathbb{E}[\tilde{f}(\x,{\y(\x)}, \omega)]$, is Lipschitz and directionally differentiable on $\Xscr$.
\end{proposition}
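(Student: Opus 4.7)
The plan is a three-step argument: first establish Lipschitzness of the implicit solution map $\y(\cdot)$ on $\Xscr$, then lift this to Lipschitzness of $f^{\bf imp}$ using compactness and continuous differentiability of $\tilde f$, and finally obtain directional differentiability via classical sensitivity analysis for strongly monotone parametrized VIs coupled with a chain-rule argument under the expectation.

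\textbf{Step 1 (Lipschitzness of $\y(\cdot)$).} Given $\x_1,\x_2\in\Xscr$, I would write the defining VIs: $(\tilde\y-\y(\x_i))^T F(\x_i,\y(\x_i))\geq 0$ for every $\tilde\y\in\Yscr$ and $i=1,2$. Using $\tilde\y=\y(\x_2)$ in the first and $\tilde\y=\y(\x_1)$ in the second, adding, inserting $\pm F(\x_1,\y(\x_2))$, and invoking $\mu_F$-strong monotonicity of $F(\x_1,\bullet)$ from Assumption~\ref{ass-1}(a.iii), I obtain
\[
\mu_F\,\|\y(\x_1)-\y(\x_2)\|^2\ \leq\ \bigl(\y(\x_1)-\y(\x_2)\bigr)^T\bigl(F(\x_2,\y(\x_2))-F(\x_1,\y(\x_2))\bigr).
\]
Continuity of $F$ in $\x$ on the compact set $\Xscr$ (which I would invoke as an auxiliary regularity requirement on $F$, consistent with the smoothness setting where $G(\bullet,\y,\omega)$ is differentiable) delivers a modulus $L_F^{\x}$ so that Cauchy--Schwarz gives $\|\y(\x_1)-\y(\x_2)\|\leq (L_F^{\x}/\mu_F)\|\x_1-\x_2\|$.

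\textbf{Step 2 (Lipschitzness of $f^{\bf imp}$).} Since $\Xscr$ and $\Yscr$ are compact (Assumption~\ref{ass-1}(a.ii)) and $\tilde f(\bullet,\bullet,\omega)$ is continuously differentiable on the open set $\Cscr\times\Real^m$, the partial gradients $\nabla_{\x}\tilde f$ and $\nabla_{\y}\tilde f$ are bounded on $\Xscr\times\Yscr$ by integrable random moduli $L_{\x}(\omega),L_{\y}(\omega)$ (these exist and are integrable in view of Assumption~\ref{ass-1}(a.i)). A split
\[
|\tilde f(\x_1,\y(\x_1),\omega)-\tilde f(\x_2,\y(\x_2),\omega)|\ \leq\ L_{\x}(\omega)\|\x_1-\x_2\|+L_{\y}(\omega)\|\y(\x_1)-\y(\x_2)\|
\]
combined with Step~1 and the expectation gives the Lipschitz property of $f^{\bf imp}$ on $\Xscr$.

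\textbf{Step 3 (Directional differentiability).} I would invoke the classical sensitivity result for strongly monotone parametrized VIs (e.g., Theorem~5.2 in Facchinei--Pang, Ch.~5, or the results of Robinson/Kyparisis): under $\mu_F$-strong monotonicity, Lipschitz continuity of $F(\x,\bullet)$, and sufficient regularity of $F$ in $\x$, the solution map $\y(\bullet)$ is Hadamard directionally differentiable on $\Xscr$, with $\y'(\x;d)$ characterized as the unique solution of a linearized VI on the critical cone at $\y(\x)$. By the chain rule for directional derivatives of the composition of a continuously differentiable function with a Lipschitz and directionally differentiable map, each $\omega$-slice $\x\mapsto \tilde f(\x,\y(\x),\omega)$ is directionally differentiable with
\[
\bigl(\tilde f(\bullet,\y(\bullet),\omega)\bigr)'(\x;d)\ =\ \nabla_{\x}\tilde f(\x,\y(\x),\omega)^T d+\nabla_{\y}\tilde f(\x,\y(\x),\omega)^T \y'(\x;d).
\]
The difference quotients are uniformly bounded by the Lipschitz modulus from Step~2 (which is integrable), so dominated convergence allows passing to the limit $t\downarrow 0$ under the expectation, yielding directional differentiability of $f^{\bf imp}$.

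The main obstacle is Step~3: ensuring the joint regularity of $F$ in $(\x,\y)$ needed to apply VI sensitivity, and the measurability/integrability bookkeeping required to interchange the $t\downarrow 0$ limit with $\mathbb{E}[\cdot]$. Steps~1 and~2 are essentially bookkeeping once the Lipschitzness of $F$ in $\x$ on the compact set $\Xscr$ is granted.
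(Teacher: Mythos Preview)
Your argument is essentially correct and reconstructs from first principles what the paper obtains in one line by citing an existing result: the paper's proof is simply ``This result follows from invoking~\cite[Cor.~4.2]{patriksson99stochastic} together with the compactness of $\Xscr$.'' So the two approaches differ only in that the paper delegates the sensitivity analysis of the lower-level VI and the resulting Lipschitz and directional differentiability of the implicit objective to Patriksson's corollary, while you spell out the three ingredients (Lipschitzness of $\y(\cdot)$ via strong monotonicity, lifting to $f^{\bf imp}$ via compactness and $C^1$ of $\tilde f$, and directional differentiability via VI sensitivity plus dominated convergence) by hand.

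One honest gap you already flag deserves emphasis: neither Assumption~\ref{ass-1}(a.ii,\,a.iii) nor the $C^1$ hypothesis on $\tilde f$ gives you the regularity of $F$ in $\x$ that both Step~1 (for a Lipschitz modulus $L_F^{\x}$) and Step~3 (for the linearized VI governing $\y'(\x;d)$) require. The cited Patriksson result carries its own hypotheses on the joint smoothness of $F$, and the paper is implicitly importing them; your direct route makes this dependence explicit but cannot avoid it. Apart from this, your Step~3 bookkeeping (Hadamard directional differentiability of $\y(\cdot)$, chain rule for $C^1\circ$\,directionally differentiable, and dominated convergence via the Lipschitz bound from Step~2) is the standard and correct way to push the directional derivative through the expectation.
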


\begin{proof} This result follows from invoking {~\cite[Cor.~4.2]{patriksson99stochastic}} together with the compactness of $\Xscr$. 
\end{proof}

\begin{proposition}\em  Consider the problem {\eqref{eqn:a_s_prob}}. {Let} Assumption~\ref{ass-1} {(b.ii, b.iii)} hold. Suppose $\tilde{f}(\bullet,\bullet,\omega)$ is continuously differentiable on $\Cscr \times \Real^m$ where $\Cscr$ is an open set containing $\Xscr$. Then the function {$f^{\bf imp}$}, defined as ${f^{{\bf imp}}(\x)} \triangleq \mathbb{E}[\tilde{f}(\x,\y(\x,\omega), \omega)]$, is Lipschitz and directionally differentiable on $\Xscr$.
\end{proposition}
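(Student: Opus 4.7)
The strategy is to reduce the two-stage statement to a pointwise-in-$\omega$ application of the preceding (single-stage) proposition and then exchange the resulting limit with the expectation. To begin, I would fix an arbitrary $\omega \in \Omega$ and note that by Assumption~\ref{ass-1}(b.iii), the map $G(\x,\bullet,\omega)$ is $\mu_F(\omega)$-strongly monotone and $L_F(\omega)$-Lipschitz on $\Yscr$ uniformly in $\x \in \Xscr$, so the lower-level VI$(\Yscr,G(\x,\bullet,\omega))$ has a unique solution $\y(\x,\omega)$ for every $\x \in \Xscr$. For this fixed $\omega$, the scenario-wise implicit function $f_\omega(\x)\triangleq \tilde{f}(\x,\y(\x,\omega),\omega)$ is precisely the implicit objective of a deterministic MPEC of the type covered by the previous proposition (the ``randomness'' collapses to a point mass at $\omega$). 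Invoking~\cite[Cor.~4.2]{patriksson99stochastic} in this scenario-wise form then gives that $f_\omega$ is Lipschitz continuous on $\Xscr$ with modulus $L_0(\omega)$ and directionally differentiable on $\Xscr$ for every $\omega$.

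Next, I would establish Lipschitz continuity of $f^{\bf imp}$ by integrating the scenario-wise bound. For any $\x_1,\x_2 \in \Xscr$, Jensen's inequality yields $|f^{\bf imp}(\x_1)-f^{\bf imp}(\x_2)| \leq \mathbb{E}[|f_\omega(\x_1)-f_\omega(\x_2)|] \leq \mathbb{E}[L_0(\omega)]\|\x_1-\x_2\|$, and a further application of Cauchy--Schwarz gives $\mathbb{E}[L_0(\omega)] \leq \sqrt{\mathbb{E}[L_0^2(\omega)]} = L_0 < \infty$ by Assumption~\ref{ass-1}(b.i). Hence $f^{\bf imp}$ is $L_0$-Lipschitz on $\Xscr$.

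For directional differentiability at an arbitrary $\x \in \Xscr$ along a direction $d$, I would form the difference quotient and attempt to pass the limit $t \downarrow 0^+$ inside the expectation. The pointwise limit $\lim_{t\downarrow 0^+}\tfrac{f_\omega(\x+td)-f_\omega(\x)}{t} = f'_\omega(\x;d)$ exists for every $\omega$ by the scenario-wise directional differentiability obtained above. The scenario-wise Lipschitz bound furnishes an integrable dominating function, $\bigl|\tfrac{f_\omega(\x+td)-f_\omega(\x)}{t}\bigr| \leq L_0(\omega)\|d\|$, and the Dominated Convergence Theorem then yields $(f^{\bf imp})'(\x;d) = \mathbb{E}[f'_\omega(\x;d)]$, completing the proof.

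The principal obstacle is justifying the interchange of limit and expectation in the last step; everything hinges on having an integrable, $t$-uniform dominating bound for the difference quotients. The scenario-wise Lipschitz constants $L_0(\omega)$ supply exactly such a bound, and the finiteness of $L_0$ in Assumption~\ref{ass-1}(b.i) is what makes the dominated convergence argument go through. A minor technical point is to ensure the scenario-wise Lipschitz property holds on (a neighborhood of) $\Xscr$ so that the difference quotients are controlled for all sufficiently small $t>0$; this is built into Assumption~\ref{ass-1}(b.i) via the enlargement $\Xscr+\eta_0\mathbb{B}$.
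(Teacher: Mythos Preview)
Your scenario-wise reduction via \cite[Cor.~4.2]{patriksson99stochastic} plus dominated convergence is a natural strategy, but as written it relies on a hypothesis the proposition does not give: you invoke Assumption~\ref{ass-1}(b.i) twice --- first to identify the scenario-wise Lipschitz modulus with $L_0(\omega)$, and then to obtain $\mathbb{E}[L_0^2(\omega)]<\infty$ so that $L_0(\omega)\|d\|$ is an integrable dominator --- yet the proposition assumes only (b.ii) and (b.iii). Corollary~4.2 applied at a fixed $\omega$ yields only \emph{some} Lipschitz constant, depending a priori on the derivatives of $\tilde f(\cdot,\cdot,\omega)$ over a compact set and on the Lipschitz parameter of $\y(\cdot,\omega)$; nothing in (b.ii)--(b.iii) together with the $C^1$ hypothesis on $\tilde f(\cdot,\cdot,\omega)$ alone ensures that these constants form an integrable family in $\omega$. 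Without that integrability, the DCT step is unjustified and the argument for directional differentiability of $f^{\bf imp}$ does not close.

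The paper sidesteps this entirely by citing \cite[Cor.~4.3]{patriksson99stochastic} directly; that result is the expected-value counterpart of Cor.~4.2 and packages the requisite measurability and integrability conditions so that the limit--expectation interchange is handled internally. If you wish to salvage your route, you must either add (b.i) as a hypothesis (which changes the statement) or argue, from (b.ii), (b.iii), and the $C^1$ assumption alone, that the scenario-wise Lipschitz constants are integrable --- which is essentially what Cor.~4.3 establishes.
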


\begin{proof} This result follows from invoking ~\cite[Cor.~4.3]{patriksson99stochastic} together with the compactness of $\Xscr$. 
\end{proof}
In a subset of regimes, {$f^{{\bf imp}}$} is captured by the next assumption.

\begin{assumption}[{\bf Convexity of $f$ in implicit problem}] \label{ass-2} \em {Consider any of the implicit problems \fy{\eqref{prob:mpec_as_imp} or \eqref{prob:mpec_exp_imp}}. {Then the implicit function $f^{{\bf imp}}$} is convex on $\Xscr$.} 
\end{assumption}

We note that  there has been extensive study of conditions under which the implicit function {$f^{{\bf imp}}$}  is indeed convex {(for example, see~\cite{patriksson99stochastic,xu06implicit,demiguel09stochastic})}.
{In fact,  the convexity of the implicit function can be proven in MPECs arising in a host of  application-driven regime~\cite{sherali83stackelberg,sherali84multiple,su07analysis,xu06implicit,demiguel09stochastic}, there appear to be no explicit conditions to the best of our knowledge.}

\subsection{Stationarity conditions}\label{sec:2.2}
While {the implicit function $f^{\bf imp}$} can be shown to be convex in some {specific settings}, 
the function {$f^{\bf imp}$} is Lipschitz continuous on $\Xscr$ in more general settings. Consequently, the problem can be compactly stated as 
\begin{align} \label{Opt} 
    \min_{\x \in \Xscr} \ h(\x)\triangleq f^{{\bf imp}}(\x) \fy{.}
\end{align}
We observe that $h$ is a nonsmooth and
possibly nonconvex function on $\Xscr$. In the remainder of
this subsection, we recap some of the concepts of Clarke's nonsmooth calculus
that will facilitate the development of stationarity conditions. We begin by defining the directional derivative, a key object necessary in addressing nonsmooth and possibly nonconvex optimization problems. 
\begin{definition}[cf. \cite{clarke98}] \em 
    The directional derivative of $h$ at $\x$ in a direction $v$ is defined as 
    \begin{align}
        h^{\circ}(\x,v) \triangleq  \limsup_{\y \to \x, t \downarrow 0} \left(\frac{h(\y+tv)-h(\y)}{t}\right).
\end{align}
The Clarke generalized gradient at $\x$ can then be defined as 
\begin{align}
    \partial h(\x) \triangleq  \left\{ \fyyy{\zeta} \in \Real^n \mid h^{\circ}(\x,v) \geq \langle \fyyy{\zeta}, v\rangle, \quad \forall v \in \Real^n\right\}.
\end{align}
In other words, $h^{\circ}(\x,v) = \displaystyle \sup_{g \in \partial h(\x)} \langle g,v\rangle.$\qed
\end{definition}

\medskip

 If $h$ is continuously differentiable at $\x$, we have that the Clarke generalized gradient reduces to the standard gradient, i.e. $\partial h(\x) = \nabla_{\x} h(\x).$ If $\x$ is a minimal point of $h$, then we have that $0 \in \partial h(\x)$. For purposes of completeness, we recap some properties of $\partial h(\x)$. Recall that if $h$ is locally Lipschitz on an open set $\Cscr$ containing $\Xscr$, then $h$ is differentiable almost everywhere on $\Cscr$ by Rademacher's theorem~\cite{clarke98}. Suppose $\Cscr_h$ denotes the set of points where $h$ is not differentiable. We may then recall some properties of Clarke generalized gradients.   
 \begin{proposition}[Properties of Clarke generalized gradients~\cite{clarke98}] \em
Suppose $h$ is Lipschitz continuous on $\Real^n$. Then the following hold.
\begin{enumerate}
\item[(i)] $\partial h(\x)$ is a nonempty, convex, and compact  set and $\|g \| \leq L$ for any $g \in \partial h(\x)$. 
\item[(ii)] $h$ is differentiable almost everywhere. 
\item[(iii)] $\partial h(\x)$ is an upper semicontinuous map defined as 
    $$\partial h(\x) = \mbox{conv}\left\{g \mid g = \lim_{k \to \infty} \nabla_{\x} h(\x_k), \Cscr_h \not \owns \x_k \to \x\right\}.$$
\end{enumerate}
\end{proposition}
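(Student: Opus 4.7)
The plan is to establish the three parts in order, treating them as classical facts about the Clarke generalized gradient whose proofs hinge on the sublinearity of the directional derivative $h^{\circ}(x,\cdot)$ and Rademacher's theorem. Throughout, the Lipschitz constant $L$ of $h$ on $\mathbb{R}^n$ is the main quantitative handle.

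For part (i), I would first show that the map $v \mapsto h^{\circ}(x,v)$ is sublinear, i.e., positively homogeneous and subadditive. Positive homogeneity follows by rescaling $t \to t/\lambda$ inside the limsup; subadditivity follows by splitting $h(y+t(u+v))-h(y) = [h(y+tu+tv)-h(y+tv)] + [h(y+tv)-h(y)]$ and taking limsups. Next, Lipschitz continuity immediately gives $|h^{\circ}(x,v)| \leq L\|v\|$, so $h^{\circ}(x,\cdot)$ is a finite sublinear functional on $\mathbb{R}^n$. By the Hahn--Banach theorem applied to this sublinear majorant, there exists at least one linear functional $\zeta$ with $\langle \zeta,v\rangle \leq h^{\circ}(x,v)$ for every $v$, so $\partial h(x)$ is nonempty. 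Convexity and closedness follow immediately from the representation of $\partial h(x)$ as an intersection of closed half-spaces $\{\zeta : \langle \zeta,v\rangle \leq h^{\circ}(x,v)\}$ indexed by $v$. The bound $\|g\| \leq L$ follows by plugging $v = g/\|g\|$ into $\langle g,v\rangle \leq h^{\circ}(x,v) \leq L\|v\|$, and compactness then follows from closedness plus boundedness in finite dimension.

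For part (ii), I would simply invoke Rademacher's theorem as recalled in the excerpt: a locally Lipschitz function on an open subset of $\mathbb{R}^n$ is differentiable almost everywhere with respect to Lebesgue measure. The argument typically proceeds by reducing to the one-dimensional case via Fubini and using absolute continuity of Lipschitz functions on lines.

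For part (iii), I would split the work into upper semicontinuity and the convex-hull representation. Upper semicontinuity of $\partial h$ as a set-valued map can be obtained by showing that $h^{\circ}(x,v)$ is itself upper semicontinuous in $x$ for each fixed $v$ (directly from its limsup definition and the Lipschitz property), and then arguing that if $x_k \to x$, $\zeta_k \in \partial h(x_k)$, and $\zeta_k \to \zeta$, the inequality $\langle \zeta_k,v\rangle \leq h^{\circ}(x_k,v)$ passes to the limit to give $\zeta \in \partial h(x)$. For the representation, one inclusion ($\supseteq$) is straightforward: any limit $g = \lim_k \nabla h(x_k)$ with $x_k \notin \mathcal{C}_h$ satisfies $\langle g,v\rangle = \lim_k \langle \nabla h(x_k),v\rangle \leq h^{\circ}(x,v)$, and the convex hull of such points remains inside $\partial h(x)$ by convexity. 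The reverse inclusion is the main obstacle: one needs to show that this convex hull $K$ coincides with $\partial h(x)$. The standard route is to argue by contradiction, supposing some $\zeta^* \in \partial h(x) \setminus K$, then separating $\zeta^*$ from the closed convex set $K$ by a hyperplane with direction $v^*$, and finally exhibiting a sequence of differentiability points $x_k \to x$ along which $\nabla h(x_k)^\top v^* \to h^{\circ}(x,v^*)$ (using the limsup definition together with Rademacher-based density of differentiability points), contradicting the separation. The delicate piece is the density argument ensuring that the approximating sequence can be taken among points of differentiability, which is precisely where Rademacher's theorem re-enters.
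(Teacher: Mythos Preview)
Your proposal is correct and follows the standard arguments from Clarke's text. Note, however, that the paper does not actually prove this proposition: it is stated as a recall of classical facts with a citation to~\cite{clarke98}, and no proof appears in the paper itself. Your sketch is precisely the route taken in that reference, so there is nothing to compare against here.
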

We may also define the \fyyy{$\delta$}-generalized gradient~\cite{goldstein77} as
\begin{align}
    \partial_\fyyy{\delta} h(\x) \triangleq \mbox{conv}\left\{ \fyyy{\zeta}: \fyyy{\zeta} \in \partial h(\y), \|\x-\y\| \leq \fyyy{\delta}\right\}.
\end{align}
Under the assumption that $h$ is globally bounded from below and Lipschitz
continuous on $\Xscr$, our interest \fy{in the nonconvex regimes} lies in developing techniques for computing
an {\em approximate} stationary point. For instance, when $h$ is $L$-smooth,
then computing an approximate stationary point in unconstrained regimes such
that $\|\nabla_{\x} h(\x)\| \leq \epsilon$ requires at most
$\mathcal{O}(1/\epsilon^2)$ gradient steps. Much of the prior work in the
computation of stationary points of nonconvex and nonsmooth functions is either
asymptotic~\cite{burke05robust,chen12smoothing} or relies on some structure~\fyyy{\cite{beck17fom,xu2019stochastic,liu2019successive}} where the nonconvex part
is smooth while the convex part may be {closed and proper}. However, the
question of computing approximate stationary points for functions that are both
nonconvex and nonsmooth has been less studied.  

\subsection{Properties of spherical \fy{smoothing} of $f$}\label{sec:2.3}

We consider an iterative smoothing approach in this paper where a smoothed
approximation of $h$ is minimized and the smoothing parameter is progressively
reduced. This avenue has a long history, beginning with the efforts by
Steklov~\cite{steklov1} leading to significant efforts in both convex~\cite{DeFarias08,Farzad1,Duchi12} and
nonconvex~\cite{nesterov17} regimes. In this paper, we consider the
following smoothing of $h$, given by $h_{\eta}$ where \begin{align} 
    h_{\eta}(\x) \triangleq  \mathbb{E}_{u \in \mathbb{B}}[h(\x+\eta u)], \label{def-smooth}
\end{align} where $u$ is a random vector in the unit ball $\mathbb{B}$, defined as $\mathbb{B}\triangleq \{u \in  \mathbb{R}^n\mid \|u\| \leq 1\}$.  Throughout, we let $\mathbb{S}$ denote the surface of the ball $\mathbb{B}$, i.e., $\mathbb{S}\triangleq \{v \in \mathbb{R}^n\mid \|v\| = 1\}$. We also let $\eta\mathbb{B}$ and $\eta\mathbb{S}$ denote the ball with radius $\eta$ and its surface, respectively. Recall that if $h$ is locally Lipschitz over a compact set $\Xscr$, it is globally Lipschitz on $\Xscr$. We may derive the following properties on $h_{\eta}$.  

\begin{lemma}[{\bf Properties of spherical smoothing\footnote{We note that while spherical \fy{smoothing} have apparently been studied in~\cite{nemirovskij1983problem}, we did not have access to this text. Part (i) of our {lemma} is inspired by Flaxman et al.~\cite{flaxman2005online} while other parts either  follow in  a fashion similar to Gaussian smoothing~\cite{nesterov17} or are directly proven.}}] \label{lemma:props_local_smoothing}\em Suppose $h:\mathbb{R}^n \to \mathbb{R}$ is {a continuous function and $\eta>0$ is a given scalar}.  Let $h_{\eta}$ be defined as \eqref{def-smooth}. Then the following hold.

    \noindent (i) The smoothed function $h_{\eta}$ is continuously differentiable over $\Xscr$. In particular,  for any $\x \in \Xscr$, we have that 
    \begin{align}
        \nabla_{\x} h_{\eta}(\x) = \left(\tfrac{n}{\eta}\right) \mathbb{E}_{v \in \eta \mathbb{S}} \left[h(\x+v) \tfrac{v}{\|v\|}\right]. 
    \end{align}
    {Suppose $h \in C^{0,0}(\Xscr_{\eta})$ with parameter $L_0$.} For any $\x, \y \in \Xscr$, we have that (ii) -- (iv) hold. 

\begin{enumerate}
    \item[ (ii)] $| h_{\eta}(\x)-h_{\eta}(\y) | \leq L_0 \|\x-\y\|.$ 

    \item[ (iii)] $| h_{\eta}(\x) - h(\x)| \leq L_0 \eta.$ 

    \item[ (iv)] $\| \nabla_{\x} h_{\eta}(\x) -\nabla_{\x} h_{\eta}(\us{\y})\| \leq 
\tfrac{L_0n}{\eta}\|\x-\y\|.$ 
    \item[(v)] If $h$ is convex {and $h \in C^{0,0}(\Xscr_{\eta})$ with parameter $L_0$}, then $h_{\eta}$ is convex and satisfies the following for any $\x \in \Xscr$. 
        \begin{align}
                h(\x) \leq h_{\eta}(\x) \leq h(\x) + \eta L_0.
            \end{align} 
    \item[(vi)] If $h$ is convex {and $h \in C^{0,0}(\Xscr_{\eta})$ with parameter $L_0$}, then $\nabla_x h_{\eta}(\x) \in \partial_{\fyyy{\delta}} h(\x)$ where {$\fyyy{\delta} \triangleq \eta L_0$}.
        

    \item[(vii)] If {$h \in C^{1,1}(\Xscr_{\eta})$} with constant {$L_1$}, then $\|\nabla_x h_{\eta}(\x)-\nabla_x h(\x) \| \leq {\eta L_1 n}.$ 
 
    \item [(viii)] {Suppose $h \in C^{0,0}(\Xscr_{\eta})$ with parameter $L_0$. Let us define  for $v \in \eta\mathbb{S}$
\begin{align*}
g_{\eta}(\x,v) \triangleq \left(\tfrac{n}{\eta}\right) \tfrac{(h(\x+v) - h(\x))v}{\|v\|}.
\end{align*}
Then, for} any $\x \in \Xscr$, {we have that} ${\mathbb{E}_{v \in \eta \mathbb{S}}}[\|{g_\eta}(\x,v)\|^2] \leq L_0^2 n^2$. 
\end{enumerate}
\end{lemma}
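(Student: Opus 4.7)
The plan is to exploit two equivalent representations of $h_\eta$, namely the expectation form $h_\eta(\x) = \mathbb{E}_{u \in \mathbb{B}}[h(\x+\eta u)]$ and, after a change of variables, the volume-integral form $h_\eta(\x) = \tfrac{1}{\mathrm{vol}(\eta\mathbb{B})} \int_{\x + \eta\mathbb{B}} h(w)\,dw$, with each representation tailored to different parts. The centerpiece of the proof is (i), from which (iv) and (viii) follow immediately; the estimates (ii), (iii), (v), and (vii) follow from Jensen-type averaging arguments applied to the first representation; and (vi) follows from combining Rademacher's theorem with the expectation formula for $\nabla h_\eta$.

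For (i), I would differentiate the volume-integral form under the integral sign and invoke the divergence theorem to convert $\nabla_{\x} \int_{\x+\eta\mathbb{B}} h(w)\,dw$ into the surface integral $\int_{\x+\eta\mathbb{S}} h(w) \tfrac{w-\x}{\eta}\,ds(w)$. Using the standard ratio $\mathrm{surf}(\eta\mathbb{S})/\mathrm{vol}(\eta\mathbb{B}) = n/\eta$ and rewriting the surface integral as an expectation over $v$ uniform on $\eta\mathbb{S}$ then yields the stated gradient formula, and continuity of the resulting surface integral in $\x$ gives $h_\eta \in C^1$. Part (iv) follows by subtracting the formulas at $\x$ and $\y$ and applying $|h(\x+v)-h(\y+v)| \le L_0\|\x-\y\|$ under the expectation; part (viii) follows from the bound $|h(\x+v)-h(\x)| \le L_0\eta$ on $\eta\mathbb{S}$ together with the fact that $v/\|v\|$ is a unit vector, yielding $\|g_\eta(\x,v)\|^2 \le (n/\eta)^2 L_0^2 \eta^2 = L_0^2 n^2$.

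For (ii) and (iii), Jensen's inequality on the expectation representation gives $|h_\eta(\x)-h_\eta(\y)| \le \mathbb{E}_u[|h(\x+\eta u)-h(\y+\eta u)|] \le L_0\|\x-\y\|$ and $|h_\eta(\x)-h(\x)| \le \mathbb{E}_u[L_0\eta\|u\|] \le L_0\eta$. For (v), convexity of $h_\eta$ is inherited from averaging convex functions, the upper bound is (iii), and the lower bound follows from Jensen together with $\mathbb{E}_u[u] = \mathbf{0}$ on the symmetric ball. For (vii), under $h \in C^{1,1}(\Xscr_\eta)$, I would interchange $\nabla$ and $\mathbb{E}$ to obtain $\nabla h_\eta(\x) = \mathbb{E}_u[\nabla h(\x+\eta u)]$ and then bound $\|\nabla h_\eta(\x)-\nabla h(\x)\| \le \mathbb{E}_u[L_1\eta\|u\|] \le L_1 \eta$, which lies within the stated $\eta L_1 n$ bound.

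Finally, for (vi), Rademacher's theorem guarantees differentiability of the Lipschitz function $h$ almost everywhere on $\Xscr_\eta$, so $\nabla h_\eta(\x) = \mathbb{E}_u[\nabla h(\x+\eta u)]$ expresses this gradient as an average of $\nabla h(\x+\eta u) \in \partial h(\x+\eta u)$ at points within distance $\eta$ of $\x$; convexity and closedness of $\partial_{\delta} h(\x)$ then place $\nabla h_\eta(\x)$ inside for the prescribed Goldstein radius. The main obstacle I anticipate is the rigorous justification of the divergence theorem step in (i), since $h$ is only assumed continuous (and, under (vi), Lipschitz); this will be handled by mollifying $h$ into a smooth $h_\varepsilon$, applying the divergence theorem to $h_\varepsilon$, and passing to the limit $\varepsilon \to 0$ using dominated convergence with the uniform $L_0$-bound to recover the identity for $h$ itself.
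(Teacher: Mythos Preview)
Your plan for parts (i)--(v) and (viii) matches the paper's proof essentially verbatim: divergence theorem for (i), Jensen-type averaging for (ii), (iii), (v), and the pointwise Lipschitz bound under the sphere expectation for (iv) and (viii).

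Two parts genuinely diverge. For (vii), the paper does \emph{not} differentiate the ball average; instead it rewrites $\nabla h(\x)$ via the identity $\int_{\eta\mathbb{S}} vv^T p_v(v)\,dv = \tfrac{\eta^2}{n}\mathbf{I}$, subtracts this from the sphere formula of (i), and then applies the descent-lemma estimate $|h(\x+v)-h(\x)-\nabla h(\x)^T v|\le L_1\|v\|^2$ to get $n\eta L_1$. Your route through $\nabla h_\eta(\x)=\mathbb{E}_u[\nabla h(\x+\eta u)]$ is cleaner and delivers the sharper bound $L_1\eta$, so it is a strict improvement.

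For (vi), however, there is a mismatch you should be aware of. The paper's $\partial_\delta h(\x)$ in this item is the \emph{$\epsilon$-subdifferential of convex analysis}, i.e.\ $\{g:h(\y)\ge h(\x)+g^T(\y-\x)-\delta\ \forall\y\}$, and its proof is a two-line chain using (v): $h(\y)+\eta L_0\ge h_\eta(\y)\ge h_\eta(\x)+\nabla h_\eta(\x)^T(\y-\x)\ge h(\x)+\nabla h_\eta(\x)^T(\y-\x)$. That is exactly why $\delta=\eta L_0$ appears. Your Rademacher-plus-averaging argument instead yields membership in the \emph{Goldstein} set $\partial_\eta h(\x)$ (radius $\eta$, not $\eta L_0$); this is a correct and interesting statement, but it is not the one the paper proves, and when $L_0<1$ it does not imply the stated inclusion under either reading. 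If you want to recover $\delta=\eta L_0$ exactly, drop Rademacher and run the convexity chain above.
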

\begin{proof} (i) We elaborate on the proof sketch provided in~\cite{flaxman2005online}. By definition, we have that 
    \begin{align*}
        h_{\eta}(\x) =   \mathbb{E}_{u \in \eta \mathbb{B}}[h(\x+u)] =\int_{\eta \mathbb{B}} h(\x+u) p(u) du.
    \end{align*}
Let $p(u)$ denote the probability density function of $u$. Since $u$ is uniformly distributed in the ball $\eta \mathbb{B}$, we have that $p(u) = \tfrac{1}{\mbox{Vol}(\eta \mathbb{B})}$ for any $u \in \eta \mathbb{B}$. Consequently, 
\begin{align*}
h_{\eta}(\x) =    \int_{\eta \mathbb{B}} h(\x+u) p(u) du = \frac{\int_{\eta \mathbb{B}} h(\x+u)  du}{\mbox{Vol$_n$}(\eta \mathbb{B})}.    
\end{align*}
We may then compute the derivative $\nabla_{\x} h_{\eta}(\x)$ by leveraging Stoke's theorem and by defining $\tilde{p}(v) = 
\tfrac{1}{\mbox{Vol$_{n-1}$}(\eta \mathbb{S})}$ for all $v$.
\begin{align*}
    \nabla_{\x} h_{\eta}(\x)&  =   \nabla_{\x} \left[ \frac{\int_{\eta \mathbb{B}} h(\x+u)  du}{\mbox{Vol$_{n}$}(\eta \mathbb{B})}\right] 
     \overset{\tiny \mbox{Stoke's theorem}}{=} 
\left[ \frac{\int_{\eta \mathbb{S}} h(\x+v) \tfrac{v}{\|v\|} dv}{\mbox{Vol$_{n}$}(\eta \mathbb{B})}\right] 
     = \left[ \frac{\int_{\eta \mathbb{S}} h(\x+v) \tfrac{v}{\|v\|} dv}{\mbox{Vol$_{n}$}(\eta \mathbb{B})}\right] \frac{\mbox{Vol$_{n-1}$}(\eta \mathbb{S})}{\mbox{Vol$_{n-1}$}(\eta \mathbb{S})} \\
    & = \left[ \frac{\int_{\eta \mathbb{S}} h(\x+v) \tfrac{v}{\|v\|} dv}{\mbox{Vol$_{n-1}$}(\eta \mathbb{S})}\right] \frac{\mbox{Vol$_{n-1}$}(\eta \mathbb{S})}{\mbox{Vol$_{n}$}(\eta \mathbb{B})} 
     =  \left[   \int_{\eta \mathbb{S}}h(\x+v) \tfrac{v}{\|v\|} \tilde{p}(v) dv\right] \frac{n}{\eta} 
     =  \frac{n}{\eta} \mathbb{E}_{ v \in \eta\mathbb{S}} \left[ h(\x+v) \tfrac{v}{\|v\|} \right]. 
        \end{align*}

    \noindent (ii) {We have
    \begin{align*}
        | h_{\eta}(\x)-h_{\eta}(\y) | &= \left| \mathbb{E}_{u \in \mathbb{B}}[h(\x+\eta u)] -  \mathbb{E}_{u \in \mathbb{B}}[h(\y+\eta u)]\right| \overset{\tiny \mbox{Jensen's ineq.}}{\leq} \mathbb{E}_{u \in \mathbb{B}}[|h(\x+\eta u) -h(\y+\eta u) |] \\
                                      &\overset{\tiny h \in C^{0,0}(\fy{\Xscr_\eta})}{\leq} \mathbb{E}_{u \in \mathbb{B}}[L_0 \|\x-\y\|] 
        = L_0 \|\x-\y\|.
    \end{align*}

}
    \noindent (iii) Next, we show that $|h_{\eta}(\x)-h(\x)|$ can be bounded in terms of $\eta$ and $L_0$. 
    \begin{align*}
        | h_{\eta}(\x) - h(\x)| & = \left| \int_{\eta \mathbb{B}} (h(\x+u) - h(\x))p(u) du \right| \\
                        & \leq \int_{\eta \mathbb{B}} \left| (h(\x+u) - h(\x))\right| p(u) du  \\
                        & \leq L_0 \int_{\eta \mathbb{B}} \|u\|  p(u) du  
                         \leq L_0 \eta \int_{\eta \mathbb{B}} p(u) du = L_0\eta. 
    \end{align*}

    \noindent (iv) 
{Note that we have $\Xscr+\eta\mathbb{S} \subseteq \Xscr+\eta\mathbb{B}$. Thus,  from the definition of $\Xscr_\eta$ and $h \in C^{0,0}(\Xscr_\eta)$, we have $h \in C^{0,0}(\Xscr+\eta\mathbb{S})$.  As such, we have}
    \begin{align*}
         \left\| \nabla_{\x} h_{\eta}(\x) - \nabla_{\x} h_{\eta}(\y)\right \| 
        & =  \left\| \tfrac{n}{\eta} \mathbb{E}_{ v \in \eta\mathbb{S}} \left[ h(\x+v) \tfrac{v}{\|v\|} \right] - \tfrac{n}{\eta} \mathbb{E}_{ v \in \mathbb{S}} \left[ h(\y+v)\tfrac{v}{\|v\|} \right] \right\| \\
        & \leq \tfrac{n}{\eta}  \mathbb{E}_{ v \in \eta\mathbb{S}} \left[ \left\|(h(\x+v) - h(\y+v)) \tfrac{v}{\|v\|}  \right\|\right]
        \\
        & \leq \tfrac{L_0n}{\eta} \|\x-\y\|  \mathbb{E}_{ v \in\eta \mathbb{S}} \left[\tfrac{\|v\|}{\| v \|}\right] 
         = \tfrac{L_0n}{\eta}\|\x-\y\| .  
    \end{align*}
  
    \noindent   {  (v)  First, note that from $h \in C^{0,0}(\Xscr_{\eta})$,  we have that $h \in C^{0,0}(\mbox{int}(\Xscr_\eta))$. Noting that $\mbox{int}(\Xscr_\eta)$ is an open set, from part (b) of Theorem 3.61 in~\cite{beck17fom}, we have that $\|\tilde g\|\leq L_0$ for all $\x \in \mbox{int}(\Xscr_\eta)$ and $\tilde g \in \partial h(\x)$. The desired statements then follow from part (a) and part (b) of Lemma 2~\cite{yousefian10convex}.}

 \noindent   {(vi) From part (v), function $h_\eta$ is convex and $h(\y) + \eta {L_0} \geq h_{\eta}(\y)$ for any $\y \in \Xscr$. Thus, for all $\x,\y \in \Xscr$ we have
   \begin{align*}
   h(\y) + \eta {L_0} \geq h_{\eta}(\y) \geq  h_{\eta}(\x) +\nabla h_\eta(\x)^T(\y-\x) \geq h(\x) +\nabla h_\eta(\x)^T(\y-\x).
\end{align*}   }   
\noindent   {(vii) Note that we can show that $\int_{\eta \mathbb{S}}vv^T  p_v(v) dv = \tfrac{\eta^2}{n}\mathbf{I}$. We may then express $\nabla_x h(x)$ as 
  \begin{align*}
  \nabla_x h(\x) &=\tfrac{n}{\eta^2} \left(\int_{\eta \mathbb{S}}vv^T  p_v(v) dv\right)  \nabla_x h(\x)  = \tfrac{n}{\eta^2} \left(\int_{\eta \mathbb{S}}v^T\nabla_x h(\x)  v p_v(v) dv\right)\\
  &= \tfrac{n}{\eta} \left(\int_{\eta \mathbb{S}}v^T\nabla_x h(\x)  \tfrac{v}{\|v\|} p_v(v) dv\right)=\tfrac{n}{\eta}\mathbb{E}_{ v \in \eta\mathbb{S}} \left[ \left(\nabla_x h(\x)^Tv\right)\tfrac{v}{\|v\|} \right],
  \end{align*}
  {where the third inequality follows from $\|v\| = \eta$ for $v\in \eta\mathbb{S}$.}
  From this relation, part (i), and by recalling that $\tfrac{n}{\eta} \mathbb{E}_{ v \in \eta\mathbb{S}} \left[ h(\x)\tfrac{v}{\|v\|} \right]=0$, we can write 
  \begin{align*}
  \| \nabla_x h_{\eta}(\x)-\nabla_x h(\x)\| &= \left\|\tfrac{n}{\eta} \mathbb{E}_{ v \in \eta\mathbb{S}} \left[ \left(h(\x+v)-h(\x) \right)\tfrac{v}{\|v\|} \right]-\tfrac{n}{\eta}\mathbb{E}_{ v \in \eta\mathbb{S}} \left[ \left(\nabla h(\x)^Tv\right)\tfrac{v}{\|v\|} \right] \right\|\\
  & \leq \tfrac{n}{\eta} \mathbb{E}_{ v \in \eta\mathbb{S}} \left[\left|h(\x+v)-h(\x)-\nabla h(\x)^Tv \right|\tfrac{\|v\|}{\|v\|} \right]\\
    & \leq \tfrac{n}{\eta} \mathbb{E}_{ v \in \eta\mathbb{S}} \left[L_1\|v\|^2\right] = n\eta L_1.
\end{align*}    }     
   
    \noindent (viii) We observe that for any $\x$, ${\mathbb{E}_{v \in \eta \mathbb{S}}}[\|{g_{\eta}}(\x,v)\|^2]$ may be bounded as follows. 
    \begin{align*}
{\mathbb{E}_{v \in \eta \mathbb{S}}}[\|{g_{\eta}}(\x,v)\|^2] &  = \frac{n^2}{\eta^2} \int_{\eta \mathbb{S}} \frac{\| (h(\x+v)-h(\x))v\|^2}{\|v\|^2}  p_v(v) dv \\
            & \leq \frac{n^2}{\eta^2} \int_{\eta \mathbb{S}} L_0^2 \| v\|^2  p_v(v) dv \leq n^2 \int_{\eta \mathbb{S}} p_v(v) dv = n^2 L_0^2.
    \end{align*}

    \end{proof} 

\begin{remark}[Local vs global smoothing] \em Gaussian smoothing as employed
    in~\cite{nesterov17} allows for unbounded random variables as part of the
    smoothing process. However, this precludes contending with compact regimes
    which we may require to impose Lipschitzian assumptions. Furthermore, in
    many settings, the domain of the function is compact and Gaussian smoothing
    cannot be adopted. Instead, local smoothing requires that the smoothing
    random variable have compact support. In~\cite{yousefian10convex,Farzad1},
    we examine smoothing schemes based on random variables defined on a cube or a
    sphere. However, most of the results of the previous \fyy{lemma} are novel with
    respect to~\cite{yousefian10convex}. 
\end{remark}

    We intend to develop schemes for computing approximate stationary points of \eqref{Opt} by an iterative smoothing scheme. However, this needs formalizing the relationship between the original problem and its smoothed counterpart. Before proceeding, we define $\fyyy{\delta}$-Clarke generalized gradient of $h$, denoted by $\partial_{\fyyy{\delta}} h(\x)$ at $\x$, as follows~\cite{goldstein77}.
    \begin{align}
        \partial_{\fyyy{\delta}} h(\x) \triangleq \mbox{conv} \left\{ \fyyy{\zeta} \mid  \fyyy{\zeta} \in \partial h(\y), \|\y - \x\| \leq \fyyy{\delta} \right\}. 
    \end{align} 
    It was first shown by Goldstein~\cite{goldstein77} that $\partial_{\fyyy{\delta}} h(\x)$ is nonempty, compact, and convex set. 
    \begin{proposition}\label{prop_equiv} \em 
        Consider the problem \eqref{Opt} where $h$ is a locally Lipschitz continuous function and $\Xscr$ is a closed, convex, and bounded set in $\Real^n$. 


        \noindent (i) For any $\eta > 0$ and any $\x \in \Real^n$, $\nabla h_{\eta}(\x) \in \partial_{2\eta} h(\x)$. Furthermore, if $0 \not \in \partial h(\x)$, then there exists an $\eta$ such that $\nabla_{\x} h_{\tilde \eta} (\x) \neq 0$ for $\tilde{\eta} \in (0,\eta]$.    

        \noindent (ii) For any $\eta > 0$ and any $\x \in \Xscr$, 
        \begin{align}
            \left[ 0 \in \nabla_{\x} h_{\eta}(\x) + \mathcal{N}_{\Xscr}(\x) \right] \  \implies \ \left[ 0 \in \partial_{2\eta} h(\x)+ \mathcal{N}_{\Xscr}(\x)\right].
        \end{align}
    \end{proposition}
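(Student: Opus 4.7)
\textbf{Plan for Proposition \ref{prop_equiv}.} The plan is to prove part (i) via an integral representation of $\nabla h_\eta(\x)$ as an expectation of Clarke subgradients, combined with convexity and closedness of Goldstein's $\delta$-generalized gradient. Part (ii) will then follow as a direct substitution. The main obstacle is rigorously handling the differentiation-expectation interchange for the merely Lipschitz function $h$, together with the upper semi-continuity argument needed for the nonvanishing claim.

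First, I would establish the interchange
\[
\nabla h_\eta(\x) \;=\; \nabla_\x \, \mathbb{E}_{u \in \mathbb{B}}[h(\x+\eta u)] \;=\; \mathbb{E}_{u \in \mathbb{B}}[\nabla_\x h(\x+\eta u)].
\]
This exchange is justified because $h$ is locally Lipschitz on a compact neighborhood of $\Xscr$; by Rademacher's theorem it is differentiable almost everywhere with gradient essentially bounded by the Lipschitz constant, so dominated convergence applies to the difference quotient defining $\nabla h_\eta$. Each realization $\nabla h(\x+\eta u)$ lies in $\partial h(\x+\eta u)$ at a point with $\|(\x+\eta u)-\x\| \leq \eta \leq 2\eta$, so it belongs to $\{g : g \in \partial h(\y),\ \|\y-\x\| \leq 2\eta\}$. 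Because $\partial_{2\eta} h(\x)$ is by definition the convex hull of this set and is nonempty, compact, and convex (by Goldstein), the expected value inherits membership, yielding $\nabla h_\eta(\x) \in \partial_{2\eta} h(\x)$.

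For the second claim in (i), assume $0 \notin \partial h(\x)$. Since $\partial h(\x)$ is compact and convex, $r := \mathrm{dist}(0,\partial h(\x)) > 0$. Upper semi-continuity of $\partial h(\cdot)$ gives some $\delta_0 > 0$ such that $\partial h(\y) \subseteq \partial h(\x) + (r/2)\mathbb{B}$ for all $\y$ with $\|\y-\x\| \leq \delta_0$. Since $\partial h(\x) + (r/2)\mathbb{B}$ is already convex, this containment is preserved under convex hull, so $\partial_{\delta_0} h(\x) \subseteq \partial h(\x) + (r/2)\mathbb{B}$, and this set is bounded away from $0$ by at least $r/2$. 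Setting $\eta := \delta_0/2$, monotonicity of $\partial_{\bullet} h(\x)$ in its radius yields $0 \notin \partial_{2\tilde\eta} h(\x)$ for every $\tilde\eta \in (0,\eta]$; combined with the first conclusion of (i), this forces $\nabla h_{\tilde\eta}(\x) \neq 0$ throughout that range.

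Part (ii) is a one-line consequence: if $0 \in \nabla h_\eta(\x) + \mathcal{N}_{\Xscr}(\x)$, then $-\nabla h_\eta(\x) \in \mathcal{N}_{\Xscr}(\x)$, and since $\nabla h_\eta(\x) \in \partial_{2\eta} h(\x)$ by part (i), we obtain $0 = \nabla h_\eta(\x) + (-\nabla h_\eta(\x)) \in \partial_{2\eta} h(\x) + \mathcal{N}_{\Xscr}(\x)$. The most delicate technical pieces are the differentiation-expectation interchange for non-smooth Lipschitz $h$, and the upper semi-continuity step; in the latter one must be careful because a convex hull of sets each avoiding a \emph{point} need not avoid that point, which is exactly why I route the argument through $\partial h(\x) + (r/2)\mathbb{B}$, a convex neighborhood whose complement is preserved under taking convex hulls.
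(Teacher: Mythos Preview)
Your argument is correct and self-contained, whereas the paper does not actually prove this proposition; it simply records that (i) and (ii) are constrained counterparts of Proposition~2.2 and Corollary~2.1 in Mayne and Polak (1984). Your route is essentially the one underlying that reference: represent $\nabla h_\eta(\x)$ as the volume average $\mathbb{E}_{u\in\mathbb{B}}[\nabla h(\x+\eta u)]$ via Rademacher's theorem and dominated convergence on the difference quotients, observe that each integrand lies in $\partial h(\x+\eta u)$ at a point within distance $\eta$ of $\x$, and invoke compact convexity of the Goldstein set so that the barycenter remains inside. Your upper-semicontinuity argument for the nonvanishing claim is handled carefully (routing through the convex set $\partial h(\x)+(r/2)\mathbb{B}$ so that the convex hull cannot recapture $0$), and part~(ii) is indeed a one-line substitution. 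One incidental observation: because $\|\eta u\|\le \eta$ for $u\in\mathbb{B}$, your argument in fact yields the sharper inclusion $\nabla h_\eta(\x)\in\partial_{\eta} h(\x)$; the factor $2$ in the stated result is inherited from the cited reference and is not required by your proof.
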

    \begin{proof} (i) and (ii) represent a constrained counterparts of \cite[Prop.~2.2 and Cor.~2.1]{mayne84}.
    \end{proof}

Lemma~\ref{lemma:props_local_smoothing} (v) provides a statement that relates the true objective
to its smoothed counterpart in convex regimes. This provides an avenue for
developing finite-time schemes for computing approximate solutions to the {\em
original problem}. \fyy{Prop.~\ref{prop_equiv} (ii)} provides a relationship in
settings where $h$ is locally Lipschitz; in particular, it is shown that if
$\x$ satisfies stationarity of the $\eta$-smoothed problem, it satisfies a
suitable $2\eta-$stationarity property for the original problem.

\section{Zeroth-order methods for single-stage {SMPECs}}\label{sec:1s}
In this section, we present a zeroth-order framework for contending
{with \eqref{prob:mpec_exp_imp}.} The remainder of this section is organized as
follows. In Section~\ref{sec:3.1}, we introduce an implicit zeroth-order scheme
that can {allow} for \fyyy{constructing a smoothed zeroth-order gradient through leveraging} inexact solutions of the lower-level problem. To address settings
where the implicit problem is convex, we derive rate and complexity guarantees
for an iteratively smoothed gradient framework in Section~\ref{sec:3.2} when
the lower-level problem is either inexactly or exactly resolved. \fyyy{In these settings, the smoothing parameter is progressively reduced at each iteration.} {Lastly in Section~\ref{sec:nonconvex_single_stage}, we
derive iteration complexity in addressing the nonconvex case \fyyy{under a constant smoothing parameter}.}

\subsection{An implicit zeroth-order scheme}\label{sec:3.1}
Since the {implicit} function is merely Lipschitz continuous, we employ a zeroth-order framework that relies on computing a zeroth-order approximation of the gradient. Consider the implicit problem \eqref{prob:mpec_exp_imp}. G}iven the function ${f^{\bf imp}}$ {and a scalar $\eta$, we consider a spherical smoothing {denoted by $f^{\bf imp}_{\eta}$} based on \eqref{def-smooth}, defined {as}
\begin{align}\label{eqn:G-Smooth}\tag{G-Smooth{$^{\bf 1s}$}}
  { f^{\bf imp}_{\eta}(\x) \triangleq \mathbb{E}_{u\in \mathbb{B}}[f^{\bf imp}(\x+\eta u)]= \mathbb{E}_{u\in \mathbb{B}}[\mathbb{E}[\tilde{f}(\x+\eta u,\y(\x+\eta u),\omega)]]},
\end{align}
where $u$ is uniformly distributed in {the unit} ball $\mathbb{B}$.} Let $g_{\eta}(\x)$ denote a zeroth-order approximation of the  gradient {of ${f^{\bf imp}_{\eta}(\x)}$}. {Invoking Lemma \ref{lemma:props_local_smoothing},} one choice for $g_{\eta}$ {is given as follows for any $\x$}. 
\begin{align}\fy{\label{eqn:g_eta}}
    g_{\eta}(\x) = \left( \frac{n}{\eta} \right)\mathbb{E}_{v \in \eta \mathbb{S}}\left[\frac{\left(f^{\bf imp}(\x+v) - f^{\bf imp}(\x)\right)v}{\|v\|} \right]. 
\end{align} 
{In general}, {given the presence of the expectation}, $g_{\eta}(\x)$ is challenging to evaluate and a common approach has been in utilizing an unbiased estimate given by ${g_{\eta}(\x,v,\omega)}$ defined as 
\begin{align}\label{eqn:g_v_eta}
 {g_{\eta}(\x,v,\omega)} \triangleq \left( \frac{n}{\eta} \right)\left[\frac{\left({\tilde f}(\x+v, \y(\x+v),\omega) -{\tilde f}(\x, \y(\x),\omega)\right)v}{\|v\|} \right]. 
\end{align}
Given a vector $\x_0 \in \Xscr$, we may employ \eqref{eqn:g_v_eta} in 
constructing a sequence $\{\x_k\}$ where $\x_k$ satisfies the following projected stochastic gradient update. 
\begin{align}\label{eqn:fixed_smoothing_scheme}
    \x_{k+1} := \Pi_{\Xscr} \left[ \x_k - \gamma_k {g}_{\eta}(\x_k,v_k,\omega_k) \right].
\end{align}

{Motivated by the development of the stochastic approximation (SA) scheme~\cite{robbins51sa}, the projected stochastic gradient and gradient-free schemes have been studied extensively in convex and nonconvex regimes (e.g., see~\cite{nemirovski_robust_2009,Farzad1,ghadimi13zeroth,ghadimi15} and the references therein). Recall that in the SA schemes, the standard requirements on the stepsize sequence include $\sum_{k=0}^\infty \gamma_k = \infty$ and $\sum_{k=0}^\infty \gamma_k^2 < \infty$.} The scheme \eqref{eqn:fixed_smoothing_scheme} has been studied for addressing
    nonsmooth convex and nonconvex optimization problems~\cite{nesterov17}
    while unconstrained nonconvex regimes were examined
    in~\cite{ghadimi13zeroth}. In particular, in the work by Nesterov and
    Spokoiny~\cite{nesterov17}, zeroth-order randomized smoothing gradient
    schemes are proposed under a single sample with a fixed smoothing parameter
    $\eta$ with the assumption that the smoothing random variable $v$ has a
Gaussian distribution. Importantly, a direct adoption of such smoothing schemes
to address the hierarchical problems studied in this work is afflicted by
several challenges.

\medskip

\noindent (i) {\em Lack of asymptotic guarantees.} When $\eta > 0$, the scheme generates a sequence that is convergent to an approximate solution, at best. In addition, the choice of $\eta$ is contingent on accurate estimates of other problem parameters (such as $L_0$), in the absence of which, $\eta$ may be chosen to be extremely small. This often afflicts the practical behavior of the scheme. Moreover, employing a fixed $\eta$ precludes asymptotic convergence to the true counterpart. Instead, in most of our schemes, we employ a mini-batch approximation of $g_{\eta}(\x)$, denoted by $g_{\eta, N}(\x)$ and defined as  
\begin{align}\label{eqn:g_eta_and_N}
 g_{\eta,N}(\x) \triangleq \frac{\sum_{j=1}^N g_{\eta}(\x,v_j, \omega_j)}{N}. 
\end{align}
Furthermore, we replace a fixed $\eta$ by a diminishing sequence $\{\eta_k\}$, the resulting iterative smoothing scheme being articulated as follows. 
\begin{align}\label{eqn:method_iter_smooth}
    \x_{k+1} := \Pi_{\Xscr} \left[ \x_k - \gamma_k {g}_{\eta_k,N_k}(\x_k) \right].
\end{align}
\noindent (ii) {\em Unavailability of exact solutions of $\y(\x)$.}  Even if $\y(\bullet)$ is a single-valued map requiring the solution of a strongly monotone lower-level problem, computing a solution to this problem is not necessarily cheap. As a consequence, our scheme needs to account for \fyy{random} errors in the computation of $g_{\eta_k}(\x_k)$, denoted by $\fyyy{\tilde{b}_k}$. As a consequence, the resulting scheme is defined as follows.    
\begin{align}\label{eqn:inexact_smoothing_scheme}
    \x_{k+1} := \Pi_{\Xscr} \left[ \x_k - \gamma_k ({g}_{\eta_k,N_k}(\x_k)+\fyyy{\tilde{b}_k}) \right], {\qquad} \fyy{\hbox{for all }k\geq 0}.
\end{align}
{In particular}, when considering problems (SMPEC$^{\bf 1s}$), exact solutions of $\y(\x_k)$ are generally unavailable in finite time. Instead, one can take \fyy{$t_k$} steps of a standard projection scheme. 
\fyy{\begin{align} \label{upd-y}
    \y_{t+1} := \Pi_{\Yscr} \left[ \y_t - \beta_t \bar{F}(\x_k, \y_t)\right], {\qquad}  t = 0, \cdots, \fyy{t_k}-1,
\end{align}
where $\bar{F}(\x_k,\y_t) \triangleq \tfrac{\sum_{\ell=1}^{{M}_t} {G}(\x_k, \y_t, \fyy{\omega_{\ell,t}})}{{M}_t}$}. \fyy{In such} a variance-reduced scheme, \fyy{when $M_t$ grows at a geometric rate}, $\log(1/\epsilon_k)$ steps of \fyy{\eqref{upd-y}} are required to obtain an $\epsilon_k$-solution of $\y_k$\fyy{~\cite{iusem19variance}}. 

\smallskip 

\noindent (iii) {\em Bias in $\fyyy{\tilde{b}_k}$.} A key issue that arises
from (ii) emerges in the form of bias. In particular,
${g}_{\eta_k,N_k}(\x_k)+\fyyy{\tilde{b}_k}$ is
not necessarily an unbiased estimator of ${g}_{\eta_k}(\x_k)$. Further, it
remains unclear how the bias and variance of ${g}_{\eta_k,N_k}(\x_k)+\fyyy{\tilde{b}_k}$ 
propagate through this framework
\eqref{eqn:inexact_smoothing_scheme}-\eqref{upd-y} as $\gamma_k$, $\eta_k$, and
$N_k$ are updated iteratively in the outer loop
\eqref{eqn:inexact_smoothing_scheme}. Consequently, in the development of
the inexact smoothing scheme
\eqref{eqn:inexact_smoothing_scheme}-\eqref{upd-y}, it remains critical to
design prescribed stepsize, smoothing, and sample-size sequences to control the
accuracy of the estimator ${g}_{\eta_k,N_k}(\x_k)+\fyyy{\tilde{b}_k}$ and consequently,
ascertain the convergence of the generated iterate to an optimal solution of {the underlying MPEC}. This concern will be examined
in detail in the subsequent sections.  
\subsection{Convex single-stage regimes}\label{sec:3.2} 
{In this  subsection, we consider resolving the {implicit formulations when the
    implicit function} is convex.  \fy{As pointed out earlier,} the convexity of the implicit
problem often holds in practice
(cf.~\cite{patriksson99stochastic,xu06implicit,demiguel09stochastic}). We first consider the inexact case where the exact value of $\y(\bullet)$
is not necessarily available.  We then specialize our statements to settings
where exact solutions of lower-level problems can be employed.  

\subsubsection{An inexact zeroth-order scheme}
We now delve into developing and analyzing an inexact zeroth-order method for
resolving the implicit variant {\eqref{prob:mpec_exp_imp}.} We begin {by} providing the general setup and assumptions. Then, we provide some key results and algorithms.  Before proceeding, we consider the following assumption.
\begin{assumption}\label{assum:u_iter_smooth}\em
    Given a sequence $\{\eta_k\}$, let ${\{v_k\}} \in \mathbb{R}^n$ be {iid replicates uniformly distributed on} $\eta_k\mathbb{S}$ for all $k\geq 0$. {Also, let $\{\omega_{k}\}$ be {iid replicates}.}
\end{assumption}
\fyy{\begin{remark}\em
Throughout the paper, for the ease of presentation, we assume that there exists an oracle that returns the replicates of $\omega$ in the upper-level. The function $\tilde{f}(\bullet,\bullet, \omega)$ can then be evaluated using a second oracle. Note that this assumption is without loss of any generality and an alternative approach is to assume that there exists an oracle that generates the random realizations of the function $\tilde{f}(\bullet,\bullet, \omega)$ directly. 
\end{remark}}
Consider the implicit form of} \fyy{\eqref{eqn:SMPECepx}}, i.e. \fy{\eqref{prob:mpec_exp_imp}} where
the lower-level problem is complicated by the presence of expectation-valued
maps, i.e., $F$ is defined as \eqref{def-SF} and satisfies
Assumption~\ref{ass-1} (a.iii).
In such an instance, obtaining $\y(\x)$ is impossible in finite time unless the expectation can be tractably resolved. Instead, by employing stochastic approximation \fyy{methods} for addressing the lover-level problem, we consider the case where we have access to an approximate solution $\y_\fyyy{\tilde{\epsilon}_k}(\x_k)$ such that \fyy{the following holds a.s.}
\begin{align}
    \mathbb{E}[\|\y_\fyyy{\tilde{\epsilon}_k} (\x_k) - \y(\x_k)\|^2\mid \x_k] \leq \fyyy{\tilde{\epsilon}_k}, \mbox{ where } \y(\x_k) \in \mbox{SOL}(\Yscr, \fy{F(\x_k,\bullet)}).  
\end{align}
As a consequence, we may define {an inexact zeroth-order gradient mapping} $g_{\eta,\fyyy{\tilde{\epsilon}}}(\x,{v}\fyy{,\omega})$ as follows.
{\begin{align}\label{def-g-eta-eps}
 {g_{\eta,\fyyy{\tilde{\epsilon}}}(\x,v,\omega)  \triangleq } \frac{n({\tilde f}(\x+ v, \y_{\fyyy{\tilde{\epsilon}}}(\x+ v),\omega) - {\tilde f}(\x, \y_{\fyyy{\tilde{\epsilon}}}(\x),\omega))v}{\|v\|\eta}, 
\end{align}
where {$v \in \eta\mathbb{S}$}  and $\y_\fyyy{\tilde{\epsilon}_k} (\x_k)$ is an output of a
variance-reduced stochastic approximation scheme.} {The outline of the proposed
zeroth-order solver (\fyy{\texttt{ZSOL$^{\bf 1s}_{\rm cnvx}$}}) is presented in
Algorithm~\ref{algorithm:inexact_zeroth_order_SVIs} while an inexact solution
of $\y(\x)$ is computed by Algorithm~\ref{algorithm:inexact_lower_level_SA}.
We impose the following assumptions \fyy{on} the lower-level evaluations
    $G(\hat \x_k,\y_t,\omega_{\ell,t})$ \fyy{in Algorithm~\ref{algorithm:inexact_lower_level_SA}}.

\begin{assumption}\label{assum:lower_level_stoch}\em
   \fyy{Consider Algorithm~\ref{algorithm:inexact_lower_level_SA}.} Let the following hold for all $k\geq 0$, $t \geq 0$,  $\hat \x_k \in \Xscr$, $\y_t \in \Yscr$, and $1\leq \ell\leq M_t$ \fyy{where $M_t$ denotes the batch size at iteration $t$}.

\noindent (a)  \fyyy{The replicates $\{G(\bullet,\bullet, \omega_{\ell,t})\}_{\ell=1}^{M_t}$ are generated randomly and are iid.}

\noindent (b)  $\mathbb{E}[G(\hat \x_k,\y_t,\omega_{\ell,t})\mid \hat \x_k,\y_t]  = F(\hat \x_k,\y_t)$ {holds almost surely}.

\noindent (c)  $\mathbb{E}[\|G(\hat \x_k,\y_t,\omega_{\ell,t})-F(\hat \x_k,\y_t)\|^2 \mid \hat \x_k,\y_t]  \leq \nu_{\y}^2\|\y_t\|^2+\nu_G^2$ {holds almost surely} for some {deterministic scalars} \fyyy{$\nu_\y\geq 0$ and $\nu_G>0$}.
\end{assumption}

\begin{algorithm}[H]
    \caption{\texttt{ZSOL$^{\bf 1s}_{\rm cnvx}$}: Zeroth-order method for convex  \fyy{\eqref{eqn:SMPECepx}}}\label{algorithm:inexact_zeroth_order_SVIs}
    \begin{algorithmic}[1]
        \STATE\textbf{input:} {Given $\x_0 \in \Xscr$, ${\bar \x}_0: = \x_0$,  stepsize sequence $\{\gamma_k\}$,  smoothing parameter sequence  $\{\eta_k\}$}, {inexactness sequence $\{\fyyy{\tilde{\epsilon}_k}\}$}, $r \in [0,1)$,  and $S_0 : = \gamma_0^r$
    \FOR {$k=0,1,\ldots,{K}-1$}
\STATE Generate { iid replicates} $\omega_{k} \in \Omega$ and  $v_{k} \in \eta_k \mathbb{S}$ 
    		    \STATE  {Do one of the following, depending on the type of the scheme.
    		    
\vspace{-.1in}    	
	    
\begin{itemize}
\item   Inexact scheme: Call Algorithm \ref{algorithm:inexact_lower_level_SA} twice to obtain $\y_\fyyy{\tilde{\epsilon}_k}(\x_k)$ and $\y_\fyyy{\tilde{\epsilon}_k}(\x_k+v_k)$ 
\vspace{-.15in}    	

\item   Exact scheme: Evaluate $\y(\x_k)$ and $\y(\x_k+v_k)$
\end{itemize}}

\vspace{-.1in}    	

    		 \STATE Evaluate the inexact {or exact zeroth-order gradient approximation as follows.
    		 \vspace{-.1in}    	
	    
         \begin{align}
             g_{\eta_k,\fyyy{\tilde{\epsilon}_k}}(\x_k,v_k,\omega_k) & :=\tfrac{n\left({\tilde f}(\x_k+ v_k, \y_\fyyy{\tilde{\epsilon}_k}(\x_k+ v_k),\omega_k) - \fyy{\tilde f} (\x_k, \y_\fyyy{\tilde{\epsilon}_k}(\x_k),\omega_k)\right)v_k}{\|v_k\|\eta_k} \tag{Inexact} \\
             g_{\eta_k}(\x_k,v_k,\omega_k) & :=\tfrac{n\left({\tilde f}(\x_k+ v_k, \y(\x_k+ v_k){,\omega_k}) - {\tilde f} (\x_k, \y(\x_k),\omega_k)\right)v_k}{\|v_k\|\eta_k}. \tag{Exact}
\end{align}}
\STATE {Update $\x_k$ as follows.}
{
        \begin{align*}
            \x_{k+1} :=  \begin{cases}
                \Pi_{\Xscr} \left[ \x_k - \gamma_k g_{\eta_k,\fyyy{\tilde{\epsilon}_k}}(\x_k,v_k,\omega_k) \right] &  \hspace{2.8in} \mbox{ (Inxact)}  \\
                \Pi_{\Xscr} \left[ \x_k - \gamma_k g_{\eta_k}(\x_k,v_k,\omega_k) \right] &   \hspace{2.85in}\mbox{ (Exact)} 
            \end{cases}
        \end{align*}
    }


  \STATE Update the averaged iterate as follows.  $ S_{k+1} : = S_k+\gamma_{k+1}^r$ and  $\bar \x_{k+1}:=\tfrac{S_k\bar \x_k+\gamma_{k+1}^r\x_{k+1}}{S_{k+1}}$
    \ENDFOR
   \end{algorithmic}
\end{algorithm}

\begin{algorithm}[H]
    \caption{{Variance-reduced SA method for lower-level of convex \fyy{\eqref{eqn:SMPECepx}}}}\label{algorithm:inexact_lower_level_SA}
    \begin{algorithmic}[1]
        \STATE \textbf{input:} An arbitrary $\y_0 \in \Yscr$, vector $\hat{\x}_k$ \fyy{(that is either $\x_k$ or $\x_k+v_k$ from Alg.~\ref{algorithm:inexact_zeroth_order_SVIs})}, scalar $\rho \in (0,1)$, {stepsize $\alpha>0$, mini-batch sequence $\{M_t\}$ \fyy{with $M_t := \lceil M_0\rho^{-t} \rceil$}, integer $k$, and \fyy{scalars $M_0,\tau>0$ (see Def.~\eqref{def:algo_1})} }
        \STATE Compute $t_k:=\lceil\tau \ln(k+1) \rceil $ 
      \FOR {$t=0,1,\ldots,t_k-1$}
      \STATE Generate random realizations of the stochastic mapping $G(\hat{\x}_k,{\y_t}, \omega_{\ell,t})$ for $\ell=1,\ldots,M_t$
      \STATE {Update $\y_t$ as follows.}  $
        \y_{t+1} := \Pi_{\Yscr}\left[\y_t - \alpha \tfrac{\sum_{\ell = 1}^{M_t} G(\hat{\x}_k,\y_t,\omega_{\ell, t})}{M_t}\right]$
        \ENDFOR
        \STATE Return $\y_{t_k}$ 
   \end{algorithmic}
\end{algorithm}

Before analyzing \fyy{(\texttt{ZSOL$^{\bf 1s}_{\rm cnvx}$})}, we review the properties of the  exact zeroth-order {stochastic} gradient denoted by $g_{\eta}(\x,v,\omega)$ and show that {it} is an unbiased estimator of the gradient of the smoothed implicit function. We then derive a bound on the second moment of this stochastic gradient under the assumption that the implicit {stochastic} function is Lipschitz. 
{\begin{remark}\label{rem:g_eta}
Throughout, we use the definition $
g_{\eta}(\x,v)\triangleq \left(\tfrac{n}{\eta}\right) \frac{\left(f^{\bf imp}(\x+v) - f^{\bf imp}(\x)\right)v}{\|v\|}$, where $f^{\bf imp}(\bullet)$ is the implicit function defined by \eqref{prob:mpec_exp_imp} or \eqref{prob:mpec_as_imp} . 
\end{remark}}
\begin{lemma}[\bf Properties of the single-stage exact zeroth-order gradient]\label{lem:smooth_grad_properties}\em
Suppose Assumption~\ref{ass-1} (a) holds.  Consider \eqref{prob:mpec_exp_imp}. Given $\x \in \Xscr$ and $\eta>0$,  {consider the stochastic zeroth-order mapping $g_{\eta}(\x,v,\omega)$ defined by \eqref{eqn:g_v_eta} for $v \in \eta\mathbb{S}$ and $k\geq 0$, where $v$ and $\omega$ are independent}.
 Then,  ${\nabla f^{\bf imp}_{\eta}(\x)} =\mathbb{E}[g_{\eta}(\x,v,\omega)\mid \x]$ and $\mathbb{E}[\|g_{\eta}(\x,v,\omega)\|^2\mid \x] \leq {L_0^2n^2}$ {almost surely} for all $k\geq 0$. 
\end{lemma}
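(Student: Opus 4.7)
My plan is to establish the two claims in turn, exploiting independence of $v$ and $\omega$ together with the Lipschitz hypotheses in Assumption~\ref{ass-1}(a.i) and the identity for $\nabla f^{\bf imp}_\eta$ already proved in Lemma~\ref{lemma:props_local_smoothing}(i).

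\textbf{Unbiasedness.} First I would condition on $v$ and compute the conditional expectation of $g_\eta(\x,v,\omega)$ with respect to $\omega$. Since $v$ and $\omega$ are independent, and $\|v\|=\eta$ for $v\in\eta\mathbb{S}$, the definition \eqref{eqn:g_v_eta} gives
\begin{align*}
\mathbb{E}[g_\eta(\x,v,\omega)\mid \x,v]
= \tfrac{n}{\eta^2}\,\mathbb{E}\!\left[\tilde f(\x+v,\y(\x+v),\omega)-\tilde f(\x,\y(\x),\omega)\mid \x,v\right] v
= \tfrac{n}{\eta^2}\bigl(f^{\bf imp}(\x+v)-f^{\bf imp}(\x)\bigr)v,
\end{align*}
using the definition of $f^{\bf imp}$ in \eqref{prob:mpec_exp_imp}. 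Taking the outer expectation over $v\in\eta\mathbb{S}$ and using $\|v\|=\eta$ to rewrite the right-hand side as $(n/\eta)(f^{\bf imp}(\x+v)-f^{\bf imp}(\x))v/\|v\|$, I match the form in Remark~\ref{rem:g_eta}. The constant term $f^{\bf imp}(\x)\,v/\|v\|$ integrates to zero by symmetry of the uniform distribution on $\eta\mathbb{S}$, so I can invoke Lemma~\ref{lemma:props_local_smoothing}(i) applied to $h=f^{\bf imp}$ (which is continuous on $\Xscr_{\eta_0}$ by Assumption~\ref{ass-1}(a.i)) to conclude $\mathbb{E}[g_\eta(\x,v,\omega)\mid \x]=\nabla f^{\bf imp}_\eta(\x)$.

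\textbf{Second-moment bound.} For the second claim I would bound $\|g_\eta(\x,v,\omega)\|^2$ pointwise and then integrate. Using $\|v\|=\eta$,
\begin{align*}
\|g_\eta(\x,v,\omega)\|^2
= \tfrac{n^2}{\eta^2}\bigl|\tilde f(\x+v,\y(\x+v),\omega)-\tilde f(\x,\y(\x),\omega)\bigr|^2.
\end{align*}
By Assumption~\ref{ass-1}(a.i) applied to the composition $\tilde f(\bullet,\y(\bullet),\omega)$ (which is $L_0(\omega)$-Lipschitz on $\Xscr+\eta_0\mathbb{B}$, and we take $\eta\leq \eta_0$), the right-hand side is at most $(n^2/\eta^2)L_0(\omega)^2\|v\|^2 = n^2 L_0(\omega)^2$. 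Taking expectation with respect to $\omega$ and using independence of $v$ and $\omega$ together with $L_0^2=\mathbb{E}[L_0(\omega)^2]$ yields $\mathbb{E}[\|g_\eta(\x,v,\omega)\|^2\mid \x]\le n^2 L_0^2$ almost surely, as required.

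\textbf{Main obstacle.} The computation is essentially a bookkeeping exercise; the only subtlety is ensuring the interchange of expectation with the difference $\tilde f(\x+v,\y(\x+v),\omega)-\tilde f(\x,\y(\x),\omega)$ is justified and that the Lipschitz bound is applied to the correct (implicit) composition on a domain containing $\x+v$. Both are handled by the standing assumptions: $v\in\eta\mathbb{S}$ with $\eta\le\eta_0$ places perturbed arguments in $\Xscr+\eta_0\mathbb{B}$, and finiteness of $L_0$ together with boundedness of $\|v\|=\eta$ ensures integrability and permits Fubini. No further work is needed.
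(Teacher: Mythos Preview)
Your proposal is correct and follows essentially the same approach as the paper: for unbiasedness you integrate out $\omega$ first to recover $f^{\bf imp}$, drop the symmetric constant term, and invoke Lemma~\ref{lemma:props_local_smoothing}(i); for the second moment you apply the $L_0(\omega)$-Lipschitz bound pointwise and then average over $\omega$. If anything, your version is slightly more explicit than the paper's about the domain condition $\eta\le\eta_0$ and the Fubini justification.
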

\begin{proof}
{From \eqref{eqn:g_v_eta} and that $f^{\bf imp}(\x)  \triangleq \mathbb{E}[\tilde{f}(\x,\y(\x),\omega)]$ we can write 
\begin{align*}
\mathbb{E}[g_{\eta}(\x,v,\omega)\mid \x]&
=\mathbb{E}_{v \in \eta\mathbb{S}}\left[\left(\tfrac{n}{\eta}\right) \frac{\left(f^{\bf imp}(\x+v) - f^{\bf imp}(\x)\right)v}{\|v\|}\mid \x\right]\\
&=\left(\tfrac{n}{\eta}\right)\mathbb{E}_{v \in \eta\mathbb{S}}\left[ f^{\bf imp}(\x+v) \frac{ v}{\|v\|}\mid \x\right]\overset{\tiny \mbox{Lemma }\ref{lemma:props_local_smoothing} (i)}{=}  \nabla f^{\bf imp}_{\eta}(\x) .
\end{align*}
We have
\begin{align*}
\mathbb{E}[\|g_{\eta}(\x,v,\omega)\|^2\mid \x,\omega]  
 &=\left(\tfrac{n}{\eta}\right)^2\mathbb{E}\left[\left\| \tfrac{\left({\tilde f}(\x+v, \y(\x+v),\omega) - {\tilde f}(\x, \y(\x),\omega)\right)v}{\|v\|}\right\|^2\mid \x,\omega\right]  \\
 & =\left(\tfrac{n}{\eta}\right)^2 \int_{\eta \mathbb{S}} \tfrac{\left\| \left({\tilde f}(\x+v, \y(\x+v),\omega) - {\tilde f}(\x, \y(\x),\omega)\right)v\right\|^2}{\|v\|^2}  p_v(v) dv \\
 &\overset{\tiny \mbox{Assumption \ref{ass-1} (a.i)}}{\leq} \frac{n^2}{\eta^2} \int_{\eta \mathbb{S}} L_0^2(\omega) \| v\|^2  p_v(v) dv \leq n^2 {L_0^2(\omega)} \int_{\eta \mathbb{S}} p_v(v) dv = n^2 L_0^2(\omega).
\end{align*}
Taking {expectations} with respect to $\omega$ {on} both sides of the preceding inequality and invoking $L_0^2 \triangleq \mathbb{E}[L_0^2(\omega)]<\infty$, we obtain the desired bound. 
}
\end{proof}}
We are now ready to present the properties of the inexact zeroth-order gradient mapping. 
\begin{lemma}[\bf Properties of the single-stage inexact zeroth-order gradient]{\label{lem:inexact_error_bounds}}\em Consider \eqref{prob:mpec_exp_imp}.  Suppose {Assumption~\ref{ass-1} (a)} holds.  {Let} $g_{\eta,\fyyy{\tilde{\epsilon}}}(\x,v,\omega)$ be defined as \eqref{def-g-eta-eps} for $\omega \in \Omega$ and $v \in \eta\mathbb{S}$ for $\eta, \fyyy{\tilde{\epsilon}} >0$. Suppose $\mathbb{E}[\|\y_\fyyy{\tilde{\epsilon}}(\x)-\y(\x)\|^2 \mid \x,\omega] \leq \fyyy{\tilde{\epsilon}}$ almost surely for all $\x \in \Xscr$. Then, the following hold for the single-stage model {for any $\x \in \Xscr$}. 

    \noindent {\bf (a)}  $\mathbb{E}[\|g_{\eta,\fyyy{\tilde{\epsilon}}}(\x,v,\omega)\|^2\mid\x]  \leq  3n^2\left(\tfrac{2{\tilde{L}_0^2} \fyyy{\tilde{\epsilon}}   }{\eta^2} + {L_0^2}\right),$ {almost surely}. 

 \noindent {\bf (b)}  $ \mathbb{E}\left[ \left\| g_{\eta,\fyyy{\tilde{\epsilon}}}(\x,v,\omega) - g_{\eta}(\x,v,\omega) \right\|^2 \mid \x\right] 
 \leq   \frac{{4\tilde{L}^2_0n^2}\fyyy{\tilde{\epsilon}}}{\eta^2} $,{ almost surely}.

\end{lemma}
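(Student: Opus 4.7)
The plan is to obtain both parts by direct telescoping of the scalar function difference that appears inside $g_{\eta,\tilde{\epsilon}}$ and $g_\eta$, combined with the Lipschitz assumptions on $\tilde f$ from Assumption~\ref{ass-1}(a.i) and the inexactness bound on $\y_{\tilde{\epsilon}}$. As a preliminary observation, since $v \in \eta\mathbb{S}$ we have $\|v\| = \eta$, so $\|nv/(\|v\|\eta)\| = n/\eta$; hence for any scalar quantity $q(\x,v,\omega)$ one has $\|(nv/(\|v\|\eta))\,q\|^2 = (n^2/\eta^2)\,q^2$. This reduces both claims to bounding scalar function differences and then taking expectations in the appropriate order.

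For part (b), I would subtract the two gradient estimates directly from their definitions \eqref{eqn:g_v_eta} and \eqref{def-g-eta-eps}; the common vector factor $nv/(\|v\|\eta)$ factors out and leaves the scalar difference
\[
A \;\triangleq\; \bigl[\tilde f(\x+v,\y_{\tilde{\epsilon}}(\x+v),\omega) - \tilde f(\x+v,\y(\x+v),\omega)\bigr] - \bigl[\tilde f(\x,\y_{\tilde{\epsilon}}(\x),\omega) - \tilde f(\x,\y(\x),\omega)\bigr].
\]
Applying $(a-b)^2 \leq 2a^2 + 2b^2$ and the $\tilde L_0(\omega)$-Lipschitzness of $\tilde f(\x,\bullet,\omega)$ in its second argument bounds $A^2$ by $2\tilde L_0^2(\omega)\|\y_{\tilde{\epsilon}}(\x+v) - \y(\x+v)\|^2 + 2\tilde L_0^2(\omega)\|\y_{\tilde{\epsilon}}(\x) - \y(\x)\|^2$. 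I would then take expectations by first conditioning on $(\x,\omega)$ to invoke $\mathbb{E}[\|\y_{\tilde{\epsilon}}(\x) - \y(\x)\|^2 \mid \x,\omega] \leq \tilde{\epsilon}$ (applied at both $\x$ and $\x+v$), and then taking expectation over $\omega$ to replace $\mathbb{E}[\tilde L_0^2(\omega)]$ by $\tilde L_0^2$, which yields the claimed bound $4\tilde L_0^2 n^2 \tilde{\epsilon}/\eta^2$.

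For part (a), I would decompose the scalar factor of $g_{\eta,\tilde{\epsilon}}$ into three pieces by inserting $\pm\tilde f(\x+v,\y(\x+v),\omega)$ and $\pm\tilde f(\x,\y(\x),\omega)$: two pieces are bounded by $\tilde L_0(\omega)$ times the inexactness at $\x+v$ and at $\x$ respectively, and the middle piece $\tilde f(\x+v,\y(\x+v),\omega) - \tilde f(\x,\y(\x),\omega)$ is bounded by $L_0(\omega)\|v\| = L_0(\omega)\eta$ using Assumption~\ref{ass-1}(a.i), which posits Lipschitzness of the \emph{composition} $\tilde f(\bullet,\y(\bullet),\omega)$ (rather than any joint Lipschitzness of $\tilde f$ in $(\x,\y)$). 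Applying $(a+b+c)^2 \leq 3(a^2+b^2+c^2)$ and then taking conditional expectations as in part (b), the two inexactness pieces contribute $6n^2 \tilde L_0^2 \tilde{\epsilon} / \eta^2$ while the middle piece contributes $3n^2 L_0^2$, giving the stated bound $3n^2\bigl(2\tilde L_0^2 \tilde{\epsilon}/\eta^2 + L_0^2\bigr)$.

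The main care point, rather than a genuine obstacle, is bookkeeping the order in which expectations are taken: the inexactness bound is stated conditionally on $(\x,\omega)$, while the Lipschitz constants $L_0(\omega), \tilde L_0(\omega)$ are $\omega$-measurable and $v$ is independent of $\omega$. Taking $\mathbb{E}[\,\cdot \mid \x,\omega]$ first and then $\mathbb{E}_\omega[\,\cdot]$ handles both parts cleanly and lets us pass from $L_0^2(\omega), \tilde L_0^2(\omega)$ to $L_0^2, \tilde L_0^2$ without assuming independence between the (random) Lipschitz constants and the inexactness error. The only nontrivial modeling step is the three-way split in~(a), which isolates the Lipschitz-in-$\x$ hypothesis from the Lipschitz-in-$\y$ hypothesis so that Assumption~\ref{ass-1}(a.i) is used in its sharpest form and the $L_0^2$ term in the final bound does not pick up any $1/\eta^2$ factor.
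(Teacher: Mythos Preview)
Your proposal is correct and follows essentially the same approach as the paper: for (a) the paper adds and subtracts $g_\eta(\x,v,\omega)$ (which amounts to your three-way scalar split), applies the triangle inequality and the $3$-term Cauchy--Schwarz bound, and then invokes Lemma~\ref{lem:smooth_grad_properties} for the middle term (which is exactly your direct $L_0(\omega)\eta$ bound on the scalar); for (b) the paper likewise subtracts, splits into two Lipschitz-in-$\y$ pieces, and squares. Your treatment of the order of expectations (first conditioning on $(\x,\omega)$ to use the inexactness bound, then integrating over $\omega$ to pass from $\tilde L_0^2(\omega)$ to $\tilde L_0^2$) is in fact slightly more careful than the paper's write-up, which in part (b) writes $\tilde L_0$ directly at the pointwise step.
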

\begin{proof}  
\noindent {\bf (a)} Adding and subtracting $ g_{\eta}(\x,{v},\omega)$, we obtain from \eqref{def-g-eta-eps}
\begin{align*}
   & \quad  \|  g_{\eta,\fyyy{\tilde{\epsilon}}}(\x,{v},\omega)\| \\
   &  =  \left\| \frac{n({ \tilde f}(\x+{v}, \y_{\fyyy{\tilde{\epsilon}}}(\x+{v}){,\omega}) - { \tilde f}(\x+{v}, \y(\x+{v}),\omega)){v}}{{\|v\|}\eta}  +{g_{\eta}(\x,{v},\omega)} +\frac{n({ \tilde f}(\x, \y(\x),\omega) - { \tilde f}(\x, \y_{\fyyy{\tilde{\epsilon}}}(\x),\omega)){v}}{{\|v\|}\eta}\right\| \\
   & \leq  \left\| \frac{n({ \tilde f}(\x+{v}, \y_{\fyyy{\tilde{\epsilon}}}(\x+{v}),\omega) - { \tilde f}(\x+{v}, \y(\x+{v}),\omega)){v}}{{\|v\|}\eta}\right\| + \left\|{g_{\eta}(\x,{v},\omega)} \right\|  \\
   &+ \left\| \frac{n({ \tilde f}(\x, \y(\x),\omega) - { \tilde f}(\x, \y_{\fyyy{\tilde{\epsilon}}}(\x),\omega)){v}}{{\|v\|}\eta} \right\| \\
   & \leq    \frac{\| { \tilde f}(\x+{v}, \y_{\fyyy{\tilde{\epsilon}}}(\x+{v}),\omega) - { \tilde f}(\x+{v}, \y(\x+{v}),\omega)\|n \|v\|}{\|v\|\eta} + \left\|{g_{\eta}(\x,{v},\omega)}  \right\| \\
                               & +  \frac{\|{ \tilde f}(\x , \y(\x) ,\omega) - { \tilde f}(\x , \y_{\fyyy{\tilde{\epsilon}}}(\x ),\omega)\|n\|v\|}{\eta\|v\|}  \\
                               & \leq    \frac{{\tilde{L}_0(\omega)}\|\y_{\fyyy{\tilde{\epsilon}}}(\x+v) - \y(\x+v)\| n }{\eta}   + \left\|{g_{\eta}(\x,v,\omega)}  \right\|  + \frac{\tilde{L}_0(\omega)\|\y_{\fyyy{\tilde{\epsilon}}}(\x)-\y(\x)\| n}{\eta} . 
\end{align*} 
{Invoking Lemma \ref{lem:smooth_grad_properties}, we} may then bound the second moment of $\|g_{\eta,\fyyy{\tilde{\epsilon}}}(\x,v,\omega)\|$ as follows.
\begin{align}\label{eqn:lemma_props_of_g_eta_bound2}
    \mathbb{E}[\|{g_{\eta,\fyyy{\tilde{\epsilon}}}(\x,{v},\omega)}\|^2] & \leq   3\mathbb{E}\left[\left(\frac{{\tilde{L}_0^2(\omega){n^2}}\|\y_{\fyyy{\tilde{\epsilon}}}(\x+v) - \y(\x+v)\|^2  }{\eta^2}\right) \mid \x\right]  + 3\mathbb{E}\left[\left\|g_{\eta}(\x,v,\omega)\right\|^2\mid \x\right] \notag\\
                                                     & + 3\mathbb{E}\left[\left(\frac{\tilde{L}_0^2(\omega){n^2}\|\y_{\fyyy{\tilde{\epsilon}}}(\x)-\y(\x)\|^2 }{\eta^2}\right) \mid \x\right]  \leq  6\left(\frac{{\tilde{L}_0^2{n^2}} \fyyy{\tilde{\epsilon}}   }{\eta^2}\right) + {3L_0^2n^2}, \mbox{ a.s.} 
\end{align}

{\noindent {\bf (b)}} 
{We first} derive a bound on $\left\| { g_{\eta,\fyyy{\tilde{\epsilon}}}(\x,{v},\omega) - g_{\eta}(\x,{v},\omega) } \right\|$. 
\begin{align*}
 & \quad  \left\| { g_{\eta,\fyyy{\tilde{\epsilon}}}(\x,{v},\omega) - g_{\eta}(\x,{v},\omega) } \right\|
\\& = \left\| \frac{n({ \tilde f}(\x+{v}, \y_{\fyyy{\tilde{\epsilon}}}(\x+{v}),\omega) - \fyyy{{ \tilde f}(\x, \y_{\fyyy{\tilde{\epsilon}}}(\x),\omega))}{v}}{\|v\|\eta}  -   \frac{n({ \tilde f}(\x+{v}, \y(\x+{v}),\omega) - \fyyy{{ \tilde f}(\x, \y(\x),\omega))}{v}}{\|v\|\eta}  \right\| \\
  & \leq \left\| \frac{n({ \tilde f}(\x+v, \y_{\fyyy{\tilde{\epsilon}}}(\x+v),\omega) - { \tilde f}(\x+v, \y(\x+v ),\omega))v}{\|v\|\eta}  \right\| + \left\|  \frac{n({ \tilde f}(\x, \y_{\fyyy{\tilde{\epsilon}}}(\x),\omega) - { \tilde f}(\x, \y(\x),\omega))v}{\|v\|\eta}  \right\| \\
  & \leq   \frac{{\tilde{L}_0{n}}\| \y_{\fyyy{\tilde{\epsilon}}}(\x+v) - \y(\x+v)\|}{\eta}   +   \frac{{\tilde{L}_0{n}} \| \y_{\fyyy{\tilde{\epsilon}}}(\x) - \y(\x)\|}{\eta},  
  \end{align*}
\fyyy{where in the last inequality we use the definition of $\tilde L_0$ in Assumption~\ref{ass-1} (a.i).} It follows that $\mathbb{E}\left[  \left\| { g_{\eta,\fyyy{\tilde{\epsilon}}}(\x,{v},\omega) - g_{\eta}(\x,{v},\omega) } \right\|^2 \mid \x\right] 
\leq \frac{{4\tilde{L}^2_0n^2}\fyyy{\tilde{\epsilon}}}{\eta^2}$ {holds almost surely.}
\end{proof}
{We make use of the following \fyyy{result} in the convergence and rate analysis. 
\begin{lemma}[Lemma~2.11 in \cite{KaushikYousefian2021}]\label{Lem:averaging_seq}\em
Let $\{\bar \x_k\}$ be generated by Algorithm \ref{algorithm:inexact_zeroth_order_SVIs}. \fyyy{Let} $\alpha_{k,N} \triangleq \frac{\gamma_k^r}{\sum_{j=0}^N \gamma_j^r}$ for $k \in \{0,\ldots, N\}$ and $N\geq 0$. Then, for any $N\geq 0$, we have $\bar{\x}_{N} = \sum_{k=0}^N \alpha_{k,N} \x_k$. Furthermore, \fyyy{if} $\Xscr$ is a convex set, \fyyy{then} $\bar \x_N \in \Xscr$.
\end{lemma}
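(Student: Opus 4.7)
My plan is to establish both claims by straightforward induction on $N$, exploiting the recursive definitions of $S_k$ and $\bar{\x}_k$ given in Algorithm \ref{algorithm:inexact_zeroth_order_SVIs}. Before beginning the induction on the weighted-average identity, I would first record the auxiliary identity $S_N = \sum_{j=0}^{N}\gamma_j^r$, which follows trivially by induction from $S_0 = \gamma_0^r$ and $S_{k+1} = S_k + \gamma_{k+1}^r$. With this in hand, the denominator in $\alpha_{k,N}$ is exactly $S_N$.

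Next, I would prove the first claim $\bar{\x}_N = \sum_{k=0}^N \alpha_{k,N}\x_k$ by induction on $N$. The base case $N=0$ is immediate since $\bar{\x}_0 = \x_0$ and $\alpha_{0,0} = \gamma_0^r / S_0 = 1$. For the inductive step, assuming $\bar{\x}_N = \sum_{k=0}^N \frac{\gamma_k^r}{S_N}\x_k$, I would substitute into the algorithm's update rule
\begin{align*}
\bar{\x}_{N+1} = \frac{S_N \bar{\x}_N + \gamma_{N+1}^r \x_{N+1}}{S_{N+1}} = \frac{\sum_{k=0}^N \gamma_k^r \x_k + \gamma_{N+1}^r \x_{N+1}}{S_{N+1}} = \sum_{k=0}^{N+1}\alpha_{k,N+1}\x_k,
\end{align*}
which closes the induction.

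For the second claim, I would observe that each $\alpha_{k,N}$ is nonnegative (since $\gamma_k > 0$ implies $\gamma_k^r > 0$) and that $\sum_{k=0}^N \alpha_{k,N} = \sum_{k=0}^N \gamma_k^r / S_N = 1$ by the auxiliary identity. Hence $\bar{\x}_N$ is a convex combination of the iterates $\x_0, \ldots, \x_N$, all of which lie in $\Xscr$ by virtue of the projection step in Algorithm \ref{algorithm:inexact_zeroth_order_SVIs}. Convexity of $\Xscr$ then yields $\bar{\x}_N \in \Xscr$. I do not anticipate any real obstacle here; the entire proof is a direct verification using the algorithm's definitions, and the only subtlety is ensuring that one first reduces $S_N$ to the telescoped sum $\sum_{j=0}^N \gamma_j^r$ before unrolling the averaging recursion.
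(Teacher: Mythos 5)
Your proof is correct; the paper itself gives no proof of this lemma, citing it directly from Lemma~2.11 of~\cite{KaushikYousefian2021}, and your induction on the recursions for $S_k$ and $\bar{\x}_k$ followed by the convex-combination observation is exactly the standard argument one would expect there. No gaps.
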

\fyy{\begin{remark}\em Lemma~\ref{Lem:averaging_seq} allows for representing $\bar \x_k$ in Algorithm~\ref{algorithm:inexact_zeroth_order_SVIs} as a weighted average of the generated iterates $\{\x_k\}$. The term $\gamma_k^r$ in the last step of (\texttt{ZSOL$^{\bf 1s}_{\rm cnvx}$}) is employed to build the weights $\frac{\gamma_k^r}{\sum_{j=0}^N \gamma_j^r}$ where $0\leq r<1$ is a fixed parameter that can be arbitrarily chosen. This averaging scheme was studied earlier~\cite{KaushikYousefian2021,FarzadAngeliaUday_MathProg17} and allows for achieving the best convergence rate for SA methods.
\end{remark}}
}
{We are now in a position to develop rate and complexity statements for Algorithms~\ref{algorithm:inexact_zeroth_order_SVIs}--\ref{algorithm:inexact_lower_level_SA}.}
The algorithm parameters for both schemes are defined next. 

{\begin{definition}[{Parameters for Algorithms~\ref{algorithm:inexact_zeroth_order_SVIs}--\ref{algorithm:inexact_lower_level_SA}}]\label{def:algo_1}\em Let the stepsize and smoothing sequence in Algorithm~\ref{algorithm:inexact_zeroth_order_SVIs} be given by {$\gamma_k := \frac{\gamma_0}{(k+1)^a}$ and $\eta_k := \frac{\eta_0}{(k+1)^b}$}, respectively for all $k\geq 0$ where {$\gamma_0, \eta_0, a,$ and $b$} are strictly positive.  In Algorithm~\ref{algorithm:inexact_lower_level_SA}, suppose  $\alpha \leq \tfrac{\mu_F}{2L_F^2}$, $M_t := \lceil M_0\rho^{-t} \rceil$ for $t\geq 0$ for some $0<\rho<1$ where $M_0 \geq \tfrac{2\nu_\y^2}{L_F^2}$.  Let $t_k:=\lceil\tau \ln(k+1) \rceil $ where $\tau \geq \frac{-{2(a+b)}}{\ln(\max\{1-\mu_F\alpha,\rho\})}$.
Finally, suppose $r \in [0,1)$ is an arbitrary scalar. 
\end{definition}}
{\begin{theorem}[\bf{{Rate and complexity statements and almost sure convergence for inexact \texttt{ZSOL$^{\bf 1s}_{\rm cnvx}$}}}]\label{thm:ZSOL_convex}\em
Consider \fy{the} sequence {$\{\bar \x_k\}$} generated by applying Algorithm~\ref{algorithm:inexact_zeroth_order_SVIs} on {\eqref{prob:mpec_exp_imp}}. Suppose {Assumptions~\ref{ass-1}--~\ref{assum:lower_level_stoch}} hold and algorithm parameters are defined by Def.~\ref{def:algo_1}.  

        \noindent {\bf(a)} {Suppose \fyy{$\hat \x_k \in \Xscr+\eta_k\mathbb{S}$} and let $\{\y_{t_k}\}$ \fy{be the} sequence generated by Algorithm \ref{algorithm:inexact_lower_level_SA}. Then for  suitably defined $\tilde{d} < 1$ and $B\fyyy{(\hat \x_k)} > 0$, the following holds for $t_k \geq 1$. }  
\begin{align*}
 \mathbb{E}[\|\y_{t_k} - \y(\hat \x_k)\|^2] \leq \fyyy{\tilde{\epsilon}_k} \triangleq  B\fyyy{(\hat \x_k)} \tilde{d}^{\fy{t_k}}.
\end{align*}

\noindent {\bf{(b)}}  {Let $a=0.5$ and $b \in [0.5,1)$ and $0\leq r< 2(1-b)$. Then, for all $K \geq 2^{\frac{1}{1-r}}-1$ we have
}  
{\begin{align*} \mathbb{E}\left[f^{\bf imp}(\bar \x_K)\right]-f^*  \leq (2-r)\left(\tfrac{D_\Xscr }{\gamma_0}+\tfrac{2\theta_0\fyyy{(\hat \x_k)}\gamma_0}{1-r}\right)\tfrac{1}{\sqrt{K+1}}+(2-r)\left(\tfrac{\eta_0L_0}{1-0.5r-b}\right)\tfrac{1}{(K+1)^{b}},\end{align*}
where $\theta_0\fyyy{(\hat \x_k)} \triangleq D_\Xscr  +\tfrac{\left(2+3\gamma_0^2\right)n^2\tilde{L}_0^2B}{\eta_0^2\gamma_0^2}+ 1.5n^2L_0^2$.
In particular, when $b:=1-\delta$ and $r=0$, where $\delta>0$ is a small scalar,  we have for all $K\geq 1$
\begin{align*} 
\mathbb{E}\left[f^{\bf imp}(\bar \x_K)\right]-f^* &\leq 2\left(\tfrac{D_\Xscr }{\gamma_0}+2\theta_0\fyyy{(\hat \x_k)}\gamma_0\right)\tfrac{1}{\sqrt{K+1}}+\left(\tfrac{2\eta_0L_0}{\delta}\right)\tfrac{1}{(K+1)^{1-\delta}}.
\end{align*}}

\noindent {\bf{(c)}}  Suppose $\gamma_0:= \mathcal{O}(\tfrac{1}{L_0})$, $a:=0.5$, $b:=0.5$, and $r:=0$. Let $\epsilon>0$ be an arbitrary scalar and $K_{\epsilon}$ be such that $\mathbb{E}\left[f^{\bf imp}(\bar \x_{K_\epsilon})\right]-f^*  \leq \epsilon$. Then,

\noindent {{(c-1)}} The total number of upper-level projection steps on $\Xscr$ is $K_{\epsilon}=\mathcal{O}\left(n^4L_0^2\tilde{L}_0^4\epsilon^{-2}  \right)$.

\noindent {{(c-2)}} \fyy{The total sample complexity of upper-level is} $\mathcal{O}\left(n^4L_0^2\tilde{L}_0^4\epsilon^{-2}  \right)$.
 
\noindent {{(c-3)}} The total number of lower-level projection steps on $\Yscr$ is $ 
\mathcal{O}\left(n^4L_0^2\tilde{L}_0^4\epsilon^{-2}  \ln\left( n^2L_0\tilde{L}_0^2\epsilon^{-1}  \right)\right).$

 \noindent {{(c-4)}} The total sample complexity of lower-level is $ \mathcal{O}\left( n^{4\bar \tau}L_0^{2\bar \tau}\tilde{L}_0^{4\bar \tau}\epsilon^{-2\bar \tau}\right)$ where $\bar \tau \geq 1-\tau \ln(\rho)$.\\

\noindent {\bf{(d)}} For any $a \in (0.5,1]$ and $b>1-a$, there exists $\x^* \in \Xscr^*$ such that $\lim_{k\to \infty} \|\bar \x_k-\x^*\|^2 = 0$ almost surely.
\end{theorem}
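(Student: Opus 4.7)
\textbf{Proof plan for Theorem \ref{thm:ZSOL_convex}(d).} The strategy is to apply the Robbins--Siegmund supermartingale convergence theorem to $\{\|\x_k-\x^*\|^2\}$ for an arbitrary $\x^*\in\Xscr^*$, and then lift the resulting almost-sure convergence of $\{\x_k\}$ to the weighted average $\{\bar\x_k\}$ via a Toeplitz-type argument. Fix $\x^*\in\Xscr^*$ and let $\Fscr_k$ denote the $\sigma$-algebra generated by the history up to iteration $k$. Using the non-expansivity of $\Pi_\Xscr$, writing $g_{\eta_k,\tilde\epsilon_k}=g_{\eta_k}+(g_{\eta_k,\tilde\epsilon_k}-g_{\eta_k})$, and exploiting $\mathbb{E}[g_{\eta_k}(\x_k,v_k,\omega_k)\mid\Fscr_k]=\nabla f^{\bf imp}_{\eta_k}(\x_k)$ from Lemma~\ref{lem:smooth_grad_properties}, I would obtain
\begin{align*}
\mathbb{E}[\|\x_{k+1}-\x^*\|^2\mid\Fscr_k]\leq\|\x_k-\x^*\|^2-2\gamma_k\langle\nabla f^{\bf imp}_{\eta_k}(\x_k),\x_k-\x^*\rangle+2\gamma_k\|e_k\|\,D_\Xscr^{1/2}+\gamma_k^2\,M_k,
\end{align*}
where $e_k\triangleq\mathbb{E}[g_{\eta_k,\tilde\epsilon_k}-g_{\eta_k}\mid\Fscr_k]$ and $M_k\triangleq\mathbb{E}[\|g_{\eta_k,\tilde\epsilon_k}\|^2\mid\Fscr_k]$.

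Next I would combine Lemma~\ref{lemma:props_local_smoothing}(v) (convexity and uniform sandwich $f^{\bf imp}\le f^{\bf imp}_{\eta_k}\le f^{\bf imp}+\eta_k L_0$) to lower bound the inner product by $f^{\bf imp}(\x_k)-f^*-\eta_k L_0$. Lemma~\ref{lem:inexact_error_bounds}(a)-(b) together with Jensen's inequality yield $\|e_k\|\le 2n\tilde L_0\sqrt{\tilde\epsilon_k}/\eta_k$ and $M_k\le 3n^2(2\tilde L_0^2\tilde\epsilon_k/\eta_k^2+L_0^2)$. Invoking part~(a) of the theorem, the choice of $\tau$ in Def.~\ref{def:algo_1} ensures $\tilde d^{t_k}\le(k+1)^{-2(a+b)}$, so that with $\gamma_k=\gamma_0/(k+1)^a$ and $\eta_k=\eta_0/(k+1)^b$, each non-negative term on the right is $O\!\left((k+1)^{-(a+b)}\right)$, $O\!\left((k+1)^{-2a}\right)$, $O\!\left((k+1)^{-4a}\right)$, or $O\!\left((k+1)^{-2a}\right)$. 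Under $a\in(0.5,1]$ and $b>1-a$ these sums are finite. Therefore the recursion has the Robbins--Siegmund form
\begin{align*}
\mathbb{E}[\|\x_{k+1}-\x^*\|^2\mid\Fscr_k]\le\|\x_k-\x^*\|^2+\beta_k-2\gamma_k\bigl(f^{\bf imp}(\x_k)-f^*\bigr),
\end{align*}
with $\sum_k\beta_k<\infty$ and $f^{\bf imp}(\x_k)-f^*\ge 0$, from which I conclude that $\|\x_k-\x^*\|^2$ converges a.s.\ to a non-negative random variable and $\sum_k\gamma_k(f^{\bf imp}(\x_k)-f^*)<\infty$ a.s.

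Since $\sum_k\gamma_k=\infty$ (as $a\le 1$), this forces $\liminf_k f^{\bf imp}(\x_k)=f^*$ a.s. By compactness of $\Xscr$ and continuity of $f^{\bf imp}$, almost every sample path admits a subsequence $\{\x_{k_j}\}$ converging to some $\tilde\x\in\Xscr^*$. Re-applying the Robbins--Siegmund conclusion with $\x^*$ replaced by this (path-dependent) $\tilde\x$ shows that $\|\x_k-\tilde\x\|^2$ converges a.s.; since one subsequence tends to zero, the whole sequence does, yielding $\x_k\to\tilde\x$ a.s. Finally, by Lemma~\ref{Lem:averaging_seq}, $\bar\x_N=\sum_{k=0}^N\alpha_{k,N}\x_k$ is a convex combination with weights $\alpha_{k,N}=\gamma_k^r/\sum_{j=0}^N\gamma_j^r$; since $ar<1$ we have $\sum_j\gamma_j^r\to\infty$, so $\alpha_{k,N}\to 0$ for each fixed $k$, and a standard Toeplitz lemma gives $\bar\x_k\to\tilde\x$ a.s.

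\textbf{Main obstacle.} The delicate step is orchestrating the rates so that all four error-type sums ($\sum\gamma_k\eta_k$, $\sum\gamma_k\sqrt{\tilde\epsilon_k}/\eta_k$, $\sum\gamma_k^2\tilde\epsilon_k/\eta_k^2$, and $\sum\gamma_k^2$) are finite simultaneously; this is what pins down the joint regime $a\in(0.5,1]$, $b>1-a$, and relies crucially on the geometric decay of $\tilde\epsilon_k$ from part~(a) together with the lower bound on $\tau$ in Def.~\ref{def:algo_1}. A secondary subtlety is that $B(\hat\x_k)$ in part~(a) must be uniformly bounded along the trajectory, which follows from the boundedness of $\Xscr$ in Assumption~\ref{ass-1}(a.ii).
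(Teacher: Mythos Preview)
Your proposal is correct and follows essentially the same strategy as the paper: derive a one-step recursion for $\|\x_k-\x^*\|^2$, verify summability of the noise terms under $a\in(0.5,1]$ and $b>1-a$, apply Robbins--Siegmund (Lemma~\ref{lemma:supermartingale}), extract a subsequential limit in $\Xscr^*$, upgrade to full-sequence convergence, and finish with the Toeplitz-type Lemma~\ref{lem:convergence_sum} for $\bar\x_k$.

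The only notable difference is how you handle the bias $g_{\eta_k,\tilde\epsilon_k}-g_{\eta_k}$. The paper, in part~(b), uses the Young-type split $-2\gamma_k(\x_k-\x^*)^Tw_k\le\gamma_k^2\|\x_k-\x^*\|^2+\mathbb{E}[\|w_k\|^2\mid\x_k]$ and then shows $\tilde\epsilon_k/\eta_k^2\le(B/\eta_0^2\gamma_0^2)\gamma_k^2$ via the choice of $\tau$, packaging everything into the single recursion~\eqref{eqn:recurs_ineq_as_conv} with error $2\gamma_k^2\theta_0+2\gamma_k\eta_kL_0$. Part~(d) then simply reuses this recursion and checks that $\sum\gamma_k^2<\infty$ and $\sum\gamma_k\eta_k<\infty$. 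You instead bound the bias cross-term by Cauchy--Schwarz, producing an extra $\gamma_k\sqrt{\tilde\epsilon_k}/\eta_k$ term and hence four summability conditions rather than two. Both routes are fine; the paper's is slightly more economical because it recycles the inequality already established in~(b), while yours is self-contained but has more bookkeeping.
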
}
\begin{proof}
{\noindent {\bf (a)} 
We define the errors $\Delta_t \triangleq \bar{F}(\hat \x_k,\y_t) - F(\hat{\x}_k,\y_t)$ for $t\geq 0$.  
Next, we estimate a bound on the term $\mathbb{E}[\|\Delta_t\|^2\mid \fyy{\hat \x_k,\y_t} ]$. \fy{From Assumption \ref{assum:lower_level_stoch} we} have
\begin{align}\label{eqn:bound_on_Delta_norm_sq}
\mathbb{E}[\|\Delta_t\|^2\mid\fyy{\hat \x_k,\y_t} ] &= \mathbb{E}\left[\left\|\tfrac{\sum_{\ell = 1}^{M_t} (G(\hat{\x}_k,\y_t,\omega_{\ell, t})-F(\hat \x_k,\fyyy{\y_t}))}{M_t}\right\|^2\mid \fyy{\hat \x_k,\y_t} \right]\notag \\
& = \tfrac{1}{M_t^2}\mathbb{E}\left[\sum_{\ell = 1}^{M_t} \left\|G(\hat{\x}_k,\y_t,\omega_{\ell, t})-F(\hat \x_k,\fyyy{\y_t})\right\|^2\mid \fyy{\hat \x_k,\y_t} \right]\leq \tfrac{\nu_\y^2\|\y_t\|^2+\nu_G^2}{M_t}.
\end{align}
From $ \fy{\y(\hat\x_k)} \in \mbox{SOL}(\Yscr, F(\fy{\hat \x_k,\bullet}))$, we have $\y(\hat \x_k)=\Pi_{\Yscr}\left[\y(\hat \x_k) -\alpha{F}(\hat \x_k,\y(\hat \x_k))\right]$ {for any $\alpha > 0$}. We have
\begin{align*}
&\quad  \|\y_{t+1}- \y(\hat \x_k)\|^2 = \|\Pi_{\Yscr}\left[\y_t - \alpha\bar{F}(\hat \x_k,\y_t)\right] - \Pi_{\Yscr}\left[\y(\hat \x_k) -\alpha{F}(\hat \x_k,\y(\hat \x_k))\right] \|^2 \\ 
&\leq  \|\y_t - \alpha\bar{F}(\hat \x_k,\y_t)-\y(\hat \x_k) +\alpha{F}(\hat \x_k,\y(\hat \x_k))\|^2\\
& =  \|\y_t - \alpha{F}(\hat \x_k,\y_t)-\alpha\Delta_t-\y(\hat \x_k)+\alpha{F}(\hat \x_k,\y(\hat \x_k)) \|^2\\
& = \|\y_t - \y(\hat \x_k)\|^2 + \alpha^2\|{F}(\hat \x_k,\y_t)-{F}(\hat \x_k,\y(\hat \x_k))\|^2 +\alpha^2\|\Delta_t\|^2 \\
&- 2\alpha(\y_t - \y(\hat \x_k))^T({F}(\hat \x_k,\y_t)-{F}(\hat \x_k,\y(\hat \x_k))) \\
&-2\alpha(\y_t - \y(\hat \x_k)-\alpha{F}(\hat \x_k,\y_t)+\alpha{F}(\hat \x_k,\y(\hat \x_k)) )^T\Delta_t.
\end{align*}
Taking conditional expectations in the preceding relation,  using \eqref{eqn:bound_on_Delta_norm_sq}, and invoking the strong monotonicity and Lipschitzian property of the mapping $F$ in Assumption \ref{ass-1}, we obtain
\begin{align*}
   \mathbb{E}[\|\y_{t+1}- \y(\hat \x_k)\|^2\mid \fyy{\hat \x_k,\y_t}] & \leq \left(1-2\mu_F\alpha+\alpha^2L_F^2\right)\|\y_t - \y(\hat \x_k)\|^2 +\tfrac{\nu_\y^2\|\y_t\|^2+\nu_G^2}{M_t}\alpha^2.
\end{align*}
Taking expectations on both sides, we obtain
\begin{align*}
   \mathbb{E}[\|\y_{t+1}- \y(\hat \x_k)\|^2] & \leq\left(1-2\mu_F\alpha+\alpha^2L_F^2\right) \mathbb{E}[\|\y_t - \y(\hat \x_k)\|^2] +\tfrac{\nu_\y^2   \mathbb{E}[\|\y_t- \y(\hat \x_k)+ \y(\hat \x_k)\|^2]+\nu_G^2}{M_t}\alpha^2\\
   & \leq\left(1-2\mu_F\alpha+\alpha^2L_F^2+\tfrac{2\nu_\y^2}{M_0}\alpha^2\right) \mathbb{E}[\|\y_t - \y(\hat \x_k)\|^2] +\tfrac{2\nu_\y^2 \|\y(\hat \x_k)\|^2+\nu_G^2}{M_t}\alpha^2.
\end{align*}
\fy{Let} $\lambda \triangleq 1-2\mu_F\alpha+\alpha^2L_F^2+\tfrac{2\nu_\y^2}{M_0}\alpha^2 $ \fyyy{and} $\Lambda_t\fyyy{(\hat \x_k)} \triangleq \tfrac{2\nu_\y^2 \|\y(\hat \x_k)\|^2+\nu_G^2}{M_t}\alpha^2$ for $t\geq 0$. Note that since $M_0 \geq \tfrac{2\nu_\y^2}{L_F^2}$ and that $\alpha \leq \tfrac{\mu_F}{2L_F^2}$, we have $\lambda \leq 1-\mu_F\alpha <1$. We obtain for any $t \geq 0$
\begin{align*}
 \mathbb{E}[\|\y_{t+1}- \y(\hat \x_k)\|^2] & \leq\lambda^{t+1}\|\y_0 - \y(\hat \x_k)\|^2+\sum_{j=0}^t\lambda^{t-j}\Lambda_j\fyyy{(\hat \x_k)}\\
 & \leq\lambda^{t+1}\|\y_0 - \y(\hat \x_k)\|^2+\Lambda_0\fyyy{(\hat \x_k)}(\max\{\lambda,\rho\})^t\sum_{j=0}^t\left(\tfrac{\min\{\lambda,\rho\}}{\max\{\lambda,\rho\}}\right)^{t-j}\\
 & \leq\lambda^{t+1}\|\y_0 - \y(\hat \x_k)\|^2+\tfrac{\Lambda_0\fyyy{(\hat \x_k)}(\max\{\lambda,\rho\})^t}{1-(\min\{\lambda,\rho\}/\max\{\lambda,\rho\})}\leq B\fyyy{(\hat \x_k)} \tilde{d}^{t+1}.
\end{align*}
{where $\tilde{d} \triangleq \max\{\lambda, \rho\}$ and $B\fyyy{(\hat \x_k)} \triangleq \sup_{\y \in \Yscr}\|\y -\y_0\|^2+\tfrac{\Lambda_0\fyyy{(\hat \x_k)}}{\max\{\lambda,\rho\}-\min\{\lambda,\rho\}}$. \fyyy{Note that in view of compactness of $\Yscr$, $B\fyyy{(\hat \x_k)}<\infty$. Also,} without loss of generality, we assume {that} $\rho\neq \lambda$.}

\noindent {\bf (b)} 
Let us define $\bar{F}(\hat \x_k,\y_t)\triangleq \tfrac{\sum_{\ell = 1}^{M_t} G(\hat{\x}_k,\y_t,\omega_{\ell, t})}{M_t}$ for $t\geq 0$ and $k\geq 0$. Note that from the compactness of the set $\Xscr$ and the continuity of the implicit function, the set $\Xscr^*$ is nonempty. Let $\x^* \in \Xscr$ be an arbitrary optimal solution. We have that}
\begin{align*}
\|\x_{k+1} - \x^*\|^2 &= \left\|\Pi_{\Xscr} \left[ \x_k - \gamma_k  {g}_{\eta_k,\fyyy{\tilde{\epsilon}_k}}(\x_k,v_k,\omega_k) \right] -\Pi_{\Xscr}\left[\x^* \right]\right\|^2 \leq  \left\|  \x_k - \gamma_k   {g}_{\eta_k,\fyyy{\tilde{\epsilon}_k}}(\x_k,v_k,\omega_k)-{\x^*}\right\|^2 \\
&= \|\x_k-\x^*\|^2 -2\gamma_k(\x_k-\x^*)^T  {g}_{\eta_k,\fyyy{\tilde{\epsilon}_k}}(\x_k,v_k,\omega_k)+\gamma_k^2\| {g}_{\eta_k,\fyyy{\tilde{\epsilon}_k}}(\x_k,v_k,\omega_k)\|^2\\
&= \|\x_k-\x^*\|^2 -2\gamma_k(\x_k-\x^*)^T  ({g}_{\eta_k}(\x_k,v_k,\omega_k)+w_k)+\gamma_k^2\| {g}_{\eta_k,\fyyy{\tilde{\epsilon}_k}}(\x_k,v_k,\omega_k)\|^2,
\end{align*}
where we define $w_k \triangleq {g}_{\eta_k,\fyyy{\tilde{\epsilon}_k}}(\x_k,v_k,\omega_k)-g_{\eta_k}(\x_k,v_k,\omega_k)$. Taking conditional expectations on the both sides, and invoking Lemma \ref{lem:smooth_grad_properties} and Lemma \ref{lem:inexact_error_bounds} (a), we obtain
\begin{align*}
    \mathbb{E}\left[\|\x_{k+1} - \x^*\|^2\mid \fyy{\x_k}\right] &\leq  \|\x_k-\x^*\|^2 -2\gamma_k(\x_k-\x^*)^T \nabla {f^{\bf imp}_{\eta_k}(\x_k)}\\
                                                           &-2\gamma_k    \mathbb{E}\left[(\x_k-\x^*)^Tw_k \mid \fyy{\x_k}\right] +3n^2\gamma_k^2\left(\tfrac{2{\tilde{L}_0^2} \fyyy{\tilde{\epsilon}_k}   }{\eta_k^2} + {L_0^2}\right).
\end{align*}
Invoking the convexity of ${f^{\bf imp}_{\eta_k}}$, bounding $-2\gamma_k(\x_k-\x^*)^Tw_k$,  and rearranging the terms, we obtain
\begin{align*}
2\gamma_k\left({f^{\bf imp}_{\eta_k}(\x_k )-f^{\bf imp}_{\eta_k}(\x^*)}\right) &\leq \|\x_k-\x^*\|^2 -\mathbb{E}\left[\|\x_{k+1} - \x^*\|^2\mid \fyy{\x_k}\right]\\
                                                                          & + \gamma_k^2 \|\x_k-\x^*\|^2 +\mathbb{E}\left[\|w_k\|^2\mid \fyy{\x_k}\right] +  3n^2\gamma_k^2\left(\tfrac{2{\tilde{L}_0^2} \fyyy{\tilde{\epsilon}_k}   }{\eta_k^2} + {L_0^2}\right).
\end{align*}
From Lemma \ref{lem:inexact_error_bounds} (b) we obtain 
\begin{align*}
2\gamma_k\left({f^{\bf imp}_{\eta_k}(\x_k )-f^{\bf imp}_{\eta_k}(\x^*)}\right) &\leq \|\x_k-\x^*\|^2 -\mathbb{E}\left[\|\x_{k+1} - \x^*\|^2\mid \fyy{\x_k}\right]\\
                                                                          & + \gamma_k^2 \|\x_k-\x^*\|^2 + \tfrac{{4\tilde{L}^2_0n^2}\fyyy{\tilde{\epsilon}_k}}{\eta_k^2} +  3n^2\gamma_k^2\left(\tfrac{2{\tilde{L}_0^2} \fyyy{\tilde{\epsilon}_k}   }{\eta_k^2} + {L_0^2}\right).
\end{align*}
From Lemma \ref{lemma:props_local_smoothing} (v) we have that {$f^{\bf imp}(\x_k) \leq f^{\bf imp}_{\eta_k}(\x_k)$ and $ f^{\bf imp}_{\eta_k}(\x^*) \leq {f^*} +\eta_k{L_0}$.  From the preceding inequalities} we obtain
\begin{align*}
 \quad {2\gamma_k\left(f^{\bf imp}(\x_k)-{f^*}\right)}&\leq\|\x_k-\x^*\|^2 -\mathbb{E}\left[\|\x_{k+1} - \x^*\|^2\mid \fyy{\x_k}\right] 
+ \gamma_k^2  \|\x_k-\x^*\|^2 \\
&+  (4+6\gamma_0^2)\tfrac{{\tilde{L}^2_0n^2}\fyyy{\tilde{\epsilon}_k}}{\eta_k^2}+2\gamma_k \eta_k L_0+ 3n^2L_0^2\gamma_k^2.
\end{align*}
Next, we derive a bound on \fy{$\tfrac{\fyyy{\tilde{\epsilon}_k}}{\eta_k^2}$}. From part (a) and the update rule of $\eta_k$ we have
\begin{align}\label{eqn:bound_on_ek_etak2}
\tfrac{\fyyy{\tilde{\epsilon}_k}}{\eta_k^2} =\left(\tfrac{\fyyy{\tilde{\epsilon}_k}}{\eta_k^2\gamma_k^2}\right) \gamma_k^2= \left(\tfrac{\left(\max\{\lambda,\rho\}\right)^{t_k}B\fyyy{(\hat \x_k)}(k+1)^{2(a+b)}}{\eta_0^2\gamma_0^2}\right) \gamma_k^2.
\end{align}
Note that from $\alpha \leq \tfrac{\mu_F}{2L_F^2}$ and $M_0 \geq \tfrac{2\nu_\y^2}{L_F^2}$, we have $\lambda \leq 1-\mu_F\alpha$. Thus, we have $\tau \geq \frac{-{2(a+b)}}{\ln(\max\{1-\mu_F\alpha,\rho\})} \geq \frac{-{2(a+b)}}{\ln(\max\{\lambda,\rho\})}$. From $t_k:=\lceil\tau \ln(k+1) \rceil \geq \tau \ln(k+1)$ and $\tau \geq  \frac{-{2(a+b)}}{\ln(\max\{\lambda,\rho\})}$ we have that 
\begin{align*}
\left(\max\{\lambda,\rho\}\right)^{t_k}(k+1)^{2(a+b)}\leq \left(\left(\max\{\lambda,\rho\}\right)^{\tau}\mathrm{e}^{2(a+b)}\right)^{\ln(k+1)}\leq \left(\max\{\lambda,\rho\}\right)^{\tau}\mathrm{e}^{2(a+b)} \leq 1.
\end{align*}
This relation and \eqref{eqn:bound_on_ek_etak2} imply that $\tfrac{\fyyy{\tilde{\epsilon}_k}}{\eta_k^2}\leq \left(\tfrac{B\fyyy{(\hat \x_k)}}{\eta_0^2\gamma_0^2}\right) \gamma_k^2$. Also, note that since $\Xscr$ is bounded, there exists a scalar $D_\Xscr \triangleq \frac{1}{2}\sup_{\x \in \Xscr} \|\x-\x^*\|^2$ \fyyy{such that $D_{\Xscr}<\infty$}.  Therefore, we obtain
\begin{align}{\label{eqn:recurs_ineq_as_conv}}
& {2\gamma_k\left(f^{\bf imp}(\x_k)-{f^*}\right)\leq  \|\x_k-\x^*\|^2  -\mathbb{E}\left[\|\x_{k+1} - \x^*\|^2\mid \fyy{\x_k} \right] 
+ 2\gamma_k^2\theta_0\fyyy{(\hat \x_k)}+ 2\gamma_k\eta_k L_0,}
\end{align}
where {$\theta_0\fyyy{(\hat \x_k)} \triangleq D_\Xscr  +\tfrac{\left(2+3\gamma_0^2\right)n^2\tilde{L}^2_0B\fyyy{(\hat \x_k)}}{\eta_0^2\gamma_0^2}+ 1.5n^2L_0^2\ \fyyy{<\infty}$}. {Taking expectations on the both sides and multiplying} both sides by $\frac{\gamma_k^{r-1}}{2}$, we have that 
\begin{align}\label{eqn:inexact_proof_a_ineq1}
& \gamma_k^r\left(\mathbb{E}\left[{f^{\bf imp}(\x_k)}\right]-{f^*}\right)\leq \frac{\gamma_k^{r-1}}{2}\left(\mathbb{E}\left[ \|\x_k-\x^*\|^2\right] -\mathbb{E}\left[\|\x_{k+1} - \x^*\|^2 \right] \right)
+ \gamma_k^{1+r}\theta_0\fyyy{(\hat \x_k)}+\gamma_k^r\eta_kL_0.
\end{align}
{Adding and subtracting the term $\frac{\gamma_{k-1}^{r-1}}{2}\mathbb{E}\left[ \|\x_k-\x^*\|^2\right]$, we obtain 
\begin{align*} 
 & \quad \gamma_k^r\left(\mathbb{E}\left[{f^{\bf imp}(\x_k)}\right]-{f^*}\right)\\ 
& \leq \frac{\gamma_{k-1}^{r-1}}{2}\mathbb{E}\left[ \|\x_k-\x^*\|^2\right] - \frac{\gamma_{k}^{r-1}}{2}\mathbb{E}\left[\|\x_{k+1} - \x^*\|^2 \right] + \left(\gamma_k^{r-1}-\gamma_{k-1}^{r-1}\right)D_\Xscr + \theta_0\fyyy{(\hat \x_k)} \gamma_k^{1+r}+\gamma_k^r\eta_kL_0.
\end{align*}
Summing both sides from $k = 1,\ldots, K$ we obtain
\begin{align*} 
\sum_{k=1}^K\gamma_k^r\left(\mathbb{E}\left[{f^{\bf imp}(\x_k)}\right]-{f^*}\right) &\leq \frac{\gamma_{0}^{r-1}}{2}\mathbb{E}\left[ \|\x_1-\x^*\|^2\right]  + \left(\gamma_K^{r-1}-\gamma_{0}^{r-1}\right)D_\Xscr  \\ &+\theta_0\fyyy{(\hat \x_k)}\sum_{k=1}^K\gamma_k^{1+r}+L_0\sum_{k=1}^K\gamma_k^r\eta_k.
\end{align*}
Writing \eqref{eqn:inexact_proof_a_ineq1} for $k:=0$ we have
\begin{align*}
& \gamma_0^r\left(\mathbb{E}\left[{f^{\bf imp}(\x_0)}\right]-{f^*}\right)\leq \frac{\gamma_0^{r-1}}{2}\left(\mathbb{E}\left[ \|\x_0-\x^*\|^2\right] -\mathbb{E}\left[\|\x_{1} - \x^*\|^2 \right] \right)
+ \theta_0\fyyy{(\hat \x_k)}\gamma_0^{1+r}+\gamma_0^r\eta_0L_0.
\end{align*}
Adding the preceding two relations together and using the definition of $D_\Xscr$, we obtain
\begin{align*} 
\sum_{k=0}^K\gamma_k^r\left(\mathbb{E}\left[{f^{\bf imp}(\x_k)}\right]-{f^*}\right) \leq D_\Xscr \gamma_K^{r-1} +\theta_0\fyyy{(\hat \x_k)} \sum_{k=0}^K\gamma_k^{1+r}+L_0\sum_{k=0}^K\gamma_k^r\eta_k.
\end{align*}
From the definition $\bar \x_K \triangleq \sum_{k=0}^K\alpha_{k,K}\x_k$ in Lemma \ref{Lem:averaging_seq} and applying the convexity of the implicit function, for all $K \geq 2^{\frac{1}{1-r}}-1$ we have
\begin{align*} 
\mathbb{E}\left[{f^{\bf imp}(\bar \x_K)}\right]-f^* \leq \frac{D_\Xscr\gamma_K^{r-1} +\theta_0\fyyy{(\hat \x_k)} \sum_{k=0}^K\gamma_k^{1+r}+L_0\sum_{k=0}^K\gamma_k^r\eta_k}{\sum_{k=0}^K\gamma_k^r}.
\end{align*}
Substituting $\gamma_k :=\frac{\gamma_0}{\sqrt{k+1}}$ and $\eta_k := \frac{\eta_0}{(k+1)^b}$, and invoking Lemma \ref{lem:harmonic_bounds}, we obtain
\begin{align*} 
\mathbb{E}\left[{f^{\bf imp}(\bar \x_K)}\right]-f^* & \leq \frac{D_\Xscr \gamma_0^{r-1}(K+1)^{0.5(1-r)} +\theta_0\fyyy{(\hat \x_k)}\gamma_0^{1+r}\frac{(K+1)^{1-0.5(1+r)}}{1-0.5(1+r)}+\gamma_0^r\eta_0L_0\frac{(K+1)^{1-0.5r-b}}{1-0.5r-b}}{\gamma_0^r\frac{(K+1)^{1-0.5r}}{2-r}} \\
& \leq (2-r)\left(\tfrac{D_\Xscr }{\gamma_0}+\tfrac{2\theta_0\fyyy{(\hat \x_k)}\gamma_0}{1-r}\right)\tfrac{1}{\sqrt{K+1}}+(2-r)\left(\tfrac{\eta_0L_0}{1-0.5r-b}\right)\tfrac{1}{(K+1)^{b}}.
\end{align*}}

\noindent {\bf(c)} {The results in (c-1) and (c-2) follow directly from part (b) by substituting $\gamma_0$ and $r$. To show part (c-3), note that in Algorithm \ref{algorithm:inexact_zeroth_order_SVIs},  we have $t_k : = \lceil \tau\ln(k+1)\rceil$. From part (b),  we require the following total number of iterations of the SA scheme.
\begin{align*}
2\sum_{k=0}^{K_\epsilon} t_k &= 2\sum_{k=0}^{K_\epsilon}   \lceil \tau\ln(k+1)\rceil\leq 2\left(K_\epsilon+1\right)+2 \tau\sum_{k=2}^{K_\epsilon+1}\ln(k)\\
                             &\leq 2\left(K_\epsilon+1\right)+2 \tau\int_{2}^{K_\epsilon+1}\ln({u})d{u}   \leq 2\left(K_\epsilon+1\right)+ 2\tau\left(K_\epsilon+2\right)\ln\left(K_\epsilon+2\right)\\
&\leq  4\max\{\tau,1\}\left(K_\epsilon+2\right)\ln\left(K_\epsilon+2\right).
\end{align*}
The bound in (c-3) follows from the preceding inequality and the bound on $K_\epsilon$ in (c-1). To show (c-4), note that the total samples used in the lower-level is as follows.
\begin{align*}
2\sum_{k=0}^{K_\epsilon}\sum_{t=0}^{ t_k} M_t &=2\sum_{k=0}^{K_\epsilon}\sum_{t=0}^{ t_k}   \lceil M_0\rho^{-t}\rceil \leq 4M_0 \sum_{k=0}^{K_\epsilon}\sum_{t=0}^{t_k}   \rho^{-t} =\mathcal{O}\left( \sum_{k=0}^{K_\epsilon}  \frac{\rho^{-t_k}}{\ln(\tfrac{1}{\rho})}\right)=\mathcal{O}\left( \sum_{k=0}^{K_\epsilon}  \frac{\rho^{-\tau\ln(k+1)}}{\ln(\tfrac{1}{\rho})}\right)\\
& \leq \mathcal{O}\left( \sum_{k=0}^{K_\epsilon}  \frac{e^{\fy{(\bar \tau-1)}\ln(k+1)}}{\ln(\tfrac{1}{\rho})}\right)=\mathcal{O}\left( \sum_{k=0}^{K_\epsilon}  \frac{(k+1)^{\fy{\bar \tau-1}}}{\ln(\tfrac{1}{\rho})}\right)=\mathcal{O}\left(  \frac{K_\epsilon^{\bar \tau}}{\ln(\tfrac{1}{\rho})}\right),
\end{align*}
where \fy{$\bar \tau \geq 1+\tau \ln(\tfrac{1}{\rho})$}. The bound in (c-4) follows from the preceding inequality and the bound on $K_\epsilon$ in (c-1).
}

\noindent {{\bf{(d)}} Consider relation \eqref{eqn:recurs_ineq_as_conv}. Rearranging the terms, for all $k\geq 0$ we have 
\begin{align*}
& \mathbb{E}\left[\|\x_{k+1} - \x^*\|^2\mid \fyy{\x_k} \right]  \leq   \|\x_k-\x^*\|^2  -2\gamma_k\left(f^{\bf imp}(\x_k)-{f^*}\right)
+ 2\gamma_k^2\theta_0\fyyy{(\hat \x_k)}+2\gamma_k\eta_k L_0.
\end{align*}
Note that we have $\sum_{k=0}^{\infty} \gamma_k^2 < \infty$ and
$\sum_{k=0}^{\infty} \gamma_k \eta_k < \infty$ since $b>0.5$. Thus, in view of
Lemma \ref{lemma:supermartingale}, we have that $\{\|\x_k-\x^*\|^2\}$ is a
convergent sequence in an almost sure sense and $\sum_{k=0}^{\infty}  \gamma_k
(f^{\bf imp}(\x_k) - f^*) < \infty$ almost surely. The former statement implies
that $\{\x_k\}$ is a bounded sequence {in an a.s. sense}. Further, the latter statement and
$\sum_{k=0}^{\infty} \gamma_k = \infty$ imply that $\liminf_{k \to \infty}
f^{\bf imp}(\x_k)= f^*$ {in an a.s. sense}. Thus, from continuity of the implicit function, there
is a subsequence of $\{\x_k\}_{k \in \mathcal{K}}$ with {limit point
denoted by $\hat \x$} such that $\hat \x \in \Xscr^*$. Since
$\{\|\x_k-\x^*\|^2\}$ is a convergent sequence for all $\x^* \in \Xscr^*$, we
have  $\{\|\x_k-\hat \x\|^2\}$ is a convergent sequence. But {we have shown} that
$\lim_{k\to \infty, \ k \in \mathcal{K}}\|\x_k-\hat \x\|^2 =0$ almost surely.
Hence $\lim_{k\to \infty}\|\x_k-\hat \x\|^2 =0$ almost surely where $\hat \x
\in \Xscr^*$. Next, we show that $\lim_{k\to \infty}\|\bar \x_k-\hat \x\|^2
=0$. In view of Lemmas \ref{Lem:averaging_seq} and \ref{lem:convergence_sum},
it suffices to have $\sum_{k=0}^\infty \gamma_k^r = \infty$ or equivalently, we
must have $ar\leq 1$. This is already satisfied as a consequence of $a \in
(0.5,1]$ and $r \in [0,1)$.} 
\end{proof}
{\begin{remark}[{\bf Variance-reduction schemes}]\em
\begin{enumerate}
\item[]
\item[(i)] In Algorithm~\ref{algorithm:inexact_lower_level_SA} we employ a variance-reduced (VR) scheme in computing an $\epsilon$-solution of the parametrized VI at the lower-level. This is crucial since it allows for computing an $\epsilon$-solution in $\log(1/\epsilon)$ steps while in a non-VR regime, it would have taken $\mathcal{O}(1/\epsilon)$ steps. Variance-reduction on strongly monotone VIs has been studied in~\cite{iusem19variance,cui2021analysis,jalilzadeh19proximal}, amongst others. 
\item[(ii)] In addressing single-stage SMPECs, while employing a VR scheme in either lower-level or upper-level is possible, but sometimes this approach may not be advisable to be adopted at the both levels simultaneously. For instance, in (\texttt{ZSOL$^{\bf 1s}_{\rm cnvx}$}), employing a VR scheme in the upper-level would lead to requiring an increasing number of inexact solutions of a lower-level stochastic VI at each iteration, where each of these solutions would require a VR scheme to be employed in the lower-level. Consequently, this may render the scheme impractical. 
\end{enumerate}
\end{remark}}

{\begin{remark}[{\bf Definition of history}]\em 
We conclude this subsection with a brief remark regarding the formal definition of the $\sigma-$algebra for Algorithms~\ref{algorithm:inexact_zeroth_order_SVIs}--\ref{algorithm:inexact_lower_level_SA}. First, $\mathcal{F}_{0,0} \triangleq \{\x_0\}$. In addition, $\mathcal{F}_{k,0}$ is defined as 
\begin{align*} 
	\mathcal{F}_{1,0}  = \mathcal{F}_{0,0} \cup \left\{\omega_0, v_0\right\} \cup \mathcal{F}_{0,t_0}^1 \cup \mathcal{F}_{0,t_0}^2,    \mbox{ where }
\end{align*}
\begin{align*}
\mathcal{F}_{0,t}^1 & \triangleq \left\{ \left\{G(\x_0,\y_{0},\omega_{\ell,{0}}\fyyy{)}\right\}_{\ell=1}^{M_{0}}, \cdots, \left\{G({\x}_0,\y_{t-1},\omega_{\ell,t-1}\fyyy{)}\right\}_{\ell=1}^{M_0}\right\} \mbox{ and } \\
\mathcal{F}_{0,t}^2 & \triangleq \left\{ \left\{G(\x_0+v_0,\y_{0},\omega_{\ell,{0}}\fyyy{)}\right\}_{\ell=1}^{M_{0}}, \cdots, \left\{G({\x}_0+v_0,\y_{t-1},\omega_{\ell,t-1}\fyyy{)}\right\}_{\ell=1}^{M_0}\right\} \mbox{ for } t = 0, \cdots, t_0-1.
\end{align*}
At the $k$th iteration with $k > 0$, we have that  
\begin{align*} 
	\mathcal{F}_{k,0}  = \mathcal{F}_{k-1,0} \cup \left\{\omega_k, v_k\right\} \cup \mathcal{F}_{k,t_k}^1 \cup \mathcal{F}_{k,t_k}^2, \mbox{ where }   
\end{align*}
\begin{align*}
\mathcal{F}_{k,t}^1 & \triangleq \left\{ \left\{G(\x_k,\y_{0},\omega_{\ell,{0}}\fyyy{)}\right\}_{\ell=1}^{M_{t}}, \cdots, \left\{G({\x}_k,\y_{t-1},\omega_{\ell,t-1}\fyyy{)}\right\}_{\ell=1}^{M_t}\right\} \mbox{ and } \\
\mathcal{F}_{k,t}^2 & \triangleq \left\{ \left\{G(\x_k+v_k,\y_{0},\omega_{\ell,{t}}\fyyy{)}\right\}_{\ell=1}^{M_{t}}, \cdots, \left\{G({\x}_k+v_k,\y_{t-1},\omega_{\ell,t-1}\fyyy{)}\right\}_{\ell=1}^{M_t}\right\} \mbox{ for } t = 0, \cdots, t_k-1.
\end{align*}
 In particular, at the $t$th, iteration of the SA scheme at the $k$th upper-level step, we may define $\mathcal{F}_{k,t}$ as 
\begin{align*}
\mathcal{F}_{k,t} \triangleq \mathcal{F}_{k,0} \cup \left\{ \left\{G(\hat{\x}_k,\y_{0},\omega_{\ell,{0}}\fyyy{)}\right\}_{\ell=1}^{M_{0}}, \cdots, \left\{G(\hat{\x}_k,\y_{t-1},\omega_{\ell,t-1}\fyyy{)}\right\}_{\ell=1}^{M_{t-1}}\right\}, \mbox{ for } t = 0, \cdots, t_k - 1.
\end{align*} 
Furthermore, at the $t$th step of the lower-level SA scheme associated with the $k$th iteration, the history is denoted by $\mathcal{F}_{k-1,t}^1$ and $\mathcal{F}_{k-1,t}^2$, defined as 
\begin{align*}
{\mathcal{F}}_{k-1,t}^{1} \triangleq \mathcal{F}_{k-1,0} \cup \{v_k,\omega_k\} \cup \mathcal{F}_{k,t-1}^1  \mbox{ and }
{\mathcal{F}}_{k-1,t}^{2} \triangleq \mathcal{F}_{k-1,0} \cup \{v_k,\omega_k\} \cup \mathcal{F}_{k,t-1}^2.
\end{align*}
 Naturally, one can employ these histories in constructing the conditional
expectations; specifically, at the $k$th iteration, we may use
$\mathcal{F}_{k-1,0}$ while at the $t$th step of the lower-level SA scheme at
the $k$th iteration, we may use $\mathcal{F}_{k-1,t-1}$. For expository ease,
we use the iterate as a proxy in constructing the conditional
expectation, as the reader will observe. Note that for expository ease, we
employ $\y_t$ at iteration $k$ as a proxy for the history (rather than
$\y_{k,t}$).  \end{remark}
}

\subsubsection{An exact zeroth-order scheme}
In this subsection, we consider the {case} where an exact solution of the
lower-level problem is available. {This case is particularly relevant when the lower-level problem is a deterministic variational inequality problem and highly accurate solutions are available}.  We develop {a} zeroth-order method where
the gradient mapping is approximated using two evaluations of the implicit
function. Similar to the inexact setting, we allow for iterative smoothing and
provide the convergence analysis in addressing the original implicit problem.
In the following, we derive non-asymptotic convergence rate statements and also, show an almost sure convergence result for the proposed zeroth-order method in the exact regimes.
{\begin{corollary}[\bf{Rate and complexity statements and \uss{a.s.} convergence for exact ({\texttt{ZSOL$^{\bf 1s}_{\rm cnvx}$}})}]\label{thm:convex_exact}\em
        Consider the problem {\eqref{prob:mpec_exp_imp}}. Suppose {Assumptions~\ref{ass-1}--~\ref{assum:u_iter_smooth}} hold. \fyyy{Let $\{\bar \x_k\}$ denote} the sequence generated by {Algorithm \ref{algorithm:inexact_zeroth_order_SVIs} (exact variant)} in which the stepsize and smoothing sequences are defined as $\gamma_k := \frac{\gamma_0}{(k+1)^a}$ and $\eta_k := \frac{\eta_0}{(k+1)^b}$, respectively, for all $k\geq 0$ where $\gamma_0$ and $\eta_0$ are strictly positive. Then, the following statements hold.

\noindent {\bf{(a)}} Let $a=0.5$ and $b \in [0.5,1)$ and $0\leq r< 2(1-b)$. Then, for all $K \geq 2^{\frac{1}{1-r}}-1$ we have
\begin{align*} 
\mathbb{E}\left[{f^{\bf imp}(\bar \x_K)}\right]-f^* &\leq (2-r)\left(\tfrac{D_\Xscr }{\gamma_0}+\tfrac{L_0^2n^2\gamma_0}{1-r}\right)\tfrac{1}{\sqrt{K+1}}+(2-r)\left(\tfrac{\eta_0L_0}{1-0.5r-b}\right)\tfrac{1}{(K+1)^{b}}.
\end{align*}
In particular, when $b:=1-\delta$ and $r=0$, where $\delta>0$ is a small scalar,  we have for all $K\geq 1$
\begin{align*} 
\mathbb{E}\left[{f^{\bf imp}(\bar \x_K)}\right]-f^* &\leq 2\left(\tfrac{D_\Xscr }{\gamma_0}+L_0^2n^2\gamma_0\right)\tfrac{1}{\sqrt{K+1}}+\left(\tfrac{2\eta_0L_0}{\delta}\right)\tfrac{1}{(K+1)^{1-\delta}}.
\end{align*}

\noindent {\bf{(b)}} Let $a:=0.5$, $b=0.5$, $r=0$, $\gamma_0:=\tfrac{\sqrt{D_\Xscr}}{nL_0}$, and $\eta_0\leq \sqrt{D_\Xscr}n$. Then, the iteration complexity {in projection steps on $\Xscr$} {as well as} the total sample complexity of upper-level evaluations, for achieving {$\mathbb{E}\left[{f^{\bf imp}(\bar \x_{K_\epsilon})}\right]-f^* \leq \epsilon$} for some $\epsilon>0$ is {bounded} as follows.
\begin{align*}
{K_\epsilon} \geq \frac{64n^2L_0^2D_\Xscr}{\epsilon^2} .
\end{align*}

\noindent {\bf{(c)}} For any $a \in (0.5,1]$ and $b>1-a$, there exists $\x^* \in \Xscr^*$ such that $\lim_{k\to \infty} \|\bar \x_k-\x^*\|^2 = 0$ almost surely.
\end{corollary}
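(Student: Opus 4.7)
\textbf{Proof proposal for Corollary \ref{thm:convex_exact}.} The plan is to specialize the machinery developed for the inexact scheme in Theorem \ref{thm:ZSOL_convex} to the exact regime, where the lower-level solution $\y(\x)$ is available exactly so that the bias term vanishes (i.e., $\tilde\epsilon_k \equiv 0$ and $w_k \equiv 0$). The key simplification is that, by Lemma \ref{lem:smooth_grad_properties}, the exact stochastic zeroth-order gradient $g_{\eta_k}(\x_k,v_k,\omega_k)$ is an \emph{unbiased} estimator of $\nabla f^{\bf imp}_{\eta_k}(\x_k)$ with second moment bounded by $L_0^2 n^2$ uniformly in $\x_k$ and $\eta_k$. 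Consequently, Lemma \ref{lem:inexact_error_bounds} is not needed in this regime.

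First, I would reproduce the nonexpansivity argument for the projection step: expanding $\|\x_{k+1}-\x^*\|^2$, taking the conditional expectation with respect to $\x_k$, and using unbiasedness and the second-moment bound yields
\begin{align*}
\mathbb{E}[\|\x_{k+1}-\x^*\|^2 \mid \x_k] \leq \|\x_k-\x^*\|^2 - 2\gamma_k(\x_k-\x^*)^T\nabla f^{\bf imp}_{\eta_k}(\x_k) + \gamma_k^2 L_0^2 n^2 .
\end{align*}
Invoking convexity of $f^{\bf imp}_{\eta_k}$ and Lemma \ref{lemma:props_local_smoothing}(v) (which gives $f^{\bf imp}(\x_k)\leq f^{\bf imp}_{\eta_k}(\x_k)$ and $f^{\bf imp}_{\eta_k}(\x^*)\leq f^* + \eta_k L_0$), I obtain the exact analog of \eqref{eqn:recurs_ineq_as_conv} with $\theta_0$ replaced by the simpler constant $\tfrac{1}{2}L_0^2 n^2$ and without the variance contribution arising from $\tilde\epsilon_k/\eta_k^2$. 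This yields
\begin{align*}
2\gamma_k(\mathbb{E}[f^{\bf imp}(\x_k)]-f^*) \leq \mathbb{E}[\|\x_k-\x^*\|^2] - \mathbb{E}[\|\x_{k+1}-\x^*\|^2] + \gamma_k^2 L_0^2 n^2 + 2\gamma_k\eta_k L_0 .
\end{align*}

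For part (a), I would multiply by $\tfrac{\gamma_k^{r-1}}{2}$, add and subtract $\tfrac{\gamma_{k-1}^{r-1}}{2}\mathbb{E}[\|\x_k-\x^*\|^2]$, telescope from $k=0$ to $K$, and apply Jensen's inequality through the weighted averaging identity $\bar\x_K = \sum_k\alpha_{k,K}\x_k$ from Lemma \ref{Lem:averaging_seq}. Substituting $\gamma_k = \gamma_0/(k+1)^{1/2}$ and $\eta_k = \eta_0/(k+1)^b$ and bounding the resulting harmonic-type sums (using the same Lemma \ref{lem:harmonic_bounds} cited in the proof of Theorem \ref{thm:ZSOL_convex}(b)) delivers the claimed rate. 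Part (b) follows by plugging in $\gamma_0 = \sqrt{D_\Xscr}/(nL_0)$, $\eta_0 \leq \sqrt{D_\Xscr}\, n$, $a=b=0.5$, $r=0$, balancing the two dominant terms at order $1/\sqrt{K+1}$, and solving $\epsilon \geq c \cdot nL_0\sqrt{D_\Xscr}/\sqrt{K_\epsilon+1}$ for $K_\epsilon$.

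For part (c), I would establish the supermartingale-type recursion
\begin{align*}
\mathbb{E}[\|\x_{k+1}-\x^*\|^2 \mid \x_k] \leq \|\x_k-\x^*\|^2 - 2\gamma_k(f^{\bf imp}(\x_k)-f^*) + \gamma_k^2 L_0^2 n^2 + 2\gamma_k\eta_k L_0,
\end{align*}
and verify that $\sum_k \gamma_k^2 < \infty$ and $\sum_k \gamma_k \eta_k < \infty$ whenever $2a > 1$ and $a+b > 1$, both of which hold under $a\in(0.5,1]$ and $b > 1-a$. Invoking the Robbins--Siegmund lemma (Lemma \ref{lemma:supermartingale}) gives a.s. convergence of $\{\|\x_k-\x^*\|^2\}$ for every $\x^*\in\Xscr^*$, together with $\sum_k \gamma_k(f^{\bf imp}(\x_k)-f^*)<\infty$ a.s. Since $\sum_k \gamma_k = \infty$ when $a\leq 1$, this forces $\liminf_k f^{\bf imp}(\x_k) = f^*$ a.s., so by continuity of $f^{\bf imp}$ and boundedness of $\{\x_k\}$ a subsequence converges to some $\hat\x\in\Xscr^*$; combined with convergence of $\{\|\x_k-\hat\x\|^2\}$, this upgrades to $\x_k\to\hat\x$ a.s. The extension to $\bar\x_k$ then follows from $\sum_k\gamma_k^r=\infty$ (equivalent to $ar\leq 1$, automatic here) and Lemma \ref{lem:convergence_sum}, exactly as in Theorem \ref{thm:ZSOL_convex}(d). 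The main obstacle is bookkeeping: ensuring the algebra in part (b) yields the stated constant $64$ with the prescribed choices of $\gamma_0$ and $\eta_0$, which requires carefully tracking the contributions of the $L_0^2 n^2 \gamma_0/\sqrt{K+1}$ and $\eta_0 L_0/\sqrt{K+1}$ terms after substitution.
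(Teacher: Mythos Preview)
Your proposal is correct and follows essentially the same approach as the paper's proof: the paper also derives the basic recursion via nonexpansivity, Lemma~\ref{lem:smooth_grad_properties}, convexity of $f^{\bf imp}_{\eta_k}$, and Lemma~\ref{lemma:props_local_smoothing}(v), then defers to the telescoping and averaging argument of Theorem~\ref{thm:ZSOL_convex}(b) for part~(a), explicitly plugs in the parameter choices to obtain the constant $8nL_0\sqrt{D_\Xscr}$ (hence $64$) for part~(b), and refers to Theorem~\ref{thm:ZSOL_convex}(d) for part~(c). Your identification of the simplification $\theta_0 \to \tfrac{1}{2}L_0^2 n^2$ in the exact regime is exactly right.
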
}
\begin{proof}
\noindent {\bf{(a)}} Let $\x^* \in \Xscr^*$ be an arbitrary optimal solution.  We can write:
\begin{align*}
    \|\x_{k+1} - \x^*\|^2 &= \left\|\Pi_{\Xscr} \left[ \x_k - \gamma_k  {g}_{\eta_k}(\x_k,{v}_k,\omega_k) \right] -\Pi_{\Xscr}\left[\x^* \right]\right\|^2 \leq  \left\|  \x_k - \gamma_k   {g}_{\eta_k}(\x_k,{v_k},\omega_k)-\x^*\right\|^2 \\
                          &= \|\x_k-\x^*\|^2 -2\gamma_k(\x_k-\x^*)^T  {g}_{\eta_k}(\x_k,{v_k},\omega_k)+\gamma_k^2\| {g}_{\eta_k}(\x_k,{v_k},\omega_k)\|^2.
\end{align*}
Taking conditional expectations on the both sides and invoking Lemma \ref{lem:smooth_grad_properties}, we obtain
\begin{align*}
    \mathbb{E}\left[\|\x_{k+1} - \x^*\|^2\mid \fyy{\x_k}\right] &\leq  \|\x_k-\x^*\|^2 -2\gamma_k(\x_k-\x^*)^T {\nabla f^{\bf imp}_{\eta_k}(\x_k)}+\gamma_k^2L_0^2n^2.
\end{align*}
Invoking the convexity of $f_{\eta_k}$, we obtain
\begin{align}\label{ineq:convex_imp_case_ineq1}
    \mathbb{E}\left[\|\x_{k+1} - \x^*\|^2\mid \fyy{\x_k}\right] &\leq  \|\x_k-\x^*\|^2 -2\gamma_k{\left(f_{\eta_k}^{\bf imp}(\x_k)-f_{\eta_k}^{\bf imp}(x^*)\right)}+\gamma_k^2L_0^2n^2.
\end{align}
Taking expectations from both sides of the preceding relation and rearranging the terms, we obtain
\begin{align*}
    2\gamma_k\left(\mathbb{E}\left[f^{\bf imp}_{\eta_k}(\x_k)\right]-f^{\bf imp}_{\eta_k}(\x^*)\right) &\leq\mathbb{E}\left[ \|\x_k-\x^*\|^2\right] -\mathbb{E}\left[\|\x_{k+1} - \x^*\|^2 \right] +\gamma_k^2L_0^2n^2.
\end{align*}
From the Lipschitzian property of the implicit function and Lemma \ref{lemma:props_local_smoothing} (v), we have that 
\begin{align}\label{ineq:convex_imp_case_ineq2}
    f^{\bf imp}_{\eta_k}(\x^*) \leq f^* +\eta_k L_0.
\end{align}
From the preceding two inequalities and that $f^{\bf imp}(\x_k) \leq f^{\bf imp}_{\eta_k}(\x_k)$, we obtain
\begin{align*}
    2\gamma_k\left(\mathbb{E}\left[{f^{\bf imp}(\x_k)}\right]-f^*\right) &\leq\mathbb{E}\left[ \|\x_k-\x^*\|^2\right] -\mathbb{E}\left[\|\x_{k+1} - \x^*\|^2 \right] +\gamma_k^2L_0^2n^2+2\gamma_k\eta_kL_0.
\end{align*}
{The rest of the proof {follows} in a similar fashion to that of Theorem \ref{thm:ZSOL_convex} (b).}

\noindent {\bf{(b)}} Under the specified setting, from part (a) we have
\begin{align*} 
\mathbb{E}\left[{f^{\bf imp}(\bar \x_K)}\right]-f^* &\leq 2\left(\tfrac{D_\Xscr }{\gamma_0}+L_0^2n^2\gamma_0\right)\tfrac{1}{\sqrt{K+1}}+\left(\tfrac{2\eta_0L_0}{0.5}\right)\tfrac{1}{\sqrt{K+1}}\\
&=2(nL_0\sqrt{D_\Xscr}+nL_0\sqrt{D_\Xscr})\tfrac{1}{\sqrt{K+1}}+\left(4nL_0\sqrt{D_\Xscr}\right)\tfrac{1}{\sqrt{K+1}} \\
& = \tfrac{8nL_0\sqrt{D_\Xscr}}{\sqrt{K+1}} \leq \epsilon.
\end{align*}
This implies the desired bound. 

\noindent {\bf{(c)}}  {The proof {follows} in a similar vein to that of Theorem \ref{thm:ZSOL_convex} (d).}
\end{proof}


{\subsection{Nonconvex single-stage SMPEC} \label{sec:nonconvex_single_stage}}
{In this \fyy{subsection}, in addressing \eqref{prob:mpec_exp_imp} in the nonconvex case, we consider a smoothed implicit problem given by the following.}
\begin{align}\label{prob:smoothed_implicit}
\begin{aligned}
        \min & \quad \fyy{f^{\bf imp}_{\eta}(\x)} \\
        \st & \quad \x \in \Xscr,
\end{aligned}
\end{align}
{where {$f_\eta^{\bf imp}$} is defined by \eqref{eqn:G-Smooth} for a given $\eta>0$. \\

\subsubsection{An inexact zeroth-order scheme}
In this subsection, we consider the {case} where an exact solution of the
lower-level problem is unavailable.} The outline of the proposed zeroth-order scheme is given by Algorithms \ref{algorithm:ZSOL_nonconvex}--\ref{algorithm:inexact_lower_level_SA_nonconvex}. We make the following assumptions \fyy{in these algorithms}.
\begin{assumption}\label{assum:u_iter_smooth_VR}\em
    Given a mini-batch {size of $N_k$} and a smoothing parameter $\eta>0$, let $\{v_{j,k}\}_{j=1}^{N_k} \in \mathbb{R}^n$ be $N_k$ iid replicates generated at epoch $k$ from \fyyy{the uniform distribution on} $\eta\mathbb{S}$ for all $k\geq 0$. Also, let the random realizations $\{\omega_{j,k}\}_{j=1}^{N_k}$ be iid replicates.
\end{assumption}
\begin{assumption}\label{assum:lower_level_stoch_nonconvex}\em
    Let the following hold and for all $k\geq 0$, $t \geq 0$, \fyy{$\hat \x_k \in \Xscr+\eta_k\mathbb{S}$}, and $\y_t \in \Yscr$.

\noindent (a) \fyyy{The replicates $\{G(\bullet,\bullet, \omega_{t})\}_{t=0}^{\infty}$ are generated randomly and are iid.}

\noindent (b)  $\mathbb{E}[G(\hat \x_k,\y_t,{\omega_{t}})\mid \hat \x_k,\y_t]  = F(\hat \x_k,\y_t)$ {holds almost surely.}

\noindent (c)  $\mathbb{E}[\|G(\hat \x_k,\y_t,{\omega_{t}})-F(\fyy{\hat{\x}_k},\y_t)\|^2 \mid \fyy{\hat{\x}_k},\y_t]  \leq  \nu_G^2$ holds almost surely for some $\nu_G>0$.
\end{assumption}
\fyy{Assumption~\ref{assum:lower_level_stoch_nonconvex} provides standard conditions on the first and second moment of the stochastic oracle. Such conditions have been assumed in the literature of the SA schemes extensively (e.g., see~\cite{nemirovski_robust_2009,Farzad1}).}
We utilize the following definition and lemma in the analysis in this subsection.
\begin{definition}[The residual mappings]\label{def:res_maps}
    {Suppose Assumption~\ref{ass-1} holds.} Given a scalar $\beta>0$ and a smoothing parameter $\eta>0$, for any $\x \in \mathbb{R}^n$, let the residual mapping $G_{\eta,\beta}(\x)$ and {its error-afflicted counterpart} $\tilde{G}_{\eta,\beta}(\x)$ be defined as 
\begin{align}
   G_{\eta,\beta}(\x) &\triangleq \beta {\left(\x - \Pi_{\Xscr}\left[\x - \tfrac{1}{\beta} \nabla_x {f^{\bf imp}_{\eta}(\x)}\right]\right)}, \\
\tilde{G}_{\eta,\beta}(\x) &\triangleq \beta\left( \x - \Pi_{\Xscr}\left[\x - \tfrac{1}{\beta} (\nabla_x {f^{\bf imp}_{\eta}(\x)}+\tilde{e}) \right]\right),
\end{align}
where $\tilde e \in \mathbb{R}^n$ is an arbitrary given vector.
\end{definition}
 {{It may be observed that $G_{\eta,\beta}$ is a residual for stationarity for the minimization of smooth nonconvex objectives over convex sets (cf.~\cite{beck17fom}). In fact, the first part of \eqref{equiv-stat-res} is a consequence of the  well known result relating the residual function $G_{\eta,\beta}(\x)$ to the standard stationarity condition (cf.~\cite[Thm.~9.10]{beck14introduction}) while the second implication in \eqref{equiv-stat-res} is Prop.~\ref{prop_equiv}.}  
\begin{lemma}\label{equiv-stat} \em Consider the problem~\eqref{prob:smoothed_implicit}. Then the following holds for any $\eta, \beta > 0$. 
    \begin{align}\label{equiv-stat-res}
    \left[ G_{\eta, \beta}(\x) = 0 \right]  \iff   \left[0 \in \nabla_{\x} {f^{\bf imp}_{\eta}(\x)}+ \Nscr_{\Xscr}(\x) \right] \implies \left[0 \in \partial_{2\eta} {f^{\bf imp}(\x)} + \Nscr_{\Xscr}(\x)\right]. 
\end{align}
\end{lemma}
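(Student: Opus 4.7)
The plan is to establish the biconditional via the standard projection-theoretic characterization of stationarity for smooth constrained optimization, and then obtain the subsequent implication as a direct invocation of Proposition \ref{prop_equiv}(ii).

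First I would prove $\left[G_{\eta,\beta}(\x) = 0\right] \iff \left[0 \in \nabla_{\x} f^{\bf imp}_{\eta}(\x) + \Nscr_{\Xscr}(\x)\right]$. By Definition \ref{def:res_maps}, $G_{\eta,\beta}(\x) = 0$ if and only if $\x = \Pi_{\Xscr}\!\left[\x - \tfrac{1}{\beta}\nabla_{\x} f^{\bf imp}_{\eta}(\x)\right]$, i.e. $\x$ is a fixed point of the projected-gradient map at stepsize $1/\beta$. Applying the first-order optimality condition for the Euclidean projection onto the closed convex set $\Xscr$ (note that $\Xscr$ is closed and convex by Assumption \ref{ass-1}(a.ii)), this fixed-point condition is equivalent to the variational inequality
\begin{align*}
\left(\y - \x\right)^{T}\!\left[\tfrac{1}{\beta}\nabla_{\x} f^{\bf imp}_{\eta}(\x)\right] \geq 0, \qquad \forall \y \in \Xscr,
\end{align*}
which in turn is equivalent to $-\nabla_{\x} f^{\bf imp}_{\eta}(\x) \in \Nscr_{\Xscr}(\x)$, i.e. $0 \in \nabla_{\x} f^{\bf imp}_{\eta}(\x) + \Nscr_{\Xscr}(\x)$. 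The scalar $\beta > 0$ drops out of the equivalence, so the characterization is independent of $\beta$; this is the essence of the Beck/Teran-style reference cited in the remark preceding the lemma.

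Second, to obtain the forward implication $\left[0 \in \nabla_{\x} f^{\bf imp}_{\eta}(\x) + \Nscr_{\Xscr}(\x)\right] \implies \left[0 \in \partial_{2\eta} f^{\bf imp}(\x) + \Nscr_{\Xscr}(\x)\right]$, I would invoke Proposition \ref{prop_equiv}(ii) applied with $h \triangleq f^{\bf imp}$. This is applicable because $f^{\bf imp}$ is Lipschitz continuous on a neighborhood of $\Xscr$ (by Assumption \ref{ass-1}(a.i) together with strong monotonicity of the lower-level map, which delivers Lipschitz continuity of the solution map $\y(\bullet)$ and hence of the composition $f^{\bf imp}$), and $\Xscr$ is closed, convex, and bounded. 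The conclusion is immediate by direct substitution.

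I do not anticipate any real obstacle: both components are either standard facts (the projection characterization of stationarity) or a restatement of a result already catalogued in the preceding section (Proposition \ref{prop_equiv}(ii)). The only minor care point is verifying that the biconditional genuinely does not depend on $\beta$, and that the hypotheses of Proposition \ref{prop_equiv} are satisfied by the implicit function $f^{\bf imp}$; both follow transparently from Assumption \ref{ass-1} and the strong monotonicity of $F(\x,\bullet)$.
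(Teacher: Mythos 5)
Your proposal is correct and follows essentially the same route as the paper: the paper disposes of the equivalence by citing the standard projection-residual characterization of stationarity (which you spell out via the fixed-point/variational-inequality argument for the Euclidean projector) and obtains the final implication by invoking Proposition~\ref{prop_equiv}(ii) with $h = f^{\bf imp}$, exactly as you do.
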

Consequently, a zero of the  residual of the $\eta$-smoothed problem satisfies an $\eta$-approximate stationarity property for the original problem.}  
{The residual $\tilde{G}_{\eta,\beta}$ represents the counterpart of $G_{\eta,\beta}$ when employing an error-afflicted estimate of the gradient. {In fact, since our framework relies on sampling, leading to error, we obtain bounds on $\tilde{G}_{\eta,\beta}$. But it is still necessary to derive bounds on the original residual $G_{\eta,\beta}$ but this can be provided}  in terms of $\tilde{G}_{\eta,\beta}$ and $\tilde{e}$, the error in the gradient.}  

\begin{lemma}\label{lem:inexact_proj_2}\em
    Let Assumption~\ref{ass-1} hold. Then the following holds for any $\beta>0$, $\eta>0$, and $\x\in\mathbb{R}^n$.
\begin{align*}
    \|G_{\eta,\beta}(\x) \|^2 &  \leq  {2} \| \tilde{G}_{\eta,\beta}(\x) \|^2 + 2 \|\tilde{e}\|^2  .
\end{align*}
\end{lemma}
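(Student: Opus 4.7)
The plan is to exploit nonexpansivity of the Euclidean projection onto the convex set $\Xscr$ to first bound $\|G_{\eta,\beta}(\x) - \tilde{G}_{\eta,\beta}(\x)\|$ in terms of the perturbation $\|\tilde{e}\|$, and then conclude by a triangle-inequality/elementary $(a+b)^2\le 2a^2+2b^2$ argument.

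Concretely, I would introduce the shorthand $p \triangleq \Pi_{\Xscr}[\x - \tfrac{1}{\beta}\nabla f^{\bf imp}_{\eta}(\x)]$ and $\tilde p \triangleq \Pi_{\Xscr}[\x - \tfrac{1}{\beta}(\nabla f^{\bf imp}_{\eta}(\x)+\tilde e)]$, so that $G_{\eta,\beta}(\x) = \beta(\x-p)$ and $\tilde G_{\eta,\beta}(\x) = \beta(\x-\tilde p)$. Then by the $1$-Lipschitz (nonexpansive) property of $\Pi_{\Xscr}$ on the convex set $\Xscr$, applied to the two argument vectors, we have
\begin{align*}
\|p-\tilde p\| \;=\; \left\|\Pi_{\Xscr}\!\left[\x - \tfrac{1}{\beta}\nabla f^{\bf imp}_{\eta}(\x)\right] - \Pi_{\Xscr}\!\left[\x - \tfrac{1}{\beta}(\nabla f^{\bf imp}_{\eta}(\x)+\tilde e)\right]\right\| \;\leq\; \tfrac{1}{\beta}\|\tilde e\|,
\end{align*}
and therefore $\|G_{\eta,\beta}(\x) - \tilde G_{\eta,\beta}(\x)\| = \beta\|p-\tilde p\| \leq \|\tilde e\|$.

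The final step is the triangle inequality together with the standard bound $(a+b)^2 \leq 2a^2+2b^2$:
\begin{align*}
\|G_{\eta,\beta}(\x)\|^2 \;\leq\; \bigl(\|\tilde G_{\eta,\beta}(\x)\| + \|G_{\eta,\beta}(\x)-\tilde G_{\eta,\beta}(\x)\|\bigr)^2 \;\leq\; 2\|\tilde G_{\eta,\beta}(\x)\|^2 + 2\|\tilde e\|^2,
\end{align*}
which yields the desired inequality. There is essentially no obstacle here: the result is a routine perturbation estimate for the projected-gradient residual, and the only ingredient beyond algebra is the nonexpansivity of $\Pi_{\Xscr}$, which holds on any nonempty closed convex set (guaranteed by Assumption~\ref{ass-1}). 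No use is made of convexity or Lipschitz properties of $f^{\bf imp}$, nor of the specific form of $\nabla f^{\bf imp}_{\eta}$, so the lemma holds at any fixed $\x$ for an arbitrary error vector $\tilde e$, matching the statement.
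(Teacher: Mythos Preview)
Your proof is correct and follows essentially the same approach as the paper: add and subtract the perturbed projection, invoke nonexpansivity of $\Pi_{\Xscr}$ to bound the difference by $\|\tilde e\|$, and finish with $(a+b)^2\le 2a^2+2b^2$. Your presentation via the shorthand $p,\tilde p$ is slightly cleaner, but the argument is identical in substance.
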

\begin{proof}
{From Definition \ref{def:res_maps}}, we may bound $G_{\eta,\beta}(\x)$ as follows.  
\begin{align*}
    \|G_{\eta,\beta}(\x)\|^2 & = \left\| \beta \left(\x - \Pi_{\Xscr}\left[\x - \tfrac{1}{\beta} \nabla_x {f^{\bf imp}_{\eta}(\x)}\right]\right) \right\|^2 \\
                  & = \left\| \beta \left( \x - \Pi_{\Xscr}\left[\x - \tfrac{1}{\beta} (\nabla_x {f^{\bf imp}_{\eta}(\x)}+\tilde{e})\right]\right) \right. \\
                  & + \left.  \beta  \Pi_{\Xscr}\left[\x - \tfrac{1}{\beta} (\nabla_x {f^{\bf imp}_{\eta}(\x)}+\tilde{e})\right]- \beta \Pi_{\Xscr}\left[\x - \tfrac{1}{\beta} \nabla_x {f^{\bf imp}_{\eta}(\x)}\right] \right\|^2 \\
                  & \leq {2}\left\| \beta \left(\x - \Pi_{\Xscr}\left[\x - \tfrac{1}{\beta} (\nabla_x {f^{\bf imp}_{\eta}(\x)}+\tilde{e})\right]\right)\right\|^2  \\
                  & + {2}\left\|  \beta  \Pi_{\Xscr}\left[\x - \tfrac{1}{\beta} (\nabla_x {f^{\bf imp}_{\eta}(\x)}+\tilde{e})\right]- \beta\Pi_{\Xscr}\left[\x - \tfrac{1}{\beta} \nabla_x {f^{\bf imp}_{\eta}(\x)}\right] \right\|^2 \\
                  & \leq {2} \|{\tilde{G}_{\eta,\beta}(\x)}\|^2 +{2} \|\tilde{e}\|^2,  
\end{align*} 
 {where the last inequality is a consequence of the non-expansivity of the Euclidean projector.}
\end{proof}
The proposed scheme can be compactly represented as follows.
\begin{align}\label{eqn:inexact_proj_scheme}
    \x_{k+1} := \Pi_{\Xscr} \left[ \x_k - \gamma\left( \nabla_{\x} {f^{\bf imp}_{\eta}(\x_k)}+e_k\right)\right],
\end{align}
 where we define the stochastic errors $e_k \triangleq  g_{\eta,N_k,\fyyy{\tilde{\epsilon}_k}}(\x_k)  - \nabla_{\x} {f^{\bf imp}_{\eta}(\x_k)}$ for all $k \geq 0$. We make use of the following result in the convergence analysis. 
\begin{algorithm}[H]
\caption{\fyy{\texttt{ZSOL$^{\bf 1s}_{\rm ncvx}$}: Variance-reduced zeroth-order method for nonconvex  \fyy{\eqref{eqn:SMPECepx}}}}\label{algorithm:ZSOL_nonconvex}
    \begin{algorithmic}[1]
        \STATE\textbf{input:} Given $\x_0 \in \Xscr$, ${\bar \x}_0: = \x_0$,  stepsize $\gamma>0$, smoothing parameter $\eta>0$, mini-batch sequence $\{N_k\}$ such that $N_k:=k +1$, an integer $K$, {a scalar $\lambda \in (0,1)$, and an integer $R$ randomly selected from $\{\lceil\lambda K\rceil ,\ldots,K\}$ using a uniform distribution}
    \FOR {$k=0,1,\ldots,{K}-1$}
        		    \STATE  \fyy{Do one of the following, depending on the type of the scheme.
    		    
\vspace{-.1in}    	
	    
\begin{itemize}
\item   Inexact scheme: Call Algorithm \ref{algorithm:inexact_lower_level_SA_nonconvex} to obtain $\y_\fyyy{\tilde{\epsilon}_k}(\x_k)$ 
\vspace{-.15in}    	

\item   Exact scheme: Evaluate $\y(\x_k)$
\end{itemize}}

\vspace{-.1in}    	
            \FOR {$j=1,\ldots,N_k$}
                    \STATE Generate {$v_{j,k} \in \eta \mathbb{S}$} 
    		    
    		      \STATE  {Do one of the following.
    		    
\vspace{-.1in}    	
	    
\begin{itemize}
\item   Inexact scheme: Call Algorithm \ref{algorithm:inexact_lower_level_SA_nonconvex} to obtain $\y_\fyyy{\tilde{\epsilon}_k}(\x_k+v_{j,k})$ 
\vspace{-.15in}    	

\item   Exact scheme: Evaluate $\y(\x_k+v_{j,k})$
\end{itemize}}

\vspace{-.1in}    

    		 \STATE Evaluate the inexact \fyy{or exact zeroth-order gradient approximation as follows.
 \begin{align}
              g_{\eta,\fyyy{\tilde{\epsilon}_k}}(\x_k,v_{j,k},\omega_{j,k}) &:=\tfrac{n\left(\fyy{\tilde f}(\x_k+v_{j,k}, \y_\fyyy{\tilde{\epsilon}_k}(\x_k+ v_{j,k}),\omega_{j,k}) - \fyy{\tilde f} (\x_k, \y_\fyyy{\tilde{\epsilon}_k}(\x_k),\omega_{j,k})\right)v_{j,k}}{\|v_{j,k}\|\eta} \tag{Inexact} \\
              g_{\eta}(\x_k,v_{j,k},\omega_{j,k}) &:=\tfrac{n\left(\fyy{\tilde f}(\x_k+v_{j,k}, \y(\x_k+ v_{j,k}),\omega_{j,k}) - \fyy{\tilde f} (\x_k, \y(\x_k),\omega_{j,k})\right)v_{j,k}}{\|v_{j,k}\|\eta}\tag{Exact}
\end{align}   	

}
      \ENDFOR
\STATE Evaluate the mini-batch zeroth-order gradient. \fyy{
 \begin{align}
             g_{\eta,N_k,\fyyy{\tilde{\epsilon}_k}}(\x_k) &:=\tfrac{\sum_{j=1}^{N_k} g_{\eta,\fyyy{\tilde{\epsilon}_k}}(\x_k,v_{j,k}\fyy{,\omega_{j,k}})}{N_k} \tag{Inexact} \\
             g_{\eta,N_k}(\x_k)  & :=\tfrac{\sum_{j=1}^{N_k} g_{\eta}(\x_k,v_{j,k}\fyy{,\omega_{j,k}})}{N_k} \tag{Exact}
\end{align}   	
}

\STATE {Update $\x_k$ as follows.}
         \fyy{\begin{align*}
             \x_{k+1} := \begin{cases}
                 \Pi_{\Xscr} \left[ \x_k - \gamma g_{\eta,N_k,\fyyy{\tilde{\epsilon}_k}}(\x_k) \right] & \hspace{3.1in} \mbox{(Inexact)} \\
                 \Pi_{\Xscr} \left[ \x_k - \gamma g_{\eta,N_k}(\x_k) \right] &  \hspace{3.2in} \mbox{(Exact)} 
             \end{cases}
     \end{align*}}
%
 
    \ENDFOR
        \STATE Return $\x_R$ 
   \end{algorithmic}
\end{algorithm}
\begin{algorithm}[H]
    \caption{{SA method for lower-level of \fyy{nonconvex \eqref{eqn:SMPECepx}}}}\label{algorithm:inexact_lower_level_SA_nonconvex}
    \begin{algorithmic}[1]
        \STATE \textbf{input:} An arbitrary $\y_0 \in \Yscr$,  vector $\hat{\x}_k$, and initial stepsize $\alpha_0>\frac{1}{2\mu_F}$ 
        \STATE Set $t_k := k+1$
      \FOR {$t=0,1,\ldots,t_k-1$}
      \STATE Generate a random realization of the stochastic mapping $G(\hat{\x}_k,{\y_t}, \omega_t)$
      \STATE {Update $\y_t$ as follows.}  $
 \y_{t+1} := \Pi_{\Yscr}\left[\y_t - \alpha_tG({\hat{\x}_k,\y_t},\omega_t)\right]$

\STATE Update the stepsize using $\alpha_{t+1} := \frac{\alpha}{t+\Gamma}$
        \ENDFOR
        \STATE Return $\y_{t_k}$ 
 
   \end{algorithmic}
\end{algorithm} 
\begin{lemma}\label{lem:descent_lemma_inexact}\em
Let Assumption~\ref{ass-1} \fyy{hold}. {Suppose $\x_k$ is generated by Algorithm \ref{algorithm:ZSOL_nonconvex} in which $\gamma \in (0,\frac{\eta}{n L_0})$ for a given $\eta>0$.} Then, we have for any $k$,  
    \begin{align*}
      {f^{\bf imp}_{\eta}(\x_{k+1})}  & \leq   {f^{\bf imp}_{\eta}(\x_k)}+ \left( -1+\tfrac{nL_0\gamma}{\eta}\right) \tfrac{\gamma}{4} \|G_{\eta,1/\gamma} (\x_k)\|^2 +{\left( 1-\tfrac{nL_0\gamma}{2\eta}\right)} {\gamma} \|e_k\|^2.
    \end{align*}
\end{lemma}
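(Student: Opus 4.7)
The plan is to establish a standard three-term descent inequality by combining a Lipschitz-gradient (smoothness) bound for $f^{\bf imp}_{\eta}$ with the projection optimality condition, then use Young's inequality and Lemma~\ref{lem:inexact_proj_2} to convert from the inexact residual $\tilde{G}_{\eta,1/\gamma}$ into the exact residual $G_{\eta,1/\gamma}$. The condition $\gamma\in(0,\eta/(nL_0))$ will be used twice: once to ensure the coefficient multiplying the residual term is negative (so flipping the inequality from Lemma~\ref{lem:inexact_proj_2} goes the right way), and implicitly to make the overall scheme a descent step.

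First, I would invoke Lemma~\ref{lemma:props_local_smoothing}(iv), which tells us that $\nabla f^{\bf imp}_{\eta}$ is Lipschitz continuous with parameter $nL_0/\eta$. (Assumption~\ref{ass-1}(a.i) gives the Lipschitz property of $f^{\bf imp}$ on $\Xscr+\eta\mathbb{B}$ that is needed.) The descent lemma for $C^{1,1}$ functions then yields
\begin{align*}
f^{\bf imp}_{\eta}(\x_{k+1}) \leq f^{\bf imp}_{\eta}(\x_k) + \langle \nabla f^{\bf imp}_{\eta}(\x_k),\,\x_{k+1}-\x_k\rangle + \tfrac{nL_0}{2\eta}\|\x_{k+1}-\x_k\|^2.
\end{align*}
Next I would use the first-order optimality condition of the projection defining $\x_{k+1}=\Pi_{\Xscr}[\x_k-\gamma(\nabla f^{\bf imp}_{\eta}(\x_k)+e_k)]$ by testing against $\x_k\in\Xscr$; this yields $\gamma\langle \nabla f^{\bf imp}_{\eta}(\x_k)+e_k,\,\x_{k+1}-\x_k\rangle\leq -\|\x_{k+1}-\x_k\|^2$, hence
\begin{align*}
\langle \nabla f^{\bf imp}_{\eta}(\x_k),\,\x_{k+1}-\x_k\rangle \leq -\tfrac{1}{\gamma}\|\x_{k+1}-\x_k\|^2 - \langle e_k,\,\x_{k+1}-\x_k\rangle.
\end{align*}

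I would then apply Young's inequality to the cross term with parameter $1/\gamma$, obtaining $-\langle e_k,\x_{k+1}-\x_k\rangle \leq \tfrac{1}{2\gamma}\|\x_{k+1}-\x_k\|^2 + \tfrac{\gamma}{2}\|e_k\|^2$. Substituting back, the terms in $\|\x_{k+1}-\x_k\|^2$ collect into a coefficient of $-\tfrac{1}{2\gamma}+\tfrac{nL_0}{2\eta}$, which after using $\x_{k+1}-\x_k=-\gamma\tilde{G}_{\eta,1/\gamma}(\x_k)$ becomes $\tfrac{\gamma}{2}(-1+\tfrac{nL_0\gamma}{\eta})\|\tilde{G}_{\eta,1/\gamma}(\x_k)\|^2$. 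The stepsize restriction $\gamma<\eta/(nL_0)$ makes this coefficient strictly negative.

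The final step — and the only subtle one — is converting from $\tilde{G}_{\eta,1/\gamma}(\x_k)$ to $G_{\eta,1/\gamma}(\x_k)$. Lemma~\ref{lem:inexact_proj_2} (with $\tilde e=e_k$) gives $\|\tilde{G}_{\eta,1/\gamma}(\x_k)\|^2 \geq \tfrac{1}{2}\|G_{\eta,1/\gamma}(\x_k)\|^2 - \|e_k\|^2$; because the multiplying coefficient $\tfrac{\gamma}{2}(-1+\tfrac{nL_0\gamma}{\eta})$ is negative, applying this lower bound produces an upper bound of $\tfrac{\gamma}{4}(-1+\tfrac{nL_0\gamma}{\eta})\|G_{\eta,1/\gamma}(\x_k)\|^2 + \tfrac{\gamma}{2}(1-\tfrac{nL_0\gamma}{\eta})\|e_k\|^2$. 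Combining with the leftover $\tfrac{\gamma}{2}\|e_k\|^2$ from Young's inequality yields exactly the claimed descent bound with coefficient $\gamma(1-\tfrac{nL_0\gamma}{2\eta})$ on $\|e_k\|^2$. The main thing to watch is the sign flip in this last step; everything else is a routine combination of the smoothness inequality, projection optimality, and Young's inequality with a carefully chosen parameter.
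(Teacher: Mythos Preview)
Your proposal is correct and follows essentially the same approach as the paper's proof: descent lemma via Lemma~\ref{lemma:props_local_smoothing}(iv), projection optimality tested at $\x_k$, Young's inequality with parameter $1/\gamma$, identification $\|\x_{k+1}-\x_k\|^2=\gamma^2\|\tilde{G}_{\eta,1/\gamma}(\x_k)\|^2$, and then the sign-aware application of Lemma~\ref{lem:inexact_proj_2} to pass from $\tilde{G}$ to $G$. The sequence of steps and the arithmetic both match the paper exactly.
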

\begin{proof}
Note that by Lemma \ref{lemma:props_local_smoothing} (iv), {$\nabla f_{\eta}^{\bf imp}(\bullet)$} is Lipschitz with parameter $L\triangleq \frac{nL_0}{\eta}$. By the descent lemma, we have that 
    \begin{align*}
        {f^{\bf imp}_{\eta}(\x_{k+1})} & \leq {f^{\bf imp}_{\eta}(\x_k)} + \nabla_{\x} {f^{\bf imp}_{\eta}(\x_k)}^T(\x_{k+1}-\x_k) + \tfrac{L}{2}\|\x_{k+1}-\x_k\|^2 \\
                                & = {f^{\bf imp}_{\eta}(\x_k)}  + \left( \nabla_{\x} {f^{\bf imp}_{\eta}(\x_k)}+ e_k\right)^T(\x_{k+1}-\x_k) \\
                                &-  e_k^T(\x_{k+1}-\x_k)+ \tfrac{L}{2}\|\x_{k+1}-\x_k\|^2.
    \end{align*} 
 From the properties of the Euclidean projection, we have that 
\begin{align*}
   & (\x_k- \gamma (\nabla_x {f^{\bf imp}_{\eta}(\x_k)}+{e}_k))-\x_{k+1})^T(\x_k-\x_{k+1}) \leq 0\\
     \implies &(\nabla_x {f^{\bf imp}_{\eta}(\x_k)}+{e}_k))^T(\x_{k+1}-\x_k) \leq -\tfrac{1}{\gamma}\|\x_{k+1}-\x_k\|^2. 
\end{align*}
In addition, {for any $u,v \in \mathbb{R}^n$ we can write $u^Tv \leq \tfrac{1}{2}\left(\gamma\|u\|^2+\frac{\|v\|^2}{\gamma} \right)$. Thus,} we have that 
\begin{align*}
- {e}_k^T(\x_{k+1}-\x_k) \leq {\tfrac{\gamma}{2}} \|{e}_k\|^2 +  \tfrac{1}{2\gamma} \|\x_{k+1}-\x_k\|^2. 
\end{align*}
Consequently, {from the preceding  three inequalities} we have that 
 \begin{align*}
        {f^{\bf imp}_{\eta}(\x_{k+1})} & \leq  {f^{\bf imp}_{\eta}(\x_k)}   -\tfrac{1}{\gamma}\|\x_{k+1}-\x_k\|^2 +{\tfrac{\gamma}{2}} \|{e}_k\|^2 +  \tfrac{1}{2\gamma} \|\x_{k+1}-\x_k\|^2+ \tfrac{L}{2}\|\x_{k+1}-\x_k\|^2\\
        & =  {f^{\bf imp}_{\eta}(\x_k)} + \left( -\tfrac{1}{2\gamma}+\tfrac{L}{2}\right)\|\x_{k+1}-\x_k\|^2+{\tfrac{\gamma}{2}} \|{e}_k\|^2.
    \end{align*} 
    From $\gamma <\frac{1}{L}$, we have
    \begin{align*}
        {f^{\bf imp}_{\eta}(\x_{k+1})} & \leq {f^{\bf imp}_{\eta}(\x_k)}+ \left( -\tfrac{1}{2\gamma}+\tfrac{L}{2}\right) \|\x_{k+1}-\x\|^2 +  {\tfrac{\gamma}{2}}  \|e_k\|^2 \\
                                & = {f^{\bf imp}_{\eta}(\x_k)}+ \left( -\tfrac{1}{2\gamma}+\tfrac{L}{2}\right) \gamma^2  \|{\tilde{G}_{\eta,1/\gamma}(\x_k) }\|^2 +  {\tfrac{\gamma}{2}}  \|e_k\|^2 \\
                                & = {f^{\bf imp}_{\eta}(\x_k)}+ \left( -1+{L\gamma }\right) \tfrac{\gamma}{2} \|{\tilde{G}_{\eta,1/\gamma}(\x_k) }\|^2 + {\tfrac{\gamma}{2}}  \|e_k\|^2 \\
                                & \overset{\tiny \mbox{Lemma \ref{lem:inexact_proj_2}}}{\leq} {f^{\bf imp}_{\eta}(\x_k)}+ \left( -1+{L\gamma }\right)\tfrac{\gamma}{4}\|G_{\eta,1/\gamma} (\x_k)\|^2 +\left( 1-{L\gamma }\right)\tfrac{\gamma}{2} \|e_k\|^2 +   {\tfrac{\gamma}{2}} \|e_k\|^2 \\
                                & = {f^{\bf imp}_{\eta}(\x_k)}+ \left( -1+{L\gamma }\right) \tfrac{\gamma}{4} \|G_{\eta,1/\gamma} (\x_k)\|^2 +{\left( 1-\tfrac{L\gamma}{2}\right)} {\gamma} \|e_k\|^2.
    \end{align*} 
Substituting $L:= \frac{nL_0}{\eta}$ we obtain the desired inequality. 
\end{proof}
We make use of the following result in the convergence analysis. 
\begin{lemma}\label{lem:sublinear}\em Let $\{e_k\}$ be a non-negative sequence such that for an arbitrary non-negative sequence $\{\gamma_k\}$, the following relation is satisfied.
\begin{align}
e_{k+1}\leq (1-\alpha \gamma_k)e_k+\beta \gamma_k^2, \quad \hbox{for all } k\geq 0.
\end{align}
where $\alpha$ and $\beta$ are positive scalars. Suppose $\gamma_k=\tfrac{\gamma}{k+\Gamma}$ for any $k\geq 0$, where $\gamma>\tfrac{1}{\alpha}$ and $\Gamma>0$.  Then, we have 
\begin{align} 
e_k\leq \tfrac{\max \left\{\tfrac{\beta\gamma^2}{\alpha \gamma-1},\Gamma e_0\right\}}{k+\Gamma}, \qquad \hbox{for all } k\geq 0.
\end{align}
\end{lemma}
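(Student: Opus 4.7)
\noindent\textbf{Proof plan for Lemma \ref{lem:sublinear}.}

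The plan is to argue by induction on $k$, using the candidate upper bound $M/(k+\Gamma)$ where $M \triangleq \max\{\beta\gamma^2/(\alpha\gamma-1),\,\Gamma e_0\}$. Note that the two entries in the definition of $M$ are chosen precisely to control, respectively, the propagation of the recursion and the base case.

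\emph{Base case.} For $k=0$, the inequality $e_0\leq M/\Gamma$ is immediate from $M\geq \Gamma e_0$.

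\emph{Inductive step.} Assume $e_k \leq M/(k+\Gamma)$. Substituting the candidate bound into the recursion together with $\gamma_k=\gamma/(k+\Gamma)$ gives
\begin{align*}
e_{k+1} \ \leq \ \left(1-\tfrac{\alpha\gamma}{k+\Gamma}\right)\tfrac{M}{k+\Gamma}+\tfrac{\beta\gamma^{2}}{(k+\Gamma)^{2}}
\ = \ \tfrac{M(k+\Gamma-\alpha\gamma)+\beta\gamma^{2}}{(k+\Gamma)^{2}}.
\end{align*}
It then suffices to verify that the right-hand side is bounded above by $M/(k+\Gamma+1)$. Cross-multiplying, this reduces to showing
\begin{align*}
M(k+\Gamma) \ \leq \ \bigl(M\alpha\gamma-\beta\gamma^{2}\bigr)(k+\Gamma+1).
\end{align*}
Because $k+\Gamma+1>k+\Gamma$, it is enough to show $M\leq M\alpha\gamma-\beta\gamma^{2}$, equivalently $M(\alpha\gamma-1)\geq \beta\gamma^{2}$; since $\alpha\gamma>1$, this is exactly the inequality $M\geq \beta\gamma^{2}/(\alpha\gamma-1)$, which holds by the definition of $M$. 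This closes the induction.

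\emph{Main obstacle / remark.} There is no substantive obstacle here; the only subtlety is the algebraic calibration of the two max-terms. The term $\beta\gamma^{2}/(\alpha\gamma-1)$ is the stationary-like quantity that the recursion contracts toward (it is what makes the induction step go through), while $\Gamma e_0$ guarantees the bound at $k=0$. The hypothesis $\gamma>1/\alpha$ is used only to ensure $\alpha\gamma-1>0$ so that $M$ is finite and the sign in the inductive step is correct.
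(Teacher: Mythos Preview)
Your proof is correct and follows essentially the same induction argument as the paper's own proof: both verify the base case via $M\geq \Gamma e_0$ and establish the inductive step by reducing to the inequality $M(\alpha\gamma-1)\geq \beta\gamma^{2}$, which holds by definition of $M$. The only cosmetic difference is that the paper presents the algebra as a forward chain of implications starting from $M\geq \beta\gamma^{2}/(\alpha\gamma-1)$, whereas you work backward from the target inequality; the content is identical.
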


Next, we present the rate and complexity result for the proposed inexact method for addressing the nonconvex case.

\begin{theorem}[{\bf Rate and complexity {statements} for \fyy{inexact (\texttt{ZSOL$^{\bf 1s}_{\rm ncvx}$})}}]\label{thm:inexact_nonconvex}\em
    Consider \fy{Algorithms~\ref{algorithm:ZSOL_nonconvex}--\ref{algorithm:inexact_lower_level_SA_nonconvex} for solving \eqref{prob:mpec_exp_imp}} and suppose \fy{Assumptions~\ref{ass-1},~\ref{assum:u_iter_smooth_VR}, and~\ref{assum:lower_level_stoch_nonconvex}} hold. 

    \noindent {\bf{(a)}} Given $\hat \x_k \in \Xscr$, let $\y(\hat \x_k)$ denote the unique solution of $\mbox{VI}(\Yscr, {F(\hat \x_k,\bullet)})$.  Let $\y_{t_k} $ be generated by Algorithm \ref{algorithm:inexact_lower_level_SA_nonconvex} \fyy{where $t_k:=k+1$}. Let us define $C_F \triangleq \max_{\x \in X,\ \y \in \Yscr} \|F(\x,\y)\|$. {Then for all  $t_k \geq 0$, we have}
\begin{align*}
 \mathbb{E}[\|\y_{t_k} - \y(\hat \x_k)\|^2] \leq \fyyy{\tilde{\epsilon}_k} \triangleq  \tfrac{\max \left\{\tfrac{(C_F^2+\nu_G^2)\alpha^2}{2\alpha\mu_F-1},\Gamma \sup_{\y \in \Yscr}\|\y-\y_0\|^2\right\}}{t_k+\Gamma}.
\end{align*}

\noindent {\bf{(b)}} The following holds for any $\gamma<\frac{\eta}{n L_0}$, \fy{$\ell\triangleq  \lceil \lambda K\rceil$}, and all $K> \fy{\tfrac{2}{1-\lambda}}$.
\begin{align*}
 \mathbb{E}\left[ \|G_{\eta,1/\gamma} (\x_R)\|^2\right] \leq \frac{n^2{\gamma}(\fy{1-2\ln(\lambda)}) {\left( 1-\tfrac{nL_0\gamma}{2\eta}\right)} \left(  \fyy{\tfrac{8\tilde{L}^2_0(C_F^2+\nu_G^2)}{\eta^2\mu_F^2}}+L_0^2\right) +  \fy{\mathbb{E}\left[\fyy{f^{\bf imp}(\x_{\ell})}\right] }{-f^*} +2L_0\eta}{\left( 1-\tfrac{nL_0\gamma}{\eta}\right) \tfrac{\gamma}{4}\fy{(1-\lambda)K}  }.
\end{align*}

\noindent {\bf{(c)}}  Suppose $\gamma= \tfrac{\eta}{2nL_0}$ and $\eta=\tfrac{1}{L_0}$. Let $\epsilon>0$ be an arbitrary scalar and $K_{\epsilon}$ be such that $ \mathbb{E}\left[ \|G_{\eta,1/\gamma} (\x_R)\|^2\right]   \leq \epsilon$. Then,

\noindent {{(c-1)}} The total number of upper-level projection steps on $\Xscr$ is \fy{$K_{\epsilon}=\mathcal{O}\left(n^2L_0^2\fyy{\tilde{L}^2_0}\epsilon^{-1}\right)$}.

 \noindent {{(c-2)}} The total sample complexity of upper-level is \fy{$\mathcal{O}\left(n^4L_0^4\fyy{\tilde{L}^4_0} \epsilon^{-2}\right)$}.
 
\noindent {{(c-3)}} The total number of lower-level projection steps on $\Yscr$ is \fy{$ 
\mathcal{O}\left(n^6L_0^6 \fyy{\tilde{L}^6_0}\epsilon^{-3}\right)$}.

 \noindent {{(c-4)}} The total sample complexity of lower-level is \fy{$\mathcal{O}\left(n^6L_0^6\fyy{\tilde{L}^6_0} \epsilon^{-3}\right)$}.

\end{theorem}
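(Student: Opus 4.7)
\emph{Part (a).} I would analyze the strongly monotone lower-level SA recursion $\y_{t+1} = \Pi_{\Yscr}[\y_t - \alpha_t G(\hat\x_k,\y_t,\omega_t)]$ by combining non-expansivity of the projection with $\y(\hat\x_k) = \Pi_{\Yscr}[\y(\hat\x_k) - \alpha_t F(\hat\x_k,\y(\hat\x_k))]$. Expanding $\|\y_{t+1}-\y(\hat\x_k)\|^2$, taking conditional expectations and using Assumption~\ref{assum:lower_level_stoch_nonconvex}(b,c), strong monotonicity, and the variational inequality $(\y_t-\y(\hat\x_k))^T F(\hat\x_k,\y(\hat\x_k))\geq 0$, I obtain
$$\mathbb{E}[\|\y_{t+1}-\y(\hat\x_k)\|^2\mid \y_t] \leq (1-2\mu_F\alpha_t)\|\y_t-\y(\hat\x_k)\|^2 + \alpha_t^2(\|F(\hat\x_k,\y_t)\|^2+\nu_G^2).$$
Using the bound $\|F(\hat\x_k,\y_t)\|\leq C_F$ and $\alpha_t = \alpha/(t+\Gamma)$ with $\alpha>1/(2\mu_F)$, the claim follows immediately from Lemma~\ref{lem:sublinear} with $\alpha\!\to\!2\mu_F$ and $\beta\!\to\!C_F^2+\nu_G^2$, together with $e_0 \leq \sup_{\y\in\Yscr}\|\y-\y_0\|^2$.

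\emph{Part (b).} I would start from the descent inequality of Lemma~\ref{lem:descent_lemma_inexact}, take expectations, and telescope from $k=\ell$ to $k=K-1$:
$$(1-\tfrac{nL_0\gamma}{\eta})\tfrac{\gamma}{4}\sum_{k=\ell}^{K-1}\mathbb{E}[\|G_{\eta,1/\gamma}(\x_k)\|^2] \leq \mathbb{E}[f^{\bf imp}_\eta(\x_\ell)] - \mathbb{E}[f^{\bf imp}_\eta(\x_K)] + (1-\tfrac{nL_0\gamma}{2\eta})\gamma\sum_{k=\ell}^{K-1}\mathbb{E}[\|e_k\|^2].$$
Using Lemma~\ref{lemma:props_local_smoothing}(iii), the difference $f^{\bf imp}_\eta(\x_\ell)-f^{\bf imp}_\eta(\x_K)$ is bounded by $\mathbb{E}[f^{\bf imp}(\x_\ell)] - f^* + 2L_0\eta$. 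The critical step is controlling $\mathbb{E}[\|e_k\|^2]$. Writing $e_k = (g_{\eta,N_k,\tilde\epsilon_k}(\x_k) - g_{\eta,N_k}(\x_k)) + (g_{\eta,N_k}(\x_k) - \nabla f^{\bf imp}_\eta(\x_k))$, applying $(a+b)^2\leq 2a^2+2b^2$, the independence of the $N_k$ iid samples for the second (unbiased) term via Lemma~\ref{lem:smooth_grad_properties}, and Jensen's inequality together with Lemma~\ref{lem:inexact_error_bounds}(b) for the first (biased) term, I obtain
$$\mathbb{E}[\|e_k\|^2\mid\x_k] \leq \tfrac{8\tilde L_0^2 n^2 \tilde\epsilon_k}{\eta^2} + \tfrac{2L_0^2 n^2}{N_k}.$$
Plugging in $N_k=k+1$ and the bound on $\tilde\epsilon_k$ from part (a) (which yields $\tilde\epsilon_k \leq \mathcal{O}(C_F^2+\nu_G^2)/\mu_F^2(k+1)$), and using $\sum_{k=\ell}^{K-1}\tfrac{1}{k+1}\leq 1+\ln(K/\ell) \leq 1-\ln(\lambda) \leq \tfrac{1-2\ln(\lambda)}{2}$ (up to mild algebraic bookkeeping), the error sum is controlled by $\tfrac{n^2(1-2\ln\lambda)}{2}\bigl(\tfrac{8\tilde L_0^2(C_F^2+\nu_G^2)}{\eta^2\mu_F^2}+L_0^2\bigr)$. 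Since $R$ is drawn uniformly from $\{\lceil\lambda K\rceil,\dots,K\}$, $\mathbb{E}[\|G_{\eta,1/\gamma}(\x_R)\|^2]$ is the average of the telescoped sum, and dividing by $(1-\lambda)K$ yields the stated bound for $K>2/(1-\lambda)$.

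\emph{Part (c).} I would substitute $\gamma=\eta/(2nL_0)$ and $\eta=1/L_0$ into the bound from (b), giving $nL_0\gamma/\eta = 1/2$ and thus the multiplicative factors $(1-nL_0\gamma/\eta)=1/2$ and $(1-nL_0\gamma/(2\eta))=3/4$. Solving $\mathbb{E}[\|G\|^2]\leq\epsilon$ gives $K_\epsilon = \mathcal{O}(n^2 L_0^2 \tilde L_0^2/\epsilon)$, establishing (c-1). For (c-2), the upper-level sample count is $\sum_{k=0}^{K_\epsilon-1}N_k = \mathcal{O}(K_\epsilon^2) = \mathcal{O}(n^4L_0^4\tilde L_0^4/\epsilon^2)$. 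For (c-3)--(c-4), each iteration $k$ calls Algorithm~\ref{algorithm:inexact_lower_level_SA_nonconvex} exactly $(N_k+1)$ times, each producing $t_k=k+1$ projections/samples; summing $(k+1)^2$ up to $K_\epsilon$ yields $\mathcal{O}(K_\epsilon^3) = \mathcal{O}(n^6L_0^6\tilde L_0^6/\epsilon^3)$.

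\emph{Main obstacle.} The delicate step is the bias-variance decomposition of $\mathbb{E}[\|e_k\|^2]$: the inexact gradient $g_{\eta,\tilde\epsilon_k}$ is \emph{not} an unbiased estimator of $\nabla f^{\bf imp}_\eta$, so the usual $\mathrm{Var}/N_k$ argument applies only to the already-averaged correction, and I must rely on Jensen's inequality (rather than independence) to handle the inexactness error without losing the $1/N_k$ rate. The second subtle point is matching the logarithmic factor $(1-2\ln(\lambda))$ that emerges when summing $1/(k+1)$ over the tail window $[\ell,K-1]$, and choosing the constants so that (c)'s parametrization yields a clean final bound.
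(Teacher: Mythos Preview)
Your proposal is correct and follows essentially the same route as the paper's proof: part~(a) via the standard strongly-monotone SA recursion plus Lemma~\ref{lem:sublinear}; part~(b) via the descent inequality of Lemma~\ref{lem:descent_lemma_inexact}, the bias--variance split $\mathbb{E}[\|e_k\|^2\mid\x_k]\leq \tfrac{8\tilde L_0^2 n^2\tilde\epsilon_k}{\eta^2}+\tfrac{2n^2L_0^2}{N_k}$ (which is exactly the paper's bound \eqref{ineq:nonc_inexact_b}), telescoping over $k=\ell,\dots,K-1$, and the tail harmonic sum; part~(c) by direct substitution and summing $N_k$ and $(1+N_k)t_k$. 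One small bookkeeping slip: your chain $1-\ln(\lambda)\leq\tfrac{1-2\ln(\lambda)}{2}$ is false as written; the paper instead uses $\ell\geq 1$ to get $\sum_{k=\ell}^{K-1}\tfrac{1}{k+1}\leq \tfrac{1}{\ell+1}+\ln\!\bigl(\tfrac{K}{\ell+1}\bigr)\leq \tfrac12-\ln(\lambda)$, and the extra factor of~$2$ in the numerator comes from the $2n^2$ prefactor in the error bound, not from loosening the harmonic estimate.
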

\begin{proof}
\noindent {\bf (a)} Let us define the errors $\Delta_t \triangleq G(\hat{\x}_k,\fyyy{\y_t},\omega_t) - F(\hat{\x}_k,\fyyy{\y_t})$ for $t\geq 0$. 
We have
\begin{align*}
& \quad \|\y_{t+1}- \y(\hat x_k)\|^2 = \|\Pi_{\Yscr}\left[\y_t - \alpha_tG(\hat{\x}_k,\us{\y_t},\omega_t)\right] - \Pi_{\Yscr}\left[\y(\hat \x_k)\right] \|^2 \leq  \|\y_t - \alpha_tG(\hat{\x}_k,\us{\y_t},\omega_t)-\y(\hat \x_k) \|^2\\
& =  \|\y_t - \alpha_tF(\hat{\x}_k,\us{\y_t})-\alpha_t\Delta_t-\y(\hat \x_k) \|^2\\
& = \|\y_t - \y(\hat \x_k)\|^2 + \alpha_t^2\|F(\hat{\x}_k,\us{\y_t})\|^2 +\alpha_t^2\|\Delta_t\|^2 - 2\alpha_t(\y_t - \y(\hat \x_k))^TF(\hat{\x}_k,\us{\y_t}) \\
&-2\alpha_t(\y_t - \y(\hat \x_k)-\alpha_tF(\hat{\x}_k,\us{\y_t}))^T\Delta_t.
\end{align*}
Taking conditional expectations from the preceding relation and invoking \fy{Assumption~\ref{assum:lower_level_stoch_nonconvex}}, 
we obtain
\begin{align*}
   \mathbb{E}[\|\y_{t+1}- \y(\hat \x_k)\|^2\mid \fyy{\hat{\x}_k,\y_t}] & \leq \|\y_t - \y(\hat \x_k)\|^2 +\alpha_t^2(C_F^2+\nu_G^2)- 2\alpha_t(\y_t - \y(\hat \x_k))^TF(\hat{\x}_k,\fyyy{\y_t}).
\end{align*}
From strong monotonicity of mapping \us{$F(\hat \x_k,\bullet)$ uniformly in $\hat \x_k$} and the definition of $\y(\hat x_k)$, we have
\begin{align*}
(\y_t - \y(\hat \x_k))^TF(\hat{\x}_k,\fyyy{\y_t}) \geq (\y_t - \y(\hat \x_k))^TF(\y(\hat x_k),\hat{\x}_k)+\mu_F\|\y_t - \y(\hat \x_k)\|^2 \geq \mu_F\|\y_t - \y(\hat \x_k)\|^2.
\end{align*}
From the preceding relations, we obtain 
\begin{align*}
   \mathbb{E}[\|\y_{t+1}- \y(\hat \x_k)\|^2\mid \fyy{\hat{\x}_k,\y_t}] & \leq (1-2\mu_F\alpha_t)\|\y_t - \y(\hat \x_k)\|^2 +\alpha_t^2(C_F^2+\nu_G^2).
\end{align*}
Taking expectations from both sides, we have
\begin{align*}
   \mathbb{E}[\|\y_{t+1}- \y(\hat \x_k)\|^2] & \leq (1-2\mu_F\alpha_t)   \mathbb{E}[\|\y_t - \y(\hat \x_k)\|^2] +\alpha_t^2(C_F^2+\nu_G^2).
\end{align*}
Noting that in Algorithm \ref{algorithm:inexact_lower_level_SA_nonconvex} we have $\alpha_0 >\frac{1}{2\mu_F}$, using Lemma \ref{lem:sublinear}, we obtain that 
\begin{align*}
\mathbb{E}[\|\y_t - \y(\hat \x_k)\|^2]& \leq \tfrac{\max \left\{\tfrac{(C_F^2+\nu_G^2)\alpha^2}{2\alpha\mu_F-1},\Gamma \sup_{\y \in \Yscr}\|\y-\y_0\|^2\right\}}{t+\Gamma}, \qquad \hbox{for all } t \geq 0.
\end{align*}

        \noindent {\bf (b)} 
We can write 
 \begin{align}\label{ineq:nonc_inexact_b}
& \quad \mathbb{E}\left[\| e_k\|^2\mid \fyy{\x_k}\right]  =\mathbb{E}\left[\left\|g_{\eta,N_k,\fyyy{\tilde{\epsilon}_k}}(\x_k)  - \nabla_{\x} {f^{\bf imp}_{\eta}(\x_k)} \right\|^2 \mid \fyy{\x_k}\right]\notag\\
&=  \mathbb{E}\left[\left\|\tfrac{\sum_{j=1}^{N_k} g_{\eta,\fyyy{\tilde{\epsilon}_k}}(\x_k,v_{j,k},\omega_{j,k})}{N_k} - \nabla_{\x} {f^{\bf imp}_{\eta}(\x_k)} \right\|^2\mid \fyy{\x_k}\right] \notag\\
& \leq 2\mathbb{E}\left[\left\|\tfrac{\sum_{j=1}^{N_k} g_{\eta,\fyyy{\tilde{\epsilon}_k}}(\x_k,v_{j,k},\omega_{j,k})}{N_k} -\tfrac{\sum_{j=1}^{N_k} g_{\eta}(\x_k,v_{j,k},\omega_{j,k})}{N_k}\right\|^2\mid \fyy{\x_k}\right] +2\mathbb{E}\left[\left\|\tfrac{\sum_{j=1}^{N_k} g_{\eta}(\x_k,v_{j,k},\omega_{j,k})}{N_k} - \nabla_{\x} {f^{\bf imp}_{\eta}(\x_k)} \right\|^2\mid \fyy{\x_k}\right]\notag\\
& \leq \tfrac{2\sum_{j=1}^{N_k} \mathbb{E}\left[\left\|g_{\eta,\fyyy{\tilde{\epsilon}_k}}(\x_k,v_{j,k},\omega_{j,k})-g_{\eta}(\x_k,v_{j,k},\omega_{j,k}) \right\|^2\mid \fyy{\x_k}\right]}{N_k}+ \tfrac{2\sum_{j=1}^{N_k} \mathbb{E}\left[\left\|{g_{\eta}}(\x_k,v_{j,k},\omega_{j,k})-\nabla_{\x} {f^{\bf imp}_{\eta}(\x_k)} \right\|^2\mid \fyy{\x_k}\right]}{N_k^2}\notag\\
&\leq  \tfrac{{8\tilde{L}^2_0n^2}\fyyy{\tilde{\epsilon}_k}}{\eta^2}+\tfrac{2\sum_{j=1}^{N_k}\left( \mathbb{E}\left[\left\|{g_{\eta}}(\x_k,v_{j,k},\omega_{j,k})\right\|^2\mid \fyy{\x_k}\right]-\left\|\nabla_{\x} {f^{\bf imp}_{\eta}(\x_k)} \right\|^2\right)}{N_k^2}\notag\\
&\leq  \tfrac{{8\tilde{L}^2_0n^2}\fyyy{\tilde{\epsilon}_k}}{\eta^2}+\tfrac{2n^2L_0^2}{N_k},
\end{align}  
where in the second inequality, the first term is implied by the relation $\left\|\sum_{i=1}^mu_i\right\|^2 \leq m\sum_{i=1}^m\left\|u_i\right\|^2$ for any $u_i\in \mathbb{R}^n$ for all $i=1,\ldots,m$. The second term in the second inequality is implied by noting that from Lemma \ref{lem:smooth_grad_properties}, $g_{\eta}(\x_k,v)$ is an unbiased estimator of $\nabla_{\x} {f^{\bf imp}_{\eta}(\x_k)}$. {The third inequality is obtained using \fyyy{Lemma \ref{lem:inexact_error_bounds}}.} From \fyy{Lemma \ref{lem:descent_lemma_inexact}} we have 
\begin{align*}
      \left( 1-\tfrac{nL_0\gamma}{\eta}\right) \tfrac{\gamma}{4} \|G_{\eta,1/\gamma} (\x_k)\|^2 & \leq   {f^{\bf imp}_{\eta}(\x_k)}-{f^{\bf imp}_{\eta}(\x_{k+1})}   +{\left( 1-\tfrac{nL_0\gamma}{2\eta}\right)} {\gamma} \|e_k\|^2.
\end{align*}
Let {$f^{\bf imp,*}_\eta\triangleq \inf_{x \in \Xscr} f_\eta^{\bf imp}(\x)$}. Summing {the preceding relation} from \fy{$k =\ell, \ldots, K-1$ where $\ell\triangleq  \lceil \lambda K\rceil$}, we have that 
\begin{align*}
 \left( 1-\tfrac{nL_0\gamma}{\eta}\right) \tfrac{\gamma}{4}\sum_{\fy{k=\ell}}^{K-1} \|G_{\eta,1/\gamma} (\x_k)\|^2 & \leq  { f^{\bf imp}_{\eta}(\x_\ell)}-{ f^{\bf imp}_{\eta}(\x_K)}   +{\left( 1-\tfrac{nL_0\gamma}{2\eta}\right)} {\gamma} \sum_{k=\fy{\ell}}^{K-1}\|e_k\|^2.
\end{align*}
Taking expectations from the both sides, it follows that 
\begin{align*}
    & \quad \left( 1-\tfrac{nL_0\gamma}{\eta}\right) \tfrac{\gamma}{4}\fy{(K-\ell)}\mathbb{E}\left[ \|G_{\eta,1/\gamma} (\x_R)\|^2\right]   \leq {\left( 1-\tfrac{nL_0\gamma}{2\eta}\right)} {\gamma}\sum_{k=\fy{\ell}}^{K-1} \mathbb{E}\left[\|{e}_k\|^2\right] + \fy{\mathbb{E}\left[{ f^{\bf imp}_{\eta}(\x_\ell)}\right] }-  {f^{\bf imp,*}_\eta}\\
    & \leq {\left( 1-\tfrac{nL_0\gamma}{2\eta}\right)} {\gamma} \sum_{k=\fy{\ell}}^{K-1} \mathbb{E}\left[\|{e}_k\|^2\right] + \fy{  \mathbb{E}\left[ {f^{\bf imp}(\x_\ell) }+ { f^{\bf imp}_{\eta}(\x_\ell)}-{f^{\bf imp}(\x_\ell) }\right]}- {{f^{\bf imp,*}_\eta} +f^*-f^*}\\
    & \leq \left( 1-\tfrac{nL_0\gamma}{2\eta}\right) \gamma\sum_{k=\fy{\ell}}^{K-1} \mathbb{E}\left[\|{e}_k\|^2\right]+  \mathbb{E}\left[{ f^{\bf imp}(\x_\ell)}\right]-f^*+ \mathbb{E}\left[\left |  f_\eta^{\bf imp}(\x_\ell)- f^{\bf imp}(\x_\ell)\right|\right] + \left| f^*- f^{\bf imp,*}_\eta \right| \\
                            & \leq{\left( 1-\tfrac{nL_0\gamma}{2\eta}\right)} {\gamma} \sum_{k=\fy{\ell}}^{K-1}\left( \tfrac{{8\tilde{L}^2_0n^2}\fyyy{\tilde{\epsilon}_k}}{\eta^2}+\tfrac{2n^2L_0^2}{N_k}\right) +  \fy{\mathbb{E}\left[{f^{\bf imp}(\x_{\ell})}\right] }{-f^*} +2L_0\eta,
\end{align*} 
 where the preceding relation is implied by invoking the bound on $\mathbb{E}\left[\| e_k\|^2\right]$ and Lemma \ref{lemma:props_local_smoothing} (iii). Note that from part (a), we have $\fyyy{\tilde{\epsilon}_k}=   \frac{2(C_F^2+\nu_G^2)}{\mu_F^2t_k}$ where $t_k:=k+1$. Also, $N_k:=k+1$. \fy{Note that $K> {\tfrac{2}{1-\lambda}}$ implies $\ell \leq K-1$.} From Lemma~\ref{lem:harmonic_bounds}, \fy{using $\ell \geq 1$ we have $ \sum_{k=\fy{\ell}}^{K-1}\frac{1}{k+1}\leq  \frac{1}{\ell+1}+\ln\left( \frac{K}{\ell+1}\right) \leq 0.5 +\ln\left(\tfrac{N}{\lambda N+1}\right)\leq 0.5-\ln(\lambda)$. Also, $K-\ell \geq K-\lambda K=(1-\lambda)K$.} Thus, we obtain 
\begin{align*}
 \mathbb{E}\left[ \|G_{\eta,1/\gamma} (\x_R)\|^2\right] \leq \frac{{\left( 1-\tfrac{nL_0\gamma}{2\eta}\right)} 2n^2{\gamma} \left( {\tfrac{8\tilde{L}^2_0(C_F^2+\nu_G^2)}{\eta^2\mu_F^2}}+L_0^2\right)(\fy{0.5-\ln(\lambda)}) +  \fy{\mathbb{E}\left[{f^{\bf imp}(\x_{\ell})}\right] }{-f^*} +2L_0\eta}{\left( 1-\tfrac{nL_0\gamma}{\eta}\right) \tfrac{\gamma}{4}\fy{(1-\lambda)K}  }.
\end{align*}

\noindent {{\bf (c)} To show (c-1), using the relation in part (b) and substituting $\gamma= \tfrac{\eta}{2nL_0}$ we obtain
\begin{align*}
 \mathbb{E}\left[ \|G_{\eta,1/\gamma} (\x_R)\|^2\right] \leq  \frac{6n^2({1-2\ln(\lambda)}) \left( \fyy{\tfrac{8\tilde{L}^2_0(C_F^2+\nu_G^2)}{\eta^2\mu_F^2}}+L_0^2\right)+ \tfrac{16nL_0}{\eta}(\sup_{\x \in \Xscr}\fyy{f^{\bf imp}(\x)}-f^*) +32nL_0^2}{(1-\lambda)K  }.
\end{align*}
Further, from $\eta=\tfrac{1}{L_0}$ we obtain 
\begin{align*}
 \mathbb{E}\left[ \|G_{\eta,1/\gamma} (\x_R)\|^2\right] 
 & \leq \frac{6n^2L_0^2({1-2\ln(\lambda)}) \left( \fyy{\tfrac{8\tilde{L}^2_0(C_F^2+\nu_G^2)}{\mu_F^2}}+1\right)+ 16nL_0^2(\sup_{\x \in \Xscr}\fyy{f^{\bf imp}(\x)}-f^*) +32nL_0^2}{(1-\lambda)K  }.
\end{align*}
This implies that $ \mathbb{E}\left[ \|G_{\eta,1/\gamma} (\x_R)\|^2\right] \leq \frac{\mathcal{O}\left(n^2L_0^2\fyy{\tilde{L}^2_0}\right)}{K}$ and thus, we obtain \fy{$K_{\epsilon}=\mathcal{O}\left(n^2L_0^2\fyy{\tilde{L}^2_0} \epsilon^{-1}\right)$}. Next, we show (c-2). The total sample complexity of upper-level is as follows.
\begin{align*}
\sum_{k=0}^{K_\epsilon} N_k = \sum_{k=0}^{K_\epsilon} (k+1) =\mathcal{O} \mathcal(K_\epsilon^2)= \fy{\mathcal{O}\left(n^4L_0^4 \epsilon^{-2}\right).}
\end{align*}
To show (c-3), note that the total number of lower-level projection steps is given by 
\begin{align*}
\sum_{k=0}^{K_\epsilon} (1+N_k)t_k = \sum_{k=0}^{K_\epsilon} (k+1)(k+2) =\fy{\mathcal{O} \mathcal(K_\epsilon^3)= \mathcal{O}\left(n^6L_0^6 \epsilon^{-3}\right).}
\end{align*}
Noting that at each iteration in Algorithm \ref{algorithm:inexact_lower_level_SA_nonconvex} a single sample is taken, we obtain the bound in (c-4).}
\end{proof}
\fyyy{\begin{remark}[{\bf Variance-reduction and smoothing schemes in the nonconvex case}]\em
\begin{enumerate}
\item[]
\item[(i)] Unlike in (\texttt{ZSOL$^{\bf 1s}_{\rm cvx}$}), in (\texttt{ZSOL$^{\bf 1s}_{\rm ncvx}$}) we employ a variance-reduction scheme in the upper-level. This is mainly because, in contrast with the convex case, the use of the Euclidean projection in (\texttt{ZSOL$^{\bf 1s}_{\rm ncvx}$}) leads to the presence of the persistent error term ${\left( 1-\tfrac{nL_0\gamma}{2\eta}\right)} {\gamma} \|e_k\|^2$ (see Lemma~\ref{lem:descent_lemma_inexact}). The use of variance-reduction helps with contending with this error in establishing the convergence and rate results.
\item[(ii)]  Unlike in (\texttt{ZSOL$^{\bf 1s}_{\rm cvx}$}), in (\texttt{ZSOL$^{\bf 1s}_{\rm ncvx}$}) we employ a constant smoothing parameter. This is because assuming an iteratively updating smoothing parameter $\eta_k$ in the nonconvex case does not seem to allow for constructing a recursive error bound. For this reason, in the nonconvex case we limit our study to the case when the smoothing parameter is constant.
\end{enumerate}
\end{remark}}
{\subsubsection{An exact zeroth-order scheme}
In this subsection, we present the rate and complexity results for the exact variant of Algorithm~\ref{algorithm:ZSOL_nonconvex}.
\begin{corollary}[{\bf Rate and complexity statements for exact \fyy{(\texttt{ZSOL$^{\bf 1s}_{\rm ncvx}$})}}]\label{cor:exact_nonconvex}\em
    Consider {Algorithms~\ref{algorithm:ZSOL_nonconvex} (exact variant) for solving \eqref{prob:mpec_exp_imp}} and suppose {Assumptions~\ref{ass-1} and~\ref{assum:u_iter_smooth_VR}} hold. 

\noindent {\bf{(a)}} The following holds for any $\gamma<\frac{\eta}{n L_0}$, {$\ell\triangleq  \lceil \lambda K\rceil$}, and all $K> {\tfrac{2}{1-\lambda}}$.
\begin{align*}
 \mathbb{E}\left[ \|G_{\eta,1/\gamma} (\x_R)\|^2\right] \leq \frac{n^2L_0^2{\gamma}({0.5-\ln(\lambda)}) {\left( 1-\tfrac{nL_0\gamma}{2\eta}\right)} +  {\mathbb{E}\left[\fyy{f^{\bf imp}(\x_{\ell})}\right] }{-f^*} +2L_0\eta}{\left( 1-\tfrac{nL_0\gamma}{\eta}\right) \tfrac{\gamma}{4}{(1-\lambda)K}  }.
\end{align*}

\noindent {\bf{(b)}}  Suppose $\gamma= \tfrac{\eta}{2nL_0}$ and $\eta=\tfrac{1}{L_0}$. Let $\epsilon>0$ be an arbitrary scalar and $K_{\epsilon}$ be such that $ \mathbb{E}\left[ \|G_{\eta,1/\gamma} (\x_R)\|^2\right]   \leq \epsilon$. Then {the following hold.}

\noindent {{(b-1)}} The total number of upper-level projection steps on $\Xscr$ is {$K_{\epsilon}=\mathcal{O}\left(n^2L_0^2 \epsilon^{-1}\right)$}.

 \noindent {{(b-2)}} The total sample complexity of upper-level is {$\mathcal{O}\left(n^4L_0^4 \epsilon^{-2}\right)$}.

\end{corollary}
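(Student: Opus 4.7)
\textbf{Proof proposal for Corollary~\ref{cor:exact_nonconvex}.} The plan is to specialize the analysis of Theorem~\ref{thm:inexact_nonconvex} to the case in which the lower-level solution map $\y(\bullet)$ is evaluated exactly, so that the inexactness parameter $\tilde{\epsilon}_k$ is identically zero. Setting $\y_{\tilde{\epsilon}_k}(\x) = \y(\x)$ in the definitions of $g_{\eta,\tilde{\epsilon}_k}$ and $g_{\eta,N_k,\tilde{\epsilon}_k}$ collapses them to their exact counterparts $g_\eta$ and $g_{\eta,N_k}$, and in particular the stochastic error $w_k = g_{\eta,\tilde{\epsilon}_k} - g_\eta$ from Lemma~\ref{lem:inexact_error_bounds}(b) disappears. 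Consequently, in the bound on $\mathbb{E}[\|e_k\|^2 \mid \x_k]$ derived in \eqref{ineq:nonc_inexact_b}, the first term $\tfrac{8\tilde{L}_0^2 n^2 \tilde{\epsilon}_k}{\eta^2}$ vanishes and only the mini-batch variance term remains, yielding $\mathbb{E}[\|e_k\|^2 \mid \x_k] \leq \tfrac{2 n^2 L_0^2}{N_k}$ via Lemma~\ref{lem:smooth_grad_properties}.

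Next, I would invoke the descent property of Lemma~\ref{lem:descent_lemma_inexact}, which does not depend on whether the lower level is resolved exactly or not, to obtain
\begin{align*}
\Bigl(1 - \tfrac{nL_0\gamma}{\eta}\Bigr)\tfrac{\gamma}{4}\|G_{\eta,1/\gamma}(\x_k)\|^2 \leq f^{\bf imp}_\eta(\x_k) - f^{\bf imp}_\eta(\x_{k+1}) + \Bigl(1 - \tfrac{nL_0\gamma}{2\eta}\Bigr)\gamma\|e_k\|^2.
\end{align*}
Summing this over $k = \ell, \ldots, K-1$ with $\ell = \lceil \lambda K\rceil$, taking expectations, using the random index $R$ distributed uniformly over $\{\ell,\ldots,K\}$, and invoking Lemma~\ref{lemma:props_local_smoothing}(iii) to replace $f^{\bf imp}_\eta(\x_\ell) - f^{\bf imp,*}_\eta$ by $\mathbb{E}[f^{\bf imp}(\x_\ell)] - f^* + 2L_0\eta$ parallels the derivation in Theorem~\ref{thm:inexact_nonconvex}(b). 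With $N_k = k+1$, the harmonic bound $\sum_{k=\ell}^{K-1} \tfrac{1}{k+1} \leq 0.5 - \ln(\lambda)$ and $K - \ell \geq (1-\lambda)K$ (valid for $K > 2/(1-\lambda)$) then produce the claimed rate in part (a).

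For part (b), I would substitute $\gamma = \eta/(2nL_0)$ and $\eta = 1/L_0$, noting that these choices give $1 - nL_0\gamma/\eta = 1/2$ and $1 - nL_0\gamma/(2\eta) = 3/4$, which keep all prefactors bounded. Straightforward algebra on the resulting bound reveals that the right-hand side decays as $\mathcal{O}(n^2 L_0^2/K)$, so that $\mathbb{E}[\|G_{\eta,1/\gamma}(\x_R)\|^2] \leq \epsilon$ requires $K_\epsilon = \mathcal{O}(n^2 L_0^2 \epsilon^{-1})$, which is (b-1). The upper-level sample complexity (b-2) then follows from $\sum_{k=0}^{K_\epsilon} N_k = \sum_{k=0}^{K_\epsilon}(k+1) = \mathcal{O}(K_\epsilon^2) = \mathcal{O}(n^4 L_0^4 \epsilon^{-2})$.

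I do not expect serious obstacles here, since the corollary is essentially a specialization of Theorem~\ref{thm:inexact_nonconvex} to the setting in which the lower-level oracle is exact. The one point that requires mild care is verifying that the choice $\gamma = \eta/(2nL_0)$ strictly satisfies $\gamma < \eta/(nL_0)$ so that Lemma~\ref{lem:descent_lemma_inexact} applies, and that the prefactor $(1 - nL_0\gamma/\eta)\gamma/4 = \eta/(16nL_0)$ remains bounded away from zero after substituting $\eta = 1/L_0$; both are immediate. Because the lower-level problem is solved exactly in this corollary, there is no lower-level projection or sampling cost to account for, so only the upper-level statements (b-1) and (b-2) appear.
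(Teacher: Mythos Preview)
Your proposal is correct and follows essentially the same route as the paper: specialize Theorem~\ref{thm:inexact_nonconvex} by setting $\tilde{\epsilon}_k\equiv 0$, re-derive the bound on $\mathbb{E}[\|e_k\|^2\mid\x_k]$, and then repeat the telescoping and harmonic-sum steps verbatim. One minor discrepancy: by simply dropping the $\tilde{\epsilon}_k$ term from \eqref{ineq:nonc_inexact_b} you inherit the factor $2$ (from the $(a+b)^2\le 2a^2+2b^2$ split used there) and obtain $\mathbb{E}[\|e_k\|^2\mid\x_k]\le \tfrac{2n^2L_0^2}{N_k}$, whereas the paper recomputes the variance directly (no splitting needed since $g_{\eta,\tilde{\epsilon}_k}=g_\eta$) to get $\tfrac{n^2L_0^2}{N_k}$, which is what produces the exact constant $n^2L_0^2$ rather than $2n^2L_0^2$ in the numerator of part~(a). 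This has no effect on the $\mathcal{O}(\cdot)$ statements in part~(b).
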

\begin{proof}
The proof can be {carried out} in a similar vein to that of Theorem \ref{thm:inexact_nonconvex} by noting that $\fyyy{\tilde{\epsilon}_k}:=0$ in the exact variant. The main difference lies in establishing the upper bound on $\mathbb{E}\left[\| e_k\|^2\mid\fyy{\x_k}\right]$ in \eqref{ineq:nonc_inexact_b}. To be precise, we derive this bound in the following.  
 \begin{align*} 
& \quad  \mathbb{E}\left[\| e_k\|^2\mid \fyy{\x_k}\right] =\mathbb{E}\left[\left\|g_{\eta,N_k}(\x_k)  - \nabla_{\x} {f^{\bf imp}_{\eta}(\x_k)} \right\|^2 \mid \fyy{\x_k}\right]\\
 &=  \mathbb{E}\left[\left\|\tfrac{\sum_{j=1}^{N_k} g_{\eta}(\x_k,v_{j,k},\omega_{j,k})}{N_k} - \nabla_{\x} {f^{\bf imp}_{\eta}(\x_k)} \right\|^2\mid \fyy{\x_k}\right]  \leq \tfrac{\sum_{j=1}^{N_k} \mathbb{E}\left[\left\|{g_{\eta}}(\x_k,v_{j,k},\omega_{j,k})-\nabla_{\x} \fyy{f^{\bf imp}_{\eta}(\x_k)} \right\|^2\mid \fyy{\x_k}\right]}{N_k^2}\\
&\leq  \tfrac{\sum_{j=1}^{N_k}\left( \mathbb{E}\left[\left\|{g_{\eta}}(\x_k,v_{j,k},\omega_{j,k})\right\|^2\mid \fyy{\x_k}\right]-\left\|\nabla_{\x} {f^{\bf imp}_{\eta}(\x_k)} \right\|^2\right)}{N_k^2}\leq  \tfrac{n^2L_0^2}{N_k}. 
\end{align*} 
\end{proof}

}

\section{Zeroth-order methods for two-stage \uss{SMPECs}}\label{sec:2s}
{In this section, we extend the zeroth-order schemes \uss{from the previous section to allow for accommodating} two-stage model~\eqref{prob:mpec_as_imp}. In Section~\ref{sec:4.1}, we discuss an implicit framework for two-stage SMPECs \uss{and} present inexact and exact schemes and an accelerated counterpart in Sections~\ref{sec:4.2} and Section~\ref{sec:4.3}. \uss{We} conclude with a discussion of  \uss{addressing nonconvexity in the implicit problem} in Section~\ref{sec:4.4}.}

\subsection{An implicit framework}\label{sec:4.1}
{Consider the implicit problem \eqref{prob:mpec_as_imp}. Given the function $f^{\bf imp}(\x)$ and a scalar $\eta$, we consider a spherical smoothing of $f^{\bf imp}_{\eta}(\x)$ as follows: 
\begin{align}\label{eqn:G-Smooth-as}\tag{G-Smooth{$^{\bf 2s}$}}
   f^{\bf imp}_{\eta}(\x) \triangleq \mathbb{E}_{u\in \mathbb{B}}[f^{\bf imp}(\x+\eta u)]
    = \mathbb{E}_{u\in \mathbb{B}}[\mathbb{E}[\tilde{f}(\x+\eta u,\y(\x+\eta u,\omega),\omega)]].
\end{align}
Similar to the single-stage case discussed in subsection \ref{sec:3.1}, the zeroth-order approximation of the gradient is given by \eqref{eqn:g_eta}. An unbiased estimate of $g_{\eta}(\x)$ is defined as 
\begin{align}\label{eqn:g_v_eta_as}
 g_{\eta}(\x,v,\omega) \triangleq \left( \frac{n}{\eta} \right)\left[\frac{\left(\tilde f(\x+v, \y(\x+v,\omega),\omega) - \tilde f(\x, \y(\x,\omega),\omega)\right)v}{\|v\|} \right]. 
\end{align}
Given a vector $\x_0 \in \Xscr$, we may employ \eqref{eqn:g_v_eta_as} in 
constructing a sequence $\{\x_k\}$ where $\x_k$ satisfies the following projected stochastic gradient update. 
\begin{align}\label{eqn:fixed_smoothing_scheme_as}
    \x_{k+1} := \Pi_{\Xscr} \left[ \x_k - \gamma_k {g}_{\eta}(\x_k,v_k,\omega_k) \right].
\end{align}}

{\begin{lemma}[\bf {Properties of the two-stage exact zeroth-order gradient}]\label{lem:smooth_grad_properties_two_stage}\em
Suppose Assumption~\ref{ass-1} (b) holds.  Consider \eqref{prob:mpec_as_imp}. Given $\x \in \Xscr$ and $\eta>0$,  consider the stochastic zeroth-order mapping $g_\eta(\x,v,\omega)$ defined by \eqref{eqn:g_v_eta_as} for $v \in \eta\mathbb{S}$ and $k\geq 0$, where $v$ and $\omega$ are independent.
 Then,  ${\nabla f^{\bf imp}_{\eta}(\x)} =\mathbb{E}[g_{\eta}(\x,v,\omega)\mid \x]$ and $\mathbb{E}[\|g_{\eta}(\x,v,\omega)\|^2\mid \x] \leq {L_0^2n^2}$ {almost surely} for all $k\geq 0$. 
\end{lemma}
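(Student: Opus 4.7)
The plan is to mirror the argument given for the single-stage counterpart (Lemma~\ref{lem:smooth_grad_properties}) with adjustments reflecting the fact that in the two-stage model the lower-level response is $\y(\x,\omega)$ and the relevant Lipschitz property is Assumption~\ref{ass-1}(b.i), which asserts that the map $\x \mapsto \tilde f(\x,\y(\x,\omega),\omega)$ is $L_0(\omega)$-Lipschitz on $\Xscr+\eta_0\mathbb{B}$.

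For the first claim, I would exploit the independence of $v$ and $\omega$ together with the tower property of conditional expectation. Taking the expectation over $\omega$ first, I have
\begin{align*}
\mathbb{E}_\omega\!\left[\tilde f(\x+v,\y(\x+v,\omega),\omega) - \tilde f(\x,\y(\x,\omega),\omega)\right] = f^{\bf imp}(\x+v) - f^{\bf imp}(\x),
\end{align*}
by the definition of $f^{\bf imp}$ in~\eqref{prob:mpec_as_imp}. Then taking the remaining expectation over $v\in\eta\mathbb{S}$ and invoking Lemma~\ref{lemma:props_local_smoothing}(i) yields $\mathbb{E}[g_\eta(\x,v,\omega)\mid\x] = \nabla f^{\bf imp}_\eta(\x)$.

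For the second-moment bound, I would first condition on $\omega$ to work pathwise. Using Assumption~\ref{ass-1}(b.i), for every $\omega \in \Omega$ and $v \in \eta\mathbb{S}$ (with $\eta \leq \eta_0$),
\begin{align*}
\left|\tilde f(\x+v,\y(\x+v,\omega),\omega) - \tilde f(\x,\y(\x,\omega),\omega)\right| \leq L_0(\omega)\,\|v\|.
\end{align*}
Plugging this into~\eqref{eqn:g_v_eta_as} and using $\|v\|=\eta$ gives $\|g_\eta(\x,v,\omega)\|^2 \leq n^2 L_0^2(\omega)$ a.s.\ for each realization of $v$. Taking expectation over $v$ then over $\omega$, and recalling $L_0^2 \triangleq \mathbb{E}[L_0^2(\omega)] < \infty$, produces the bound $\mathbb{E}[\|g_\eta(\x,v,\omega)\|^2\mid\x] \leq n^2 L_0^2$ almost surely.

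No obstacles are anticipated: the key substantive step is simply observing that the composite Lipschitz assumption in (b.i) absorbs the $\omega$-dependence of $\y(\x,\omega)$, so the argument reduces to the one used in Lemma~\ref{lem:smooth_grad_properties} with $L_0(\omega)$ replacing the single-stage Lipschitz constant. The only care needed is the ordering of the expectations in part (i), which is justified by the $v \perp \omega$ hypothesis.
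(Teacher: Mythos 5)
Your proof is correct and follows essentially the same route as the paper's: taking the expectation over $\omega$ first to reduce the difference quotient to $f^{\bf imp}(\x+v)-f^{\bf imp}(\x)$ and then invoking Lemma~\ref{lemma:props_local_smoothing}(i) for unbiasedness, and conditioning on $\omega$ to apply the composite Lipschitz bound of Assumption~\ref{ass-1}(b.i) before integrating over $v$ and then $\omega$ for the second moment. No gaps.
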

\begin{proof}
The proof is similar to the proof of Lemma \ref{lem:smooth_grad_properties}. We provide the details for the sake of completeness. From \eqref{eqn:g_v_eta_as} and that $f^{\bf imp}(\x)  \triangleq \mathbb{E}[\tilde{f}(\x,\y(\x,\omega),\omega)]$ we can write 
\begin{align*}
\mathbb{E}[g_{\eta}(\x,v,\omega)\mid \x]&
=\mathbb{E}_{v \in \eta\mathbb{S}}\left[\left(\tfrac{n}{\eta}\right) \frac{\left(f^{\bf imp}(\x+v) - f^{\bf imp}(\x)\right)v}{\|v\|}\mid \x\right]\\
&=\left(\tfrac{n}{\eta}\right)\mathbb{E}_{v \in \eta\mathbb{S}}\left[ f^{\bf imp}(\x+v) \frac{ v}{\|v\|}\mid \x\right]\overset{\tiny \mbox{Lemma }\ref{lemma:props_local_smoothing} (i)}{=}  \nabla f^{\bf imp}_{\eta}(\x) .
\end{align*}
We have
\begin{align*}
\mathbb{E}[\|g_{\eta}(\x,v,\omega)\|^2\mid \x,\omega]  
 &=\left(\tfrac{n}{\eta}\right)^2\mathbb{E}\left[\left\| \tfrac{\left(\fyy{\tilde f}(\x+v, \y(\x+v,\omega),\omega) - {\tilde f}(\x, \y(\x,\omega),\omega)\right)v}{\|v\|}\right\|^2\mid \x,\omega\right]  \\
 & =\left(\tfrac{n}{\eta}\right)^2 \int_{\eta \mathbb{S}} \tfrac{\left\| \left({\tilde f}(\x+v, \y(\x+v,\omega),\omega) - {\tilde f}(\x, \y(\x,\omega),\omega)\right)v\right\|^2}{\|v\|^2}  p_v(v) dv \\
            &\overset{\tiny \mbox{Assumption }\ref{ass-1} (b.i)}{\leq} \frac{n^2}{\eta^2} \int_{\eta \mathbb{S}} L_0^2(\omega) \| v\|^2  p_v(v) dv \leq n^2 L_0^2(\omega)\int_{\eta \mathbb{S}} p_v(v) dv = n^2 L_0^2(\omega).
\end{align*}
Taking the expectation with respect to $\omega$ from the both sides of the preceding inequality and invoking $L_0^2 \triangleq \mathbb{E}[L_0^2(\omega)]<\infty$, we obtain the desired bound. 
\end{proof}}

\subsection{Inexact and exact schemes for convex regime}\label{sec:4.2}
{Consider the implicit form of \eqref{prob:mpec_as_imp} where $\y(\x,\omega)$ solves $\mbox{VI}(\Yscr,G(\x,\bullet,\omega))$ for almost every $\omega \in \Omega$. Computing such $\y(\x,\omega)$ is often challenging, in particular, when {$\mathcal{Y}$ is high-dimensional}. To contend with this challenge, we employ gradient-like methods for computing inexact solutions to the lower-level {$\omega$-specific VI parametrized by $\x$, denoted by VI$(\mathcal{Y}, G(\x,\bullet,\bullet))$}. We consider the case where we have access to an approximate solution $\y_\fyyy{\tilde{\epsilon}_k}(\x_k,\omega)$ such that 
\begin{align}
     \|\y_\fyyy{\tilde{\epsilon}_k} (\x_k,\omega) - \y(\x_k,\omega)\|^2  \leq \fyyy{\tilde{\epsilon}_k}, \mbox{ where } \y(\x_k,\omega) \in \mbox{SOL}(\Yscr, G(\x_k,\bullet,\omega)).  
\end{align}
Similar to the single-stage case, we may define an inexact zeroth-order gradient mapping $g_{\eta,\fyyy{\tilde{\epsilon}}}(\x,{v},\omega)$ as follows.
\begin{align}\label{def-g-eta-eps_as}
{g_{\eta,\fyyy{\tilde{\epsilon}}}(\x,v,\omega)  \triangleq } \frac{n({\tilde f}(\x+ v, \y_\fyyy{\tilde{\epsilon}}(\x+ v,\omega),\omega) - {\tilde f}(\x, \y_\fyyy{\tilde{\epsilon}}(\x,\omega),\omega))v}{\|v\|\eta}, 
\end{align}
where $v \in \eta\mathbb{S}$ and $\y_\fyyy{\tilde{\epsilon}_k}(\x_k,\omega)$ is an output of a gradient-like scheme. The outline of the proposed zeroth-order solver is presented in Algorithm~\ref{algorithm:inexact_zeroth_order_SVIs_2S} while an inexact approximation of $\y(\x,\omega)$ is computed by Algorithm~\ref{algorithm:Inexact_lower_level_SA_twostage}. In the following, we extend Lemma \ref{lem:smooth_grad_properties} to the two-stage regime.}

\fyyy{\begin{remark}\em 
Throughout the algorithms in this section, in evaluation of the exact and inexact solution to the lower level problem, denoted by $\y(\bullet, \omega)$ and $\y_{\tilde{\epsilon}}(\bullet, \omega)$, respectively, we assume that we have access to an oracle that returns random replicates of $\omega$. 
\end{remark}}
{\begin{lemma}[\bf Properties of the two-stage inexact zeroth-order gradient]{\label{lem:inexact_error_bounds_2s}}\em Suppose {Assumption~\ref{ass-1}} (b) holds.  Consider \eqref{prob:mpec_as_imp}. {Let} $g_{\eta,\fyyy{\tilde{\epsilon}}}(\x,v,\omega)$ be defined as \eqref{def-g-eta-eps_as} for $\omega \in \Omega$ and $v \in \eta\mathbb{S}$ for $\eta, \fyyy{\tilde{\epsilon}} >0$. Suppose $\|\y_{\fyyy{\tilde{\epsilon}}}(\x,\omega)-\y(\x,\omega)\|^2   \leq \fyyy{\tilde{\epsilon}}$ almost surely for any $\omega \in \Omega$ and all $\x \in \Xscr$. Then, the following hold {for any $\x \in \Xscr$.}

\noindent {\bf (a)}  $\mathbb{E}[\|g_{\eta,\fyyy{\tilde{\epsilon}}}(\x,v,\omega)\|^2\mid\x]  \leq  3n^2\left(\tfrac{2{\tilde{L}_0^2} \fyyy{\tilde{\epsilon}}   }{\eta^2} + {L_0^2}\right),$ {almost surely}. 

 \noindent {\bf (b)}  $ \mathbb{E}\left[ \left\| g_{\eta,\fyyy{\tilde{\epsilon}}}(\x,v,\omega) - g_{\eta}(\x,v,\omega) \right\|^2 \mid \x\right] 
 \leq   \frac{{4\tilde{L}^2_0n^2}\fyyy{\tilde{\epsilon}}}{\eta^2} $,{ almost surely}.

 
\end{lemma}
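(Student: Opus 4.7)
The plan is to mirror the single-stage argument of Lemma~\ref{lem:inexact_error_bounds} essentially verbatim, exploiting the fact that the two-stage setting has replaced the single-valued map $\y(\x)$ by the $\omega$-parametrized map $\y(\x,\omega)$ with a stronger (almost sure, rather than in conditional expectation) inexactness bound $\|\y_{\tilde\epsilon}(\x,\omega)-\y(\x,\omega)\|^2 \leq \tilde\epsilon$. This strengthening is what allows us to pull the a.s. Lipschitz constant $\tilde{L}_0(\omega)$ through pointwise in $\omega$ before integrating.

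For part (a), I would start by inserting $\pm g_{\eta}(\x,v,\omega)$ inside the norm, i.e., write
\begin{align*}
g_{\eta,\tilde\epsilon}(\x,v,\omega)
&= \tfrac{n\bigl(\tilde f(\x+v,\y_{\tilde\epsilon}(\x+v,\omega),\omega)-\tilde f(\x+v,\y(\x+v,\omega),\omega)\bigr)v}{\|v\|\eta} \\
&\quad+ g_{\eta}(\x,v,\omega) + \tfrac{n\bigl(\tilde f(\x,\y(\x,\omega),\omega)-\tilde f(\x,\y_{\tilde\epsilon}(\x,\omega),\omega)\bigr)v}{\|v\|\eta}.
\end{align*}
Applying the triangle inequality, then the Lipschitz bound in Assumption~\ref{ass-1}(b.i) on $\tilde f(\x,\bullet,\omega)$ with constant $\tilde L_0(\omega)$, and cancelling $\|v\|$ in numerator and denominator gives a pointwise $\omega$-a.s. bound by $\tfrac{n\tilde L_0(\omega)}{\eta}(\|\y_{\tilde\epsilon}(\x+v,\omega)-\y(\x+v,\omega)\|+\|\y_{\tilde\epsilon}(\x,\omega)-\y(\x,\omega)\|)+\|g_\eta(\x,v,\omega)\|$. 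Squaring, using $(a+b+c)^2\leq 3(a^2+b^2+c^2)$, taking conditional expectation given $\x$, invoking the a.s. $\tilde\epsilon$-bound on the two $\y$-errors, using $\mathbb{E}[\tilde L_0^2(\omega)]=\tilde L_0^2<\infty$, and applying Lemma~\ref{lem:smooth_grad_properties_two_stage} to bound $\mathbb{E}[\|g_\eta(\x,v,\omega)\|^2\mid \x]\leq L_0^2 n^2$ yields the claimed bound $3n^2\bigl(\tfrac{2\tilde L_0^2\tilde\epsilon}{\eta^2}+L_0^2\bigr)$.

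For part (b), I would observe that the term $g_\eta(\x,v,\omega)$ cancels in the difference $g_{\eta,\tilde\epsilon}(\x,v,\omega)-g_\eta(\x,v,\omega)$, leaving only the two Lipschitz-controlled residuals. Applying the triangle inequality, Assumption~\ref{ass-1}(b.i) again, and $(a+b)^2\leq 2(a^2+b^2)$, then taking conditional expectation and using the a.s. $\tilde\epsilon$-bound together with $\mathbb{E}[\tilde L_0^2(\omega)]=\tilde L_0^2$, gives $\mathbb{E}[\|g_{\eta,\tilde\epsilon}(\x,v,\omega)-g_\eta(\x,v,\omega)\|^2\mid\x]\leq \tfrac{4\tilde L_0^2 n^2\tilde\epsilon}{\eta^2}$.

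Neither step appears to contain a genuine obstacle; the only subtlety to be careful about is bookkeeping of the order of conditioning, since in the two-stage case one wants to first condition on $(\x,\omega)$ to pull out $\tilde L_0(\omega)$, then take expectation in $\omega$ using independence of $v$ and $\omega$ to separate the $L_0^2$ factor from the $\|v\|^2/\|v\|^2=1$ ratio. Because $v\in\eta\mathbb{S}$ has $\|v\|=\eta$, these ratios collapse cleanly and the constants come out exactly as in the single-stage Lemma~\ref{lem:inexact_error_bounds}, which is the expected parallel structure.
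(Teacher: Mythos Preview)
Your proposal is correct and follows essentially the same route as the paper: add and subtract $g_{\eta}(\x,v,\omega)$, apply the triangle inequality together with the $\tilde L_0(\omega)$-Lipschitzness of $\tilde f(\x,\bullet,\omega)$, square using $(a+b+c)^2\le 3(a^2+b^2+c^2)$ (respectively $(a+b)^2\le 2(a^2+b^2)$ for part (b)), invoke the almost-sure $\tilde\epsilon$-bound, then integrate out $\omega$ via $\mathbb{E}[\tilde L_0^2(\omega)]=\tilde L_0^2$ and Lemma~\ref{lem:smooth_grad_properties_two_stage}. Your remark about conditioning first on $(\x,\omega)$ before averaging in $\omega$ is exactly the mechanism the paper uses implicitly.
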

\begin{proof}  
\noindent {\bf (a)} In a similar fashion to the proof of Lemma \ref{lem:inexact_error_bounds} (a), we can show that 
\begin{align*}
 \|  g_{\eta,\fyyy{\tilde{\epsilon}}}(\x,{v},\omega)\| \leq    \frac{{\tilde{L}_0(\omega)}\|\y_{\fyyy{\tilde{\epsilon}}}(\x+v,\omega) - \y(\x+v,\omega)\| n }{\eta}   + \left\|{g_{\eta}(\x,v,\omega)}  \right\|  + \frac{\tilde{L}_0(\omega)\|\y_{\fyyy{\tilde{\epsilon}}}(\x,\omega)-\y(\x,\omega)\| n}{\eta} . 
\end{align*} 
{Invoking Lemma \ref{lem:smooth_grad_properties}, we} may then bound the second moment of $\|g_{\eta,\fyyy{\tilde{\epsilon}}}(\x,v,\omega)\|$ as follows.
\begin{align*}
    \mathbb{E}[\|{g_{\eta,\fyyy{\tilde{\epsilon}}}(\x,{v},\omega)}\|^2] & \leq   3\mathbb{E}\left[\left(\frac{{\tilde{L}_0^2(\omega){n^2}}\|\y_{\fyyy{\tilde{\epsilon}}}(\x+v,\omega) - \y(\x+v,\omega)\|^2  }{\eta^2}\right) \mid \x\right]  + 3\mathbb{E}\left[\left\|g_{\eta}(\x,v,\omega)\right\|^2\mid \x\right] \notag\\
                                                     & + 3\mathbb{E}\left[\left(\frac{\tilde{L}_0^2(\omega){n^2}\|\y_{\fyyy{\tilde{\epsilon}}}(\x+v,\omega) - \y(\x+v,\omega)\|^2   }{\eta^2}\right) \mid \x\right] \notag\\
                                                     &\leq  3\mathbb{E}\left[\left(\frac{{\tilde{L}_0^2(\omega){n^2}}\fyyy{\tilde{\epsilon}}^2  }{\eta^2}\right) \mid \x\right]  + 3L_0^2n^2  + 3\mathbb{E}\left[\left(\frac{\tilde{L}_0^2(\omega){n^2}\fyyy{\tilde{\epsilon}}^2 }{\eta^2}\right) \mid \x\right] \leq     3n^2\left(\tfrac{2{\tilde{L}_0^2} \fyyy{\tilde{\epsilon}}   }{\eta^2} + {L_0^2}\right).  \notag\\
\end{align*}

{\noindent {\bf (b)}} 
 In a similar fashion to the proof of Lemma \ref{lem:inexact_error_bounds} (b), we can show that 
\begin{align*}
 & \quad  \left\| { g_{\eta,\fyyy{\tilde{\epsilon}}}(\x,{v},\omega) - g_{\eta}(\x,{v},\omega) } \right\|
 \leq   \frac{{\tilde{L}_0(\omega){n}}\| \y_{\fyyy{\tilde{\epsilon}}}(\x+v,\omega) - \y(\x+v,\omega)\|}{\eta}   +   \frac{{\tilde{L}_0(\omega){n}} \| \y_{\fyyy{\tilde{\epsilon}}}(\x,\omega) - \y(\x,\omega)\|}{\eta}.  
  \end{align*}
It follows that \begin{align*}
\mathbb{E}\left[  \left\| { g_{\eta,\fyyy{\tilde{\epsilon}}}(\x,{v},\omega) - g_{\eta}(\x,{v},\omega) } \right\|^2 \mid \x\right] 
 & \leq  \frac{2 \mathbb{E}[ {\tilde{L}_0^2(\omega)n^2}\| \y_{\fyyy{\tilde{\epsilon}}}(\x+v,\omega) - \y(\x+v,\omega)\|^2\mid \x]}{\eta^2}  \\
  &  +   \frac{2\mathbb{E}[{\tilde{L}_0^2(\omega)n^2} \| \y_{\fyyy{\tilde{\epsilon}}}(\x,\omega) - \y(\x,\omega)\|^2\mid \x]}{\eta^2}\\
 & \leq \frac{2 \mathbb{E}[ {\tilde{L}_0^2(\omega)n^2}\fyyy{\tilde{\epsilon}}^2\mid \x]}{\eta^2}    +   \frac{2\mathbb{E}[{\tilde{L}_0^2(\omega)n^2} \fyyy{\tilde{\epsilon}}^2\mid \x]}{\eta^2}  \leq  \frac{{4\tilde{L}^2_0n^2}\fyyy{\tilde{\epsilon}}}{\eta^2}.
   \end{align*}
\end{proof}
}

{ \begin{algorithm}[H]
        \caption{\texttt{ZSOL$^{\bf 2s}_{\rm cnvx}$}: Zeroth-order method for convex  \fyy{\eqref{eqn:a_s_prob}}}\label{algorithm:inexact_zeroth_order_SVIs_2S}
    \fyy{\begin{algorithmic}[1]
        \STATE\textbf{input:} {Given $\x_0 \in \Xscr$, ${\bar \x}_0: = \x_0$,  stepsize sequence $\{\gamma_k\}$,  smoothing parameter sequence  $\{\eta_k\}$}, {inexactness sequence $\{\fyyy{\tilde{\epsilon}_k}\}$}, $r \in [0,1)$,  and $S_0 : = \gamma_0^r$
    \FOR {$k=0,1,\ldots,{K}-1$}
        \STATE Generate $v_{k} \in \eta_k \mathbb{S}$
    		    \STATE  Do one of the following, depending on the type of the scheme.
    		    
\vspace{-.1in}    	
	    
\begin{itemize}
\item   Inexact scheme: Call Alg. \ref{algorithm:Inexact_lower_level_SA_twostage} twice to obtain $\y_\fyyy{\tilde{\epsilon}_k}(\x_k,\omega_k)$ and $\y_\fyyy{\tilde{\epsilon}_k}(\x_k+v_k,\omega_k)$ 
\vspace{-.15in}    	

\item   Exact scheme: Evaluate $\y(\x_k,\omega_k)$ and $\y(\x_k+v_k,\omega_k)$
\end{itemize}

\vspace{-.1in}    	

    		 \STATE Evaluate the inexact or exact zeroth-order gradient approximation as follows.
 \begin{align}
             & g_{\eta_k,\fyyy{\tilde{\epsilon}_k}}(\x_k,v_k,\omega_k):= \tfrac{n\left(\fyy{\tilde f}(\x_k+ v_k, \y_\fyyy{\tilde{\epsilon}_k}(\x_k+ v_k,\omega_k),\omega_k) - \fyy{\tilde f} (\x_k, \y_\fyyy{\tilde{\epsilon}_k}(\x_k,\omega_k),\omega_k)\right)v_k}{\|v_k\|\eta_k} \tag{Inexact} \\
             & g_{\eta_k}(\x_k,v_k,\omega_k) := \tfrac{n\left(\fyy{\tilde f}(\x_k+ v_k, \y(\x_k+ v_k,\omega_k),\omega_k) - \fyy{\tilde f} (\x_k, \y(\x_k,\omega_k),\omega_k)\right)v_k}{\|v_k\|\eta_k}\tag{Exact}
\end{align}  	

\STATE {Update $\x_k$ as follows.}
        \begin{align*}
            \x_{k+1} :=  \begin{cases}
                 \Pi_{\Xscr} \left[ \x_k - \gamma_k g_{\eta_k,\fyyy{\tilde{\epsilon}_k}}(\x_k,v_k,\omega_k) \right] & \hspace{2.75in} \mbox{ (Inxact)}  \\
                   \Pi_{\Xscr} \left[ \x_k - \gamma_k g_{\eta_k}(\x_k,v_k,\omega_k) \right] & \hspace{2.8in}\mbox{ (Exact)} 
            \end{cases}
        \end{align*}

  \STATE Update the averaged iterate as follows. $ S_{k+1} : = S_k+\gamma_{k+1}^r$ and  $\bar \x_{k+1}:=\tfrac{S_k\bar \x_k+\gamma_{k+1}^r\x_{k+1}}{S_{k+1}}$
    \ENDFOR
   \end{algorithmic}}
\end{algorithm}

\begin{algorithm}[H]
  \caption{\fyy{\usv{Projection} method for the VI in the lower-level of \fyy{\eqref{eqn:a_s_prob}}}}\label{algorithm:Inexact_lower_level_SA_twostage}
    \begin{algorithmic}[1]
        \fyy{\STATE \textbf{input:} An arbitrary $\y_0\in \Yscr$,  vectors $\hat{\x}_k$ and $\omega$, scalar $\rho \in (0,1)$, {stepsize $\alpha>0$, integer $k$, and scalar $\tau>0$} 
        \STATE Compute $t_k:=\lceil\tau \ln(k+1) \rceil $ 
      \FOR {$t=0,1,\ldots,t_k-1$}
      \STATE Evaluate the mapping $G(\hat{\x}_k,{\y_t}, \omega)$
      \STATE {Update $\y_t$ as follows.} $ \y_{t+1} := \Pi_{\Yscr}\left[\y_t- \alpha G(\hat{\x}_k,\y_t,\omega)\right]$
        \ENDFOR
        \STATE Return $\y_{t_k}$ }
   \end{algorithmic}
\end{algorithm}}

{Next we develop rate and complexity statements for Algorithm~\ref{algorithm:inexact_zeroth_order_SVIs_2S}. The algorithm parameters for both inexact and exact schemes are defined next. 

\begin{definition}[Parameters for Algorithms~\ref{algorithm:inexact_zeroth_order_SVIs_2S}--\ref{algorithm:Inexact_lower_level_SA_twostage}]\label{def:algo_6}\em Let the stepsize and smoothing sequence in Algorithm~\ref{algorithm:inexact_zeroth_order_SVIs_2S} be given by {$\gamma_k := \frac{\gamma_0}{(k+1)^a}$ and $\eta_k := \frac{\eta_0}{(k+1)^b}$}, respectively for all $k\geq 0$ where {$\gamma_0, \eta_0, a,$ and $b$} are strictly positive.  In Algorithm~\ref{algorithm:Inexact_lower_level_SA_twostage}, suppose  $\alpha \leq \tfrac{\mu_F}{L_F^2}$.  Let $t_k:=\lceil\tau \ln(k+1) \rceil $ where $\tau \geq \frac{-{2(a+b)}}{\ln(1-\mu_F\alpha)}$. Finally, suppose $r \in [0,1)$ is an arbitrary scalar.  \end{definition}
}

{\begin{theorem}[\bf Rate and complexity statements and \uss{a.s.} convergence for inexact \fyy{(\texttt{ZSOL$^{\bf 2s}_{\rm cnvx}$})}]\label{thm:ZSOL_convex_2s}\em
Consider {the} sequence {$\{\bar \x_k\}$} generated by applying Algorithm~\ref{algorithm:inexact_zeroth_order_SVIs_2S} on \eqref{prob:mpec_as_imp}. Suppose Assumptions~\ref{ass-1}--~\ref{assum:u_iter_smooth} hold and algorithm parameters are defined by Def.~\ref{def:algo_6}.  

        \noindent {\bf(a)} {Suppose \fyy{$\hat \x_k \in \Xscr+\eta_k\mathbb{S}$} and let $\{\y_{t_k}\}$ be the sequence generated by Algorithm \ref{algorithm:Inexact_lower_level_SA_twostage}. Then for suitably defined $\tilde{d} < 1$ and $B > 0$, the following holds for $t_k \geq 1$. }  
\begin{align*}
\|\y_{t_k} - \y(\hat \x_k,\omega_k)\|^2 \leq \fyyy{\tilde{\epsilon}_k} \triangleq  B \tilde{d}^{t_k}.
\end{align*}
\noindent {\bf{(b)}}  Let $a=0.5$ and $b \in [0.5,1)$ and $0\leq r< 2(1-b)$. Then, for all $K \geq 2^{\frac{1}{1-r}}-1$ we have
 
\begin{align*} \mathbb{E}\left[f^{\bf imp}(\bar \x_K)\right]-f^*  \leq (2-r)\left(\tfrac{D_\Xscr }{\gamma_0}+\tfrac{2\theta_0\gamma_0}{1-r}\right)\tfrac{1}{\sqrt{K+1}}+(2-r)\left(\tfrac{\eta_0L_0}{1-0.5r-b}\right)\tfrac{1}{(K+1)^{b}},\end{align*}

where $\theta_0 \triangleq D_\Xscr  +\tfrac{\left(2+3\gamma_0^2\right)n^2\tilde{L}_0^2B}{\eta_0^2\gamma_0^2}+ 1.5n^2L_0^2$.
In particular, when $b:=1-\delta$ and $r=0$, where $\delta>0$ is a small scalar,  we have for all $K\geq 1$
\begin{align*} 
\mathbb{E}\left[f^{\bf imp}(\bar \x_K)\right]-f^* &\leq 2\left(\tfrac{D_\Xscr }{\gamma_0}+2\theta_0\gamma_0\right)\tfrac{1}{\sqrt{K+1}}+\left(\tfrac{2\eta_0L_0}{\delta}\right)\tfrac{1}{(K+1)^{1-\delta}}.
\end{align*}

\noindent {\bf{(c)}}  Suppose $\gamma_0\fyy{:= \mathcal{O}(\tfrac{1}{L_0})}$, \fyy{$a:=0.5$, $b:=0.5$, and $r:=0$}. Let $\epsilon>0$ be an arbitrary scalar and $K_{\epsilon}$ be such that $\mathbb{E}\left[\fyy{f^{\bf imp}(\bar \x_{K_\epsilon})}\right]-f^*  \leq \epsilon$. Then,

\noindent {{(c-1)}} The total number of upper-level projection steps on $\Xscr$ is $K_{\epsilon}=\fyy{\mathcal{O}\left(n^4L_0^2\fyy{\tilde{L}_0^4}\epsilon^{-2}  \right)}$.

\noindent {{(c-2)}} \fyy{The total sample complexity of upper-level} is $\fyy{\mathcal{O}\left(n^4L_0^2\tilde{L}_0^4\epsilon^{-2}  \right)}$.
 
\noindent {{(c-3)}} The total number of lower-level projection steps on $\Yscr$ is $ 
\fyy{\mathcal{O}\left(n^4L_0^2\tilde{L}_0^4\epsilon^{-2}  \ln\left( n^2L_0\tilde{L}_0^2\epsilon^{-1}  \right)\right)}.$


\noindent {\bf{(d)}} For any $a \in (0.5,1]$ and $b>1-a$, there exists $\x^* \in \Xscr^*$ such that $\lim_{k\to \infty} \|\bar \x_k-\x^*\|^2 = 0$ almost surely.
\end{theorem}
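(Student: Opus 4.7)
\textbf{Proof proposal for Theorem \ref{thm:ZSOL_convex_2s}.} My plan is to establish the four parts by following the blueprint of Theorem \ref{thm:ZSOL_convex}, with the key structural simplification that, once $\omega_k$ is drawn at the outer level, the lower-level problem VI$(\Yscr,G(\hat\x_k,\bullet,\omega_k))$ is \emph{deterministic} and strongly monotone. This means Algorithm~\ref{algorithm:Inexact_lower_level_SA_twostage} is a deterministic projection scheme with constant stepsize, avoiding the need for variance reduction and simplifying both the error-decay bound and the propagation analysis.

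\textbf{Part (a).} I would invoke the firm nonexpansivity of the Euclidean projection together with strong monotonicity of $G(\hat\x_k,\bullet,\omega_k)$, using $\y(\hat\x_k,\omega_k) = \Pi_{\Yscr}[\y(\hat\x_k,\omega_k) - \alpha G(\hat\x_k,\y(\hat\x_k,\omega_k),\omega_k)]$. The standard contraction estimate gives
\begin{align*}
\|\y_{t+1} - \y(\hat\x_k,\omega_k)\|^2 \leq (1 - 2\mu_F\alpha + \alpha^2 L_F^2)\|\y_t - \y(\hat\x_k,\omega_k)\|^2.
\end{align*}
With $\alpha \leq \mu_F/L_F^2$, the contraction factor is at most $\tilde d := 1-\mu_F\alpha < 1$, and iterating over $t_k$ steps yields $\|\y_{t_k}-\y(\hat\x_k,\omega_k)\|^2 \leq B\tilde d^{t_k}$ with $B := \sup_{\y\in\Yscr}\|\y-\y_0\|^2$, which is finite by boundedness of $\Yscr$.

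\textbf{Part (b).} I would follow exactly the recursion derivation of Theorem~\ref{thm:ZSOL_convex}(b) substituting Lemmas~\ref{lem:smooth_grad_properties_two_stage} and~\ref{lem:inexact_error_bounds_2s} for their single-stage counterparts. Concretely, expanding $\|\x_{k+1}-\x^*\|^2$, using nonexpansivity of $\Pi_{\Xscr}$, and adding/subtracting $g_{\eta_k}(\x_k,v_k,\omega_k)$ yields an error term $w_k := g_{\eta_k,\tilde\epsilon_k}(\x_k,v_k,\omega_k) - g_{\eta_k}(\x_k,v_k,\omega_k)$ whose conditional second moment is bounded by $4\tilde L_0^2 n^2\tilde\epsilon_k/\eta_k^2$, and a second-moment bound $3n^2(2\tilde L_0^2\tilde\epsilon_k/\eta_k^2 + L_0^2)$. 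Invoking the convexity of $f_{\eta_k}^{\bf imp}$ and Lemma~\ref{lemma:props_local_smoothing}(v) gives the recursion
\begin{align*}
2\gamma_k(f^{\bf imp}(\x_k) - f^*) \leq \|\x_k-\x^*\|^2 - \mathbb{E}[\|\x_{k+1}-\x^*\|^2\mid\x_k] + 2\gamma_k^2\theta_0 + 2\gamma_k\eta_k L_0,
\end{align*}
provided I first verify $\tilde\epsilon_k/\eta_k^2 \leq (B/(\eta_0^2\gamma_0^2))\gamma_k^2$. This verification uses $t_k \geq \tau\ln(k+1)$ together with $\tau \geq -2(a+b)/\ln(1-\mu_F\alpha)$ to force $\tilde d^{t_k}(k+1)^{2(a+b)}\leq 1$; this is essentially identical to the single-stage derivation but with the cleaner contraction $\tilde d = 1-\mu_F\alpha$. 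Multiplying the recursion by $\gamma_k^{r-1}/2$, telescoping, and applying the weighted-averaging identity of Lemma~\ref{Lem:averaging_seq} together with Lemma~\ref{lem:harmonic_bounds} gives the stated rate.

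\textbf{Parts (c) and (d).} Part (c) follows by direct substitution of $\gamma_0 = \mathcal{O}(1/L_0)$, $a=b=1/2$, $r=0$ into part (b), reading off $K_\epsilon = \mathcal{O}(n^4 L_0^2 \tilde L_0^4/\epsilon^2)$; the sample count on the upper level matches $K_\epsilon$ since one $\omega_k$ is drawn per outer iteration; and the lower-level projection count sums to $2\sum_{k=0}^{K_\epsilon}t_k = \mathcal{O}(K_\epsilon \ln K_\epsilon)$, giving the stated logarithmic factor. Crucially, there is no $\bar\tau$-type lower-level sample complexity here, because Algorithm~\ref{algorithm:Inexact_lower_level_SA_twostage} takes only one function/map evaluation per inner step. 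For Part (d), I would rearrange the recursion from Part (b) into the supermartingale form required by Lemma~\ref{lemma:supermartingale}: with $a \in (0.5,1]$ and $b>1-a$ we have $\sum \gamma_k^2<\infty$ and $\sum \gamma_k\eta_k<\infty$, so $\{\|\x_k-\x^*\|^2\}$ converges almost surely and $\sum\gamma_k(f^{\bf imp}(\x_k)-f^*)<\infty$; combined with $\sum\gamma_k=\infty$ and continuity of $f^{\bf imp}$, this produces a limit point $\hat\x\in\Xscr^*$, and uniqueness of the limit of $\|\x_k-\hat\x\|^2$ then upgrades subsequential convergence to full convergence. The extension to $\bar\x_k$ uses Lemma~\ref{Lem:averaging_seq} together with Lemma~\ref{lem:convergence_sum}, noting $ar\leq 1$ for $r\in[0,1)$.

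\textbf{Main obstacle.} The only genuinely delicate step is the bound $\tilde\epsilon_k/\eta_k^2 \leq C\gamma_k^2$ in Part (b), which couples the inner-loop geometric decay to the outer-loop polynomial schedules through the choice of $\tau$; everything else is a careful but routine transcription of the single-stage argument, with the pleasant surplus that lower-level sampling disappears entirely.
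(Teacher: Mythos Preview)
Your proposal is correct and follows essentially the same approach as the paper's own proof: the paper gives the contraction argument for part~(a) explicitly (arriving at the same $\tilde d = 1-\mu_F\alpha$ and $B=\sup_{\y\in\Yscr}\|\y-\y_0\|^2$), and for parts~(b) and~(d) defers to Theorem~\ref{thm:ZSOL_convex} with Lemmas~\ref{lem:smooth_grad_properties_two_stage} and~\ref{lem:inexact_error_bounds_2s} swapped in, while part~(c) is the same direct substitution and $\mathcal{O}(K_\epsilon\ln K_\epsilon)$ lower-level count you describe. Your observation that the lower-level sample complexity term (c-4) disappears because Algorithm~\ref{algorithm:Inexact_lower_level_SA_twostage} is deterministic is exactly the structural simplification the paper exploits.
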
}
\begin{proof}{
\noindent {\bf (a)} 
From $ \y(\hat\x_k,\omega_k) \in \mbox{SOL}(\Yscr, G(\hat \x_k,\bullet,\omega_k))$, we have {that the following fixed-point relationship holds.}  $$\y(\hat\x_k,\omega_k)=\Pi_{\Yscr}\left[\y(\hat\x_k,\omega_k) -\alpha G(\hat \x_k,\y(\hat\x_k,\omega_k),\omega_k)\right],$$ 
{for any $\alpha > 0$}. Thus, we can write
\begin{align*}
&\quad  \|\y_{t+1}- \y(\hat \x_k,\omega_k)\|^2 = \|\Pi_{\Yscr}\left[\y_t- \alpha G(\hat{\x}_k,\y_t,\omega_{k})\right] -\Pi_{\Yscr}\left[\y(\hat\x_k,\omega_k) -\alpha G(\hat \x_k,\y(\hat\x_k,\omega_k),\omega_k)\right] \|^2\\
& \leq  \|\y_t- \alpha G(\hat{\x}_k,\y_t,\omega_{k}) -\y(\hat\x_k,\omega_k) +\alpha G(\hat \x_k,\y(\hat\x_k,\omega_k),\omega_k) \|^2\\
& = \|\y_{t}- \y(\hat \x_k,\omega_k)\|^2 +\|\alpha G(\hat{\x}_k,\y_t,\omega_{k}) -\alpha G(\hat \x_k,\y(\hat\x_k,\omega_k),\omega_k) \|^2 \\
&-2\alpha(\y_t -\y(\hat\x_k,\omega_k))^T( G(\hat{\x}_k,\y_t,\omega_{k}) - G(\hat \x_k,\y(\hat\x_k,\omega_k),\omega_k)).
\end{align*}
Invoking Assumption~\ref{ass-1} (b) we obtain 
\begin{align*}
\quad \|\y_{t+1}- \y(\hat \x_k,\omega_k)\|^2 &\leq  \|\y_{t}- \y(\hat \x_k,\omega_k)\|^2 +\alpha L_F(\omega)\|\y_{t}- \y(\hat \x_k,\omega_k)\|^2 -2\alpha\mu_F(\omega)\|\y_{t}- \y(\hat \x_k,\omega_k)\|^2\\
& \leq (1+\alpha^2L_F^2-2\alpha \mu_F) \|\y_{t}- \y(\hat \x_k,\omega_k)\|^2.
\end{align*}
This implies that 
$ \|\y_{t_k}- \y(\hat \x_k,\omega_k)\|^2 \leq (1+\alpha^2L_F^2-2\alpha \mu_F)^{t_k}(\sup_{\y \in \Yscr}\|\y -\y_0\|^2).$ Note that $\alpha \leq \tfrac{\mu_F}{L_F^2}$ implies that $1+\alpha^2L_F^2-2\alpha \mu_F\leq 1-\alpha\mu_F$. Defining $\tilde{d} \triangleq 1-\alpha \mu_F$ and $B \triangleq \sup_{\y \in \Yscr}\|\y -\y_0\|^2$, we obtain the bound.}

{\noindent {\bf (b, d)} Recall the properties of the exact and inexact zeroth-order gradient mappings in the two-stage model provided in Lemmas \ref{lem:smooth_grad_properties_two_stage} and \ref{lem:inexact_error_bounds_2s}, respectively. Note that these results are identical to those of the single-stage model provided in Lemmas \ref{lem:smooth_grad_properties} and \ref{lem:inexact_error_bounds}, respectively. For this reason, the proof of the remaining parts can be done in a very similar fashion to the proofs in Theorem \ref{thm:ZSOL_convex}. As such, the proofs for (b) and (d) are omitted.}

{\noindent {\bf (c)} Note that (c-1) and (c-2) follow directly from part (b) by substituting $\gamma_0$ and $r$. To show (c-3), note that the total projection steps in the lower-level is as follows.
\begin{align*}
2\sum_{k=0}^{K_\epsilon}\sum_{t=0}^{ t_k} 1 &= 2(K_\epsilon+1)(t_{K_\epsilon}+1) = 2(K_\epsilon+1)(\lceil \tau\ln(K_\epsilon+1)\rceil+1)= \mathcal{O}\left(n^4L_0^2\tilde{L}_0^4\epsilon^{-2}  \ln\left( n^2L_0\tilde{L}_0^2\epsilon^{-1}  \right)\right).
\end{align*}
}
\end{proof}
\fyyy{\begin{remark}
The convergence rate in expectation in Theorem~\ref{thm:ZSOL_convex}~(b) and Theorem~\ref{thm:ZSOL_convex_2s}~(b) can be extended to the case that $a \in [0.5,1)$. However, the rate of convergence would be worse when $a \in (0.5,1)$ compared to when $a=0.5$. This is because employing Lemma~\ref{lem:harmonic_bounds}, the rate of convergence is characterized as $\mathcal{O}\left(\tfrac{1}{k^{1-a}} +\tfrac{1}{k^{a}} +\tfrac{1}{k^b}\right)$. For this reason we only present the rate analysis in those theorems for $a=0.5$.
\end{remark}}
\noindent {\bf An exact zeroth-order scheme.}
{Next, we address} the two-stage model~\eqref{prob:mpec_as_imp} {where} we consider the {case} where an exact solution of the lower-level problem is available. In the following, we extend the convergence properties of the ZSOL scheme to the exact case.
\begin{corollary}[\bf Rate and complexity statements and almost sure convergence for exact \fyy{(\texttt{ZSOL$^{\bf 2s}_{\rm cnvx}$})}]\label{thm:convex_exact_2s}\em
        Consider the problem {\eqref{prob:mpec_exp_imp}}. Suppose {Assumptions~\ref{ass-1}--~\ref{assum:u_iter_smooth}} hold. Suppose {$\{\bar \x_k\}$} denotes the sequence generated by {Algorithm \ref{algorithm:inexact_zeroth_order_SVIs_2S} (exact variant)} in which the stepsize and smoothing sequences are defined as $\gamma_k := \frac{\gamma_0}{(k+1)^a}$ and $\eta_k := \frac{\eta_0}{(k+1)^b}$, respectively, for all $k\geq 0$ where $\gamma_0$ and $\eta_0$ are strictly positive. Then, the following statements hold.

\noindent {\bf{(a)}} Let $a=0.5$ and $b \in [0.5,1)$ and $0\leq r< 2(1-b)$. Then, for all $K \geq 2^{\frac{1}{1-r}}-1$ we have
\begin{align*} 
\mathbb{E}\left[\fyy{f^{\bf imp}(\bar \x_K)}\right]-f^* &\leq (2-r)\left(\tfrac{D_\Xscr }{\gamma_0}+\tfrac{L_0^2n^2\gamma_0}{1-r}\right)\tfrac{1}{\sqrt{K+1}}+(2-r)\left(\tfrac{\eta_0L_0}{1-0.5r-b}\right)\tfrac{1}{(K+1)^{b}}.
\end{align*}
In particular, when $b:=1-\delta$ and $r=0$, where $\delta>0$ is a small scalar,  we have for all $K\geq 1$
\begin{align*} 
\mathbb{E}\left[\fyy{f^{\bf imp}(\bar \x_K)}\right]-f^* &\leq 2\left(\tfrac{D_\Xscr }{\gamma_0}+L_0^2n^2\gamma_0\right)\tfrac{1}{\sqrt{K+1}}+\left(\tfrac{2\eta_0L_0}{\delta}\right)\tfrac{1}{(K+1)^{1-\delta}}.
\end{align*}

\noindent {\bf{(b)}} Let $a:=0.5$, $b=0.5$, $r=0$, $\gamma_0:=\tfrac{\sqrt{D_\Xscr}}{nL_0}$, and $\eta_0\leq \sqrt{D_\Xscr}n$. Then, the iteration complexity {in projection steps on $\Xscr$} for achieving {$\mathbb{E}\left[\fyy{f^{\bf imp}(\bar \x_{K_\epsilon})}\right]-f^* \leq \epsilon$} for some $\epsilon>0$ is {bounded} as follows.
\begin{align*}
{K_\epsilon} \geq \frac{64n^2L_0^2D_\Xscr}{\epsilon^2} .
\end{align*}

\noindent {\bf{(c)}} For any $a \in (0.5,1]$ and $b>1-a$, there exists $\x^* \in \Xscr^*$ such that $\lim_{k\to \infty} \|\bar \x_k-\x^*\|^2 = 0$ almost surely.
\end{corollary}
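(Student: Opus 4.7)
The plan is to adapt the proof of Corollary~\ref{thm:convex_exact} directly, leveraging the fact that Lemma~\ref{lem:smooth_grad_properties_two_stage} establishes that the exact zeroth-order gradient $g_{\eta}(\x,v,\omega)$ defined via \eqref{eqn:g_v_eta_as} satisfies precisely the same two properties (unbiasedness with respect to $\nabla f^{\bf imp}_{\eta}(\x)$ and the second-moment bound $L_0^2 n^2$) as its single-stage counterpart. Because the upper-level update in the exact variant of Algorithm~\ref{algorithm:inexact_zeroth_order_SVIs_2S} has exactly the same form as in Algorithm~\ref{algorithm:inexact_zeroth_order_SVIs}, and because $f^{\bf imp}$ in \eqref{prob:mpec_as_imp} is Lipschitz on $\Xscr_{\eta_0}$ by Assumption~\ref{ass-1}(b.i), the same chain of inequalities derived in Corollary~\ref{thm:convex_exact} transfers verbatim.

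For part (a), I would fix an arbitrary $\x^* \in \Xscr^*$ and start from non-expansivity of $\Pi_\Xscr$ to obtain
\begin{align*}
\|\x_{k+1}-\x^*\|^2 \leq \|\x_k-\x^*\|^2 - 2\gamma_k(\x_k-\x^*)^T g_{\eta_k}(\x_k,v_k,\omega_k) + \gamma_k^2 \|g_{\eta_k}(\x_k,v_k,\omega_k)\|^2.
\end{align*}
Conditioning on $\x_k$ and invoking Lemma~\ref{lem:smooth_grad_properties_two_stage} produces the bound with $\nabla f^{\bf imp}_{\eta_k}(\x_k)$ and the term $\gamma_k^2 L_0^2 n^2$. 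Convexity of $f^{\bf imp}_{\eta_k}$ (which follows from Assumption~\ref{ass-2} together with Lemma~\ref{lemma:props_local_smoothing}(v) applied to $f^{\bf imp}$) gives the subgradient inequality, and Lemma~\ref{lemma:props_local_smoothing}(v) bounds $f^{\bf imp}_{\eta_k}(\x^*) \leq f^* + \eta_k L_0$ while $f^{\bf imp}(\x_k) \leq f^{\bf imp}_{\eta_k}(\x_k)$. Multiplying by $\gamma_k^{r-1}/2$, telescoping after the standard rewriting that adds and subtracts $\tfrac{\gamma_{k-1}^{r-1}}{2}\mathbb{E}[\|\x_k-\x^*\|^2]$, and applying Lemma~\ref{Lem:averaging_seq} together with the harmonic-sum bounds (Lemma~\ref{lem:harmonic_bounds}) with $\gamma_k = \gamma_0/\sqrt{k+1}$ and $\eta_k = \eta_0/(k+1)^b$ delivers the stated rate. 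The specialization to $b = 1-\delta, r = 0$ is immediate.

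For part (b), substituting $a=b=0.5$, $r=0$, $\gamma_0 = \sqrt{D_\Xscr}/(nL_0)$, and $\eta_0 \leq n\sqrt{D_\Xscr}$ into the bound from part (a) collapses every term to a constant multiple of $nL_0\sqrt{D_\Xscr}/\sqrt{K+1}$, and the total coefficient is $8nL_0\sqrt{D_\Xscr}$; setting this at most $\epsilon$ yields $K_\epsilon \geq 64n^2 L_0^2 D_\Xscr / \epsilon^2$. For part (c), I would use the recursion (the two-stage analog of \eqref{eqn:recurs_ineq_as_conv}) with only the terms $2\gamma_k^2 \cdot \tfrac{1}{2}L_0^2 n^2$ and $2\gamma_k \eta_k L_0$ on the right (since there is no $\fyyy{\tilde{\epsilon}_k}$ contribution), verify that $\sum \gamma_k^2 < \infty$ and $\sum \gamma_k \eta_k < \infty$ whenever $a \in (0.5,1]$ and $b > 1-a$, and then apply Lemma~\ref{lemma:supermartingale} to conclude that $\{\|\x_k-\x^*\|^2\}$ converges a.s.\ while $\liminf_k f^{\bf imp}(\x_k) = f^*$ a.s.; continuity of $f^{\bf imp}$ plus boundedness of $\{\x_k\}$ identifies a limit point $\hat \x \in \Xscr^*$, after which uniqueness of the limit of $\{\|\x_k - \hat \x\|^2\}$ and Lemma~\ref{lem:convergence_sum} lift convergence from $\{\x_k\}$ to $\{\bar \x_k\}$.

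Because the structural dependencies are identical to the single-stage exact case (the two-stage aspect only enters through $\omega$-dependence of $\y(\x,\omega)$, which is absorbed into the expectation defining $f^{\bf imp}$), no genuinely new obstacle arises; the main bookkeeping point is simply to verify that the unbiasedness and second-moment statements in Lemma~\ref{lem:smooth_grad_properties_two_stage} substitute cleanly into the single-stage template. The only mild subtlety is the a.s.\ argument in part (c), where one must ensure that the error-free conditional bound on $\|g_{\eta_k}(\x_k,v_k,\omega_k)\|^2$ is uniform in $\x_k$ so that the supermartingale conditions hold pathwise; this is immediate from the deterministic bound $L_0^2 n^2$ in Lemma~\ref{lem:smooth_grad_properties_two_stage}.
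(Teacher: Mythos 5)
Your proposal is correct and follows essentially the same route as the paper: the paper's own proof of this corollary is a one-line remark that, since Lemma~\ref{lem:smooth_grad_properties_two_stage} gives the two-stage exact zeroth-order gradient the same unbiasedness and $L_0^2 n^2$ second-moment bound as in the single-stage case, the argument of Corollary~\ref{thm:convex_exact} carries over verbatim. Your write-up simply makes that adaptation explicit, with the same chain of inequalities, parameter substitutions, and supermartingale argument.
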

\begin{proof}
{In view of the similarity between the results of Lemmas \ref{lem:smooth_grad_properties_two_stage} and \ref{lem:inexact_error_bounds_2s} with those of Lemmas \ref{lem:smooth_grad_properties} and \ref{lem:inexact_error_bounds}, the proof can be done in a similar fashion to that of Corollary \ref{thm:convex_exact}.} 
\end{proof}

\subsection{Exact accelerated schemes for convex regime}\label{sec:4.3}
In this subsection, we consider an accelerated scheme for resolving the problem \fyy{\eqref{eqn:a_s_prob}}, whose implicit form is defined as \fyy{\eqref{prob:mpec_as_imp}} where $\y(\x,\omega)$ is the unique solution of an $\omega$-specific strongly monotone variational inequality problem parametrized by $\x$. 
The deterministic
counterpart of this problem is the standard MPEC in which the lower-level
problem is a parametrized strongly monotone variational inequality problem.  While the previous subsection has considered a standard gradient-based framework, we consider an accelerated counterpart motivated by Nesterov's celebrated accelerated gradient method~\cite{nesterov83} that produces a
non-asymptotic rate of $\mathcal{O}(1/k^2)$ in terms of suboptimality for
smooth convex optimization problems. In~\cite{nesterov17}, Nesterov and
Spokoiny develop an accelerated zeroth-order scheme for the unconstrained minimization of a
smooth function.  Instead, we present an 
accelerated gradient-free scheme for a nonsmooth function by leveraging the smoothing architecture. Notably, this scheme can contend with MPECs with convex
implicit functions. In this subsection, we assume that $\y(\x,\omega)$ can be generated by invoking a suitable variational inequality problem solver.    
\begin{algorithm}[H]
    \caption{\fyy{\texttt{ZSOL$^{\bf 2s}_{\rm cnvx, acc}$}: Variance-reduced accelerated exact zeroth-order method for convex {\eqref{eqn:a_s_prob}}}}\label{algorithm:acc-inexact_zeroth_order_SVIs}
    \label{alg:acc_zsol}
    \begin{algorithmic}[1]
        \STATE\textbf{input:} {Given $\x_0 \in \Xscr$, $\lambda_0 = 1$,  stepsize sequence $\{\gamma_k\}$,  smoothing parameter sequence  $\{\eta_k\}$},   sample-size $\{N_k\}$
    \FOR {$k=0,1,\ldots,{K}-1$}
       \FOR {$j=1,\ldots,N_k$}
\STATE \fyy{Generate $v_{\fy{j,k}} \in \eta_k \mathbb{S}$ 
\STATE Evaluate $\y(\x_k+v_{{j,k}},\omega_{{j,k}})$}

    \STATE  \fyy{Evaluate the exact zeroth-order gradient approximation as follows.
$$   g_{\eta_k}(\x_k,v_{j,k},\omega_{j,k}) :=\tfrac{n\left(\fyy{\tilde f}(\x_k+ v_{j,k}, \y(\x_k+ v_{j,k},\omega_{j,k}),\omega_{j,k}) - \fyy{\tilde f} (\x_k, \y(\x_k,\omega_{j,k}),\omega_{j,k})\right)v_{j,k}}{\|v_{j,k}\|\eta_k}$$}
\ENDFOR
\STATE \fyy{Evaluate the mini-batch exact zeroth-order gradient as $g_{\eta_k,N_k}(\x_k) = \frac{\sum_{j=1}^{N_k} g_{\eta_k}(\x_k,v_{j,k},\omega_{j,k})}{N_k}.$
}

\STATE {Update $\x_k$ as follows.}
{\begin{align}
\begin{aligned}
    \vz_{k+1} & := \Pi_{\Xscr} \left[ \x_k - \gamma_k g_{\eta_k,N_k}(\x_k,v_k) \right] \\
    \lambda_{k+1} & := \tfrac{1+\sqrt{1+4\lambda_k^2}}{2} \\
    \x_{k+1} & = \vz_{k+1} + \tfrac{(\lambda_k-1)}{\lambda_{k+1}}\left(\vz_{k+1}-\vz_k\right). 
\end{aligned}
\end{align}}
    \ENDFOR
   \end{algorithmic}
\end{algorithm}
We provide convergence theory for {Algorithm \ref{alg:acc_zsol}} by appealing to related work on smoothed accelerated schemes for nonsmooth stochastic convex optimization~\cite{jalilzadeh2018smoothed}. There are two key differences between the framework presented here and that of our prior work.

\medskip

\noindent (a) {\em Smoothing.} In~\cite{jalilzadeh2018smoothed}, we employ a
deterministic smoothing technique~\cite{beck17fom} while in this paper, we
consider a locally randomized smoothing technique in a zeroth-order regime.
Notably, the latter leads to similar (but not identical) smoothness properties
\fy{with} related relationships (but not identical) between the smoothed function
and its original counterpart.   

\smallskip

\noindent (b) {\em Zeroth-order gradient approximation.}
In~\cite{jalilzadeh2018smoothed}, a sampled gradient of the smoothed function is
available. However, faced by the need to resolve hierarchical problems, we do
not have such access in this paper. Instead, we utilize an increasingly accurate zeroth-order
approximation of the gradient by raising the sample-size $N_k$ in constructing this approximation. {We make the following assumption on the generated random samples in  the proposed accelerated scheme in the upper-level.
 \begin{assumption}\label{assum:upper_level_acc}\em
Given a mini-batch sequence $\{N_k\}$ and a smoothing sequence $\{\eta_k\}$, let $v_{j,k} \in \mathbb{R}^n$, for $j=1,\ldots, N_k$ and $k \geq 0$ be generated randomly and independently, from $\eta_k\mathbb{S}$ for all $k\geq 0$. Also, let the random realizations $\{\omega_{j,k}\}$ be \fyy{iid replicates}.
 \end{assumption}
 We may define $\bar{w}_{k,N_k}$ as $\bar{w}_{k,N_k} \triangleq  g_{\eta_k,N_k}(\x_k) - \nabla_{\x} {f^{\bf imp}_{\eta_k}(\x_k)}$. The following claims can be made.
 \begin{lemma} Consider $\bar{w}_{k,N_k}$ obtained by generating $N_K$ independent realizations given by $\{v_{j,k}\}_{j=1}^{N_k}$ and  {$\{\omega_{j,k}\}_{j=1}^{N_k}$}. Let Assumption~\ref{assum:upper_level_acc} hold. {Then the following hold almost surely {for any $\x_k \in \Xscr.$}}

\noindent (a)   $\mathbb{E}[\bar{w}_{k,N_k} \mid \x_k] = 0$. 

\noindent (b)  $\mathbb{E}[\|\bar{w}_{k,N_k}\|^2 \mid \x_k]   \leq \tfrac{n^2L_0^2}{N_k} $.
\end{lemma}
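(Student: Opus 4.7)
The plan is to exploit the iid structure of the samples $\{(v_{j,k},\omega_{j,k})\}_{j=1}^{N_k}$ guaranteed by Assumption~\ref{assum:upper_level_acc}, combined with the first- and second-moment bounds on the single-sample estimator established in Lemma~\ref{lem:smooth_grad_properties_two_stage}. Both claims will follow by a standard ``mean of iid random vectors'' argument conditioned on $\x_k$.

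For part (a), I would write
\[
  \mathbb{E}[\bar{w}_{k,N_k}\mid \x_k]
  = \tfrac{1}{N_k}\sum_{j=1}^{N_k}\mathbb{E}[g_{\eta_k}(\x_k,v_{j,k},\omega_{j,k})\mid \x_k]
    - \nabla_{\x} f^{\bf imp}_{\eta_k}(\x_k),
\]
and then invoke the unbiasedness statement $\mathbb{E}[g_{\eta_k}(\x_k,v,\omega)\mid \x_k]=\nabla f^{\bf imp}_{\eta_k}(\x_k)$ from Lemma~\ref{lem:smooth_grad_properties_two_stage}. Each of the $N_k$ conditional expectations equals $\nabla f^{\bf imp}_{\eta_k}(\x_k)$, so the sum divided by $N_k$ cancels with the subtracted gradient.

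For part (b), define the centered increments $\xi_{j,k}\triangleq g_{\eta_k}(\x_k,v_{j,k},\omega_{j,k}) - \nabla_{\x} f^{\bf imp}_{\eta_k}(\x_k)$ so that $\bar{w}_{k,N_k}=\tfrac{1}{N_k}\sum_{j=1}^{N_k}\xi_{j,k}$. By Assumption~\ref{assum:upper_level_acc} the $\xi_{j,k}$ are iid conditional on $\x_k$, and by part (a) each has conditional mean zero, so the cross terms vanish:
\[
\mathbb{E}[\|\bar{w}_{k,N_k}\|^2\mid \x_k]
  = \tfrac{1}{N_k^2}\sum_{j=1}^{N_k}\mathbb{E}[\|\xi_{j,k}\|^2\mid \x_k]
  = \tfrac{1}{N_k}\mathbb{E}[\|\xi_{1,k}\|^2\mid \x_k].
\]
Finally I would bound $\mathbb{E}[\|\xi_{1,k}\|^2\mid \x_k]\leq \mathbb{E}[\|g_{\eta_k}(\x_k,v_{1,k},\omega_{1,k})\|^2\mid \x_k]\leq n^2 L_0^2$, where the first inequality follows from the variance-vs-second-moment identity (i.e., $\mathbb{E}[\|U-\mathbb{E}[U]\|^2]\leq \mathbb{E}[\|U\|^2]$) and the second inequality is the second-moment bound from Lemma~\ref{lem:smooth_grad_properties_two_stage}. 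This yields the desired $n^2L_0^2/N_k$ bound.

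There is no real obstacle here; the result is a routine variance-reduction calculation whose only subtlety is being careful that both the smoothing random vectors $v_{j,k}$ and the sampled scenarios $\omega_{j,k}$ are drawn independently across $j$, so that the $\xi_{j,k}$ are genuinely iid conditional on $\x_k$. Assumption~\ref{assum:upper_level_acc} supplies exactly this, and Lemma~\ref{lem:smooth_grad_properties_two_stage} provides the single-sample moment bounds needed to finish.
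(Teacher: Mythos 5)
Your proposal is correct and follows essentially the same route as the paper's proof: part (a) via linearity and the unbiasedness statement of Lemma~\ref{lem:smooth_grad_properties_two_stage}, and part (b) by exploiting the conditional iid, zero-mean structure so the cross terms vanish, then applying the variance-versus-second-moment inequality and the single-sample bound $\mathbb{E}[\|g_{\eta_k}(\x_k,v,\omega)\|^2\mid\x_k]\leq n^2L_0^2$. The only difference is cosmetic — you name the centered increments $\xi_{j,k}$ explicitly, while the paper writes the same chain of inequalities directly.
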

\begin{proof}
Note that (a) holds in view of Lemma \ref{lem:smooth_grad_properties_two_stage}. 
Invoking Lemma \ref{lem:smooth_grad_properties_two_stage} can write 
 \begin{align*}
 &\quad \mathbb{E}\left[\| \bar{w}_{k,N_k}\|^2\mid \fyy{\x_k}\right]  =\mathbb{E}\left[\left\|g_{\eta_k,N_k}(\x_k)  - \nabla_{\x} {f^{\bf imp}_{\eta_k}(\x_k)} \right\|^2 \mid\fyy{\x_k}\right]\notag\\
&=  \mathbb{E}\left[\left\|\tfrac{\sum_{j=1}^{N_k} g_{\eta_k}(\x_k,v_{j,k},\omega_{j,k})}{N_k} - \nabla_{\x} f^{\bf imp}_{\eta_k}(\x_k) \right\|^2\mid \fyy{\x_k}\right] \leq  \tfrac{\sum_{j=1}^{N_k} \mathbb{E}\left[\left\|g_{\eta_k}(\x_k,v_{j,k},\omega_{j,k})-\nabla_{\x} f^{\bf imp}_{\eta_k}(\x_k) \right\|^2\mid \fyy{\x_k}\right]}{N_k^2}\notag\\
&\leq  \tfrac{\sum_{j=1}^{N_k}\left( \mathbb{E}\left[\left\|g_{\eta_k}(\x_k,v_{j,k},\omega_{j,k})\right\|^2\mid \fyy{\x_k}\right]-\left\|\nabla_{\x} f^{\bf imp}_{\eta_k}(\x_k) \right\|^2\right)}{N_k^2}\leq  \tfrac{n^2L_0^2}{N_k}.
\end{align*}  
\end{proof}}
\begin{lemma}\em~\cite[Lemma 4]{jalilzadeh2018smoothed} \label{lem:acc-lemma} Consider the problem {\eqref{prob:mpec_as_imp}}. Suppose {Assumptions~\ref{ass-1}--~\ref{assum:u_iter_smooth}, ~\ref{assum:upper_level_acc}} hold. Suppose {$\{\x_k,\vz_k\}$} denote the sequence generated by Algorithm \ref{alg:acc_zsol} in which the stepsize and smoothing sequences are  defined as {$\eta_k = \tfrac{1}{k+1}$ and $\gamma_k = \tfrac{1}{2(k+1)}$, and $N_k = \lfloor (k+1)^a\rfloor$ for $k\geq 0$. Suppose $\|\vx_0-\vx^*\|\leq C$ for some $C>0$. Then the following holds.}
    \begin{align}
        \mathbb{E}\left[{f^{\bf imp}_{\eta_K}}(\vz_K) - {f^{\bf imp}_{\eta_K}}(\vx^*)\right] \leq \frac{2}{\gamma_{K-1} (K-1)^2} \sum_{k=1}^{K-1} \tfrac{\gamma_k^2k^2 {n^2L_0^2}}{N_{k-1}} + \frac{2C^2}{\gamma_{K-1} (K-1)^2}. 
        \end{align}
\end{lemma}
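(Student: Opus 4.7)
\textbf{Proof proposal for Lemma~\ref{lem:acc-lemma}.} The plan is to adapt the accelerated-with-smoothing analysis of~\cite{jalilzadeh2018smoothed} to the present zeroth-order setting, with the smoothed implicit function $f^{\bf imp}_{\eta_k}$ playing the role of the smoothed objective. The crucial structural ingredients are already in place: by Assumption~\ref{ass-2} and Lemma~\ref{lemma:props_local_smoothing}~(v), $f^{\bf imp}_{\eta_k}$ is convex on $\Xscr$; by Lemma~\ref{lemma:props_local_smoothing}~(iv), $\nabla f^{\bf imp}_{\eta_k}$ is Lipschitz continuous with constant $L_k := nL_0/\eta_k$; and by the preceding lemma, the zeroth-order mini-batch estimate decomposes as $g_{\eta_k,N_k}(\x_k) = \nabla f^{\bf imp}_{\eta_k}(\x_k) + \bar{w}_{k,N_k}$ with $\mathbb{E}[\bar{w}_{k,N_k}\mid\x_k]=0$ and $\mathbb{E}[\|\bar{w}_{k,N_k}\|^2\mid\x_k] \leq n^2L_0^2/N_k$. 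Thus the recursion
\begin{align*}
\vz_{k+1}=\Pi_{\Xscr}[\x_k-\gamma_k(\nabla f^{\bf imp}_{\eta_k}(\x_k)+\bar{w}_{k,N_k})], \qquad \x_{k+1}=\vz_{k+1}+\tfrac{\lambda_k-1}{\lambda_{k+1}}(\vz_{k+1}-\vz_k)
\end{align*}
is exactly a FISTA-type scheme on $f^{\bf imp}_{\eta_k}$ driven by an unbiased noisy gradient.

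My first step is to write down the one-step inequality used in the standard accelerated analysis. For a step of size $\gamma_k \leq 1/L_k$ with exact gradient, the projected-gradient estimate at $\x_k$ satisfies a ``sufficient decrease'' relation of the form $f^{\bf imp}_{\eta_k}(\vz_{k+1})-f^{\bf imp}_{\eta_k}(\vu)\leq \tfrac{1}{2\gamma_k}(\|\x_k-\vu\|^2 - \|\vz_{k+1}-\vu\|^2)$ for all $\vu\in\Xscr$; with an inexact gradient this carries an additive term controlled by $\gamma_k\|\bar{w}_{k,N_k}\|^2$ plus a mean-zero cross term. Applied with $\vu=\x^*$ and with $\vu$ chosen as a convex combination of $\x^*$ and $\vz_k$ (the standard Tseng/Nesterov choice calibrated by $\lambda_k$), I obtain after multiplying by $\lambda_k^2$ and rearranging the canonical recursion
\begin{align*}
\lambda_{k+1}^2\bigl(f^{\bf imp}_{\eta_k}(\vz_{k+1})-f^{\bf imp}_{\eta_k}(\x^*)\bigr)\gamma_k + \tfrac{1}{2}\|\vu_{k+1}-\x^*\|^2 \leq \lambda_k^2\bigl(f^{\bf imp}_{\eta_k}(\vz_k)-f^{\bf imp}_{\eta_k}(\x^*)\bigr)\gamma_k + \tfrac{1}{2}\|\vu_k-\x^*\|^2 + \gamma_k^2 \lambda_{k+1}^2\|\bar{w}_{k,N_k}\|^2 + \langle\text{mean-zero}\rangle,
\end{align*}
where $\vu_k := \lambda_k\vz_k-(\lambda_k-1)\vz_{k-1}$. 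The identity $\lambda_{k+1}^2-\lambda_{k+1} = \lambda_k^2$ is the algebraic fact that powers telescoping. Using the prescribed $\gamma_k=1/(2(k+1))$ and the standard bound $\lambda_k\geq(k+1)/2$, the factor $\gamma_k \lambda_{k+1}^2$ reduces to $\Theta(k)$, matching the $\gamma_k^2 k^2$ appearing in the target bound.

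Next I would telescope this inequality from $k=1$ to $K-1$, take expectations, and use $\mathbb{E}[\bar{w}_{k,N_k}]=0$ to kill the cross terms (since $\bar{w}_{k,N_k}$ is independent of the history used to form $\vu_k, \x_k$). The smoothing parameter dependence requires one additional monotonicity observation: since $\eta_k$ is decreasing and $f^{\bf imp}$ is convex, one can relate $f^{\bf imp}_{\eta_k}(\x^*)$ and $f^{\bf imp}_{\eta_K}(\x^*)$ through the sandwich $f^{\bf imp}(\x)\leq f^{\bf imp}_{\eta_k}(\x)\leq f^{\bf imp}(\x)+\eta_k L_0$ from Lemma~\ref{lemma:props_local_smoothing}~(v), letting the left-hand quantity at iteration $K$ be controlled by the running quantities at iterations $k<K$. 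Dividing the telescoped inequality by $\lambda_{K-1}^2\gamma_{K-1}\geq \tfrac{1}{4}\gamma_{K-1}(K-1)^2$, substituting $\|\x_0-\x^*\|\leq C$ in the initial term, and inserting $\mathbb{E}\|\bar{w}_{k,N_k}\|^2\leq n^2L_0^2/N_k$ yields precisely the claimed inequality.

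The main obstacle, and where care is required, is the interplay between the changing smoothing parameter $\eta_k$ and the stepsize $\gamma_k$: the usual accelerated descent bound requires $\gamma_k\leq 1/L_k = \eta_k/(nL_0)$, so the constants must be absorbed correctly in the choices $\eta_k=1/(k+1)$ and $\gamma_k=1/(2(k+1))$. A secondary subtlety is in ensuring that the noise term $\bar{w}_{k,N_k}$ is independent of the momentum-mixed auxiliary vector $\vu_k$; this is handled because $\vu_k$ is measurable with respect to $\{v_{j,\ell},\omega_{j,\ell}\}_{\ell<k}$ whereas $\bar{w}_{k,N_k}$ is built from the freshly drawn samples $\{v_{j,k},\omega_{j,k}\}_{j=1}^{N_k}$. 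Modulo these two points, the proof is a direct adaptation of~\cite[Lemma~4]{jalilzadeh2018smoothed}.
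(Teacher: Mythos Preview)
The paper does not prove this lemma at all: it is stated with the citation~\cite[Lemma~4]{jalilzadeh2018smoothed} and immediately used in Proposition~\ref{prop:acc_convex_exact} without any argument. So there is no ``paper's own proof'' to compare against; the result is imported wholesale from the cited reference.

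Your sketch is a reasonable reconstruction of how such a bound is obtained in the accelerated-with-iterative-smoothing framework: the FISTA/Tseng one-step inequality applied to the $L_k$-smooth convex surrogate $f^{\bf imp}_{\eta_k}$, telescoped with the $\lambda_k$ recursion, with the unbiasedness of $\bar w_{k,N_k}$ killing the cross terms and its second-moment bound $n^2L_0^2/N_k$ producing the sum on the right. The two subtleties you flag---the compatibility of $\gamma_k=\tfrac{1}{2(k+1)}$ with the Lipschitz constant $L_k=nL_0(k+1)$ (which as written requires $nL_0\le 2$ rather than holding for arbitrary $n,L_0$), and the handling of the drifting smoothing level $\eta_k$ across the telescoping---are exactly the places where the argument is delicate, and the paper here simply defers to~\cite{jalilzadeh2018smoothed} for those details rather than reproducing them.
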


We may now provide the main rate statement for the smoothed accelerated scheme by adapting~\cite[Thm.~5]{jalilzadeh2018smoothed}. 

\begin{proposition}[{\bf Rate statement for Algorithm~\ref{alg:acc_zsol}}]\label{prop:acc_convex_exact}\em
Consider the problem {\eqref{prob:mpec_as_imp}}. Suppose {Assumptions~\ref{ass-1}--~\ref{assum:u_iter_smooth}, ~\ref{assum:upper_level_acc}} hold. Suppose {$\{\x_k,\vz_k\}$} denote the sequence generated by Algorithm~\ref{alg:acc_zsol} in which the stepsize and smoothing sequences are  defined as {$\eta_k = \tfrac{1}{k+1}$ and $\gamma_k = \tfrac{1}{2(k+1)}$, and $N_k = \lfloor (k+1)^a\rfloor$ for $k\geq 0$. Suppose $\|\vx_0-\vx^*\|\leq C$ for some $C>0$}. Then the following hold for $a = 1+\delta$ \fyy{where $\delta>0$}. {Suppose $\fyyy{K_\epsilon}$ is such that $\mathbb{E}[f^{\bf imp}(\vz_{K_\epsilon})] - f^*\leq \epsilon$. Then the following holds.}

\noindent {\bf(a)} {The iteration complexity in {terms of} zeroth-order gradient steps is $\mathcal{O}(1/\epsilon)$.} 

\noindent {\bf(b)}  {We have} $\sum_{k=1}^{K_{\epsilon}} N_k \leq \mathcal{O}(1/\epsilon^{2+\delta})$ implying that \fyy{the sample complexity as well as} the iteration complexity in terms of lower-level calls to the VI solver \fyy{are both} $\mathcal{O}(1/\epsilon^{2+\delta})$.
\end{proposition}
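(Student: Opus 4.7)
The plan is to plug the prescribed parameter choices into the bound provided by Lemma~\ref{lem:acc-lemma}, use convergence of a suitable $p$-series to tame the stochastic-error sum, and finally convert the rate on the smoothed objective $f^{\bf imp}_{\eta_K}$ into one on $f^{\bf imp}$ via Lemma~\ref{lemma:props_local_smoothing}(v).

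\textbf{Step 1: Simplify the right-hand side of Lemma~\ref{lem:acc-lemma}.} With $\gamma_k=\tfrac{1}{2(k+1)}$, the first factor $\gamma_{K-1}(K-1)^2$ is of order $K$ (precisely $\tfrac{(K-1)^2}{2K}$). Inside the sum, $\gamma_k^2 k^2 = \tfrac{k^2}{4(k+1)^2} \le \tfrac14$, so
\[
\sum_{k=1}^{K-1}\frac{\gamma_k^2 k^2 n^2L_0^2}{N_{k-1}} \;\le\; \frac{n^2L_0^2}{4}\sum_{k=1}^{K-1}\frac{1}{\lfloor k^{1+\delta}\rfloor}.
\]
Since $a=1+\delta>1$, the series $\sum_{k\ge 1}k^{-(1+\delta)}$ converges, so the whole sum is bounded above by an absolute constant $S_\delta < \infty$ that depends only on $\delta$. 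Hence
\[
\mathbb{E}\!\left[f^{\bf imp}_{\eta_K}(\vz_K)-f^{\bf imp}_{\eta_K}(\vx^*)\right] \;\le\; \frac{4K}{(K-1)^2}\Big(\tfrac{n^2L_0^2 S_\delta}{4} + 2C^2\Big) \;=\; \mathcal{O}\!\left(\tfrac{1}{K}\right).
\]

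\textbf{Step 2: Pass from the smoothed objective to $f^{\bf imp}$.} By Lemma~\ref{lemma:props_local_smoothing}(v) applied on $\Xscr_{\eta_K}$, we have $f^{\bf imp}(\vz_K)\le f^{\bf imp}_{\eta_K}(\vz_K)$ and $f^{\bf imp}_{\eta_K}(\vx^*)\le f^{\bf imp}(\vx^*)+\eta_K L_0 = f^* + \tfrac{L_0}{K}$. Adding these to the bound from Step~1 yields
\[
\mathbb{E}\!\left[f^{\bf imp}(\vz_K)\right] - f^* \;\le\; \frac{4K}{(K-1)^2}\Big(\tfrac{n^2L_0^2 S_\delta}{4}+2C^2\Big) + \frac{L_0}{K} \;=\; \mathcal{O}\!\left(\tfrac{1}{K}\right).
\]
Setting this right-hand side to $\epsilon$ gives $K_\epsilon=\mathcal{O}(1/\epsilon)$, establishing part~(a) (each iteration performs one upper-level projection/accelerated step).

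\textbf{Step 3: Count lower-level VI calls / samples.} At iteration $k$, Algorithm~\ref{alg:acc_zsol} draws $N_k=\lfloor(k+1)^{1+\delta}\rfloor$ realizations of $\omega$ and performs $N_k$ exact lower-level VI solves. Therefore,
\[
\sum_{k=1}^{K_\epsilon} N_k \;\le\; \sum_{k=1}^{K_\epsilon}(k+1)^{1+\delta} \;\le\; \int_{1}^{K_\epsilon+2}\!t^{1+\delta}\,dt \;=\; \mathcal{O}\!\left(K_\epsilon^{2+\delta}\right) \;=\; \mathcal{O}\!\left(\tfrac{1}{\epsilon^{2+\delta}}\right),
\]
which gives part~(b). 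The only nontrivial step is Step~1; the main point is that choosing $a>1$ makes the accumulated variance finite and independent of $K$, which is exactly what preserves the optimal $\mathcal{O}(1/K)$ acceleration rate while pushing all the extra sampling cost into the lower-level VI calls.
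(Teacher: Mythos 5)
Your proposal is correct and follows essentially the same route as the paper: invoke Lemma~\ref{lem:acc-lemma}, substitute the prescribed $\gamma_k$, $\eta_k$, $N_k$, observe that $a=1+\delta>1$ makes the variance sum a convergent $p$-series, and then pass from $f^{\bf imp}_{\eta_K}$ to $f^{\bf imp}$ via Lemma~\ref{lemma:props_local_smoothing}(v) at the cost of an extra $\eta_K L_0=\mathcal{O}(1/K)$ term. The only difference is that you carry out explicitly the two computations the paper leaves implicit or defers to the cited reference (the bound $\gamma_k^2k^2\le\tfrac14$ with the resulting constant $S_\delta$, and the integral estimate $\sum_{k\le K_\epsilon}N_k=\mathcal{O}(K_\epsilon^{2+\delta})$ for part (b)), both of which are done correctly.
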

\begin{proof}
    \noindent {\bf(a)} From Lemma~\ref{lem:acc-lemma}, we have that  
    \begin{align}
        \mathbb{E}\left[{f^{\bf imp}_{\eta_K}}(\vz_K) -{f^{\bf imp}_{\eta_K}}(\vx^*)\right] \leq \frac{2}{\gamma_{K-1} (K-1)^2} \sum_{k=1}^{K-1} \tfrac{\gamma_k^2k^2 {n^2L_0^2}}{N_{k-1}} + \frac{2C^2}{\gamma_{k-1} (K-1)^2}. 
    \end{align}
    From Lemma~\ref{lemma:props_local_smoothing} (v), we have that {$f^{\bf imp}(\x) \leq f^{\bf imp}_{\eta_K}(\x) \leq f^{\bf imp}(\x)+\eta_K L_0$}. Consequently, we have
\begin{align*}
    \mathbb{E}\left[{f^{\bf imp}}(\vz_K) - {f^*}\right] & \leq \mathbb{E}\left[{f^{\bf imp}_{\eta_K}}(\vz_K) -{f^{\bf imp}_{\eta_K}}(\vx^*)\right] + \eta_K L_0 \\
                    & \leq \frac{2}{\gamma_{K-1} (K-1)^2} \sum_{k=1}^{K-1} \tfrac{\gamma_k^2k^2 {n^2L_0^2}}{N_{k-1}} + \frac{2C^2}{\fy{\gamma_{K-1}} (K-1)^2}  +\fy{\eta_K} L_0 \leq  \mathcal{O}(\tfrac{1}{K}), 
    \end{align*}
{where we used $\eta_k = \tfrac{1}{k+1}$ and $\gamma_k = \tfrac{1}{2(k+1)}$, and $N_k = \lfloor (k+1)^a\rfloor$} where $a = 1+\delta$.  

    \noindent {\bf(b)} {The proof can be done in a similar vein to that of~\cite[Thm.~5]{jalilzadeh2018smoothed} and thus, it is omitted.}

\end{proof}
\begin{remark} {Several points deserve emphasis. (i) The proposed scheme employs diminishing smoothing sequences rather than fixed, leading to asymptotic convergence guarantees, a key distinction from the scheme proposed in~\cite{nesterov17}. (ii) By adapting the framework employed for the inexact oracles, one may consider similar extensions to the accelerated framework. However, this would lead to bias in the gradient approximation and one would expect this to adversely affect the rate. This remains a goal of future study.}
\end{remark}

\subsection{Nonconvex two-stage SMPEC}\label{sec:4.4}
{In this {subsection}, we address the two-stage model~\eqref{prob:mpec_as_imp} \uss{when} the implicit function is nonconvex. The outline of the proposed zeroth-order scheme is given by Algorithm \ref{algorithm:ZSOL_nonconvex_2s} in both inexact and exact variants. In the following we present the results for each of the two variants. 

\subsubsection{An inexact zeroth-order scheme}
In the following, we present the rate and complexity result for the proposed inexact method for addressing the two-stage model in the nonconvex case.
\begin{algorithm}[H]
    \caption{\fyy{\texttt{ZSOL$^{\bf 2s}_{\rm ncnvx}$}: Variance-reduced zeroth-order method for nonconvex \eqref{eqn:a_s_prob}}}\label{algorithm:ZSOL_nonconvex_2s}
    \fyy{\begin{algorithmic}[1]
        \STATE\textbf{input:} Given $\x_0 \in \Xscr$, ${\bar \x}_0: = \x_0$,  stepsize $\gamma>0$, smoothing parameter $\eta>0$, mini-batch sequence $\{N_k\}$ such that $N_k:=k +1$, an integer $K$, {a scalar $\lambda \in (0,1)$, and an integer $R$ randomly selected from $\{\lceil\lambda K\rceil ,\ldots,K\}$ using a uniform distribution}
    \FOR {$k=0,1,\ldots,{K}-1$}

            \FOR {$j=1,\ldots,N_k$}
                    \STATE Generate \fyy{$v_{j,k} \in \eta \mathbb{S}$} 
    		    
    		      \STATE  \fyy{Do one of the following.
    		    
\vspace{-.1in}    	
	    
\begin{itemize}
\item   Inexact scheme: Call Alg. \ref{algorithm:Inexact_lower_level_SA_twostage} twice to obtain $\y_\fyyy{\tilde{\epsilon}_k}(\x_k,\omega_{j,k})$  and $\y_\fyyy{\tilde{\epsilon}_k}(\x_k+v_{j,k},\omega_{j,k})$ 
\vspace{-.15in}    	

\item   Exact scheme: Evaluate $\y(\x_k,\omega_{j,k})$ and $\y(\x_k+v_{j,k},\omega_{j,k})$
\end{itemize}}

\vspace{-.1in}    

\STATE Evaluate the inexact \fyy{or exact zeroth-order gradient approximation as follows.
 \begin{align}
              g_{\eta,\fyyy{\tilde{\epsilon}_k}}(\x_k,v_{j,k},\omega_{j,k}) &:=\tfrac{n\left(\fyy{\tilde f}(\x_k+v_{j,k}, \y_\fyyy{\tilde{\epsilon}_k}(\x_k+ v_{j,k},\omega_{j,k}),\omega_{j,k}) - \fyy{\tilde f} (\x_k, \y_\fyyy{\tilde{\epsilon}_k}(\x_k,\omega_{j,k}),\omega_{j,k})\right)v_{j,k}}{\|v_{j,k}\|\eta} \tag{Inexact} \\
              g_{\eta}(\x_k,v_{j,k},\omega_{j,k}) &:=\tfrac{n\left(\fyy{\tilde f}(\x_k+v_{j,k}, \y(\x_k+ v_{j,k},\omega_{j,k}),\omega_{j,k}) - \fyy{\tilde f} (\x_k, \y(\x_k,\omega_{j,k}),\omega_{j,k})\right)v_{j,k}}{\|v_{j,k}\|\eta}\tag{Exact}
\end{align}   }

      \ENDFOR

\STATE Evaluate the mini-batch zeroth-order gradient. \fyy{
 \begin{align}
             g_{\eta,N_k,\fyyy{\tilde{\epsilon}_k}}(\x_k) &:=\tfrac{\sum_{j=1}^{N_k} g_{\eta,\fyyy{\tilde{\epsilon}_k}}(\x_k,v_{j,k}\fyy{,\omega_{j,k}})}{N_k} \tag{Inexact} \\
             g_{\eta,N_k}(\x_k)  & :=\tfrac{\sum_{j=1}^{N_k} g_{\eta}(\x_k,v_{j,k}\fyy{,\omega_{j,k}})}{N_k} \tag{Exact}
\end{align}   	
}

\STATE {Update $\x_k$ as follows.}
         \fyy{\begin{align*}
             \x_{k+1} := \begin{cases}
                 \Pi_{\Xscr} \left[ \x_k - \gamma g_{\eta,N_k,\fyyy{\tilde{\epsilon}_k}}(\x_k) \right] & \hspace{3.1in} \mbox{(Inexact)} \\
                 \Pi_{\Xscr} \left[ \x_k - \gamma g_{\eta,N_k}(\x_k) \right] &  \hspace{3.2in} \mbox{(Exact)} 
             \end{cases}
     \end{align*}}

    \ENDFOR
        \STATE Return $\x_R$ 
   \end{algorithmic}}
\end{algorithm}

\begin{theorem}[{\bf Rate and complexity statements for \fyy{inexact (\texttt{ZSOL$^{\bf 2s}_{\rm ncnvx}$})}}]\label{thm:inexact_nonconvex_2s}\em
    Consider Algorithms~\ref{algorithm:ZSOL_nonconvex_2s}--\ref{algorithm:Inexact_lower_level_SA_twostage} for solving \eqref{prob:mpec_as_imp} and suppose Assumptions~\ref{ass-1} and~\ref{assum:u_iter_smooth_VR} hold. 

    \noindent {\bf{(a)}} Given $\hat \x_k \in \Xscr$, let $\y(\hat \x_k,\omega_{j,k})$ denote the unique solution of $\mbox{VI}(\Yscr, G(\hat \x_k,\bullet,\omega_{j,k}))$.  Let $\y_{t_k} $ be generated by Algorithm~\ref{algorithm:Inexact_lower_level_SA_twostage}. Then for suitably defined $\tilde{d} < 1$ and $B > 0$, the following holds for $t_k \geq 1$.   
\begin{align*}
\|\y_{t_k} - \y(\hat \x_k,\omega_{j,k})\|^2 \leq \fyyy{\tilde{\epsilon}_k} \triangleq  B \tilde{d}^{t_k}.
\end{align*}

\noindent {\bf{(b)}} The following holds for any $\gamma<\frac{\eta}{n L_0}$, {$\ell\triangleq  \lceil \lambda K\rceil$}, and all $K> {\tfrac{2}{1-\lambda}}$.
\begin{align*}
 \mathbb{E}\left[ \|G_{\eta,1/\gamma} (\x_R)\|^2\right] \leq \frac{n^2{\gamma}({1-2\ln(\lambda)}) {\left( 1-\tfrac{nL_0\gamma}{2\eta}\right)} \left( \tfrac{{4\tilde{L}^2_0B}}{\eta^2}+L_0^2\right) +  {\mathbb{E}\left[f^{\bf imp}(\x_{\ell})\right] }{-f^*} +2L_0\eta}{\left( 1-\tfrac{nL_0\gamma}{\eta}\right) \tfrac{\gamma}{4}{(1-\lambda)K}  }.
\end{align*}

\noindent {\bf{(c)}}  Suppose $\gamma= \tfrac{\eta}{2nL_0}$ and $\eta=\tfrac{1}{L_0}$. Let $\epsilon>0$ be an arbitrary scalar and $K_{\epsilon}$ be such that $ \mathbb{E}\left[ \|G_{\eta,1/\gamma} (\x_R)\|^2\right]   \leq \epsilon$. Then,

\noindent {{(c-1)}} The total number of upper-level projection steps on $\Xscr$ is {$K_{\epsilon}=\mathcal{O}\left(n^2L_0^2\fyy{\tilde{L}^2_0}\epsilon^{-1}\right)$}.

 \noindent {{(c-2)}} The total sample complexity of upper-level is {$\mathcal{O}\left(n^4L_0^4\fyy{\tilde{L}^4_0} \epsilon^{-2}\right)$}.
 
\noindent {{(c-3)}} The total number of lower-level projection steps on $\Yscr$ is {$ 
\mathcal{O}\left(\tau n^4L_0^4{\tilde{L}^4_0}\epsilon^{-2}\ln(n^2L_0^2{\tilde{L}^2_0}\epsilon^{-1})\right)$}.


\end{theorem}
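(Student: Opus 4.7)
The proof proceeds along the lines of Theorems \ref{thm:ZSOL_convex_2s}(a) and \ref{thm:inexact_nonconvex}(b,c), with the two-stage bias/variance decomposition of the zeroth-order gradient replacing its single-stage counterpart.

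\textbf{Part (a).} Since Algorithm~\ref{algorithm:Inexact_lower_level_SA_twostage} is invoked for each sampled $\omega_{j,k}$, the proof is identical to that of Theorem~\ref{thm:ZSOL_convex_2s}(a): for fixed $\hat{\x}_k$ and $\omega_{j,k}$, one uses the fixed-point characterization $\y(\hat\x_k,\omega_{j,k}) = \Pi_{\Yscr}[\y(\hat\x_k,\omega_{j,k}) - \alpha G(\hat\x_k,\y(\hat\x_k,\omega_{j,k}),\omega_{j,k})]$, applies the nonexpansivity of $\Pi_{\Yscr}$ together with the Lipschitzian and strongly monotone properties from Assumption~\ref{ass-1}(b), and iterates to obtain the contraction $(1-\alpha\mu_F)^{t_k}\sup_{\y\in\Yscr}\|\y-\y_0\|^2$. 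Setting $\tilde{d} \triangleq 1-\alpha\mu_F$ and $B \triangleq \sup_{\y\in\Yscr}\|\y-\y_0\|^2$ completes (a).

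\textbf{Part (b).} Define the error $e_k \triangleq g_{\eta,N_k,\tilde{\epsilon}_k}(\x_k) - \nabla f^{\bf imp}_\eta(\x_k)$. The key step is to decompose
\begin{align*}
\mathbb{E}[\|e_k\|^2\mid \x_k] \leq 2\mathbb{E}[\|g_{\eta,N_k,\tilde{\epsilon}_k}(\x_k)-g_{\eta,N_k}(\x_k)\|^2\mid \x_k] + 2\mathbb{E}[\|g_{\eta,N_k}(\x_k)-\nabla f^{\bf imp}_\eta(\x_k)\|^2\mid\x_k].
\end{align*}
The first term is bounded via Lemma~\ref{lem:inexact_error_bounds_2s}(b), giving $\tfrac{8\tilde{L}_0^2 n^2 \tilde{\epsilon}_k}{\eta^2}$. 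For the second, since $g_\eta(\x_k,v_{j,k},\omega_{j,k})$ are iid unbiased estimators of $\nabla f^{\bf imp}_\eta(\x_k)$ (by Lemma~\ref{lem:smooth_grad_properties_two_stage}) with second moment bounded by $n^2 L_0^2$, the mini-batch variance is at most $\tfrac{2n^2L_0^2}{N_k}$. Applying the descent-style inequality from Lemma~\ref{lem:descent_lemma_inexact} (valid for any $\gamma<\eta/(nL_0)$ since it depends only on the Lipschitz smoothness of $f^{\bf imp}_\eta$) gives
\begin{align*}
\left(1-\tfrac{nL_0\gamma}{\eta}\right)\tfrac{\gamma}{4}\|G_{\eta,1/\gamma}(\x_k)\|^2 \leq f^{\bf imp}_\eta(\x_k) - f^{\bf imp}_\eta(\x_{k+1}) + \left(1-\tfrac{nL_0\gamma}{2\eta}\right)\gamma\|e_k\|^2.
\end{align*}
Summing over $k=\ell,\dots,K-1$ with $\ell=\lceil\lambda K\rceil$, taking expectations, and telescoping $f^{\bf imp}_\eta(\x_\ell) - f^{\bf imp}_\eta(\x_K)$ using Lemma~\ref{lemma:props_local_smoothing}(iii) to relate $f^{\bf imp}_\eta$ to $f^{\bf imp}$ and $f^*$ (the additive $2L_0\eta$ term), one obtains an upper bound on $\sum_{k=\ell}^{K-1}\|G_{\eta,1/\gamma}(\x_k)\|^2$. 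Dividing by $(K-\ell)\geq (1-\lambda)K$, recalling the uniform choice of $R$ from $\{\ell,\dots,K\}$, and invoking Lemma~\ref{lem:harmonic_bounds} to bound $\sum_{k=\ell}^{K-1}(1/N_k)$ by $0.5-\ln(\lambda)$ yields the claimed rate. The key simplification in the two-stage regime (versus the single-stage case) is that $\tilde{\epsilon}_k$ need not be chosen to decay algebraically: since $\gamma$ and $\eta$ are constant, selecting $\tau$ in Algorithm~\ref{algorithm:Inexact_lower_level_SA_twostage} so that $\tilde{d}^{t_k} \leq \tfrac{1}{k+1}$ suffices to obtain the claimed $\mathcal{O}(1/K)$ rate.

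\textbf{Part (c).} Substituting $\gamma = \eta/(2nL_0)$ and $\eta = 1/L_0$ in part (b) collapses the bound to $\mathcal{O}(n^2 L_0^2 \tilde{L}_0^2 / K)$, yielding $K_\epsilon = \mathcal{O}(n^2 L_0^2 \tilde{L}_0^2 \epsilon^{-1})$ and hence (c-1). The upper-level sample count is $\sum_{k=0}^{K_\epsilon}N_k = \sum_{k=0}^{K_\epsilon}(k+1) = \mathcal{O}(K_\epsilon^2)$, giving (c-2). For (c-3), each upper-level iteration $k$ issues $2 N_k$ calls to Algorithm~\ref{algorithm:Inexact_lower_level_SA_twostage}, each performing $t_k = \lceil\tau\ln(k+1)\rceil$ deterministic projections; so the total is $\mathcal{O}\left(\tau\sum_{k=0}^{K_\epsilon}(k+1)\ln(k+1)\right) = \mathcal{O}(\tau K_\epsilon^2 \ln K_\epsilon)$, which produces the stated bound. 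The main obstacle is the careful coordination between the constant $\tau$ (which must be large enough to make $\tilde{\epsilon}_k\lesssim 1/k$ so the bias term is absorbed) and the mini-batch growth $N_k = k+1$; all other steps parallel the nonconvex single-stage arguments.
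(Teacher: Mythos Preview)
Your proof plan is correct and follows essentially the same approach as the paper: part~(a) is reduced to the deterministic contraction argument of Theorem~\ref{thm:ZSOL_convex_2s}(a); part~(b) combines the error decomposition $\mathbb{E}[\|e_k\|^2\mid\x_k]\le \tfrac{8\tilde L_0^2 n^2\tilde\epsilon_k}{\eta^2}+\tfrac{2n^2L_0^2}{N_k}$ (via Lemmas~\ref{lem:smooth_grad_properties_two_stage} and~\ref{lem:inexact_error_bounds_2s}) with Lemma~\ref{lem:descent_lemma_inexact}, the telescoping and Lemma~\ref{lemma:props_local_smoothing}(iii), and the key observation that $\tau\ge -1/\ln(\tilde d)$ forces $\tilde\epsilon_k\le B/(k+1)$; and part~(c) is obtained by substitution and the sum $\sum_{k\le K_\epsilon}2N_kt_k=\mathcal{O}(\tau K_\epsilon^2\ln K_\epsilon)$. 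There is no substantive difference from the paper's argument.
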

\begin{proof}
\noindent {\bf (a)} The proof of (a) is analogous to that of Theorem \ref{thm:ZSOL_convex_2s} (a) and it is omitted. 

\noindent {\bf (b)} In view of the similarity between the results of Lemmas \ref{lem:smooth_grad_properties_two_stage} and \ref{lem:inexact_error_bounds_2s} with those of Lemmas \ref{lem:smooth_grad_properties} and \ref{lem:inexact_error_bounds}, respectively, in a similar fashion to the proof of Theorem \ref{algorithm:ZSOL_nonconvex} (b), we can obtain
\begin{align*}
    & \quad \left( 1-\tfrac{nL_0\gamma}{\eta}\right) \tfrac{\gamma}{4}{(K-\ell)}\mathbb{E}\left[ \|G_{\eta,1/\gamma} (\x_R)\|^2\right]  \\                                                                                                              & \leq{\left( 1-\tfrac{nL_0\gamma}{2\eta}\right)} {\gamma} \sum_{k={\ell}}^{K-1}\left( \tfrac{{8\tilde{L}^2_0n^2}\fyyy{\tilde{\epsilon}_k}}{\eta^2}+\tfrac{2n^2L_0^2}{N_k}\right) +  {\mathbb{E}\left[{f^{\bf imp}(\x_{\ell})}\right] }{-f^*} +2L_0\eta.
\end{align*}
 Next, we derive a bound on $\fyyy{\tilde{\epsilon}_k}$. Note that from part (a), we have $\fyyy{\tilde{\epsilon}_k}= B\tilde{d}^{t_k}$ where $t_k:=\lceil \tau \ln(k+1)\rceil \geq \tau\ln(k+1)$. We have
\begin{align*} 
(k+1) \fyyy{\tilde{\epsilon}_k} \leq  B\tilde{d}^{\tau\ln(k+1)}(k+1) =B\left(\tilde{d}^{\tau}\mathrm{e}\right)^{\ln(k+1)} \leq B,
\end{align*}
where the last inequality is implied from $\tau \geq \tfrac{-1}{\ln(\tilde{d})}$. Thus, we have that $\fyyy{\tilde{\epsilon}_k}\leq \tfrac{B}{k+1}$. {Note that $K> {\tfrac{2}{1-\lambda}}$ implies $\ell \leq K-1$.} From Lemma~\ref{lem:harmonic_bounds}, {using $\ell \geq 1$ we have $ \sum_{k={\ell}}^{K-1}\frac{1}{k+1}\leq  \frac{1}{\ell+1}+\ln\left( \frac{K}{\ell+1}\right) \leq 0.5 +\ln\left(\tfrac{N}{\lambda N+1}\right)\leq 0.5-\ln(\lambda)$. Also, $K-\ell \geq K-\lambda K=(1-\lambda)K$.} Thus, we obtain 
\begin{align*}
 \mathbb{E}\left[ \|G_{\eta,1/\gamma} (\x_R)\|^2\right] \leq \frac{{\left( 1-\tfrac{nL_0\gamma}{2\eta}\right)} 2n^2{\gamma} \left( \tfrac{{4\tilde{L}^2_0B}}{\eta^2}+L_0^2\right)({0.5-\ln(\lambda)}) +  {\mathbb{E}\left[{f^{\bf imp}(\x_{\ell})}\right] }{-f^*} +2L_0\eta}{\left( 1-\tfrac{nL_0\gamma}{\eta}\right) \tfrac{\gamma}{4}{(1-\lambda)K}  }.
\end{align*}

\noindent {\bf (c)} The proofs of (c-1) and (c-2) are analogous to those of Theorem \ref{thm:inexact_nonconvex} (c-1) and (c-2), respectively. To show (c-3), note that the total number of lower-level projection steps is given by 
\begin{align*}
\sum_{k=0}^{K_\epsilon} 2N_kt_k& = 2\sum_{k=0}^{K_\epsilon} (k+1)\lceil \tau\ln(k+1) \rceil \leq 2\tau\int_{1}^{K_\epsilon}(x+1)(\ln(x+1)+1)dx = \mathcal{O}\left(\tau K_\epsilon^2\ln(K_\epsilon)\right)\\
& = \mathcal{O}\left(\tau n^4L_0^4{\tilde{L}^4_0}\epsilon^{-2}\ln(n^2L_0^2{\tilde{L}^2_0}\epsilon^{-1})\right).
\end{align*}
\end{proof}}

{\subsubsection{An exact zeroth-order scheme}
Here we present the rate and complexity results for the exact variant of Algorithm~\ref{algorithm:ZSOL_nonconvex_2s}.
\begin{corollary}[{\bf Rate and complexity statements for exact \fyy{(\texttt{ZSOL$^{\bf 2s}_{\rm ncnvx}$})}}]\label{cor:exact_nonconvex_2s}\em
    Consider {Algorithms~\ref{algorithm:ZSOL_nonconvex_2s} (exact variant) for solving \eqref{prob:mpec_as_imp}} and suppose {Assumptions~\ref{ass-1} and~\ref{assum:u_iter_smooth_VR}} hold. 

\noindent {\bf{(a)}} The following holds for any $\gamma<\frac{\eta}{n L_0}$, {$\ell\triangleq  \lceil \lambda K\rceil$}, and all $K> {\tfrac{2}{1-\lambda}}$.
\begin{align*}
 \mathbb{E}\left[ \|G_{\eta,1/\gamma} (\x_R)\|^2\right] \leq \frac{n^2L_0^2{\gamma}({0.5-\ln(\lambda)}) {\left( 1-\tfrac{nL_0\gamma}{2\eta}\right)} +  {\mathbb{E}\left[\fyy{f^{\bf imp}(\x_{\ell})}\right] }{-f^*} +2L_0\eta}{\left( 1-\tfrac{nL_0\gamma}{\eta}\right) \tfrac{\gamma}{4}{(1-\lambda)K}  }.
\end{align*}

\noindent {\bf{(b)}}  Suppose $\gamma= \tfrac{\eta}{2nL_0}$ and $\eta=\tfrac{1}{L_0}$. Let $\epsilon>0$ be an arbitrary scalar and $K_{\epsilon}$ be such that $ \mathbb{E}\left[ \|G_{\eta,1/\gamma} (\x_R)\|^2\right]   \leq \epsilon$. Then,

\noindent {{(b-1)}} The total number of upper-level projection steps on $\Xscr$ is {$K_{\epsilon}=\mathcal{O}\left(n^2L_0^2 \epsilon^{-1}\right)$}.

 \noindent {{(b-2)}} The total sample complexity of upper-level is {$\mathcal{O}\left(n^4L_0^4 \epsilon^{-2}\right)$}.

\end{corollary}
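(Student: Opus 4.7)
The plan is to follow the template of Theorem~\ref{thm:inexact_nonconvex_2s} specialized to the case $\tilde{\epsilon}_k \equiv 0$, since in the exact variant of Algorithm~\ref{algorithm:ZSOL_nonconvex_2s} the lower-level map $\y(\bullet,\omega)$ is assumed to be available exactly. Consequently, the only source of error in the stochastic gradient estimate $e_k \triangleq g_{\eta,N_k}(\x_k) - \nabla f^{\bf imp}_{\eta}(\x_k)$ is the mini-batch sampling noise, and the bound $\tfrac{8\tilde{L}_0^2 n^2 \tilde{\epsilon}_k}{\eta^2}$ that appears in the inexact analysis drops out entirely.

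To establish part (a), I would first bound $\mathbb{E}[\|e_k\|^2 \mid \x_k]$. By Lemma~\ref{lem:smooth_grad_properties_two_stage}, each term $g_{\eta}(\x_k,v_{j,k},\omega_{j,k})$ is an unbiased estimator of $\nabla f^{\bf imp}_{\eta}(\x_k)$ with second moment bounded by $n^2 L_0^2$. The independence of the replicates $\{(v_{j,k},\omega_{j,k})\}_{j=1}^{N_k}$ guaranteed by Assumption~\ref{assum:u_iter_smooth_VR} then yields $\mathbb{E}[\|e_k\|^2 \mid \x_k] \leq \tfrac{n^2 L_0^2}{N_k}$, which is the two-stage analogue of the bound derived in the proof of Corollary~\ref{cor:exact_nonconvex}. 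I would then invoke the descent inequality of Lemma~\ref{lem:descent_lemma_inexact} (whose proof only uses the Lipschitzian property of $\nabla f^{\bf imp}_\eta$ from Lemma~\ref{lemma:props_local_smoothing}(iv) and the projection update, both of which hold identically in the two-stage setting), sum from $k = \ell$ to $K-1$ with $\ell \triangleq \lceil \lambda K \rceil$, and take expectations.

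At this point, the estimate $|f^{\bf imp}_{\eta}(\x) - f^{\bf imp}(\x)| \leq L_0 \eta$ from Lemma~\ref{lemma:props_local_smoothing}(iii) is used to pass from $f^{\bf imp,*}_\eta$ to $f^*$, while the uniform random choice of $R \in \{\ell,\ldots,K\}$ converts the sum into the expectation $\mathbb{E}[\|G_{\eta,1/\gamma}(\x_R)\|^2]$ divided by $K-\ell \geq (1-\lambda)K$. The telescoped error budget $\sum_{k=\ell}^{K-1}\tfrac{1}{N_k} \leq 0.5 - \ln(\lambda)$ (via Lemma~\ref{lem:harmonic_bounds}, using $N_k := k+1$ and the condition $K > \tfrac{2}{1-\lambda}$ which ensures $\ell \leq K-1$) delivers the claimed bound.

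For part (b), I would substitute $\gamma = \tfrac{\eta}{2nL_0}$ and $\eta = \tfrac{1}{L_0}$ into the bound of part (a), verify that the coefficient of the $\|G_{\eta,1/\gamma}(\x_R)\|^2$ term remains positive, and observe that the right-hand side reduces to $\tfrac{\mathcal{O}(n^2 L_0^2)}{(1-\lambda)K}$. Solving this against $\epsilon$ yields (b-1); the total sample count $\sum_{k=0}^{K_\epsilon} N_k = \sum_{k=0}^{K_\epsilon}(k+1) = \mathcal{O}(K_\epsilon^2) = \mathcal{O}(n^4 L_0^4 \epsilon^{-2})$ yields (b-2). There is essentially no technical obstacle here: the proof is a direct specialization of the inexact argument, and the mild point to verify is that the term $\mathbb{E}[f^{\bf imp}(\x_\ell)] - f^* + 2L_0\eta$ is controlled uniformly in $K$ by $\sup_{\x \in \Xscr} f^{\bf imp}(\x) - f^* + 2$, using the boundedness of $\Xscr$ from Assumption~\ref{ass-1}(b.ii) together with continuity of $f^{\bf imp}$.
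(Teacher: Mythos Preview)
Your proposal is correct and follows essentially the same approach as the paper: the paper's proof simply states that the argument proceeds in the same vein as Theorem~\ref{thm:inexact_nonconvex_2s} with $\tilde{\epsilon}_k := 0$, and your sketch fleshes out precisely that specialization (including the sharper bound $\mathbb{E}[\|e_k\|^2\mid \x_k] \le n^2L_0^2/N_k$ without the factor of $2$, which accounts for the constant $(0.5-\ln(\lambda))$ rather than $(1-2\ln(\lambda))$).
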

\begin{proof}
The proof can be done in a similar vein to that of Theorem \ref{thm:inexact_nonconvex_2s} by noting that $\fyyy{\tilde{\epsilon}_k}:=0$ in the exact variant.
\end{proof}

} 

\section{Numerical results}\label{sec:5}
In this section, we demonstrate the proposed methodology by{comparing the
performance of the proposed scheme with sample-average approximation (SAA)
schemes on a breadth of  two-stage and single-stage SMPECs of varying structure
and scale \uss{in Sections~\ref{sec:5.1} and ~\ref{sec:5.2}, respectively.} \uss{We then provide confidence intervals in large-scale settings in Section~\ref{sec:5.3} and conclude with a study of how the schemes perform on a set of test problems from the literature (Section~\ref{sec:5.4}).}  Implementations were developed in \texttt{MATLAB} on a PC
with 16GB RAM and 6-Core Intel Core i7 processor (2.6GHz).
\subsection{Two-stage SMPECs}\label{sec:5.1}
In this section, we apply the schemes on a stochastic Stackelberg-Nash-Cournot
equilibrium problem \uss{which leads to a two-stage SMPEC}. The deterministic setting of the problem is derived from
\cite{sherali83stackelberg}. Consider a  market with $N$
profit-maximizing \uss{firms} by competing in Cournot (quantities)  under the (Cournot)
assumption that the remaining firms will hold their outputs at existing levels.
In addition, there \uss{exists} a \fyy{leader}, supplying the same product,
\uss{that} sets production levels by explicitly considering the reaction of the
other $N$ firms to its output variations. We assume that the $i$th Cournot
firm (follower)  supplies $q_i$ units of the product while $f_i(q_i)$ denotes
the \uss{cost of producing $q_i$ units}. In a similar fashion, suppose $x$ denotes the output  of the
\fyy{leader} and let $f(x)$ denote the total cost. Next, let
$p(\cdot,\omega)$ represent the random inverse demand curve. The $N$ Cournot firms
have sufficient capacity installed and can \uss{therefore} wait to observe the
quantities supplied by the \fyy{leader} as well as the realized demand
function before making a decision on their supply quantities. For a given $x\ge
0$, let $\{q_1(x,\omega),\dots,q_N(x,\omega)\}$ be \uss{the} set of quantities for every
$\omega\in\Omega$ \uss{where} each $q_i(x,\omega)$ solve the following profit
maximization problem assuming that $q_j(x,\omega)$, $j \ne i$ are fixed:
\begin{align}
{\max_{q_i \, \ge \, 0}} \quad  q_ip\left(q_i+x+\mbox{$\sum_{j=1,j\ne i}^N$}q_j(x,\omega),\omega \right)-f_i(q_i). \label{snc-1}
\end{align}
Accordingly, let $Q(x,\omega) \triangleq \sum_{i=1}^Nq_i(x,\omega)$. In addition, we assume there exists a capacity limit $x^u$ for $x$. Then $x^*$ is said to be a Stackelberg-Nash-Cournot equilibrium solution if $x^*$ solves
\begin{align}
{\max_{0\, \le \, x \, \le \, x^u}} \  \mathbb{E}[xp(x+Q(x,\omega),\omega)]-f(x). \label{snc-2}
\end{align}
We consider the case of a linear demand curve with convex quadratic cost functions. Specifically, let $p(\uss{u},\omega)=a(\omega)-b\uss{u}$ and let $f_i(q)=\tfrac{1}{2}cq^2$ for $i=1,\cdots,N$, and $f(x)=\tfrac{1}{2}dx^2$. 
Under this condition, the follower's objective can be shown to be strictly concave in $q^i$~\cite{xu06implicit}. Consequently, the concatenated necessary and sufficient equilibrium conditions of the follower-level game are given by the following conditions.
\begin{align} 
     \begin{aligned}
         0 & \leq q \perp \uss{F(q)} - p(x+Q(x,\omega),\omega) {\bf 1} - p'(x+Q(x,\omega)\uss{,\omega}) q   \geq 0, \label{snc-3}
\end{aligned} 
\end{align}
where \uss{$F(q) = \pmat{f'_1(q_1); \cdots;f'_N(q_N)}$}.
We observe that \eqref{snc-3} is a strongly monotone \uss{linear complementarity}  problem for $x \geq 0$ and for every $\omega \in \Omega$. Consequently, $q:
\mathbb{R}_+ \times \Omega \to \mathbb{R}_+^N$ is a single-valued map and is
convex in its first argument for every $\omega$ if $c_j$ is quadratic and
convex~\cite[Prop.~4.2]{demiguel09stochastic}. In fact, it can be claimed that $q(\cdot,\omega)$ is
a piecewise C$^2$ and non-increasing function with $\partial_{x} q(x,\omega)
\subset (-1,0]$ for $X \geq 0$. Consider the leader's problem~\eqref{snc-2}.
Consequently, we have that 
\begin{align*}
    \ic{\Real_+} \ni x & \perp \mathbb{E}\left[-p (x + Q(x,\omega),\omega) + (1 +  \partial_{x} Q(x,\omega)) bx- a(\omega)\right] + \nabla_{x} f(x) \in \ic{\Real_+}. 
\end{align*}
This may be viewed as the following inclusion which has been shown to be monotone~\cite[Thm.~4.4]{demiguel09stochastic}.
\begin{align*}
    0 & \in  \mathbb{E}[T(x,\omega)] + \mathcal{N}_{\Real_+}, \\ 
    \mbox{ where } T(x,\omega) & \triangleq \uss{\left[-p (x+ Q(x,\omega),\omega){\bf 1} - a(\omega){\bf 1}\right] 
    + \nabla_{x} f(x) + \{[(1 +  \partial_{x} Q(x,\omega)) b x]\}}.
\end{align*}
\begin{table}[htb]
\scriptsize
\caption{Errors and time comparison of the three schemes with different parameters}
\begin{center}
\begin{threeparttable} 
    \begin{tabular}{ c | c | c | c | c | c | c | c | c }
    \hline
    \multicolumn{3}{c|}{} & \multicolumn{2}{c|}{\uss{(\texttt{ZSOL}$^{\bf 2s}_{\rm cnvx}$)}} & \multicolumn{2}{c|}{\uss{(\texttt{ZSOL}$^{\bf 2s}_{\rm acc,cnvx}$)}}  & \multicolumn{2}{c}{SAA}  \\ \cline{4-9}
    \multicolumn{3}{c|}{}& & & & & \\[-0.9em]
     \multicolumn{3}{c|}{} & $f^*-f(\bar{x}_K)$ & Time & $f^*-f(x_K)$ & Time & $f^*-f(\hat{x})$ & Time \\ \hline
    \multirow{4}{*}{$N=10$} & \multirow{2}{*}{$b=1$} & $c=0.05$ &  1.2e-3 & 0.1 & 6.6e-5 & 1.4 & 5.4e-4  & 130.2 \\
     & & $c=0.1$ & 8.2e-4 & 0.1 & 4.8e-5 & 1.4 & 4.2e-4 & 109.2 \\
    & \multirow{2}{*}{$b=0.5$} & $c=0.05$ & 1.7e-3 & 0.1 & 7.0e-5 & 1.3 & 3.8e-4 & 122.5 \\
    & & $c=0.1$ & 1.2e-3 & 0.1 & 6.3e-5 & 1.4 & 2.2e-4 & 116.8 \\ \hline
    \multirow{4}{*}{$N=20$} & \multirow{2}{*}{$b=1$} & $c=0.05$ & 4.5e-4 & 0.1 & 2.6e-5 & 1.5 & 2.6e-4 & 426.7 \\
     & & $c=0.1$ & 4.0e-4 & 0.1 & 1.3e-5 & 1.4 & 5.7e-4 & 443.1 \\
     & \multirow{2}{*}{$b=0.5$} & $c=0.05$ & 6.3e-4 & 0.1 & 2.3e-5 & 1.4 & 4.8e-4 & 419.1 \\ 
     & & $c=0.1$ & 4.2e-4 & 0.1 & 2.9e-5 & 1.5 & 3.1e-4 & 450.0\\ \hline
         \multirow{4}{*}{$N=100$} & \multirow{2}{*}{$b=1$} & $c=0.05$ & 9.9e-5 & 0.2 & 3.2e-6 & 4.3 & -- & -- \\
     & & $c=0.1$ & 2.3e-5 & 0.2 & 1.3e-6 & 4.4 & -- & -- \\
     & \multirow{2}{*}{$b=0.5$} & $c=0.05$ & 2.6e-4 & 0.2 & 4.7e-6 & 4.2 & -- & -- \\ 
     & & $c=0.1$ & 2.5e-5 & 0.2 & 1.4e-6 & 4.5 & -- & -- \\ \hline
     \multirow{4}{*}{$N=1000$} & \multirow{2}{*}{$b=1$} & $c=0.05$ & 2.2e-5 & 0.6 & 3.6e-7 & 27.9 & -- & -- \\
     & & $c=0.1$ & 1.7e-6 & 0.6 & 8.3e-8 & 28.8 & -- & -- \\
     & \multirow{2}{*}{$b=0.5$} & $c=0.05$ & 2.5e-5 & 0.6 & 3.1e-7 & 29.1 & -- & -- \\ 
     & & $c=0.1$ & 1.4e-6 & 0.6 & 8.9e-8 & 28.4 & -- & -- \\ \hline
     \multirow{4}{*}{$N=10000$} & \multirow{2}{*}{$b=1$} & $c=0.05$ & 1.0e-5 & 4.6 & 5.2e-7 & 403.5 & -- & -- \\
     & & $c=0.1$ & 6.0e-6 & 4.5 & 3.8e-8 & 392.4 & -- & -- \\
     & \multirow{2}{*}{$b=0.5$} & $c=0.05$ & 1.1e-5 & 4.7 & 5.6e-8 & 334.2 & -- & -- \\ 
     & & $c=0.1$ & 7.1e-6 & 4.6 & 2.7e-8 & 399.7 & -- & -- \\ \hline
    \end{tabular}
\begin{tablenotes}
\small
\item The errors and time in the table are \uss{based on averaging over} 20 runs (`--' implies runtime $ > $ 3600s)
    \end{tablenotes}
  \end{threeparttable}
\end{center}
\label{time}
\end{table}
\noindent {\bf Problem and algorithm parameters.} Suppose there are 
$N=10$ Cournot firms and $c=d=0.1$. Furthermore, $b=1$ and $a(\omega) \sim
\mathcal{U}(7.5,12.5)$ where $\mathcal{U}(l,u)$ denotes the uniform
distribution on $[l,u]$. We choose $\gamma_k =  \tfrac{1}{\sqrt{k+1}}$ and $\eta_k = \tfrac{1}{\sqrt{k+1}}$, $\forall k \ge 1$ in (\texttt{ZSOL}$^{\bf 2s}_{\rm cnvx}$) and $\gamma_k =  \tfrac{1}{2(k+1)}$ and $\eta_k = \tfrac{1}{k+1}$, $\forall k \ge 1$ in (\texttt{ZSOL}$^{\bf 2s}_{\rm acc,cnvx}$). In addition, we choose sample size $N_k = \lfloor k^{1.01} \rfloor$. \\

\noindent {\bf Description of testing.} We compare the performance of (ZSOL) and (acc-ZSOL) with Nesterov's fixed
smoothing scheme  under the same number of iterations in Fig. \ref{tra}. Next we
change the size and parameters of the original game to ascertain parametric
sensitivity. In Table~\ref{time}, we consider a set of 12 problems where the
settings, the empirical errors, and elapsed time are shown in Table~\ref{time}. Note that we have access to the true solution from~\cite{sherali83stackelberg} and this is employed for computing the sub-optimality metrics.  In addition, to show the performance of our proposed schemes, we consider the
(SAA) scheme (utilizing the average of 1000 samples) used in
\cite{demiguel09stochastic}. Let $(\omega_k)_{k=1}^K$ denote independent
identically distributed (i.i.d.) samples. Then, with (SAA) we solve the
following formulation of problem: \begin{align*}
\max_{0\le x\le x^u} \  &\tfrac{1}{K}\sum_{k=1}^K\left[x\cdot(a(\omega_k)-b\cdot(x+Q(x,\omega_k)))\right]-\tfrac{1}{2}dx^2 \\
\st \ & 0 \le q_{i,k} \perp (c+2b)q_{i,k}-a(w_k)+b\cdot\left(x+\mbox{$\sum_{j=1,j\ne i}^N$}q_{j,k}(x,\omega_k)\right) \ge 0, \ \forall i,k.
\end{align*}
This problem allows for utilizing \texttt{NLPEC}~\cite{ferris2002mathematical} in GAMS to compute a solution. For comparison, we employ an alternative method to solve (SAA). (SAA) can be equivalently formulated as \begin{align*}
\max_{0\le x\le x^u} \  &\tfrac{1}{K}\sum_{k=1}^K\left[x\cdot(a(\omega_k)-b\cdot(x+Q(x,\omega_k)))\right]-\tfrac{1}{2}dx^2,
\end{align*}
\ic{where $Q(x,\omega_k) \triangleq \sum_{i=1}^Nq_i(x,\omega_k)$ and $q_i(x,\omega_k)$ is the solution to the following optimization problem:
\begin{align*}
{\max_{q_i \, \ge \, 0}} \quad  q_ip\left(q_i+x+\mbox{$\sum_{j=1,j\ne i}^N$}q_j(x,\omega_k),\omega_k \right)-f_i(q_i).
\end{align*}}
This problem allows for utilizing \ic{gradient based methods} to compute a solution. The results are shown in~\ref{saa}. Next, we provide some key insights from our testing. 

\begin{table}[htb]
\scriptsize
\caption{Errors and time comparison of (SAA) with different solution methods}
\begin{center}
\begin{threeparttable} 
   \ic{ \begin{tabular}{ c | c | c | c | c | c | c  }
    \hline
    \multicolumn{3}{c|}{} & \multicolumn{2}{c|}{SAA(NLPEC)} & \multicolumn{2}{c}{SAA(Gradient)}  \\ \cline{4-7}
    \multicolumn{3}{c|}{}& & & \\[-0.9em]
     \multicolumn{3}{c|}{} & $f^*-f(\hat{x})$ & Time & $f^*-f(\hat{x})$ & Time \\ \hline
    \multirow{4}{*}{$N=10$} & \multirow{2}{*}{$b=1$} & $c=0.05$ & 5.4e-4  & 130.2 & 4.6e-4 & 1.0  \\
     & & $c=0.1$ & 4.2e-4 & 109.2 & 4.5e-4 & 1.0 \\
    & \multirow{2}{*}{$b=0.5$} & $c=0.05$ & 3.8e-4 & 122.5 & 3.3e-4 & 1.0 \\
    & & $c=0.1$ & 2.2e-4 & 116.8 & 2.4e-4 & 1.0 \\ \hline
    \multirow{4}{*}{$N=20$} & \multirow{2}{*}{$b=1$} & $c=0.05$ & 2.6e-4 & 426.7 & 3.1e-4 & 1.1 \\
     & & $c=0.1$ & 5.7e-4 & 443.1 & 4.2e-4 & 1.1 \\
     & \multirow{2}{*}{$b=0.5$} & $c=0.05$ & 4.8e-4 & 419.1 & 5.6e-4 & 1.1 \\ 
     & & $c=0.1$ & 3.1e-4 & 450.0 & 3.8e-4 & 1.1 \\ \hline
         \multirow{4}{*}{$N=100$} & \multirow{2}{*}{$b=1$} & $c=0.05$ & -- & -- & 1.1e-4 & 5.5 \\
     & & $c=0.1$ & -- & -- & 2.8e-5 & 5.5  \\
     & \multirow{2}{*}{$b=0.5$} & $c=0.05$ & -- & -- & 3.0e-4 & 5.5 \\ 
     & & $c=0.1$  & -- & -- & 3.2e-5 & 5.6 \\ \hline
     \multirow{4}{*}{$N=1000$} & \multirow{2}{*}{$b=1$} & $c=0.05$  & -- & -- & 2.3e-5 & 324.7 \\
     & & $c=0.1$  & -- & -- & 1.9e-6 & 312.8 \\
     & \multirow{2}{*}{$b=0.5$} & $c=0.05$  & -- & -- & 2.6e-5 & 306.2 \\ 
     & & $c=0.1$  & -- & -- &  2.1e-6  &  316.5  \\ \hline
    \end{tabular}}
\begin{tablenotes}
\small
\item 
    \end{tablenotes}
  \end{threeparttable}
\end{center}
\label{saa}
\end{table}

\begin{figure}[htbp]
\centering
\includegraphics[width=.4\textwidth]{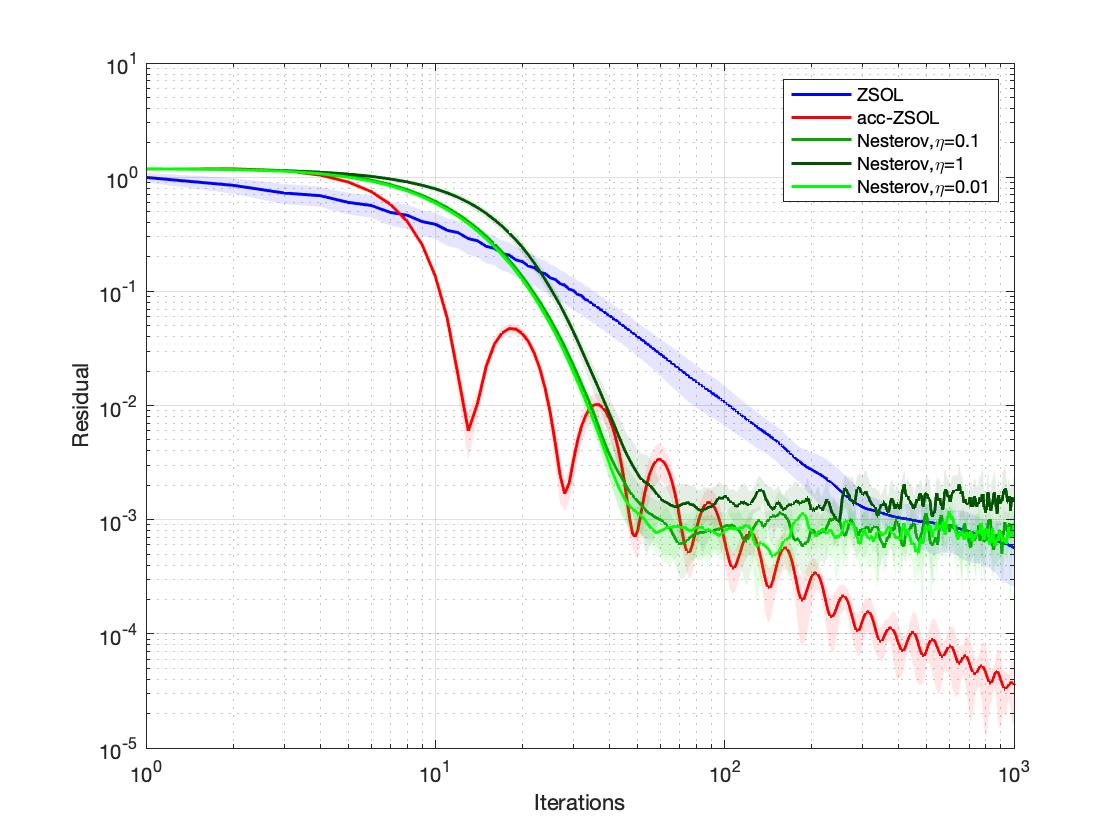}
\caption{Comparision of \uss{(\texttt{ZSOL}$^{\bf 2s}_{\rm cnvx}$)}  and \uss{(\texttt{ZSOL}$^{\bf 2s}_{\rm acc,cnvx}$)} with acceleration with fixed smoothing (Nesterov) on  convex (\uss{SMPEC$^{\bf 2s}$})}
\label{tra}
\end{figure}

\noindent \uss{\bf Insights.} 

\noindent { (i) \em Scalability.} Both \uss{(\texttt{ZSOL}$^{\bf 2s}_{\rm cnvx}$)}  and \uss{(\texttt{ZSOL}$^{\bf 2s}_{\rm acc,cnvx}$)}  show far better scalability in terms of $N$ with modest impact on accuracy and run-time. (SAA) schemes on the other hand grow by a factor of $10$ when number of firms double. In fact, for $N=20$, the (SAA) framework requires CPU time which is between 50 and 100 times greater than that required by the zeroth-order schemes. \uss{(SAA) schemes could not produce solutions for $N \geq 100$ in our tests \uss{while our proposed schemes can contend with problems with $N = 10,000$ within 5s in the unaccelerated regime}. \uss{The lack of scalability tends to be less surprising since the sample-average subproblems require solving MPECs with $\mathcal{O}(N)$  constraints and as $N$ becomes large, direct solutions become challenging, as reflected by the computational times.}} \ic{We observe that the gradient based approach that uses sample-averages appears to scale better than \texttt{NLPEC}. However, we still see a difference in performance and quality between the gradient-enabled SAA scheme and the proposed implicit SA framework.}

\noindent {(ii) \em Accuracy.} The accelerated scheme provides nearly $10$ times more accurate solutions than the unaccelerated scheme at a modest computational cost. \uss{This is aligned with the superior error bounds of such schemes compared to their unaccelerated counterparts.}  

\noindent {(iii) \em Comparison of accelerated schemes.} Figure~\ref{tra}
demonstrates the benefits of diminishing smoothing sequences as the scheme
suggested in~\cite{nesterov17} degenerates for different values of the fixed
smoothing parameter. Notably, \uss{(\texttt{ZSOL}$^{\bf 2s}_{\rm acc,cnvx}$)}
shows no such degeneration and progressively improves in function value. We
    notice in Table~\ref{time}, \uss{(\texttt{ZSOL}$^{\bf 2s}_{\rm acc,cnvx}$)}
    takes longer than \uss{(\texttt{ZSOL}$^{\bf 2s}_{\rm cnvx}$)} with the same
    number iterations, \uss{arising from the fact that \uss{(\texttt{ZSOL}$^{\bf 2s}_{\rm acc,cnvx}$)}} utilizes an increasing sample size and solves more
lower-level problems than \uss{(\texttt{ZSOL}$^{\bf 2s}_{\rm cnvx}$)}.

\noindent {(iv) \em Performance of {\em (\texttt{ZSOL}$^{\bf 2s}_{\rm cnvx}$)} with various $\gamma_k$ and $\eta_k$.} As shown in Table~\ref{ab}, we compare the results generated by (\texttt{ZSOL}$^{\bf 2s}_{\rm cnvx}$) with various values of $(a,b)$ used in $\gamma_k \coloneqq \frac{\gamma_0}{(k+1)^a}$ and $\eta_k \coloneqq \frac{\eta_0}{(k+1)^b}$. As shown in the table, for this particular problem, we find that smaller $a$ ($a=0.5$) generates better results in (\texttt{ZSOL}$^{\bf 2s}_{\rm cnvx}$). When the size of problem is large ($N=1000$), fixing $a=0.5$, larger values of $b$ lead to smaller residuals.

\begin{table}[htbp]
\scriptsize
\caption{Errors of (\texttt{ZSOL}$^{\bf 2s}_{\rm cnvx}$) with various $\gamma_k$ and $\eta_k$}
\begin{center}
\begin{threeparttable} 
\begin{tabular}{c}
    \begin{tabular}[t]{ c | c | c | c |  c  | c | c}
    \hline
    & $(a,b)$ & $(0.5,0.5)$ & $(0.5,0.7)$ & $(0.5,0.9)$ & $(0.7,0.4)$ & $(0.9,0.2)$ \\ \hline
    \multirow{3}{*}{$f^*-f(\bar{x}_K)$} & $N = 10$ & 1.2e-3 & 1.7e-3 & 1.5e-3 & 1.9e-3 & 7.7e-2 \\ \cline{2-7}
     & $N = 100$ & 2.5e-5 & 3.0e-5 & 2.6e-5 & 1.1e-3 & 1.6e-2 \\ \cline{2-7}
     & $N = 1000$ & 1.4e-6 & 4.8e-7 & 4.4e-7 & 2.9e-4 & 7.1e-4 \\ \hline
   \end{tabular}
    \end{tabular}
    
\begin{tablenotes}
\small
    \end{tablenotes}
  \end{threeparttable}
\end{center}
\label{ab}
\end{table}

\subsection{Single-stage SMPECs} \label{sec:5.2}
\uss{We consider both the convex and the nonconvex regimes next.} 

\noindent {\bf I. A convex implicit function.} {First, we consider a single-stage SMPEC where the the lower level is a parametrized stochastic variational inequality, i.e. given $x$, the  lower-level problem is a noncooperative game in which the $i$th player solves the following problem.} \[
{\max_{q_i\ge 0}} \  \mathbb{E}[q_i(a(\omega)-b(q_i+x+\mbox{$\sum_{j\ne i}q_j(x)$})]-\tfrac{1}{2}cq_i^2,
\]
Accordingly, the upper-level problem in $x$ is \uss{defined} as follows
\begin{align*}
{\max_{0\le x\le x^u}} \  \mathbb{E}\left[x(a(\uss{\xi})-b(x+\mbox{$\sum_{i=1}^Nq_i(x)$}))\right]-\tfrac{1}{2}dx^2.
\end{align*}
\begin{table}[h]
\scriptsize
\caption{Comparison of \uss{(\texttt{ZSOL}$^{\bf 1s}_{\rm cnvx}$)} and (SAA) (\uss{Convex implicit function})}
\begin{center}
\begin{threeparttable} 
    \begin{tabular}{ c | c | c | c | c | c | c  }
    \hline
    \multicolumn{3}{c|}{} & \multicolumn{2}{c|}{(\texttt{ZSOL}$^{\bf 1s}_{\rm cnvx}$)}  & \multicolumn{2}{c}{SAA}  \\ \cline{4-7}
       \multicolumn{3}{c|}{}& & & & \\[-0.9em]
     \multicolumn{3}{c|}{} & $f^*-f(\bar{x}_K)$ & Time & $f^*-f(\hat{x})$  & Time \\ \hline
    \multirow{4}{*}{$N=10^2$} & \multirow{2}{*}{$b=0.01$} & $c=3$ &  6.9e-4 & 0.1 & 2.2e-4 & 0.05 \\
     & & $c=5$ & 3.7e-4 & 0.1 &  2.4e-4 & 0.05 \\
    & \multirow{2}{*}{$b=0.02$} & $c=3$ & 8.1e-4 & 0.1 &  7.3e-4 & 0.05 \\
    & & $c=5$ & 3.5e-4 & 0.1 & 4.0e-4 & 0.05 \\ \hline
    \multirow{4}{*}{$N=10^3$} & \multirow{2}{*}{$b=0.01$} & $c=3$ & 7.0e-4 & 0.4 & 7.0e-4 & 1.2 \\
     & & $c=5$ & 4.3e-4 & 0.4 & 5.0e-4 & 1.1 \\
     & \multirow{2}{*}{$b=0.02$} & $c=3$ & 8.0e-4 & 0.4 & 6.8e-4 & 1.2 \\ 
     & & $c=5$ & 4.7e-4 & 0.4 & 4.2e-4 & 1.2 \\ \hline
         \multirow{4}{*}{$N=10^4$} & \multirow{2}{*}{$b=0.01$} & $c=3$ & 5.1e-4 & 5.8 & 7.3e-4 & 88.6 \\
     & & $c=5$ & 2.5e-4 & 5.2 & 5.4e-4 & 85.7 \\
     & \multirow{2}{*}{$b=0.02$} & $c=3$ & 6.4e-4 & 5.6 & 4.3e-4 & 93.5 \\ 
     & & $c=5$ & 3.1e-4 & 5.3 & 4.7e-4 & 87.3 \\ \hline
              \multirow{4}{*}{$N=10^5$} & \multirow{2}{*}{$b=0.01$} & $c=3$ & 8.7e-4 & 45.6 & -- & -- \\
     & & $c=5$ & 6.5e-4 & 47.1 & -- & -- \\
     & \multirow{2}{*}{$b=0.02$} & $c=3$ & 9.7e-4 & 46.3 & -- & -- \\ 
     & & $c=5$ & 7.5e-4 & 46.7 & -- & -- \\ \hline
    \end{tabular}
\begin{tablenotes}
\small
\item The errors and time in the table are \uss{based on averaging over} 20 runs (`--' implies runtime $ > $ 3600s)
    \end{tablenotes}
  \end{threeparttable}
\end{center}
\label{time2}
\end{table}
\uss{Since the lower-level equilibrium problem has a unique solution (since it is characterized by a strongly monotone map), the resulting implicit function can be shown to be  convex. }

\noindent {\bf Algorithm and Problem parameters.} We assume $b=0.01$ and $c=3$ here, other parameters are the same as in the
previous section. It can be shown that $\mu_F=3.01$ and $L_F=3.11$. We assume that $\gamma_k =  \tfrac{1}{\sqrt{k+1}}$ and $\eta_k
= \tfrac{1}{\sqrt{k+1}}$ for \uss{(\texttt{ZSOL}$^{\bf 1s}_{\rm cnvx}$)}. In \uss{(\texttt{ZSOL}$^{\bf 1s}_{\rm cnvx}$)}, we run $10^3$ iterations. In the
lower-level's variance-reduced stochastic approximation scheme, we choose
steplength $\alpha = 0.15$, sampling rate $\rho=\frac{1}{1.5}$ and the sample
size $M_t=\lceil 10^{-4} \cdot1.5^t \rceil$. Thus we may calculate that $\tau
\ge 4.9$ and then we choose $t_k=\lceil 5\ln(k+1) \rceil$. In Fig.~\ref{tra2},
we show the trajectories for \uss{(\texttt{ZSOL}$^{\bf 1s}_{\rm cnvx}$)} under various algorithm parameters. 
\begin{figure}[htbp]
\centering
\includegraphics[width=.4\textwidth]{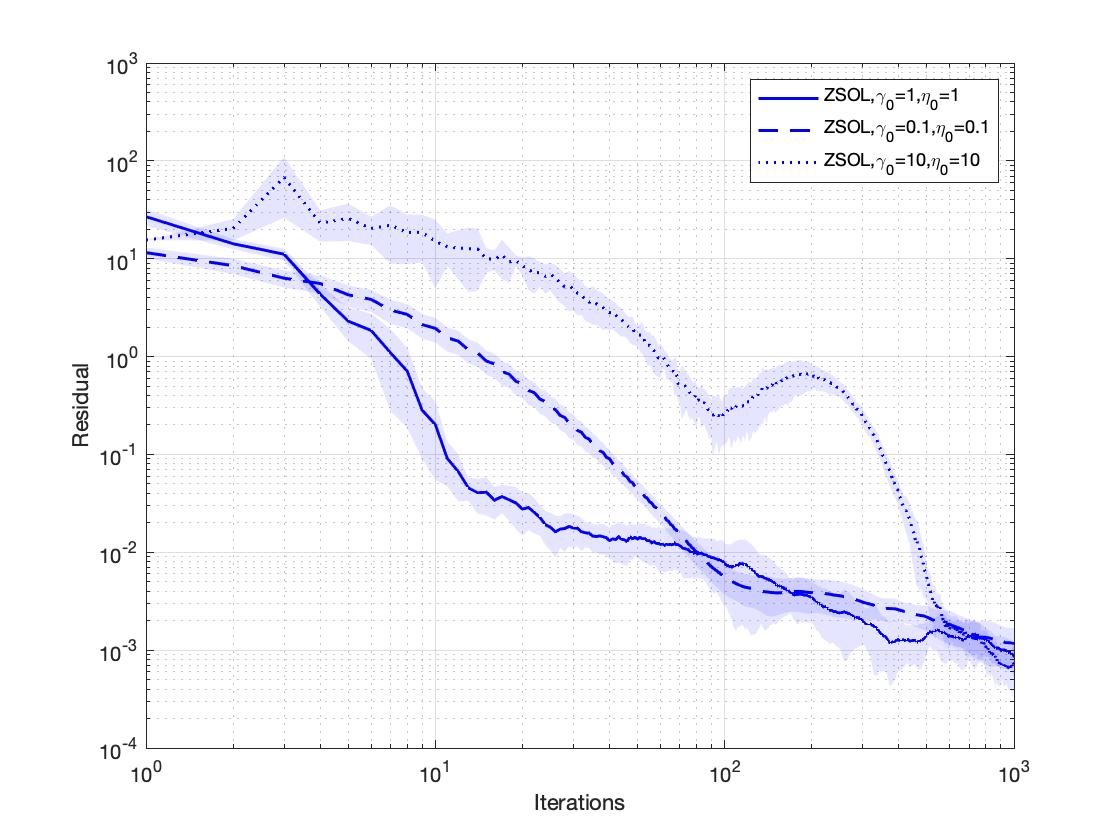}
\includegraphics[width=.4\textwidth]{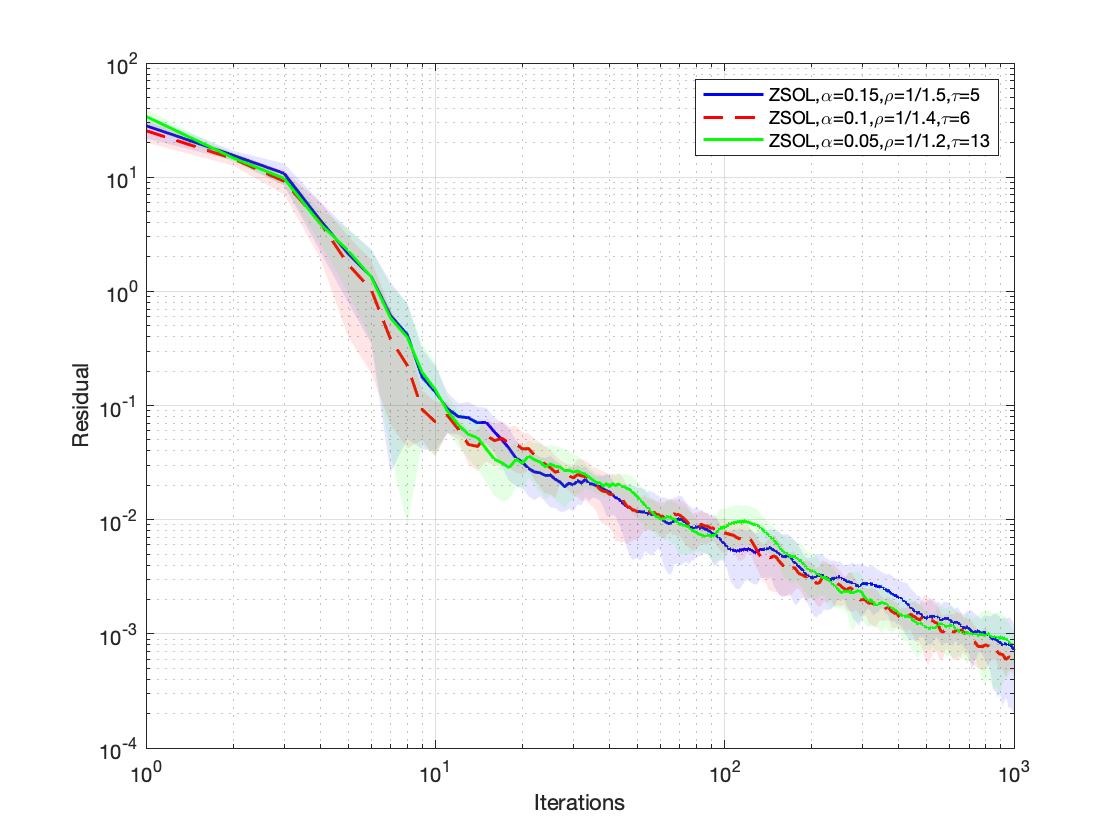} \hfill
\caption{Trajectories for \uss{(\texttt{ZSOL}$^{\bf 1s}_{\rm cnvx}$)} on the convex SMPEC$^{\bf 1s}$}
\label{tra2}
\end{figure}
Again, we compare the errors and time between \uss{(\texttt{ZSOL}$^{\bf 1s}_{\rm cnvx}$)} and (SAA) in Table~\ref{time2}. Here, with (SAA) we solve the following optimization problem
\begin{align*}
{\maximize {0\le x\le x^u}} \  &\tfrac{1}{K}\sum_{k=1}^K\left[x(a(\omega_k)-b(x+Q(x)))\right]-\tfrac{1}{2}dx^2 \\
\st \ & 0 \le q_i \perp \tfrac{1}{L}\mbox{$\sum_{\ell=1}^L$}\left[(c+2b)q_i-a(w_\ell)+b\left(x+\mbox{$\sum_{j=1,j\ne i}^N$}q_j(x)\right)\right] \ge 0, \ \forall i. 
\end{align*}
In (SAA), we use $10^3$ samples in both the upper and lower-level problems.
We also employ a gradient based method (Fig.~\ref{saa2}) to solve the following equivalent (SAA) model:
\begin{align*}
{\max_{0\le x\le x^u}} \  &\tfrac{1}{K}\sum_{k=1}^K\left[x(a(\omega_k)-b(x+Q(x)))\right]-\tfrac{1}{2}dx^2,
\end{align*}
\ic{where $Q(x) \triangleq \sum_{i=1}^Nq_i(x)$ and $q_i(x)$ is the solution to the following optimization problem:
\begin{align*}
{\max_{q_i\ge 0}} \  \mathbb{E}[q_i(a(\omega)-b(q_i+x+\mbox{$\sum_{j\ne i}q_j(x)$})]-\tfrac{1}{2}cq_i^2.
\end{align*}}

\begin{table}[h]
\scriptsize
\caption{Comparison of (SAA) with different solution methods}
\begin{center}
\begin{threeparttable} 
   \ic{ \begin{tabular}{ c | c | c | c | c | c | c  }
    \hline
    \multicolumn{3}{c|}{} & \multicolumn{2}{c|}{SAA(NLPEC)}  & \multicolumn{2}{c}{SAA(Gradient)} \\ \cline{4-7}
       \multicolumn{3}{c|}{}& & & & \\[-0.9em]
     \multicolumn{3}{c|}{} & $f^*-f(\hat{x})$ & Time & $f^*-f(\hat{x})$  & Time \\ \hline
    \multirow{4}{*}{$N=10^2$} & \multirow{2}{*}{$b=0.01$} & $c=3$ & 2.2e-4 & 0.05 & 3.9e-4 & 0.4 \\
     & & $c=5$ & 2.4e-4 & 0.05 & 2.6e-4 & 0.4 \\
    & \multirow{2}{*}{$b=0.02$} & $c=3$ & 7.3e-4 & 0.05 & 5.9e-4 & 0.4 \\
    & & $c=5$ & 4.0e-4 & 0.05 & 3.7e-4 & 0.4 \\ \hline
    \multirow{4}{*}{$N=10^3$} & \multirow{2}{*}{$b=0.01$} & $c=3$ & 7.0e-4 & 1.2 & 6.0e-4 & 2.5 \\
     & & $c=5$ & 5.0e-4 & 1.1 & 4.4e-4 & 2.5 \\
     & \multirow{2}{*}{$b=0.02$} & $c=3$ & 6.8e-4 & 1.2 & 5.9e-4 & 2.6 \\ 
     & & $c=5$ & 4.2e-4 & 1.2 & 3.8e-4 & 2.6 \\ \hline
         \multirow{4}{*}{$N=10^4$} & \multirow{2}{*}{$b=0.01$} & $c=3$ & 7.3e-4 & 88.6 & 5.9e-4 & 25.3 \\
     & & $c=5$ & 5.4e-4 & 85.7 & 4.5e-4 & 25.3 \\
     & \multirow{2}{*}{$b=0.02$} & $c=3$ & 4.3e-4 & 93.5 & 5.2e-4 & 25.2 \\ 
     & & $c=5$ &4.7e-4 & 87.3 & 4.2e-4 & 25.9 \\ \hline
              \multirow{4}{*}{$N=10^5$} & \multirow{2}{*}{$b=0.01$} & $c=3$ & -- & -- & 6.7e-4 & 94.7 \\
     & & $c=5$ & -- & -- & 5.4e-4  & 95.0 \\
     & \multirow{2}{*}{$b=0.02$} & $c=3$ & -- & -- & 8.1e-4 & 96.3 \\ 
     & & $c=5$ & -- & -- & 6.0e-4 & 95.2 \\ \hline
    \end{tabular}}
\begin{tablenotes}
\small
\item The errors and time in the table are \uss{based on averaging over} 20 runs (`--' implies runtime $ > $ 3600s)
    \end{tablenotes}
  \end{threeparttable}
\end{center}
\label{saa2}
\end{table}

\noindent {\bf Insights.} 

\noindent (i) {\em Scalability.} \uss{We observe that the CPU times for  \uss{(\texttt{ZSOL}$^{\bf 1s}_{\rm cnvx}$)} grow by a factor
of approximately 450 when $N$ grows by a factor of $1000$ (from $10^2$ to $10^5$); however (SAA) schemes show a growth in CPU time of $1770$ when $N$ grows by a factor of $100$ (from $10^2$ to $10^4$). In fact, (SAA) schemes cannot process problems for $N=10^5$ in the prescribed time.} 

\noindent (ii) {\em Accuracy.} Both approaches
provide similar accuracy but zeroth-order schemes require less than 6s in CPU
time when $N=10^4$ while the (SAA) framework requires approximately 85s. The
accuracy of \uss{(\texttt{ZSOL}$^{\bf 1s}_{\rm cnvx}$)} is relatively robust to changing steplength and sampling rates
at the lower-level but does tend to be sensitive to changing the initial
steplength at the upper-level; however, as the scheme progresses, the impact of
initial steplengths tends to be muted.

\begin{table}[htb]
\scriptsize
\caption{Errors comparison of the three schemes with different parameters}
\begin{center}
\begin{threeparttable} 
    \begin{tabular}{ c | c | c | c | c    }
    \hline
    \multicolumn{2}{c|}{} & {\texttt{ZSOL}$^{\bf 1s}_{\rm ncvx}$} & NLPEC  & BARON \\ \cline{3-5}
    \multicolumn{2}{c|}{}& & & \\[-0.9em]
     \multicolumn{2}{c|}{} & $f(x_K)$  & \uss{Stationary point}  & global optimum \\ \hline
    \multirow{3}{*}{$(a,b)=(1,0)$} & $(c,d)=(1,1)$  &  -7.50  & -7.20  & -7.50  \\
     & $(c,d)=(2,2)$ & -9.23 & -9.04 & -9.23  \\
    & $(c,d)=(3,3)$ & -9.25 & -9.10 & -9.25 \\ \hline
    \multirow{3}{*}{$(a,b)=(5,0)$} &$(c,d)=(1,1)$ &  -11.50  & -7.20  & -11.50  \\ 
     & $(c,d)=(2,2)$ & -13.23 & -9.04 & -13.23 \\ 
     & $(c,d)=(3,3)$ & -13.25 & -9.10 & -13.25 \\ \hline
     \multirow{3}{*}{$(a,b)=(10,0)$} & $(c,d)=(1,1)$ &  -16.48  & -7.20  & -16.50  \\ 
     & $(c,d)=(2,2)$ & -18.20 & -9.04 & -18.23 \\
     & $(c,d)=(3,3)$ & -18.23 & -9.10 & -18.25 \\ \hline
    \end{tabular}
\begin{tablenotes}
\small
\item The errors of \uss{(\texttt{ZSOL}$^{\bf 1s}_{\rm ncvx}$)} are \uss{based on averaging over} 20 runs 
    \end{tablenotes}
  \end{threeparttable}
\end{center}
\label{time3}
\end{table}

\noindent {\bf II. A nonconvex implicit function.} \label{5.3}
The second example, inspired from \cite{bard1988convex}, is a bilevel problem with a strongly monotone mapping in the lower-level. We add a stochastic component in the lower-level to make the mapping expectation-valued. Formally, this problem is defined as follows.  
\[
\begin{aligned}
&{\minimize {x} } &&{-x_1^2}-3x_2-4y_1\uss{(x)}+(y_2\uss{(x)})^2 \\
&\st &&x_1^2+2x_2 \le 4, \quad  
0\le x_1\le 1, \quad  0\le x_2\le 2,  
\end{aligned}
\]
where $y(x)$ is a solution to the following parametrized optimization problem. 
\[
\begin{aligned}&{\minimize {y} } &&\mathbb{E}\left[2x_1^2+y_1^2+y_2^2-\xi(\omega)y_2\right] \\
&\st &&x_1^2-2x_1+x_2^2-2y_1+y_2 \ge -3, \ x_2+3y_1-y_2 \ge 4,  \
y_1\ge0, y_2\ge 0,
\end{aligned}
\]

\noindent {\bf Problem and algorithm parameters.} We assume $\xi(\omega) \sim \mathcal{U}(4,6)$ and run \uss{(\texttt{ZSOL}$^{\bf 1s}_{\rm ncvx}$)} for 
$10^4$ iterations, \uss{choosing $\eta= 10^{-2}$ and $\gamma =  10^{-3}$} in \uss{(\texttt{ZSOL}$^{\bf 1s}_{\rm ncvx}$)}. In
addition, we choose $\alpha_0=1$ and $\alpha_t = \tfrac{\alpha_0}{t+0.01}$ for
$t=0,1,\dots,t_k-1$ in the  stochastic approximation method applied to the lower-level.
We compare the performance of \uss{(\texttt{ZSOL}$^{\bf 1s}_{\rm ncvx}$)} on this problem in Fig. \ref{bard}
for varying algorithm parameters, all of which suggest that the resulting
sequences steadily converge to the global minimizer. To test the power of
\uss{(\texttt{ZSOL}$^{\bf 1s}_{\rm ncvx}$)} on different problems, we change the objective function of upper-level
and lower-level to ${-ax_1^2}-bx_2^2-3x_2-4y_1+y_2^2$ and
$\mathbb{E}[2x_1^2+cy_1^2+dy_2^2-\xi(\omega)y_2]$, respectively. Then we vary
the values of $a$, $b$, $c$ and $d$. For comparison,  we also run each problem
using solvers \texttt{NLPEC} and \texttt{BARON}
\cite{ts:05,sahinidis:baron:21.1.13} on the
NEOS Server \cite{czyzyk_et_al_1998,dolan_2001,gropp_more_1997}. We record the
empirical errors of each scheme for 9 different settings, as shown in \uss{Table
\ref{time3}}. In \uss{(\texttt{ZSOL}$^{\bf 1s}_{\rm ncvx}$)}, we use $10^4$ samples in each test problem.

\begin{figure}[htb]
\centering
\includegraphics[width=.4\textwidth]{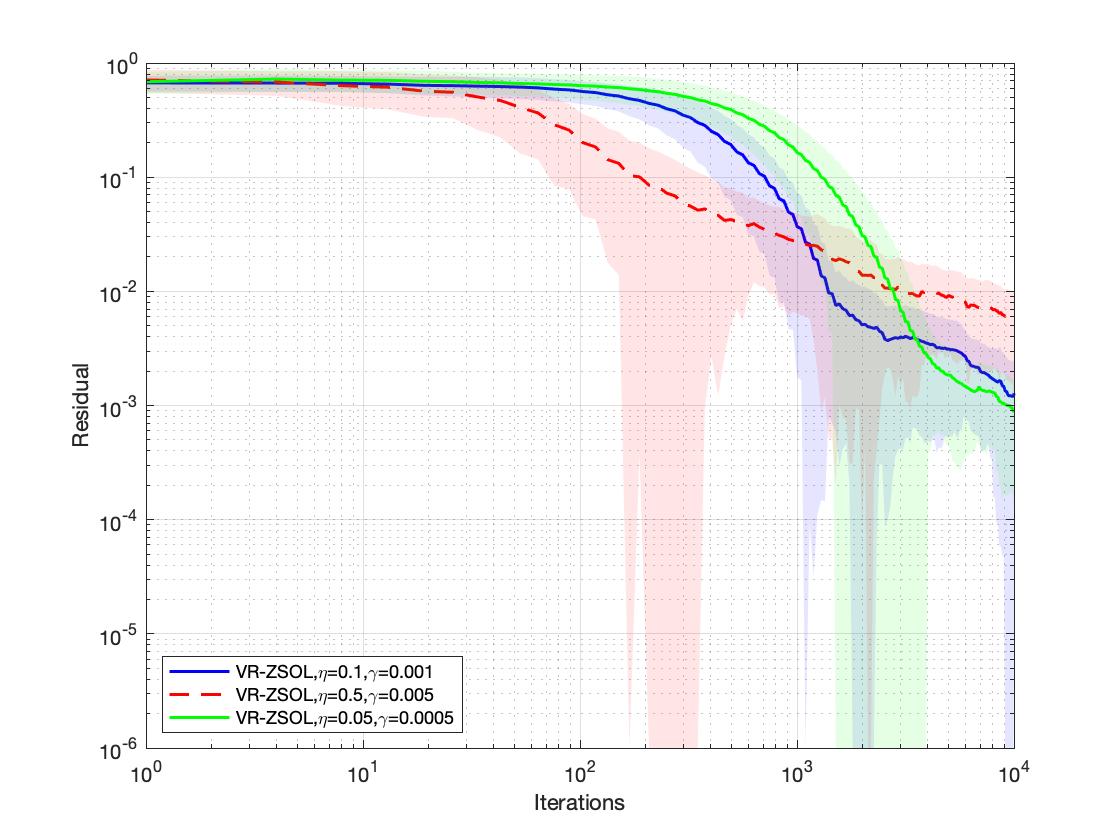}
\includegraphics[width=.4\textwidth]{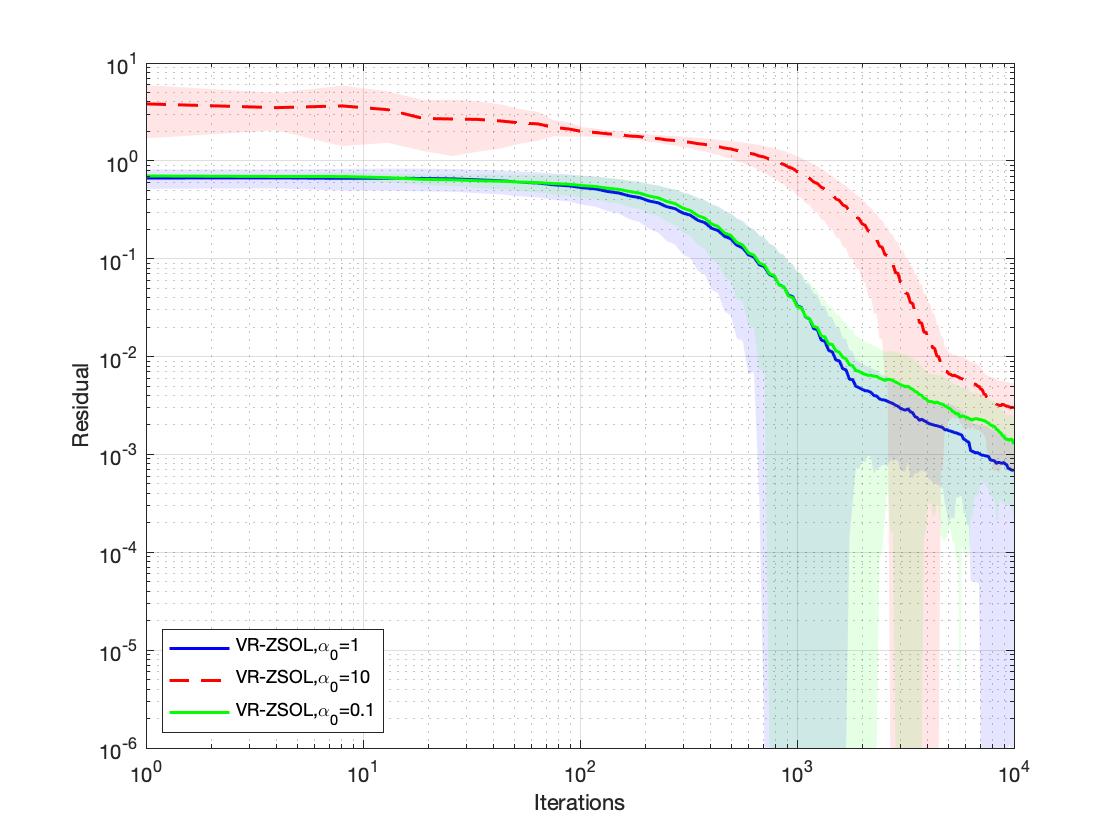} \hfill
\caption{Trajectories for \uss{(\texttt{ZSOL}$^{\bf 1s}_{\rm ncvx}$)} on the nonconvex \fyy{\eqref{eqn:SMPECepx}}}
\label{bard}
\end{figure}

\noindent {\bf Insights.} 

\noindent {\em Global minimizers.} From Fig.~\ref{bard}, we observe that while all of
the implementations perform well, large initial steplengths at the lower-level
tend to lead relatively worse compared to more modest steplengths.
Table~\ref{time3} is instructive in that it shows that \uss{(\texttt{ZSOL}$^{\bf 1s}_{\rm ncvx}$)} produces
values close to the global minimum  as obtained by \texttt{BARON} for all nine
problem instances. Notably, solvers such as \texttt{NLPEC} are equipped with
convergence guarantees to \uss{stationary} points  and provide somewhat
poorer values upon termination. 

\subsection{Confidence intervals for high-dimensional problems} \label{sec:5.3}
To validate the effectiveness of solutions generated by \uss{(\texttt{ZSOL}$^{\bf 1s}_{\rm cnvx}$)} and \uss{(\texttt{ZSOL}$^{\bf 2s}_{\rm cnvx}$)}, we construct 95\% \uss{confidence} intervals for large-scale test problems from Table~\ref{time} and \ref{time2}. The results are shown in Table~\ref{ci}. Note that \uss{(\texttt{ZSOL}$^{\bf 2s}_{\rm acc,cnvx}$)} can process two-stage SMPECs. All confidence intervals presented are relatively narrow, validating the quality of corresponding solutions.
\begin{table}[htb]
\scriptsize
\caption{Errors and confidence intervals for high dimensional problems from Table~\ref{time} and \ref{time2}}
\begin{center}
    \begin{tabular}{ c | c | c | c | c | c | c  }
    \hline
    \multicolumn{3}{c|}{} & \multicolumn{2}{c|}{\uss{(\texttt{ZSOL}$^{\bf 2s}_{\rm cnvx}$) [Table~\ref{time}], (\texttt{ZSOL}$^{\bf 1s}_{\rm cnvx}$)}[Table~\ref{time2}]} & \multicolumn{2}{c}{\uss{(\texttt{ZSOL}$^{\bf 2s}_{\rm acc, cnvx}$)}}   \\ \cline{4-7}
    \multicolumn{3}{c|}{}& & & \\[-0.9em]
     \multicolumn{3}{c|}{} & $f^*-f(\bar{x}_K)$ & CI & $f^*-f(x_K)$ & CI  \\ \hline
     \multirow{2}{*}{Table~\ref{time}} & \multirow{2}{*}{$b=1$} & $c=0.05$ & 1.0e-5 & [0.9e-5,1.1e-5] & 5.2e-7 & [5.0e-7,5.4e-7] \\ \cline{3-7}
     & & $c=0.1$ & 6.0e-6 & [5.9e-6,6.1e-6] & 3.8e-8 & [3.4e-8,4.2e-8]  \\ \cline{2-7}
    \multirow{2}{*}{$N=10^4$} & \multirow{2}{*}{$b=0.5$} & $c=0.05$ & 1.1e-5 & [1.0e-5,1.2e-5] & 5.6e-8 & [5.2e-8,6.0e-8]  \\ \cline{3-7}
     & & $c=0.1$ & 7.1e-6 & [7.0e-6,7.2e-6] & 2.7e-8 & [2.4e-8,3.0e-8] \\ \hline
     \multirow{2}{*}{Table~\ref{time2}} & \multirow{2}{*}{$b=0.01$} & $c=3$ & 8.7e-4 & [7.5e-4,9.9e-4] & n/a & n/a \\ \cline{3-7}
     & & $c=5$ & 6.5e-4 & [5.9e-4,7.1e-4] & n/a & n/a \\ \cline{2-7}
     \multirow{2}{*}{$N=10^5$}& \multirow{2}{*}{$b=0.02$} & $c=3$ & 9.7e-4 & [8.0e-4,1.1e-3] & n/a & n/a \\ \cline{3-7}
     & & $c=5$ & 7.5e-4 & [6.4e-4,8.6e-4] & n/a & n/a \\ \hline
    \end{tabular}
\end{center}
\label{ci}
\end{table}
\begin{table}[h]
\scriptsize
\caption{Results comparison with solutions from the literature}
\begin{center}
    \begin{tabular}{ c  c | c c | c  c  }
    \hline
	\multicolumn{2}{c|}{\multirow{2}{*}{Problem}}  & \multicolumn{2}{c|}{\uss{(\texttt{ZSOL}$^{\bf 2s}_{\rm ncvx}$)}}  & \multicolumn{2}{c}{Literature}  \\ \cline{3-6}
	\multicolumn{1}{c}{} & & $f^*$ &$x^*$ & $f^*$ & $x^*$ \\ \hline
\multirow{3}{*}{Problem 1}	& $L=150$, $\gamma=1.0$  & -343.35 & 55.57 & -343.35 & 55.55 \\ 
	& $L=150$, $\gamma=1.1$  & -203.15 & 42.57 & -203.15 & 42.54 \\
	& $L=150$, $\gamma=1.3$  & -68.14 & 24.19 & -68.14 & 24.14 \\ \hline
	Problem 2 &  & -1.00 & (0.50,0.50) & -1.00 & (0.50,0.50) \\ \hline
	Problem 3 &  & 0.01 & (0.00,0.00) & 0.01 & (0.00,0.00) \\ \hline
	Problem 4 &  & 0.00 & (5.00,8.99) & 0.00 & (5.00,9.00) \\ \hline
	\multirow{3}{*}{Problem 5} & $0.5((y_1-3)^2+(y_2-4)^2)$  & 3.20 & 4.06 & 3.20 & 4.06 \\ 
	& $0.5((y_1-3)^2+(y_2-4)^2+(y_3-1)^2)$  & 3.45 & 5.13 & 3.45 & 5.15 \\ 
	& $0.5((y_1-3)^2+(y_2-4)^2+10y_4^2)$  & 4.60 & 2.39 & 4.60 & 2.39 \\ 
    \hline
    \end{tabular}
\end{center}
\label{p0}
\end{table}

\begin{table}[h]
\scriptsize
\caption{Results of high-dimensional counterparts }
\begin{center}
\begin{threeparttable} 
    \begin{tabular}{ c | c | c | c | c  | c | c | c | c | c }
    \hline
     \multirow{2}{*}{Problem} & \multirow{2}{*}{$N$} & \multicolumn{3}{c|}{\uss{(\texttt{ZSOL}$^{\bf 2s}_{\rm ncvx}$)}}  & \multicolumn{2}{c}{} & \multicolumn{3}{|c}{SAA}  \\ \cline{3-10}
         \multicolumn{1}{c|}{} &\multicolumn{1}{c|}{}& & & & & \\[-0.9em]
      & & $\hat{f}(x_K)$ & CI & Time & $\underline{lb}$ & CI & $\hat{f}(\hat{x})$ & CI & Time \\ \hline \hline
  \multirow{4}{*}{Problem 1} &  $5$ & -462.6 & [-463.1,-462.1] & 0.8 & -462.8  & [-464.0,-461.5] & -461.9 & [-463.1,-460.7] & 5.3 \\
   & $10$& -174.4 & [-174.6,-174.2] & 0.9 & -174.7 & [-175.2,-174.2] & -174.2 & [-174.8,-173.6] & 23.3 \\
   &  $100$ & -5.101 & [-5.105,-5.097] & 1.3 & -- & --  & --  & -- & --\\
      &   $1000$ & -0.071 & [-0.072,-0.071] & 5.2 & -- & --  & -- & -- & -- \\ \hline
   \multirow{4}{*}{Problem 2}     &  2 & -0.882  &[-0.883,-0.881] & 0.6 & -0.883 & [-0.886,-0.880] &  -0.882 & [-0.886,-0.878] & 4.2 \\ 
    & 10& -4.408 & [-4.410,-4.406] & 0.9 & -4.408 & [-4.414,-4.402] & -4.406 & [-4.414,-4.398] & 29.6 \\ 
      & 100 & -44.07 & [-44.08,-44.07] & 5.5 & -- & -- & -- & -- & -- \\ 
        & 1000 & -439.7 & [-439.7,-439.7] & 98.1 & -- & -- & -- & -- & -- \\ \hline
    \end{tabular}
\begin{tablenotes}
\small
    \end{tablenotes}
  \end{threeparttable}
\end{center}
\label{hd}
\end{table}

\subsection{Additional tests on deterministic and two-stage stochastic MPECs}\label{sec:5.4}

We test our schemes on test problems from the literature. In all \uss{of} the test problems, the \uss{lower-level parametrized} VI is strongly monotone, implying that the lower-level decision is uniquely determined by a $\x \in \mathcal{X}$. \\

\noindent {\bf Problem and algorithm parameters.} The problems and their parameters are described in Appendix. We use the same algorithm parameters as those in \ref{5.3}(II). In Table~\ref{p0}, we compare the results generated by \uss{(\texttt{ZSOL}$^{\bf 2s}_{\rm ncvx}$)} and those from the literature, while in Table~\ref{hd}, we extend some of the existing problems to their stochastic counterparts with larger dimensions.  \\

\noindent {\bf Insights.} 

\noindent { (i) \em Scalability.} Again, \uss{(\texttt{ZSOL}$^{\bf
    2s}_{\rm ncvx}$)} shows far better scalability in terms of $N$ with
    modest impact on accuracy and run-time. For both problems in
    Table~\ref{hd}, (SAA) schemes take around 5-20 times more time on small
    scale problems while when $N \ge 100$ on the other hand, no solutions are
    produced within \uss{the imposed} time limit.

\noindent {(ii) \em Accuracy.} For deterministic MPECs,
\uss{(\texttt{ZSOL}$^{\bf 2s}_{\rm ncvx}$)} provides almost the same solutions
\uss{as the globally optimal solutions} in all problems from the literature,
which shows both efficacy and wide applicability of \uss{(\texttt{ZSOL}$^{\bf
        2s}_{\rm ncvx}$)}. \uss{In} high-dimensional SMPECs, \uss{(\texttt{ZSOL}$^{\bf
2s}_{\rm ncvx}$)} provides similar accuracy \uss{as} (SAA) but takes far less
computational time.

\section{Concluding remarks} 
Motivated by the apparent lacuna in non-asymptotic rate guarantees and
efficient first/zeroth-order schemes for MPECs, we consider a subclass of \fyy{stochastic} MPECs
where the parametrized lower-level equilibrium problem is  given by a
deterministic/stochastic variational inequality (VI) problem whose mapping is
strongly monotone, uniformly in upper-level decisions. Under suitable
assumptions, the implicit objective is Lipschitz continuous over a compact and
convex feasibility set, paving the way for developing a gradient-free locally
randomized smoothing framework applied to the implicit form the \fyy{SMPEC}. This
avenue allows for developing complexity guarantees in settings where the
implicit objective is either convex or nonconvex, the lower-level oracle is
exact (allowing for accelerated schemes in convex regimes) or inexact
(requiring the use of stochastic approximation to compute an inexact
lower-level decisions). We believe that this is but the first step in developing a
comprehensive zeroth-order foundation for contending with \fyy{SMPECs} under far
weaker assumptions. Possible extensions include settings where the lower-level map is merely monotone or
possibly non-monotone.
\bibliographystyle{siam}
\bibliography{demobib,wsc11-v03}
\vspace{-0.2in}

\section{Appendix}
\begin{lemma}[cf. Lemma 10 in~\cite{FarzadAngeliaUday_MathProg17} and Lemma 2.14 in~\cite{KaushikYousefian2021}]\label{lem:harmonic_bounds}
\em Let $\ell$ and $N$ be arbitrary integers where $0\leq \ell \leq N-1$. The following hold.
\begin{itemize}
\item [(a)] $\ln\left( \frac{N+1}{\ell+1}\right) \leq \sum_{k=\ell}^{N-1}\frac{1}{k+1} \leq \frac{1}{\ell+1}+\ln\left( \frac{N}{\ell+1}\right)$.
\item [(b)] If $0\leq \alpha <1$, then for any $N \geq 2^{\frac{1}{1-\alpha}}-1$, we have 
$\frac{(N+1)^{1-\alpha}}{2(1-\alpha)}  \leq \sum_{k=0}^{N}\frac{1}{(k+1)^\alpha}\leq \frac{(N+1)^{1-\alpha}}{1-\alpha}$.
\end{itemize}
\end{lemma}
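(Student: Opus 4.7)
The plan is to derive both bounds through standard integral comparison arguments applied to the decreasing, positive functions $f_1(x) = \tfrac{1}{x+1}$ for part (a) and $f_2(x) = \tfrac{1}{x^{\alpha}}$ for part (b). The key general fact is that, for a positive decreasing function $f$ on $[a,b]$, one has $f(k+1) \leq \int_{k}^{k+1} f(x)\,dx \leq f(k)$, and summing telescopes the integrals into a single definite integral with a tractable closed form (a logarithm when $f=f_1$, and a power function when $f=f_2$).

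For part (a), applying this observation to $f_1$ on each unit interval gives
\[
\int_{k}^{k+1}\tfrac{1}{x+1}\,dx \;\leq\; \tfrac{1}{k+1} \;\leq\; \int_{k-1}^{k}\tfrac{1}{x+1}\,dx.
\]
Summing the left inequality for $k = \ell, \ldots, N-1$ yields $\ln\!\tfrac{N+1}{\ell+1}$ as a lower bound on the sum. For the upper bound, I would peel off the $k=\ell$ term as $\tfrac{1}{\ell+1}$ (to avoid an improper lower limit), sum the right inequality for $k = \ell+1, \ldots, N-1$, and compute $\int_{\ell}^{N} \tfrac{1}{x+1}\,dx = \ln\!\tfrac{N}{\ell+1}$, producing exactly $\tfrac{1}{\ell+1}+\ln\!\tfrac{N}{\ell+1}$.

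For part (b), the upper bound follows similarly from $\tfrac{1}{(k+1)^{\alpha}} \leq \int_{k}^{k+1} \tfrac{1}{x^{\alpha}}\,dx$ and summation over $k = 0,\ldots,N$, giving $\int_{0}^{N+1} x^{-\alpha}\,dx = \tfrac{(N+1)^{1-\alpha}}{1-\alpha}$; the improper integral at the origin is finite because $\alpha < 1$. For the lower bound I would use $\tfrac{1}{(k+1)^{\alpha}} \geq \int_{k+1}^{k+2} \tfrac{1}{x^{\alpha}}\,dx$ and sum over $k=0,\ldots,N$ to obtain $\sum_{k=0}^{N}\tfrac{1}{(k+1)^{\alpha}} \geq \tfrac{(N+2)^{1-\alpha}-1}{1-\alpha}$.

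The only nonroutine step—and the reason for the hypothesis $N \geq 2^{1/(1-\alpha)} - 1$—is to show $(N+2)^{1-\alpha} - 1 \geq \tfrac{1}{2}(N+1)^{1-\alpha}$. Since $(N+2)^{1-\alpha} \geq (N+1)^{1-\alpha}$, it suffices to establish $\tfrac{1}{2}(N+1)^{1-\alpha} \geq 1$, equivalently $(N+1)^{1-\alpha} \geq 2$, which is exactly $N+1 \geq 2^{1/(1-\alpha)}$. Thus the stated threshold on $N$ is tight for this argument, and the lower bound $\tfrac{(N+1)^{1-\alpha}}{2(1-\alpha)}$ follows immediately. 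The proof is entirely classical calculus; the main point to get right is the careful selection of which shifted integral comparison to use on each side so that the hypothesis on $N$ does precisely the work required to absorb the constant $-1$ in the lower estimate.
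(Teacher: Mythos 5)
Your proof is correct: both the integral-comparison bounds and the final step showing that the hypothesis $N+1 \geq 2^{1/(1-\alpha)}$ is exactly what is needed to absorb the $-1$ in $\frac{(N+2)^{1-\alpha}-1}{1-\alpha}$ check out. Note that the paper itself does not prove this lemma — it is stated in the appendix with a citation to prior work — but the argument you give is the standard one used in those references, so there is nothing to flag.
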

\begin{lemma}[Theorem 6, page 75 in~\cite{Knopp_1951}]\label{lem:convergence_sum}\em
	Let $\{u_t\}\subset \mathbb{R}^n$ denote a sequence of vectors where $\lim_{t \to \infty}u_t=\hat{u}$. Also, let $\{\alpha_k\}$ denote a sequence of strictly positive scalars such that $\sum_{k=0}^{\infty} \alpha_k = \infty$. Suppose $v_k\in \mathbb{R}^n$ is defined by {$ v_k \triangleq  \frac{\sum_{t=0}^{k}\alpha_t u_t}{\sum_{t=0}^{k}\alpha_t}$ for all $k\geq 0$}. Then{,} $ \lim\limits_{k\rightarrow\infty}v_k = \hat{u}$.
\end{lemma}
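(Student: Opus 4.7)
The plan is to establish Lemma~\ref{lem:convergence_sum} via the classical Toeplitz-style splitting argument: exploit the convergence $u_t \to \hat{u}$ on a ``tail'' block and absorb the ``head'' block using the divergence of $\sum_k \alpha_k$. Concretely, I would first rewrite
\begin{align*}
v_k - \hat{u} \;=\; \frac{\sum_{t=0}^{k}\alpha_t (u_t - \hat{u})}{\sum_{t=0}^{k}\alpha_t},
\end{align*}
which is legitimate because $\sum_{t=0}^{k}\alpha_t > 0$ for all $k$ (all $\alpha_t$ are strictly positive).

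Next, fix an arbitrary $\epsilon>0$. By the hypothesis $\lim_{t\to\infty} u_t = \hat{u}$, choose $T = T(\epsilon)$ such that $\|u_t - \hat{u}\| \leq \epsilon/2$ for all $t \geq T$. Split the numerator at index $T$ to obtain, for $k \geq T$,
\begin{align*}
\|v_k - \hat{u}\| \;\leq\; \underbrace{\frac{\sum_{t=0}^{T-1}\alpha_t \|u_t - \hat{u}\|}{\sum_{t=0}^{k}\alpha_t}}_{=:A_k} \;+\; \underbrace{\frac{\sum_{t=T}^{k}\alpha_t \|u_t - \hat{u}\|}{\sum_{t=0}^{k}\alpha_t}}_{=:B_k}.
\end{align*}
For the second term, the tail bound gives $B_k \leq (\epsilon/2) \cdot \frac{\sum_{t=T}^{k}\alpha_t}{\sum_{t=0}^{k}\alpha_t} \leq \epsilon/2$ uniformly in $k \geq T$.

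For the first term, the numerator $C_T := \sum_{t=0}^{T-1}\alpha_t \|u_t - \hat{u}\|$ is a fixed finite constant (independent of $k$), while the denominator $\sum_{t=0}^{k}\alpha_t \to \infty$ as $k\to\infty$ because $\sum_{k=0}^\infty \alpha_k = \infty$. Hence there exists $K = K(\epsilon, T) \geq T$ such that $A_k = C_T / \sum_{t=0}^{k}\alpha_t \leq \epsilon/2$ for all $k \geq K$. Combining, $\|v_k - \hat{u}\| \leq \epsilon$ for all $k \geq K$. Since $\epsilon>0$ was arbitrary, $v_k \to \hat{u}$.

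There is no real obstacle here; the only subtle point is ensuring that the choice of $T$ is made before $K$, so that the finite head-sum $C_T$ is genuinely a constant when we send $k\to\infty$ in the denominator. Since this is a classical result (cited from Knopp~\cite{Knopp_1951}), the argument is routine and the proof above suffices.
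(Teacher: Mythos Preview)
Your proof is correct; this is precisely the standard Toeplitz splitting argument. The paper does not supply its own proof of this lemma but simply cites it as Theorem~6, page~75 of Knopp~\cite{Knopp_1951}, so there is nothing further to compare.
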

\begin{lemma}[{cf.~\cite{Polyak87}}]\label{lemma:supermartingale}\em Let $v_k,$ $u_k,$ $\alpha_k,$ and  $\beta_k$ be
nonnegative random variables, and let the
following relations hold almost surely: 
\begin{align*}
&\EXP{v_{k+1}\mid {\tilde \sF_k}} 
\le (1+\alpha_k)v_k - u_k + \beta_k \quad\hbox{ for all } k, \qquad  \sum_{k=0}^\infty \alpha_k < \infty,\qquad
\sum_{k=0}^\infty \beta_k < \infty,
\end{align*} 
where $\tilde \sF_k$ denotes the collection $v_0,\ldots,v_k$, $u_0,\ldots,u_k$,
$\alpha_0,\ldots,\alpha_k$, $\beta_0,\ldots,\beta_k$. 
Then, we have almost surely
$\lim_{k\to\infty}v_k = v$ and $\sum_{k=0}^\infty u_k < \infty,$
where $v \geq 0$ is some random variable.
\end{lemma}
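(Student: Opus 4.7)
The plan is to reduce the statement to Doob's classical supermartingale convergence theorem by a two-step change of variables that absorbs the multiplicative factor $(1+\alpha_k)$ and the additive perturbation $\beta_k$. First, I would define $A_k \triangleq \prod_{i=0}^{k-1}(1+\alpha_i)$ with $A_0 := 1$. Since $\sum_{k=0}^\infty \alpha_k < \infty$ almost surely, the infinite product $A_\infty := \lim_k A_k$ is a.s.\ finite and bounded below by $1$. Setting $\tilde v_k := v_k/A_k$, $\tilde u_k := u_k/A_{k+1}$, and $\tilde \beta_k := \beta_k/A_{k+1}$, I divide the given recursion by $A_{k+1}$ and use the identity $A_{k+1} = (1+\alpha_k)A_k$ to obtain
\begin{align*}
\mathbb{E}[\tilde v_{k+1} \mid \tilde \sF_k] \leq \tilde v_k - \tilde u_k + \tilde \beta_k,
\end{align*}
where the new sequences remain nonnegative and $\sum_k \tilde \beta_k \leq \sum_k \beta_k < \infty$ a.s.\ (because $A_{k+1} \geq 1$).

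Next, the key step is to construct a nonnegative supermartingale by absorbing the tail of $\tilde \beta_k$. On the probability-one event where $\sum_k \tilde \beta_k < \infty$, define
\begin{align*}
z_k \triangleq \tilde v_k + \sum_{i=k}^\infty \tilde \beta_i.
\end{align*}
A direct computation gives $\mathbb{E}[z_{k+1} \mid \tilde \sF_k] \leq \tilde v_k - \tilde u_k + \tilde \beta_k + \sum_{i=k+1}^\infty \tilde \beta_i = z_k - \tilde u_k \leq z_k$, so $\{z_k\}$ is a nonnegative supermartingale with respect to $\{\tilde \sF_k\}$.

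I would then invoke Doob's supermartingale convergence theorem to conclude $z_k \to z_\infty$ a.s.\ for some finite nonnegative $z_\infty$. Since $\sum_{i\geq k}\tilde \beta_i \to 0$ a.s., this yields $\tilde v_k \to \tilde v_\infty := z_\infty \geq 0$ a.s., and hence $v_k = A_k \tilde v_k \to A_\infty \tilde v_\infty =: v \geq 0$ a.s. For the summability of $u_k$, I would take expectations in $\mathbb{E}[z_{k+1} \mid \tilde \sF_k] + \tilde u_k \leq z_k$ and telescope to get $\mathbb{E}\!\left[\sum_{k=0}^\infty \tilde u_k\right] \leq \mathbb{E}[z_0] < \infty$ (after a truncation/monotone convergence argument), so $\sum_k \tilde u_k < \infty$ a.s., and then $\sum_k u_k \leq A_\infty \sum_k \tilde u_k < \infty$ a.s.\ because $A_{k+1}$ is uniformly bounded by $A_\infty$.

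The main obstacle will be technical rather than conceptual: one must verify that the construction of $z_k$ is carried out on a single probability-one event (the one on which $A_\infty < \infty$ and $\sum_k \tilde \beta_k < \infty$) and that $z_k$ is $\tilde \sF_k$-measurable despite the tail sum involving future $\tilde \beta_i$. This measurability requires that $\alpha_i$ and $\beta_i$ are themselves $\tilde \sF_i$-measurable (which is built into the definition of $\tilde \sF_k$) and is easily handled by an approximation using $z_k^{(N)} := \tilde v_k + \sum_{i=k}^N \tilde \beta_i$ followed by $N \to \infty$, or by enlarging the filtration. Beyond this bookkeeping, the proof is a clean application of Doob's theorem.
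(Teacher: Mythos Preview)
The paper does not prove this lemma; it is simply quoted from Polyak's book as the classical Robbins--Siegmund almost-supermartingale convergence result. Your reduction to Doob's theorem via the product $A_k=\prod_{i<k}(1+\alpha_i)$ is exactly the standard route, and the overall strategy is correct.

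Two technical points in your execution are genuine gaps, not bookkeeping. First, the process $z_k=\tilde v_k+\sum_{i\ge k}\tilde\beta_i$ is \emph{not} $\tilde\sF_k$-adapted: by the definition of $\tilde\sF_k$ in the statement, $\beta_i$ for $i>k$ is not $\tilde\sF_k$-measurable. Neither of your proposed fixes works. The truncation $z_k^{(N)}=\tilde v_k+\sum_{i=k}^N\tilde\beta_i$ still contains future $\tilde\beta_i$ and is therefore still not adapted. Enlarging the filtration to include all $\beta_i$ would make $z_k$ adapted, but then the hypothesis $\mathbb{E}[v_{k+1}\mid\tilde\sF_k]\le(1+\alpha_k)v_k-u_k+\beta_k$ is only given for the \emph{original} filtration and need not survive the enlargement. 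The standard remedy is to use the adapted process $w_k\triangleq\tilde v_k+\sum_{i=0}^{k-1}\tilde u_i-\sum_{i=0}^{k-1}\tilde\beta_i$, verify $\mathbb{E}[w_{k+1}\mid\tilde\sF_k]\le w_k$, and localize via the stopping times $\tau_M=\inf\{k:\sum_{i\le k}\tilde\beta_i>M\ \text{or}\ \tilde v_0>M\}$ so that the stopped process is bounded below by $-M$.

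Second, your argument for $\sum_k u_k<\infty$ by ``take expectations and telescope to get $\mathbb{E}[\sum_k\tilde u_k]\le\mathbb{E}[z_0]$'' presumes $\mathbb{E}[z_0]<\infty$, but the hypotheses give only almost-sure finiteness of $v_0$ and $\sum_k\beta_k$, not integrability. The summability of $u_k$ should instead be read off almost surely from the convergence of the localized supermartingale: on $\{\tau_M=\infty\}$ one has $w_k\to w_\infty$ finite a.s., and since $\tilde v_k\ge 0$ and $\sum_i\tilde\beta_i<\infty$ a.s., the partial sums $\sum_{i<k}\tilde u_i$ are a.s.\ bounded, hence convergent. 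Letting $M\to\infty$ and undoing the $A_k$-normalization completes the proof.
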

\noindent {\bf Proof of Lemma~\ref{lem:sublinear}.} We use induction on $k$ for $k\geq 0$. We have $e_0  =\tfrac{\Gamma e_0}{0+\Gamma} \leq  \tfrac{\max \left\{\tfrac{\beta\gamma^2}{\alpha \gamma-1},\Gamma e_0\right\}}{0+\Gamma}$
implying that the hypothesis statement holds for $k=0$. Let us assume that $e_k \leq \tfrac{\theta_0}{k+\Gamma}$ for some $k\geq 0$ where $\theta_0 \triangleq \max \left\{\tfrac{\beta\gamma^2}{\alpha \gamma-1},\Gamma e_0\right\}$. Let the induction hypothesis \fyy{hold} for $k\geq 0$. We show that it holds for $k+1$ as well. We have
\begin{align*}
& \theta_0  \geq \tfrac{\beta\gamma^2}{\alpha \gamma-1}
\Rightarrow \quad  \theta_0 \leq \gamma(\theta_0 \alpha - \beta \gamma)
\Rightarrow \quad  \tfrac{\theta_0}{k+\Gamma} \leq \tfrac{\gamma(\theta_0 \alpha - \beta \gamma)}{k+\Gamma}
\Rightarrow \quad  \tfrac{\theta_0}{k+\Gamma+1} \leq \tfrac{\gamma(\theta_0 \alpha - \beta \gamma)}{k+\Gamma}\\
\Rightarrow \quad & \tfrac{\theta_0}{(k+\Gamma+1)(k+\Gamma)} \leq \tfrac{\gamma(\theta_0 \alpha - \beta \gamma)}{(k+\Gamma)^2}
\Rightarrow \quad  \theta_0\left(\tfrac{1}{k+\Gamma}-\tfrac{1}{k+\Gamma+1}\right)  \leq \tfrac{\gamma(\theta_0 \alpha - \beta \gamma)}{(k+\Gamma)^2}
\Rightarrow \quad   \tfrac{\theta_0}{k+\Gamma}- \tfrac{\gamma(\theta_0 \alpha - \beta \gamma)}{(k+\Gamma)^2}  \leq \tfrac{\theta_0}{k+\Gamma+1}\\
\Rightarrow \quad &  \left(1-\alpha \tfrac{\gamma}{k+\Gamma}\right) \tfrac{\theta_0}{k+\Gamma}+ \tfrac{\beta \gamma^2}{(k+\Gamma)^2}  \leq \tfrac{\theta_0}{k+\Gamma+1}
\Rightarrow \quad   \left(1-\alpha \gamma_k\right) \tfrac{\theta_0}{k+\Gamma}+ \beta \gamma_k^2 \leq \tfrac{\theta_0}{k+\Gamma+1}\\
\Rightarrow \quad &  \left(1-\alpha \gamma_k\right) e_k+ \beta \gamma_k^2 \leq \tfrac{\theta_0}{k+\Gamma+1}
\Rightarrow \quad   e_{k+1} \leq \tfrac{\theta_0}{k+\Gamma+1}.
\end{align*}

\noindent{\bf Academic examples and their stochastic counterparts in Section~\ref{sec:5.4}}
\begin{enumerate}
    \item[] {\bf Problem 1. This problem is described in \cite[Definition 4.1]{outrata1995numerical}}
\begin{align*}
f(\x,\y)=r_1(x)-xp(x+y_1+y_2+y_3+y_4),
\end{align*}
where $r_i(v)=c_iv+\tfrac{\beta_i}{\beta_i+1}K_i^{1/\beta_i}v^{(1+\beta_i)/\beta_i}$, $p(Q)=5000^{1/\gamma}Q^{-1/\gamma}$, $c_i$, $\beta_i$, $K_i$, $i=1,\cdots,5$ are given positive parameters in Table~\ref{p1}, $\gamma$ is a positive parameter, $Q=x+y_1+y_2+y_3+y_4$.
\begin{gather*}
\mathcal{X}=\{0\le x \le L\}. \\
F(\x,\y)=\left(\begin{aligned}
\nabla r_2(y_1)-p(&Q)-y_1\nabla p(Q) \\
&\vdots \\
\nabla r_5(y_4)-p(&Q)-y_4\nabla p(Q) 
\end{aligned}
\right). \\
\mathcal{Y}=\{0\le y_j \le L, \quad j=1,2,3,4\}.
\end{gather*}

\begin{table}[h]
\caption{Parameter specification for \fyy{P}roblem 1}
\begin{center}
    \begin{tabular}{ | c | c | c | c | c | c |  }
    \hline
	i & 1& 2 & 3 & 4 & 5 \\\hline
	$c_i$ & 10 & 8 & 6 & 4 & 2 \\
	$K_i$ & 5 & 5 & 5 & 5 & 5 \\
	$\beta_i$ & 1.2 & 1.1 & 1.0 & 0.9 & 0.8 \\ \hline
    \end{tabular}
\end{center}
\label{p1}
\end{table}

\noindent The following three examples were tested in \cite{outrata1995numerical,facchinei1999smoothing}. \\
\item[] {\bf Problem 2.}
\begin{gather*}
f(\x,\y)=x_1^2-2x_1+x_2^2-2x_2+y_1^2+y_2^2. \\
\mathcal{X}=\{0\le x_i \le 2, \quad i=1,2\}. \\
F(\x,\y)=\left(\begin{aligned}
2y_1-2x_1 \\
2y_2-2x_2 
\end{aligned}
\right). \\
\mathcal{Y}=\{(y_j-1)^2 \le 0.25, \quad j=1,2\}.
\end{gather*}
\item[] {\bf Problem 3.}
\begin{gather*}
f(\x,\y)=2x_1+2x_2-3y_1-3y_2-60+R[\max\{0,x_1+x_2+y_1-2y_2-40\}]^2. \\
\mathcal{X}=\{0\le x_i \le 50, \quad i=1,2\}. \\
F(\x,\y)=\left(\begin{aligned}
2y_1-2x_1+40 \\
2y_2-2x_2+40 
\end{aligned}
\right). \\
\mathcal{Y}=\{-10\le y_j \le 20, \ x_j-2y_j-10 \ge 0, \quad j=1,2\}.
\end{gather*}

\item[] {\bf Problem 4.}
\begin{gather*}
f(\x,\y)=\tfrac{1}{2}((x_1-y_1)^2+(x_2-y_2)^2). \\
\mathcal{X}=\{0\le x_i \le 10, \quad i=1,2\}. \\
F(\x,\y)=\left(\begin{aligned}
-34&+2y_1+\tfrac{8}{3}y_2 \\
-24.25&+1.25y_1+2y_2 
\end{aligned}
\right). \\
\mathcal{Y}=\{-x_{3-j}-y_j+15 \ge 0, \quad j=1,2\}.
\end{gather*}

\noindent The next problem is taken from~\cite{outrata1994optimization,facchinei1999smoothing}. In all tests, the only difference lies in the objective function.

\item[] {\bf Problem 5.}
\begin{gather*}
\mathcal{X}=\{0\le x \le 10\}. \\
F(\x,\y)=\left(\begin{gathered}
(1+0.2x)y_1-(3+1.333x)-0.333y_3+2y_1y_4-y_5 \\
(1+0.1x)y_2-x+y_3+2y_2y_4-y_6 \\
0.333y_1-y_2+1-0.1x \\
9+0.1x-y_1^2-y_2^2 \\
y_1 \\
y_2 
\end{gathered}
\right). \\
\mathcal{Y}=\{y_j \ge 0, \quad j=3,4,5,6\}.
\end{gather*}

\item[] {\bf High-dimensional stochastic counterparts.}

\noindent Consider the stochastic $N$-dimensional counterpart of Problem 1, defined as follows.
\begin{align*}
    f(\x,\y)=\mathbb{E}\left[r_1(x)-xp\left(x+\sum_{i=1}^n y_i,\omega\right)\right],
\end{align*}
where $r_i(v)=c_iv+\tfrac{\beta_i}{\beta_i+1}K_i^{1/\beta_i}v^{(1+\beta_i)/\beta_i}$, $p(Q,\omega)=5000^{1/\gamma(\omega)}Q^{-1/\gamma(\omega)}$, $c_i = 6$, $\beta_i=1$, $K_i = 5$, $i=1,\cdots,5$, $\gamma(\omega) \in \mathcal{U}(0.9,1.1)$ is a positive parameter, $Q=x+\sum_{i=1}^N y_i$. 
\begin{gather*}
\mathcal{X}=\{0\le x \le L\}. \\
F(\x,\y,\omega)=\left(\begin{aligned}
\nabla r_2(y_1)-p(&Q,\omega)-y_1\nabla p(Q,\omega) \\
&\vdots \\
\nabla r_n(y_n)-p(&Q,\omega)-y_n\nabla p(Q,\omega) 
\end{aligned}
\right). \\
\mathcal{Y}=\{0\le y_j \le L, \quad j=1,\cdots, n\}.
\end{gather*}
\noindent The stochastic $N$-dimensional counterpart of Problem 2.
\begin{gather*}
\mathbb{E}[f(\x,\y(\omega))], \mbox{ where } f(x,y(\omega)) =\|x - {\bf 1}\|^2 + \|y(\omega)\|^2.  \\
\mathcal{X}=\{0\le x_i \le 2, \quad i=1,\cdots, n\}. \\
F(\x,\y,\omega)=\left(\begin{aligned}
2y - 2x + \omega
\end{aligned}
\right). \\
\mathcal{Y}=\{\| y-{\bf 1}\|^2 \le 0.25 \}\fyyy{, \hbox {where }\omega \in \mathcal{U}(-0.5,0.5).}
\end{gather*}

\end{enumerate}

\end{document}